\newtheorem{thm}{Theorem}[section]
\newtheorem{cor}[thm]{Corollary}
\newtheorem{lem}[thm]{Lemma}
\newtheorem{pro}[thm]{Proposition}
\newtheorem{dfn}[thm]{Definition}
\newtheorem{hypothesis}[thm]{Hypothesis}
\newtheorem{rmk}[thm]{Remark}
\newtheorem{conjecture}[thm]{Conjecture}
\newcommand{\hooklongrightarrow}{\lhook\joinrel\longrightarrow}
\DeclareMathOperator{\End}{End}
\DeclareMathOperator{\Frob}{Frob}
\DeclareMathOperator{\Gal}{Gal}
\DeclareMathOperator{\GL}{GL}
\DeclareMathOperator{\Hom}{Hom}
\DeclareMathOperator{\Spm}{Sp}
\DeclareMathOperator{\Spec}{Spec}
\DeclareMathOperator{\Spf}{Spf}
\DeclareMathOperator{\Supp}{Supp}
\DeclareMathOperator{\Tor}{Tor}
\newcommand{\ccyc}{{\epsilon}}
\DeclareMathOperator{\coker}{coker}
\newcommand{\cris}{{\rm cris}}
\newcommand{\dR}{{\rm dR}}
\renewcommand{\mod}{{\rm\,mod\,}}
\newcommand{\red}{{\rm red}}
\DeclareMathOperator{\reg}{reg}
\newcommand{\rig}{{\rm rig}}
\newcommand{\gal}{{\rm Gal}}
\newcommand{\fil}{{\rm Fil}}
\newcommand{\ind}{{\rm Ind}}
\newcommand{\homo}{{\rm Hom}}
\newcommand{\EndO}{{\rm End}}
\newcommand{\ext}{{\rm Ext}}
\newcommand{\GLN}{{\rm GL}}
\newcommand{\ana}{{\rm an}}
\newcommand{\st}{{\rm St}}
\newcommand{\op}{{\overline{{\mathbf{P}}}}}
\newcommand{\ob}{{\overline{{\mathbf{B}}}}}
\newcommand{\on}{{\overline{{\mathbf{N}}}}}
\newcommand{\unr}{{\rm unr}}
\newcommand{\hH}{{\mathrm{H}}}
\newcommand{\ra}{{\longrightarrow}}
\DeclareMathOperator{\val}{\mathrm val}
\newcommand{\Modo}{{\textrm{mod}}}
\newcommand{\spf}{{\rm Spf}}
\newcommand{\df}{{\mathrm{DF}}}
\newcommand{\wdre}{{\textbf{r}}}
\newcommand{\rec}{{\rm rec}_L}
\newcommand{\lalg}{{\rm lalg}}
\newcommand{\cyc}{{\rm cyc}}
\newcommand{\Art}{{\rm Art}}
\newcommand{\Ker}{{\rm Ker}}
\newcommand{\wt}{{\rm wt}}
\newcommand{\pr}{{\rm pr}}
\newcommand{\dr}{{\rm dR}}
\newcommand{\alge}{{\rm alg}}
\newcommand{\tee}{{{\otimes}_{\cR_{E,L}}}}
\newcommand{\te}{{\otimes_E}}
\newcommand{\undelram}{{\delta^0}}
\newcommand{\gr}{{\rm gr}}
\newcommand{\hpi}{{{\mathbf{h}}}}
\newcommand{\Dpik}{{\mathbf{D}}}
\newcommand{\pdr}{{\rm pdR}}
\newcommand{\sbanpik}{{\big(\mathrm{Spec}\hspace{2pt}\FZ_{\Omega_{r}}^{\otimes k}\big)^{\mathrm{rig}}}}
\newcommand{\rigch}{{\mathcal{Z}_{\bL_{r,\emptyset},\mathcal{O}_L}}}
\newcommand{\rigchl}{{\mathcal{Z}_{\bL_{r,\emptyset},L}}}
\newcommand{\univ}{{\rm univ}}
\newcommand{\loc}{{\rm loc}}
\newcommand{\pst}{{\rm pst}}
\newcommand{\ver}{{\rm ver}}
\newcommand{\gen}{{\rm gen}}
\newcommand{\bmdel}{{ \bm{\delta}_\bh}}
\newcommand{\ul}{\underline}
\newcommand{\omepik}{\Omega_{r}^{\oplus k}}
\newcommand{\defvarring}{{R_{\overline{r}}^\Box}}
\newcommand{\defvar}{{ X^\Box_{\omepik,\mathbf{{h}}}(\overline{r})}}
\newcommand{\complocaldefvarrhox}{{ \widehat{X^\Box_{\omepik,\mathbf{{h}}}}(\overline{r})_{x}}}
\newcommand{\BOne} {{\mathchoice{\hbox{\rm1\kern-2.7pt l\kern.9pt}}
		{\hbox{\rm1\kern-2.7pt l\kern.9pt}}
		{\hbox{\scriptsize\rm1\kern-2.3pt l\kern.4pt}}
		{\hbox{\scriptsize\rm1\kern-2.4pt l\kern.5pt}}}}
\newcommand{\BA}{{\mathbb{A}}}
\newcommand{\BC}{{\mathbb{C}}}
\newcommand{\BG}{{\mathbb{G}}}
\newcommand{\BQ}{{\mathbb{Q}}}
\newcommand{\BU}{{\mathbb{U}}}
\newcommand{\BW}{{\mathbb{W}}}
\newcommand{\BZ}{{\mathbb{Z}}}
\newcommand{\bA}{{\mathbf{A}}}
\newcommand{\bB}{{\mathbf{B}}}
\newcommand{\bF}{{\mathbf{F}}}
\newcommand{\bG}{{\mathbf{G}}}
\newcommand{\bH}{{\mathbf{H}}}
\newcommand{\bI}{{\mathbf{I}}}
\newcommand{\bL}{{\mathbf{L}}}
\newcommand{\bN}{{\mathbf{N}}}
\newcommand{\bP}{{\mathbf{P}}}
\newcommand{\bQ}{{\mathbf{Q}}}
\newcommand{\bR}{{\mathbf{R}}}
\newcommand{\bT}{{\mathbf{T}}}
\newcommand{\bU}{{\mathbf{U}}}
\newcommand{\bW}{{\mathbf{W}}}
\newcommand{\bZ}{{\mathbf{Z}}}
\newcommand{\bx}{{\mathbf{x}}}
\newcommand{\bh}{{\mathbf{h}}}
\newcommand{\bk}{{\mathbf{k}}}
\newcommand{\cN}{\mathcal N}
\newcommand{\cL}{\mathcal L}
\newcommand{\co}{\mathcal O}
\newcommand{\cB}{\mathcal B}
\newcommand{\cR}{\mathcal R}
\newcommand{\cH}{\mathcal H}
\newcommand{\cC}{\mathcal C}
\newcommand{\cS}{\mathcal S}
\newcommand{\cD}{\mathcal D}
\newcommand{\cT}{\mathcal T}
\newcommand{\cM}{\mathcal M}
\newcommand{\cF}{\mathcal F}
\newcommand{\cV}{\mathcal V}
\newcommand{\cG}{\mathcal G}
\newcommand{\cO}{\mathcal O}
\newcommand{\cP}{\mathcal P}
\newcommand{\cZ}{\mathcal Z}
\newcommand{\cX}{\mathcal X}
\newcommand{\FB}{{\mathfrak{B}}}
\newcommand{\FC}{{\mathfrak{C}}}
\newcommand{\FT}{{\mathfrak{T}}}
\newcommand{\FU}{{\mathfrak{U}}}
\newcommand{\FX}{{\mathfrak{X}}}
\newcommand{\FY}{{\mathfrak{Y}}}
\newcommand{\FZ}{{\mathfrak{Z}}}
\newcommand{\fa}{{\mathfrak{a}}}
\newcommand{\fb}{{\mathfrak{b}}}
\newcommand{\fg}{{\mathfrak{g}}}
\newcommand{\fj}{{\mathfrak{j}}}
\newcommand{\fl}{{\mathfrak{l}}}
\newcommand{\fm}{{\mathfrak{m}}}
\newcommand{\fn}{{\mathfrak{n}}}
\newcommand{\fp}{{\mathfrak{p}}}
\newcommand{\ft}{{\mathfrak{t}}}
\newcommand{\fu}{{\mathfrak{u}}}
\newcommand{\fz}{{\mathfrak{z}}}
\newcommand{\sD}{\mathscr D}
\newcommand{\sW}{\mathscr W}
\newcommand{\sL}{\mathscr L}
\begin{document}	
	
\title{\textbf{\textsc{Companion points and locally analytic socle conjecture for Steinberg case}}}

\author{Yiqin He
\thanks{Morningside Center of Mathematics, Chinese Academy of Science,\;No. 55, Zhongguancun East Road, Haidian District, Beijing 100190, P.R.China,\;E-mail address:\texttt{\;heyiqin@amss.ac.cn}
}}

\date{}
\maketitle

\begin{abstract}
In this paper,\;we will modify the Breuil-Hellmann-Schraen's (more generally,\;resp.,\;Breuil-Ding's) local model for the trianguline variety (resp.,\;Bernstein paraboline variety) to certain semistable (resp., potentially semistable) non-crystalline point with regular Hodge-Tate weights.\;Then we deduce several local-global compatibility results,\;including a classicality result,\;and the existence of expected companion points on the (definite) eigenvariety and locally analytic socle conjecture for such semistable non-crystalline Galois representations,\;under certain hypothesis on trianguline variety and the usual Taylor-Wiles assumptions.\;Moreover,\;we also discuss slightly the coherent sheaves obtained by patching argument and the coherent sheaves which are constructed from local models and Bezrukavnikov functor,\;under the route of the recently work of Hellmann-Hernandez-Schraen.\;
\end{abstract}

{\hypersetup{linkcolor=black}
\tableofcontents}

\numberwithin{equation}{section}
	
\numberwithin{thm}{section}

\setlength{\baselineskip}{15pt}

\section{Introduction}

Let $p$ be a prime number and $n\geq 2$ an integer.\;The aim of this paper is to prove several results in the problem of companion form or locally analytic socle conjecture when the $p$-adic Galois representation is semistable non-crystalline with regular Hodge-Tate weights and full monodromy rank (that we call Steinberg case,\;which is an extreme case in semistable case).\;

The socle phenomenon and the Breuil’s locally analytic socle conjecture in \cite{breuil2016socle} and \cite{breuil2015II} are some local-global compatibility results in the locally analytic aspect of the $p$-adic local Langlands program.\;This conjecture is closely related to the existence of companion points,\;see \cite{HN17companion} for a statement of the conjecture on the existence of all companion forms for finite slope overconvergent $p$-adic automorphic forms of $\GLN_n$ (in the language of determining the set of companion points on the eigenvariety that are associated with the same $p$-adic Galois representation but with possibly different weights).\;These problems were discussed by Breuil-Hellmann-Schraen in \cite{breuil2019local} (resp.,\;Z. Wu in \cite{wu2021local} and \cite{wu2023localcompan2},\;resp., Breuil-Ding in \cite{Ding2021}) when the $p$-adic Galois representation is \textit{generic} crystalline  with regular Hodge-Tate weights (resp.,\;\textit{generic} crystalline with non-regular Hodge-Tate weights,\;resp.,\;certain \textit{generic} potentially crystalline  with regular Hodge-Tate weights).\;

The Steinberg case is previously explored for $\GLN_2(L)$ case in \cite{2015Ding}.\;In this paper,\;we talk about the local model for the trianguline variety (resp.,\;paraboline deformation ring) at certain semistable non-crystalline (or say Steinberg) point (note that the Steinberg case is \textit{not generic}),\;then we discuss the existence of companion points and companion constituents on the eigenvariety (resp.,\;Bernstein eigenvariety).\;Furthermore,\;we also discuss the coherent sheaves obtained from patching functors and the coherent sheaves constructed from local models and the Bezrukavnikov functor,\;by following the route of \cite{HHS}.\;The relationship of such two coherent sheaves is predicted by the categorical $p$-adic Langlands correspondence.\;


Before stating our main results,\;we briefly give the global setup of the paper (in the setting of definite unitary groups as Breuil).\;Let $F^+$ be a totally real field and $F$ be a quadratic totally imaginary extension of $F^+$.\;Let $\bG_U$ be a unitary group attached to the quadratic extension $F/F^+$ such that $\bG_U\times_{F^+}F\cong \GLN_n$ and $\bG_U(F^+\otimes_{\bQ}\bR)$ is compact.\;Let $S_p$ be the set of places of $F^+$ above $p$,\;and assume that each place in  $S_p$ is split in $F$.\;Fix a place $\fp$ of $F^+$ above $p$ and a place $\widetilde{\fp}$ of $F$ over $\fp$.\;Let $E$ be a sufficiently large finite extension of $\bQ_p$.\;Let $k_E$ be the reside field of $E$.\;From now on,\;we write $L=F_{\widetilde{\fp}}$ for simplicity.\;

Let $U^p=\prod_{v\nmid p}\bG_U(F_v^+)$ (resp.,\;$U_p^\fp=\prod_{v|p,p\neq \fp}U_v$) be a compact open subgroup of $\prod_{v|p,p\neq \fp}\bG_U(F_v^+)$ (resp.,\;$\prod_{v\nmid p}\bG_U(F_v^+)$).\;These give a prime-to-$\fp$ level $U^{\fp}:=U^pU_p^\fp\subseteq \bG_U(\bA_{F^+}^{\infty,\fp})$.\;Let $\widehat{S}_{\xi,\tau}(U^{\fp},E)$ be the space of certain $p$-adic automorphic forms (roughly speaking,\;the space of $p$-adic algebraic automorphic forms over coefficient field $E$ of tame level $U^p$,\;of fixed type $\sigma_0$ (a smooth representation of $\GLN_n(\cO_L)$) at the place $S_p\setminus\{\fp\}$,\;full level at $p$,\;and whose weight is $0$ at places above $\fp$,\;and given by some fixed weight at each of the places in $S_p\setminus\{\fp\}$).\;This space is an unitary Banach space representation of $\bG_U(F_{\widetilde{\fp}})=\GLN_n(L)$ (so that its locally $\bQ_p$-analytic vectors $\widehat{S}_{\xi,\tau}(U^{\fp},E)^{\ana}$ forms an admissible locally $\bQ_p$-analytic representation of $\GLN_n(L)$).\;This space is also equipped with a faithful action of a certain commutative global Hecke algebra $\bT^{S_p,\univ}$ over $\cO_E$ which is generated by some prime-to-$p$ Hecke operator.\;

Let $\overline{{\rho}}:\gal_F\rightarrow \GLN_n(k_E)$ be a $\Modo\;p$ irreducible representation.\;We can associate to $\overline{\rho}$ a maximal ideal $\fm_{\overline{\rho}}$ of $\bT^{S_p,\univ}$.\;Let $\widehat{S}_{\xi,\tau}(U^{\fp},E)^{\ana}_{\overline{\rho}}$ be the localization of $\widehat{S}_{\xi,\tau}(U^{\fp},E)^{\ana}$ at $\fm_{\overline{\rho}}$ (with respect to the $\bT^{S_p,\univ}$-structure).\;There is a rigid analytic variety $Y(U^{\fp},\overline{\rho})$ over $E$ (called the Hecke eigenvariety) that parametrizes the systems of Hecke eigenvalues of finite slope in the representation  $\widehat{S}_{\xi,\tau}(U^{\fp},E)^{\ana}_{\overline{\rho}}$ (more generally,\;Bernstein Hecke eigenvariety \cite{Ding2021},\;which parametrizes the systems of certain Hecke eigenvalues which are not of finite slope).\;

Suppose that $E$ is sufficiently large,\;i.e.\;$\Sigma_L:=\{\sigma: L\hookrightarrow \overline{\BQ}_p\} =\{\sigma: L\hookrightarrow E\}$.\;Let $d_L:=|\Sigma_L|$.\;Put $q_L:=p^{f_L}$,\;where $f_L$ denotes the unramified degree of $L$ over $\bQ_p$.\;Let $\GLN_n$ be the general linear group over $L$.\;Let $\bB$ (resp.,\;$\bT$) be the Borel subgroup of upper triangular matrices (resp.,\;the diagonal torus).\;Let $\fg$ (resp.,\;$\ft\subseteq\fb$) be the $E$-Lie algebra of $\GLN_n$ (resp.,\;$\bT\subseteq\bB$).\;Let $\ob$ be the parabolic subgroup of $\GLN_{n}$ opposite to $\bB$.\;Let $\bG_{L}:=(\mathrm{Res}_{L/\bQ_p}\GLN_n)\times_{\bQ_p}E$ (resp.,\;$\bT_{L}:=(\mathrm{Res}_{L/\bQ_p}\bT)\times_{\bQ_p}E\subseteq \bB_{L}:=(\mathrm{Res}_{L/\bQ_p}\bB)\times_{\bQ_p}E$.\;Let $\fg_L$ (resp.,\;$\ft_L\subseteq\fb_L$) be the $E$-Lie algebra of $\bG_{L}$ (resp.,\;$\bT_{L}\subseteq\bB_{L}$).\;We have $\bG_{L}\cong \prod_{\tau\in \Sigma_L}\GLN_n$ and $\fg_L\cong \prod_{\tau\in \Sigma_L}\fg$,\;etc.\;Let $\sW_{n,L}\cong S_n^{d_L}$ be the Weyl group of $\bG_{L}$.\;We put $G=\GLN_{n}(L)$.\;Let $w_0$ (resp.,\;$\underline{w}_0:=(w_0)_{\tau\in \Sigma_L}$) be the longest element in $\sW_{n}$ (resp.,\;$\sW_{n,L}$).\;

A point $y\in Y(U^{\fp},\overline{\rho})$ can be uniquely described by a pair $(\rho_y,\underline{\delta})$,\;where $\rho$ is a Galois deformation of $\overline{\rho}$ on $E$ and $\underline{\delta}=\delta_1\otimes\cdots\otimes\delta_n$ is a locally $\bQ_p$-analytic character of $L^n=\bT(L)$.\;We are interested in point $y=(\rho_y,\underline{\delta})$ that are semistable non-crystalline with full monodromy rank  (we say that $y$ is of \textit{Steinberg type}),\;this means that the local Galois representation  $\rho_L:=\rho_y|_{\gal_{F_{\widetilde{\fp}}}}=\rho_y|_{\gal_L}$ is  semistable non-crystalline,\;and the monodromy operator $N$ on $D_{\mathrm{st}}(\rho_L)$ satisfies $N^{n-1}=0$ (thus the associated smooth representation of $G$ is the  Steinberg representation via the classical local Langlands correspondence).\;

Under this assumption,\;we can associate to $y$ two permutations $w_y=(w_{y,\tau}),w_{\cF}=(w_{\cF,\tau})\in\sW_{n,L}$.\;The first one measuring the relative positions of the weights $\wt(\delta_i)$ of $\delta_i$ with the dominant order,\;and the second one measuring the relation position of two flags coming from the $p$-adic Hodge theory (i.e.,\;the unique $(\varphi,N)$-stable flag and the Hodge filtration,\;see Section \ref{Omegafil}).\;More precisely,\;let $\bh:=(\hpi_{\tau,1}>\hpi_{\tau,2}>\cdots>\hpi_{\tau,n} )_{\tau\in \Sigma_L}$ be the Hodge-Tate weights of $\rho_{L}$ (we fix such $\bh$ throughout this paper).\;For $1\leq i\leq n$,\;put $\hpi_{i}=(\hpi_{\tau,i})_{\tau\in \Sigma_L}$.\;Then $w_{y}$ and $w_{\cF}$ are given as follows.\;
\begin{itemize}
	\item Note that $D_{\rig}(\rho_{L})$ is trianguline.\;Assume that $D_{\rig}(\rho_L)$ admits a triangulation $\cF$ with parameters $\unr({\alpha})z^{w_{\cF}\underline{w}_0(\bh_1)},\;\cdots\,\;\unr({\alpha q_L^{i-1}})z^{w_{\cF}\underline{w}_0(\bh_i)},\;\cdots\,\;,\unr({\alpha q_L^{n-1}})z^{w_{\cF}\underline{w}_0(\bh_n)}$ for some $\alpha\in E$,\;where  $w_{\cF}\underline{w}_0(\bh_i)=(\bh_{\tau,(w_{\cF,\tau}{w}_0)^{-1}(i)})_{\tau\in\Sigma_L,1\leq i\leq n}$.\;
	\item $\wt_{\tau}(\delta_i)=\bh_{\tau,\;w_{y,\tau}^{-1}(i)}$ for $1\leq i\leq n$ and $\tau\in\Sigma_L$.\;
\end{itemize}
If $w=1$,\;we say $y$ is strictly dominant.\;Recall that $y'=(\rho_y,\underline{\delta'})\in Y(U^{\fp},\overline{\rho})$ is called a companion point of $y$ if $\underline{\delta}^{-1}\underline{\delta'}$ is a $\bQ_p$-algebraic character.\;

We state our main result on companion points and companion constituents under the assumption $L\neq \bQ_p$.\;We need the following so-called ``Talyor-Wiles hypothesis''.\;

\begin{hypothesis}\label{TWhypo}\hspace{20pt}
\begin{itemize}
	\item[(1)] $p>2$;
	\item[(2)] the field $F$ is unramified over $F^+$,\;$F$ does not contain a non-trivial root $\sqrt[p]{1}$ of $1$ and $G$ is quasi-split at all finite places of $F^+$;
	\item[(3)] $U_{v}$ is hyperspecial when the finite place $v$ of $F^+$ is inert in $F$;
	\item[(4)] $\overline{\rho}$ is absolutely irreducible and $\overline{\rho}(\gal_{F(\sqrt[p]{1})})$ is adequate.\;
\end{itemize}
\end{hypothesis}

We first have the following classicality result.\;let $\fm_{\rho}\subset \bT^{S_p,\univ}_{\overline{\rho}}[1/p]$ be the maximal ideal associated to $\rho$ and $\widehat{S}_{\xi,\tau}(U^{\fp},E)^{\ana}_{\overline{\rho}}[\fm_{{\rho}}]$ be the subspace of $\widehat{S}_{\xi,\tau}(U^{\fp},E)^{\ana}_{\overline{\rho}}$ annihilated by $\fm_{\rho}$.\;
\begin{thm}\label{introClassicality}(Classicality,\;See Theorem \ref{Classicality}) Assume Hypothesis \ref{TWhypo} and Hypothesis \ref{introhypo-1}.\;If the Galois representation $\rho:\gal(\overline{F}/F)\rightarrow\GLN_n(E)$ comes from a Steinberg type point $y=(\rho,\underline{\delta})\in Y(U^{\fp},\overline{\rho})$,\;then $\widehat{S}_{\xi,\tau}(U^{\fp},E)^{\lalg}_{\overline{\rho}}[\fm_{{\rho}}]\neq 0$,\;i.e.,\;$\rho$ is associated to a classical automorphic representation of $\bG_U(\BA_{F^+}^{\infty})$.\;
\end{thm}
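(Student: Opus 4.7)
The plan is to combine the Taylor--Wiles patching strategy of \cite{breuil2019local}, \cite{Ding2021} with the local model for the trianguline variety at the Steinberg point $y$ constructed earlier in this paper (which takes Hypothesis \ref{hy114} as its geometric input). First, under Hypothesis \ref{TWhypo} I would patch $\widehat{S}_{\xi,\tau}(U^{\fp},E)_{\overline{\rho}}$ to produce a Banach space representation $\Pi_\infty$ of $G = \GLN_n(L)$ equipped with an arithmetic action of a patched deformation ring $R_\infty$, such that the continuous dual of $\Pi_\infty$ is a finite projective module over $R_\infty[[K]]$ for any compact open $K \subset G$. The original representation is recovered as $\Pi_\infty / \fa$ for an explicit ideal $\fa \subset R_\infty$ encoding the tame level $U^{\fp}$ and the fixed weights away from $\fp$, and the given point $y = (\rho, \underline{\delta})$ of $Y(U^{\fp}, \overline{\rho})$ lifts to a closed point $\widetilde{x}$ of the patched eigenvariety $\cE_\infty(\overline{\rho})$ associated to the Jacquet module $J_{\bB}(\Pi_\infty^{\an})$.

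A closed immersion from a neighborhood of $\widetilde{x}$ in $\cE_\infty(\overline{\rho})$ into $X_{\mathrm{tri}}(\overline{r}) \times (\Spf R_\infty^{\fp})^{\rig}$ (with $R_\infty^{\fp}$ the factor of $R_\infty$ corresponding to places outside $\fp$) allows one to transfer the local-model description of $X_{\mathrm{tri}}(\overline{r})$ at $y$ to $\cE_\infty(\overline{\rho})$ at $\widetilde{x}$. The strict dominance condition $w_y = 1$ forces the algebraic part of $\underline{\delta}$ to be $z^{\bh}$ with $\bh$ strictly dominant regular, so that the smooth part $\underline{\delta}^{\sm} := \underline{\delta} z^{-\bh}$ is unramified; the local model then identifies $\widetilde{x}$ as sitting on an explicit irreducible component of $X_{\mathrm{tri}}(\overline{r})$ along which classical (crystalline, and therefore automorphic by the classical theory) points accumulate. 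Applying Emerton's classicality criterion for admissible locally $\bQ_p$-analytic representations --- whose hypotheses (strict dominance of the algebraic weight plus non-critical finite slope of $\underline{\delta}^{\sm}$) are guaranteed by $w_y = 1$ combined with the explicit structure of the Steinberg Frobenius eigenvalues --- produces a nonzero vector in $\Pi_\infty^{\lalg}[\fm_{\widetilde{x}}]$. Unpatching (reducing modulo $\fa$) then yields $\widehat{S}_{\xi,\tau}(U^{\fp},E)^{\lalg}_{\overline{\rho}}[\fm_\rho] \neq 0$, which is the classicality statement.

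The main obstacle is the geometric analysis at $y$. The Steinberg case is explicitly \emph{not} generic in the sense of \cite{breuil2019local}, \cite{wu2021local}, \cite{Ding2021}: the Frobenius eigenvalues $\alpha, \alpha q_L, \ldots, \alpha q_L^{n-1}$ form a single string of consecutive ratios $q_L$, precisely the configuration excluded in those works, so the smoothness-and-density arguments used there cannot be imported verbatim. The required geometric input is the explicit combinatorial description of the completed local ring of $X_{\mathrm{tri}}(\overline{r})$ at $y$ provided by the modified local model, which must be strong enough both to identify the irreducible component through $\widetilde{x}$ as one along which automorphic points accumulate and to verify non-criticality of the slope of $\underline{\delta}^{\sm}$ despite the Steinberg ratios $q_L$ lying on the boundary of the non-critical region. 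A secondary technical point is checking that the non-vanishing in $\Pi_\infty^{\lalg}$ descends, under the $R_\infty$-action and reduction by $\fa$, to a genuine classical automorphic form on $\bG_U(\BA_{F^+}^\infty)$, for which the adequacy of $\overline{\rho}$ in Hypothesis \ref{TWhypo}(4) guarantees that the patching produces the correct localization.
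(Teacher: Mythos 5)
Your proposal takes a genuinely different route from the paper's, and it contains a gap at the key step.

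The paper's actual argument is a geometric support argument in the spirit of \cite[Theorem 3.9]{breuil2017smoothness} and \cite[Section 5]{BSCONJ}. It proceeds by (i) using Proposition \ref{localgeomertyonspecial}(d) (the local model result) to show $X_{\mathrm{tri}}(\overline{r})$ is irreducible at the point $x_{w_y\underline{w}_0}$, so there is a unique irreducible component $Z$ through it; (ii) deducing from (\ref{comparepatchedandtrivar}) that $Z$ lies in the automorphic locus $X^{\FX^{\fp}-\mathrm{aut}}_{\mathrm{tri}}(\overline{r})$; (iii) invoking the Breuil--Schneider--type result that the dual of the $\sigma_{\min}$-isotypic part of $\Pi_\infty$ is supported on a \emph{union of irreducible components} of $\FX^{\fp}\times\FX_{\overline{r}}^{\Box,\bh-\mathrm{st}}\times\BU^g$; (iv) using \cite{ALLENPARK} to identify a unique component $Z(\rho_L)$ of $\FX_{\overline{r}}^{\Box,\bh-\mathrm{st}}$ through $\rho_L$; and then crucially (v) using Hypothesis \ref{hy114} to obtain the closed immersion $\iota_{\bh}:\FX_{\overline{r},\cP_{\min}}^{\Box,\bh-\mathrm{st}}\hookrightarrow X_{\mathrm{tri}}(\overline{r})$, which lets one find a \emph{non-critical} semistable deformation $x'$ near $x_{w_y\underline{w}_0}$ inside $\iota_{\bh}(Z(\rho_L))$; since non-critical points are classical by global triangulation theory, $x'$ lies in the support, and since the support is a union of components, so does $\rho_L$. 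No slope computation or classicality criterion at $y$ itself is ever used.

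Your step 4 --- applying Emerton's classicality criterion directly at the Steinberg point $\widetilde{x}$ --- is the gap. You yourself flag the need to ``verify non-criticality of the slope of $\underline{\delta}^{\sm}$ despite the Steinberg ratios $q_L$ lying on the boundary of the non-critical region,'' but you do not actually perform this verification, and the justification offered (``guaranteed by $w_y = 1$ combined with the explicit structure of the Steinberg Frobenius eigenvalues'') is not a proof. Moreover, the diagnosis is muddled: the ratio $\varphi_{i+1}/\varphi_i = q_L$ is what breaks \emph{genericity} in the sense of \cite{breuil2019local} (and hence the formal smoothness of the old local model map), which is a separate issue from whether the numerical small-slope condition of Emerton's criterion holds; conflating the two obscures what actually needs to be checked. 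Even when the numerical slope bound does hold, your proposal has a second problem: you use Hypothesis \ref{hy114} only as loosely-described ``geometric input'' to the local model, but the local model constructions in Section 3 do not in fact depend on Hypothesis \ref{hy114}. That hypothesis is used precisely and essentially for the embedding $\iota_{\bh}$ of the semistable deformation space into the trianguline variety, which is what lets the paper replace a direct classicality criterion at $y$ by the existence of one nearby classical non-critical point in $\iota_{\bh}(Z(\rho_L))\cap V$. Without that embedding, the component $Z$ of $X_{\mathrm{tri}}(\overline{r})$ and the component $Z(\rho_L)$ of the semistable deformation space have no a priori relation, and the support argument cannot be run. Your proposal makes no use of this link, so it cannot be completed as written.
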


It is conjectured in \cite[Conjecture 1.2.5]{HN17companion} and \cite[Conjecure 6.5]{breuil2015II} (note that Breuil only states such conjecture for potentially (generic) crystalline case) that the companion points of $y$ are parametrized by $w'\in \sW_{n,L}$ such that $w'\underline{w}_0\geq w_{\cF}$ (the usual Bruhat order in $\sW_{n,L}$).\;We write $y_{w'}$ for the conjectural companion point associated to $w'$.\;

\begin{thm}\label{introcompanglobal}(Theorem \ref{globalcompanion}) Assume Hypothesis \ref{TWhypo} and Hypothesis \ref{hy114} below (certain hypothesis on trianguline variety).\;If  $\rho:\gal(\overline{F}/F)\rightarrow\GLN_n(E)$ comes from a Steinberg type strictly dominant point $y=(\rho,\underline{\delta})\in Y(U^{\fp},\overline{\rho})$ (and thus  $\widehat{S}_{\xi,\tau}(U^{\fp},E)^{\lalg}_{\overline{\rho}}[\fm_{{\rho}}]\neq 0$ by Theorem \ref{introClassicality}),\;then all companion points of $y$ are $y_{w'}$ for $w'\underline{w}_0\geq w_{\cF}$.\;
\end{thm}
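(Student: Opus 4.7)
The plan is to combine Theorem \ref{introClassicality} with the modified local model for the trianguline variety at Steinberg points (developed in the body of this paper) and the patched eigenvariety machinery à la Breuil-Hellmann-Schraen. The statement splits into two halves: (a) existence of a companion point $y_{w'}$ for every $w' \in \sW_{n,\Sigma_L}$ with $w'\underline{w}_0 \geq w_\cF$, and (b) the converse showing no other companion points arise. Part (a) is a representation-theoretic consequence of classicality and the structure of locally analytic principal series, while (b) is geometric and relies on the local model of $X_{\mathrm{tri}}(\overline{r})$ at $y$.

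For (a), Theorem \ref{introClassicality} produces a non-zero classical automorphic vector in $\widehat{S}_{\xi,\tau}(U^{\fp},E)^{\lalg}_{\overline{\rho}}[\fm_{\rho}]$. Since $\rho_L$ is of Steinberg type, the associated smooth representation of $G$ is the Steinberg representation, which embeds into a locally analytic principal series from $\bB$ with the expected infinitesimal character. Using the Orlik-Strauch classification of irreducible constituents in the locally analytic socle of this principal series (together with its twists by algebraic characters), one obtains a family of irreducible constituents indexed by Weyl group elements. The condition $w'\underline{w}_0 \geq w_\cF$ is precisely the Bruhat condition ensuring that such a constituent appears in the socle of the right locally analytic induction, and local-global compatibility (via the local model at $y$) transfers this existence into $\widehat{S}_{\xi,\tau}(U^{\fp},E)^{\an}_{\overline{\rho}}[\fm_{\rho}]$. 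Applying Emerton's Jacquet functor and the universal property of the Hecke eigenvariety yields the desired companion point $y_{w'}$.

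For (b), any companion point $y' = (\rho, \underline{\delta}')$ of $y$ projects to a point on $X_{\mathrm{tri}}(\overline{r})$ whose associated parameter gives a triangulation of $D_{\rig}(\rho_L)$. Using the modified local model at Steinberg points, one classifies these triangulations: they are parametrized exactly by $w' \in \sW_{n,\Sigma_L}$ with $w'\underline{w}_0 \geq w_\cF$, the Bruhat order reflecting the constrained interaction between the Hodge filtration and the admissible $\varphi$-stable filtrations coming from the $(\varphi,N)$-module of $\rho_L$ in the semistable non-crystalline setting. Combined with Hypothesis \ref{introhypo-1} on the trianguline variety, this geometric rigidity forces $\underline{\delta}'$ into the expected finite list.

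The main obstacle is the non-genericity of the Steinberg case: the nontrivial monodromy $N$ obstructs the direct use of the Breuil-Hellmann-Schraen and Breuil-Ding arguments, which rely on genericity to identify the completed local ring of the eigenvariety with that of the trianguline variety via a well-behaved local model. The technical heart of the proof is therefore the construction of the Steinberg-adapted local model and the verification that, under Hypothesis \ref{introhypo-1}, the patched eigenvariety near $y$ is still governed by this local model, so that the classification of companion points on $Y(U^{\fp},\overline{\rho})$ reduces to the Weyl-group classification of triangulations of $D_{\rig}(\rho_L)$.
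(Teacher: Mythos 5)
Your split into an existence half (a) and a non-existence half (b) is the right outline, and your description of (b) — any companion point gives a triangulation of $D_{\rig}(\rho_L)$ and the local model forces the relation $w'\underline{w}_0\geq w_\cF$ — matches the paper's use of Proposition \ref{propertyofxrhombullet}(c) and Proposition \ref{localgeomertyonspecial}(d). The serious gap is in (a), the existence direction, which is the hard direction and where your argument is essentially circular.

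You write that classicality produces a classical vector, that the Steinberg representation sits in a locally analytic principal series, and that Orlik-Strauch classification plus ``local-global compatibility via the local model'' then produces the companion constituents $C(w'\underline{w}_0)$ for all $w'\underline{w}_0\geq w_\cF$. But the assertion that ``the condition $w'\underline{w}_0\geq w_\cF$ is precisely the Bruhat condition ensuring that such a constituent appears in the socle'' is not a representation-theoretic fact: it is the content of the locally analytic socle conjecture, i.e.\ exactly what is being proved. Orlik-Strauch theory tells you which irreducible constituents occur in the abstract induced representation $\bI_{w\underline{w}_0}$; it says nothing about which of them embed into the \emph{global} space $\widehat{S}_{\xi,\tau}(U^{\fp},E)^{\ana}_{\overline{\rho}}[\fm_\rho]$, and classicality alone only supplies the constituent $C(\underline{w}_0\underline{w}_0)$ attached to $w'=\underline{w}_0$. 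Jumping from one embedding to embeddings for all $w'\underline{w}_0\geq w_\cF$ is the entire difficulty.

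What the paper actually does for (a) is a descending induction on Bruhat length (Theorem \ref{globalcompanion}), whose inductive step is Proposition \ref{prolocalcomapn}. That step works entirely on the patched eigenvariety $X_{\fp}(\overline{\rho})$ and proceeds by (i) translating non-vanishing of $\Hom_G(\bI_{w\underline{w}_0},\Pi_\infty^{R_\infty-\ana}[\fm_\rho^\infty])$ into non-vanishing of a cycle $[\cL(w\underline{w}_0\cdot\lambda_\bh)]$; (ii) matching these cycles with the Galois-side cycles $\FZ^\flat_{w'}$ coming from the Steinberg-adapted local model, via the factorization in Proposition \ref{factorRqrho}; (iii) using the partially de Rham criterion (Proposition \ref{PartiallydeRham} and Lemma \ref{strictlyiff}) to show that $\FZ^\flat_{w_\cF\underline{w}_0}$ cannot lie in the $\bP_{\underline I/E}$-de Rham locus for the relevant parabolic, forcing $[\cL(w_\cF\underline{w}_0\cdot\lambda_\bh)]\neq 0$; and (iv) a Zariski-closure argument on the semistable deformation rings (Proposition \ref{semistableZarisikeargsecond}) to spread the companion point along the closure of a Bruhat stratum. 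None of this cycle-theoretic machinery appears in your proposal. In particular your statement that the obstacle is ``only'' constructing the local model and verifying that the patched eigenvariety is governed by it misses that even with the local model in hand, the existence of companion points requires the numerical coincidence between automorphic cycles and Galois cycles and the partially de Rham argument — these cannot be replaced by pure representation theory.
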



The existence of companion points is a weaker version of the so-called locally analytic socle conjecture.\;Put ${\bm\lambda}_\bh:=(\hpi_{\tau,i}+i-1)_{\tau\in \Sigma_L,1\leq i\leq n}$,\;which is a dominant weight of $\bG_{L}$ with  respect to $\bB_{L}$.\;For $a\in E$,\;denote by $\unr(a)$ the unramified character of $L^\times$ sending uniformizers to $a$.\;By the Orlik-Strauch construction \cite[Theorem]{orlik2015jordan},\;we consider the locally $\bQ_p$-analytic representations 
$\bI_{w\underline{w}_0}:=\cF^G_{\ob(L)}\big(\overline{L}(-w\underline{w}_0\cdot{\lambda}_{\bh}),\unr(\beta)\big)$ for any $w\in\sW_{n,L}$ and $\beta:=\alpha q_L^{\frac{n-1}{2}}$.\;Recall that $\bI_{w\underline{w}_0}$ admits a unique quotient $C(w\underline{w}_0)$,\;which is a locally $\bQ_p$-analytic irreducible admissible representation.\;



\begin{thm}\label{introlocallysocle}(Theorem \ref{equicompanpointconsti},\;Theorem \ref{globalcompanion},\;``a special case of locally analytic socle conjecture") Assume Hypothesis \ref{TWhypo} and Hypothesis \ref{hy114}.\;Then  $C(w\underline{w}_0)$ is a subrepresentation of $\widehat{S}_{\xi,\tau}(U^{\fp},E)^{\ana}_{\overline{\rho}}[\fm_{{\rho}}]$ if and only if $w\geq w_{\cF}$.\;
\end{thm}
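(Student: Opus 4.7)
The theorem is a formal consequence of two earlier results in the paper. The plan is first to invoke the companion-point/companion-constituent dictionary of Theorem \ref{equicompanpointconsti} in order to translate the question about $C(w\underline{w}_0)$ sitting inside $\widehat{S}_{\xi,\tau}(U^{\fp},E)^{\ana}_{\overline{\rho}}[\fm_\rho]$ into the existence of a specific companion point on the Hecke eigenvariety $Y(U^{\fp},\overline{\rho})$; and then to apply Theorem \ref{globalcompanion} to characterize which such points exist. Concretely, under the parametrization sending the constituent $C(w\underline{w}_0)$ to the companion point $y_{w\underline{w}_0}$, one has the equivalence
\[
C(w\underline{w}_0) \hookrightarrow \widehat{S}_{\xi,\tau}(U^{\fp},E)^{\ana}_{\overline{\rho}}[\fm_\rho] \ \iff\ y_{w\underline{w}_0}\in Y(U^{\fp},\overline{\rho}).
\]
Then Theorem \ref{globalcompanion} asserts that the companion points of the strictly dominant Steinberg-type point $y$ are exactly the $y_{w'}$ with $w'\underline{w}_0\geq w_{\cF}$; specializing to $w'=w\underline{w}_0$ and using $\underline{w}_0^2=1$, the condition $w'\underline{w}_0\geq w_{\cF}$ collapses to $w\geq w_{\cF}$, proving both directions of the ``if and only if''.

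For the dictionary in Theorem \ref{equicompanpointconsti}, the engine is Emerton's locally analytic Jacquet functor $J_{\bB}$ together with the adjunction property of the Orlik-Strauch functor $\cF^G_{\ob(L)}$. A nonzero map $\bI_{w\underline{w}_0}\to \widehat{S}_{\xi,\tau}(U^{\fp},E)^{\ana}_{\overline{\rho}}[\fm_\rho]$ corresponds to a $\bT(L)$-eigenvector in the Jacquet module with the prescribed character, hence to a point on $Y(U^{\fp},\overline{\rho})$ with the desired triangulation/weight datum, i.e.\ a companion point. One then needs to verify that such a map actually factors through the irreducible quotient $C(w\underline{w}_0)$ and conversely that a companion point produces an embedding of $C(w\underline{w}_0)$, which amounts to ruling out the other Jordan--H\"older constituents of $\bI_{w\underline{w}_0}$ by weight/central character considerations and by knowledge of the socle of the smooth parabolic induction in the Steinberg case.

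The main obstacle, which in fact occupies most of the paper, is not this final matching but rather Theorem \ref{globalcompanion} itself in the Steinberg (hence \emph{non-generic}) situation. The local models of Breuil--Hellmann--Schraen for the trianguline variety and of Breuil--Ding for the Bernstein paraboline variety all require genericity hypotheses on the refinement that fail precisely at Steinberg points, so one cannot directly import those results. The paper circumvents this by constructing a \emph{modified} local model at Steinberg-type semistable non-crystalline points with regular Hodge--Tate weights, and then transports the resulting local geometric information to the eigenvariety via patched modules and the Taylor--Wiles input packaged in Hypothesis \ref{TWhypo}, together with the auxiliary Hypothesis \ref{introhypo-1} on the trianguline variety. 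Once Theorem \ref{equicompanpointconsti} and Theorem \ref{globalcompanion} are both in hand, the present socle statement follows immediately by the index matching described above.
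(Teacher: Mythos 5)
Your high-level route agrees with the paper's: Theorem \ref{introlocallysocle} is deduced from the companion-point classification of Theorem \ref{globalcompanion}, which, as you correctly emphasize, carries all of the hard content (the modified local model at the non-generic Steinberg point, patching, the Zariski-closure argument on semistable deformation rings, the cycle comparison). The only genuinely formal part is the dictionary between companion constituents and companion points.

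Where your outline would not go through as written is in the passage from the existence of $y_{w\underline{w}_0}$ on the eigenvariety (equivalently $\homo_G(\bI_{w\underline{w}_0},\widehat{S}_{\xi,\tau}(U^{\fp},E)^{\ana}_{\overline{\rho}}[\fm_{r_y}])\neq 0$) to an actual embedding of the cosocle $C(w\underline{w}_0)$: you propose to rule out the remaining Jordan--H\"older constituents by ``weight/central character considerations,'' but this fails. All the constituents $\Pi_{(w\underline{w}_0,1,J)}$ for $J\subseteq I(w\underline{w}_0)$ come from the Orlik--Strauch functor applied to the same module $\overline{L}(-w\underline{w}_0\cdot\lambda_{\bh})$, hence share the same infinitesimal character, and as subquotients of a single parabolic induction they also share the same central character; neither invariant separates $C(w\underline{w}_0)$ from the others. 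What the paper actually does (Theorem \ref{detlamedaSOCLE}) is to note that each $\Pi_{(w\underline{w}_0,1,J)}(\beta)$ with $J\neq\emptyset$ arises as the irreducible cosocle of $\Pi_{(w\underline{w}_0,w')}(\beta)$ for some $w'\neq 1$; an embedding of it would therefore, via the Emerton--Jacquet adjunction, produce a point $(\rho, z^{w\underline{w}_0(\bh)}\eta)$ on $X_\fp(\overline{\rho})$ whose \emph{smooth} torus character $\eta$ is a nontrivial $w'$-permutation of $\unr(\alpha)_n$. This is excluded precisely because, for the Steinberg $\rho_L$, the triangulation parameters have unramified part $\unr(\alpha)_n$ independently of the weight permutation, so the companion-point theorem leaves no room for such a point. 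In other words, Theorem \ref{globalcompanion} has to be invoked a second time to control the smooth direction, not the algebraic one; that is the step your proposal misses, and it is not replaceable by any central or infinitesimal character argument.
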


For $n=2$,\;Theorem \ref{introlocallysocle} was firstly proved by Ding \cite{2015Ding} with the condition $\lg(w\underline{w}_0)=1$.\;For $n>2$,\;such results are previously not known (to the author's knowledge).\;For $n=2$ and $\lg(w\underline{w}_0)>1$,\;we also give an alternative proof of Theorem \ref{introlocallysocle} (for $\GLN_2(L)$ case,\;without Hypothesis \ref{hy114}) in Appendix \ref{appGL2(L)},\;by combining the arguments in \cite{2015Ding} with \cite{CompanionpointforGLN2L},\;see Theorem \ref{socleGLN2}.\;The basic strategy is by computing directly the (partial de Rham) cohomology of $(\varphi,\Gamma)$-modules (without using local models) and using some stratifications of trianguline variety and patched eigenvariety to compare different complete local rings.\;

On the other hand,\;if $L=\bQ_p$,\;\cite[Theroem 5.6.5]{MSW} gives an explicit local model for Steinberg case (a variation of Breuil-Hellmann-Schraen's local model),\;and show that $\widehat{\cX}^{\flat}_{L,x_{\pdr}}$ is normal and Cohen-Macaulay at point $x_{\pdr}$,\;by blowing up the schemes given by Grothendieck's simultaneous resolution.\;The author use a calculation on the explicit basis of cohomology of $(\varphi,\Gamma)$-modules of rank $1$ (done by Colmez) to study the universal cocycle and the universal derivation.\;But the  campanion points and socle conjecture are not discussed in this work.\;We will treat this problem in the forthcoming work under the results in \cite{MSW}.\;

In the case of $n>2$ and $L\neq \bQ_p$,\;we prove the main results by modifying the methods in \cite{breuil2019local} and \cite{wu2021local} to our Steinberg case.\;The method in \cite{breuil2019local} was replacing the Hecke eigenvariety  $Y(U^{\fp},\overline{\rho})$ by the patched eigenvariety $X_{\fp}(\overline{\rho})$ in \cite{breuil2017smoothness} (constructed from the patching module \cite{PATCHING2016}).\;Then the local geometry of the  patched eigenvariety at generic crystalline points can be reflected by the corresponding local geometry of the so-called triaguline variety $X_{\mathrm{tri}}(\overline{r})$,\;where $\overline{r}:=\overline{\rho}|_{\gal_{F_{\widetilde{\fp}}}}=\overline{\rho}|_{\gal_L}$.\;The triaguline variety parameterizes local trianguline Galois representations.\;The new ingredient is that Breuil-Hellmann-Schraen find  local models of the trianguline variety at the  \textit{generic crystalline and regular points},\;by using some varieties studied in geometry representation theory.\;In precise,\;the formal completion of triaguline variety at certain  generic crystalline points can be reflected,\;up to formally smooth morphisms,\;from some algebraic varieties which are related to the Springer resolution.\;The \textit{generic} assumption is essential for the proof of the formally smoothness.\;Furthermore,\;Zhixiang Wu explores the non-regular cases in \cite{wu2021local} and \cite{wu2023localcompan2}.\;

To understanding the local geometry of triaguline variety at our Steinberg type
point for $L\neq \bQ_p$ case,\;we also need a variation of Breuil-Hellmann-Schraen's local model (the main difference is that the parameters of its triangulation are \textit{non-generic},\;so the previous local model map (i.e.,\;(\ref{keymorphismformally})) are not necessary formally smooth).\;We now explain our results explicitly.\;Indeed,\;the following discussion on local models are proved in more general situation,\;i.e.,\;for certain potentially semistable non-crystalline  Galois representation $\rho_L$ such that $D_{\rig}(\rho_L)$ admits the so-called critical special $\omepik$-filtration (see \cite{Ding2021} and Section \ref{Omegafil} for more precise statement,\;which can be viewed as a paraboline analogue of triangulation).\;For simplicity,\;we restrict to the trianguline case in  introduction.\;

Let $\overline{r}:=\overline{{\rho}}|_{\gal_L}:\gal_L\rightarrow \GLN_n(k_E)$.\;We denote by $R_{\overline{r}}^{\square}$ the maximal reduced and $p$-torsion free quotient of the universal $\co_E$-lifting ring of $\overline{r}$.\;Let $\widehat{T}_L$ be the rigid space over $E$ parametrizing continuous characters of $\bT(L)$.\;The triaguline variety $X_{\mathrm{tri}}(\overline{r})$ is a closed subspace of $\mathfrak{X}_{\overline{r}}^\Box\times\widehat{T}_L$,\;where 
$\mathfrak{X}_{\overline{r}}^\Box=(\spf\;R_{\overline{r}}^{\square})^{\rig}$.\;For $w'\in \sW_{n,L}$,\;we define
\[\delta_{w'}:=(\unr({\alpha})z^{w'(\bh_1)},\;\cdots\,\;\unr({\alpha q_L^{i-1}})z^{w'(\bh_i)},\;\cdots\,\;,\unr({\alpha q_L^{n-1}})z^{w'(\bh_n)}),\;w'(\bh_i)=(\bh_{\tau,w_{\tau}^{-1}(i)})_{\tau\in\Sigma_L}\]
For $w'=(w'_{\tau})\in \sW_{n,L}$,\;we have $x_{w'}=(\rho_L,\delta_{w'})\subseteq \mathfrak{X}_{\overline{r}}^\Box\times\widehat{T}_L$.\;We write $x:=x_{1}$ (the so-called strictly dominant point).\;The assumption on $y\in Y(U^{\fp},\overline{\rho})$ implies $x_{w_y}:=(\rho_L,\delta_{w_y})\in X_{\mathrm{tri}}(\overline{r})$.\;Let $\widehat{X_{\mathrm{tri}}(\overline{r})}_{x_{w_y}}$ be the completion of $X_{\mathrm{tri}}(\overline{r})$ at point $x_{w_y}$.\;

Let 
$\widetilde{\fg}:=\{(g\bB,\psi)\in \GLN_n/\bB\times\fg|\; \mathrm{Ad}(g^{-1})\psi\in \fb\}$,\;and $\widetilde{\fg}_L:=\{(g\bB_{L},\psi)\in \bG_{L}/\bB_{L}\times\fg_L|\; \mathrm{Ad}(g^{-1})\psi\in \fb_L\}$.\;The projection $\widetilde{\fg}\rightarrow \fg$ and $\widetilde{\fg}_L\rightarrow \fg_L$ are the so-called Grothendieck's simultaneous resolution of singularities.\;Let $X_L:=\widetilde{\fg}_L\times_{\fg_L}\widetilde{\fg}_L$ (resp.,\;$X:=\widetilde{\fg}\times_{\fg}\widetilde{\fg}$) be the scheme defined in \cite[(2.3)]{breuil2019local},\;which is equidimensional of dimension $d_L\dim G$ (resp.,\;$\dim G$).\;The irreducible components of $X_L$ are parameterized by $\{X_{L,w'}\}_{w'\in \sW_{n,L}}$.\;We have decompositions $\widetilde{\fg}_L=\prod_{\tau\in \Sigma_L}\widetilde{\fg}_\tau$ and $X_L=\prod_{\tau\in \Sigma_L}X_\tau$ by $\Sigma_L$-components with $\widetilde{\fg}_\tau\cong \widetilde{\fg}$ and $X_\tau\cong X$.\;

By the assumption on $x_w$ and the theory of almost de Rham representations (recall the period ring $B_{\pdr}$ and the associated functor $D_{\pdr}(-)$),\;the finite free  $L\otimes_{\bQ_p}E$-module $D_{\pdr}(\rho_L)$ of rank $n$ is equipped with a nilpotent endomorphism $N$ and two flags $\cD_{\bullet}$ (comes from the triangulation $\cF$) and $\fil_{\bullet}^H$ (comes from the Hodge-filtration),\;so that we can define a point $x_{\pdr}:=(\cD_{\bullet},\fil_{\bullet}^H,N)\in X_L(E)$ by choosing some basis of $D_{\pdr}(\rho_L)$.\;By using the theory of \cite[Section 3]{breuil2019local},\;there exists a natural morphism (the so-called local model map)
\begin{equation}\label{keymorphismformally}
	\Upsilon:\widehat{X_{\mathrm{tri}}(\overline{r})}_{x_{w_y}}\rightarrow \widehat{X}_{L,x_{\pdr}}\times_{\widehat{\ft_L}}\widehat{T}_{L,\delta_{w_y}}:=\widehat{\cX}_{L,x_{\pdr}}\cong \widehat{X}_{L,x_{\pdr}}\times \widehat{\ft},
\end{equation}
where $\widehat{X}_{L,x_{\pdr}}$ is the completion of $X_L$ at point $x_{\pdr}$ and $\widehat{\ft}$ is the completion of $\ft$ at $0$.\;

Let $\pr_1:\widehat{\cX}_{L,x_{\pdr}}\rightarrow \widehat{X}_{L,x_{\pdr}}$ be the natural formally smooth projection.\;For generic crystalline case studied in \cite{breuil2019local},\;the composition $\pr_1\circ \Upsilon$ (the previous local model map in \cite[Theorem 3.4.4]{breuil2019local}) is formally smooth \cite[Theorem 1.6]{breuil2019local}.\;But in our case,\;it is not true that $\pr_1\circ \Upsilon$ is formally smooth.\;In Section \ref{sectionlocalmodelmainresult},\;we show that:

\begin{pro} We construct a formal scheme $\widehat{\cX}^{\flat}_{L,x_{\pdr}}$ (a variation of $\widehat{\cX}_{L,x_{\pdr}}$ which pro-represents certain groupoid) such that:
\begin{itemize}
	\item[(a)] there is a natural morphism $\iota:\widehat{\cX}^{\flat}_{L,x_{\pdr}}\rightarrow \widehat{\cX}_{L,x_{\pdr}}$;
	\item[(b)] the natural morphism $\Upsilon: \widehat{X_{\mathrm{tri}}(\overline{r})}_{x_{w_y}}\rightarrow \widehat{\cX}_{L,x_{\pdr}}$ factors through $\widehat{\cX}^{\flat}_{L,x_{\pdr}}\rightarrow \widehat{\cX}_{L,x_{\pdr}}$;
	\item[(c)] $\widehat{X_{\mathrm{tri}}(\overline{r})}_{x_{w_y}}$ is formally smooth over $\widehat{\cX}^{\flat}_{L,x_{\pdr}}$.\;
\end{itemize}
\end{pro}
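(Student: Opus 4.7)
The plan is to define $\widehat{\cX}^{\flat}_{L,x_{\pdr}}$ as the formal scheme pro-representing a groupoid of \emph{enriched} deformations of $(D_{\pdr}(\rho_L),\cD_\bullet,\fil_\bullet^H,N)$, where the enrichment records data that in the Steinberg case cannot be reconstructed from the underlying almost-de-Rham data alone. Concretely, since consecutive parameters of the triangulation $\cF$ at $x_{w_y}$ differ by $\unr(q_L)$, the universal $(\varphi,\Gamma)$-module over $\widehat{X_{\mathrm{tri}}(\overline{r})}_{x_{w_y}}$ carries canonical extension classes between consecutive graded pieces of $\cD_\bullet$, equivalently a system of universal cocycles/derivations in the sense of Colmez (generalizing the $L=\bQ_p$ calculation of \cite{MSW}). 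I would define the groupoid so that a deformation to an Artinian $E$-algebra $A$ consists of an object of $\widehat{\cX}_{L,x_{\pdr}}(A)$ together with a compatible system of such cocycles on the filtered graded pieces on the almost-de-Rham side.

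Property (a), the existence of the forgetful morphism $\iota$, is then tautological. Property (b) is essentially by construction: the universal $(\varphi,\Gamma)$-module on $\widehat{X_{\mathrm{tri}}(\overline{r})}_{x_{w_y}}$ canonically furnishes the additional data after applying $D_{\pdr}$, giving a canonical factorization of $\Upsilon$ through $\iota$. Making this rigorous requires exhibiting the universal cocycles on the trianguline side and verifying their compatibility with the almost-de-Rham framing on the target.

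Property (c), formal smoothness, is the main step. The approach is to use the infinitesimal lifting criterion together with the description of tangent and obstruction spaces for trianguline deformations in terms of $(\varphi,\Gamma)$-cohomology of $D_{\rig}(\rho_L)$ twisted by its triangulation. In the generic crystalline case of \cite{breuil2019local} the analogous map is already formally smooth because these cohomology groups have their minimal expected dimensions. In the Steinberg case they are strictly larger, and the extra dimension should match exactly the extension data built into $\widehat{\cX}^{\flat}_{L,x_{\pdr}}$; once the target is enriched as above, the extra degrees of freedom on the source are absorbed, and a dimension-versus-obstruction computation should yield the required formal smoothness.

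The principal obstacle is identifying the correct enrichment so that it captures precisely the cohomological excess and no more — this requires a careful Euler-characteristic and regularity analysis for the $(\varphi,\Gamma)$-modules in question, together with a generalization of Colmez's explicit cocycle computations from $L=\bQ_p$ to general $L$. A secondary technical point is pro-representability of the enriched groupoid, which is where the regularity of the Hodge--Tate weights $\bh$ and the Steinberg assumption $N^{n-1}\neq 0$ will be used in a crucial way.
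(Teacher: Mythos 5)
Your proposal goes in the opposite direction from the paper's construction, and that direction is the wrong one. The paper does \emph{not} enrich $\widehat{\cX}_{L,x_{\pdr}}$ by adding cocycle data; it defines $\widehat{\cX}^{\flat}_{L,x_{\pdr}}$ (in the groupoid language, $\widehat{X}^{(\varphi,\Gamma)}_{\bW_\Dpik,\bF_{\bullet}}$) as a \emph{full subgroupoid} of $\widehat{\cX}_{L,x_{\pdr}}$ cut out by the requirement that, for each $i$, the extension class of $\bF_{A,i}/\bF_{A,i-2}$ lies in the image of the $W_{\dr}$-map
\[
\hH^1_{(\varphi,\Gamma)}\bigl(\EndO(\Delta_{\pi})\tee \cR_{E,L}(\delta_{A,i}^{-1}\delta_{A,i-1})[1/t]\bigr)
\longrightarrow
\hH^1\bigl(\gal_L,\,W_{\dr}(\EndO(\Delta_{\pi})\tee \cR_{E,L}(\delta_{A,i}^{-1}\delta_{A,i-1})[1/t])\bigr).
\]
This is a constraint, not extra data; the cocycles on the $(\varphi,\Gamma)$ side are \emph{not} carried along as part of the moduli data. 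Once stated this way, (a) is the inclusion of a full subgroupoid, (b) is immediate from the fact that the universal $(\varphi,\Gamma)$-module produces its almost-de-Rham extension classes precisely via the $W_{\dr}$-map (so the image condition is automatically satisfied), and (c) is proved by the same infinitesimal lifting argument as in Breuil--Hellmann--Schraen, with the surjectivity input replaced by a statement about the fibered product over the \emph{image} $\mathrm{Im}\, j$ rather than over the full $\hH^1$ of $W_{\dr}$; the needed ingredient is a functorial short exact sequence for $\mathrm{Im}\,j$ under base change $A\twoheadrightarrow B$, which follows from the base-change isomorphisms for the relevant $\hH^1$'s (Lemma \ref{dimlemmaCM}) and a diagram chase (Lemma \ref{LEMimagAtoBexactseq}).

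There are two concrete problems with your version. First, there is a category error in ``a compatible system of such cocycles on the filtered graded pieces on the almost-de-Rham side'': the cocycle data you want to record lives in $(\varphi,\Gamma)$-cohomology, not in any invariant of the almost-de-Rham $B_{\dR}$-representation, and $D_{\pdr}$ does not produce it (your justification of (b) via ``applying $D_{\pdr}$'' cannot work). Any attempt to make the enrichment intrinsic to the target forces you back to recording the $(\varphi,\Gamma)$-module itself, which defeats the purpose of a local model. Second, you correctly flag that your version would need a general-$L$ analogue of Colmez's explicit cocycle/derivation computation, but this is precisely what the paper \emph{avoids}: the image condition is used as an abstract constraint, and the geometry of $\widehat{\cX}^{\flat}_{L,x_{\pdr}}$ is then controlled indirectly by the observation (Proposition \ref{Jversionwdfj}) that for every proper $J\subsetneq\Sigma_L$ the composite $\widehat{\cX}^{\flat}_{L,x_{\pdr}}\to\widehat{X}_{J,x_{\pdr,J}}$ is already formally smooth, rather than by computing the universal connection explicitly. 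Your route, if pursued, would demand exactly the hard computation the paper is engineered to sidestep, and its formal-smoothness claim would in any case not follow from a dimension count alone, since a formally smooth forgetful map composed with a non-smooth section is not smooth.
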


We give the reader a comparison of $\widehat{\cX}^{\flat}_{L,x_{\pdr}}$ and the original formal completion $\widehat{\cX}_{L,x_{\pdr}}$ (equivalently,\;$\widehat{X}_{L,x_{\pdr}}$,\;up to formally smooth morphism).\;By definition,\;the point $x_{\pdr}\in\widehat{X}_{L,x_{\pdr}}$ splits into the product of its $\Sigma_L$-component,\;i.e.\;$x_{\pdr}=(x_{\pdr,\tau})_{\tau\in \Sigma_L}\in \prod_{\tau\in \Sigma_L}\widehat{X}_{\tau,x_{\pdr,\tau}}$.\;For $J\subseteq \Sigma_L$,\;put $x_{\pdr,J}=(x_{\pdr,\tau})_{\tau\in J}$ and   $\widehat{X}_{J,x_{\pdr,J}}:=\prod_{\tau\in J}{\widehat{X}}_{\tau,x_{\pdr,\tau}}$.\;

\begin{pro}\label{LNEQOpCOMPOAR} (Proposition \ref{Jversionwdfj}) For any $J\subsetneqq\Sigma_L$,\;the composition $\iota_J:\widehat{\cX}^{\flat}_{L,x_{\pdr}}\rightarrow \widehat{\cX}_{L,x_{\pdr}}\twoheadrightarrow \widehat{X}_{J,x_{\pdr,J}}$) is formally smooth.
\end{pro}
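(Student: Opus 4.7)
The plan is to reduce to the case of omitting a single embedding and then to verify formal smoothness via the infinitesimal lifting criterion. Since $\widehat{X}_{L,x_{\pdr}} = \prod_{\tau\in\Sigma_L}\widehat{X}_{\tau,x_{\pdr,\tau}}$ is a product of formal schemes, for any inclusion of index sets $J\subseteq J'\subseteq\Sigma_L$ the forgetful projection $\widehat{X}_{J',x_{\pdr,J'}}\twoheadrightarrow \widehat{X}_{J,x_{\pdr,J}}$ is a completed power series extension on structure rings, hence formally smooth. Given $J\subsetneq\Sigma_L$, pick $\tau_0\in\Sigma_L\setminus J$ (which is nonempty by hypothesis) and set $J':=\Sigma_L\setminus\{\tau_0\}$; then $\iota_J$ factors as $\iota_{J'}$ composed with a formally smooth projection, and it suffices to treat the case where $J$ omits exactly one embedding $\tau_0$.

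Next, I would unwind the groupoid description of $\widehat{\cX}^{\flat}_{L,x_{\pdr}}$ from Section \ref{sectionlocalmodelmainresult}. By construction, the ``$\flat$-decoration'' imposes an additional compatibility condition between the monodromy operator $N$ and the flag data $(\cD_\bullet,\fil^H_\bullet)$ which couples the various $\tau$-factors, and this is the only constraint beyond those already defining $\widehat{\cX}_{L,x_{\pdr}}$; it is precisely this cross-embedding coupling that obstructs formal smoothness of the full map $\iota$. For the lifting criterion, take a small surjection $A\twoheadrightarrow A_0$ of Artinian local $E$-algebras, a point $y_0\in \widehat{\cX}^{\flat}_{L,x_{\pdr}}(A_0)$, and a lift $\widetilde z\in \widehat{X}_{J',x_{\pdr,J'}}(A)$ of $\iota_{J'}(y_0)$. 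I would build a diagonal lift $\widetilde y\in \widehat{\cX}^{\flat}_{L,x_{\pdr}}(A)$ in three steps: first lift the $\tau_0$-flag and filtration data to $A$ inside the ambient $\widehat{X}_{\tau_0,x_{\pdr,\tau_0}}$, then lift the character component through $\widehat{T}_{\delta_{w_y}}$, and finally adjust the $\tau_0$-component of $N$ (and, if necessary, the $\tau_0$-flag data within its fiber) so as to enforce the cross-embedding $\flat$-compatibility.

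The main obstacle will be verifying rigorously that the obstruction to the $\flat$-compatibility on any candidate lift can indeed be killed by varying only the $\tau_0$-direction. This should reduce to a tangent-obstruction calculation: one identifies the tangent space of $\widehat{\cX}^{\flat}_{L,x_{\pdr}}$ via the explicit cocycles supplied by the defining groupoid, and then checks that the induced linear map $T_{\widehat{\cX}^{\flat}_{L,x_{\pdr}}}\to T_{\widehat{X}_{J',x_{\pdr,J'}}}$ is surjective with kernel of the expected relative dimension; the point is that the obstruction coming from the missing $\tau_0$-factor lives in a space in which the $\tau_0$-free deformations act transitively. Once surjectivity on tangent spaces is established, standard deformation theory upgrades each infinitesimal lift to a lift in $\widehat{\cX}^{\flat}_{L,x_{\pdr}}(A)$ by induction on the length of $A$, yielding formal smoothness of $\iota_{J'}$ and hence of $\iota_J$.
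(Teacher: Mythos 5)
Your first reduction step is not available in the critical case, which is precisely the case the proposition is about. You assert that the projection $\widehat{X}_{J',x_{\pdr,J'}}\twoheadrightarrow\widehat{X}_{J,x_{\pdr,J}}$ is ``a completed power series extension on structure rings, hence formally smooth.'' But that projection is formally smooth over $\widehat{X}_{J,x_{\pdr,J}}$ if and only if $\prod_{\tau\in J'\setminus J}\widehat{X}_{\tau,x_{\pdr,\tau}}$ is formally smooth over $E$, and this fails whenever some $\tau\in J'\setminus J$ is a critical embedding: at the point $x_{\pdr,\tau}$ one has $N_\tau=0$, so $x_{\pdr,\tau}$ lies on every irreducible component $X_{w}$ of $X=\widetilde{\fg}\times_{\fg}\widetilde{\fg}$ with $w$ above the relative position $w_{\cF,\tau}w_0$ of the two flags, and whenever $w_{\cF,\tau}\neq w_0$ several such components meet there, so $\widehat{\cO}_{X,x_{\pdr,\tau}}$ is not even a domain, let alone a power series ring. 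The paper's Proposition~\ref{Jversionwdfj} accordingly treats all $J\subsetneq\Sigma_L$ uniformly rather than reducing to a single omitted embedding.

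The second, deeper issue is the characterization of the $\flat$-condition. It is not, on the local-model side, an explicit ``compatibility condition between the monodromy $N$ and the flag data.'' By the very definition of $\widehat{X}^{(\varphi,\Gamma)}_{\bW_\Dpik,\bF_{\bullet}}$, the constraint is that each successive two-step extension $\bF_{i,A}/\bF_{i-2,A}$ lie in the image of the natural map from $\hH^1_{(\varphi,\Gamma)}$ to $\hH^1$ of the associated almost de Rham representation. Translating this into a closed-form equation in local model coordinates is exactly the difficulty discussed in Section~\ref{geooflocalmodel}, and is achieved explicitly only for $L=\bQ_p$ (via Colmez's computation, following \cite{MSW}). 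The obstruction analysis you defer --- ``the obstruction coming from the missing $\tau_0$-factor lives in a space in which the $\tau_0$-free deformations act transitively'' --- is, once you chase it through, the surjectivity of the map $\hH^1_{(\varphi,\Gamma)}({\cM}_{A,i,i-1})\to\hH^1(\gal_L,W_{\dr,J}({\cM}_{A,i,i-1}))$ for $J\subsetneq\Sigma_L$, i.e.\ Lemma~\ref{dimkerCMWdr}\,(3). That statement is a $(\varphi,\Gamma)$-cohomology computation which cannot be read off the local model; it is the heart of the matter, and your proposal does not supply it. The paper's proof also does not do a direct lifting argument on $\widehat{\cX}^{\flat}_{L,x_{\pdr}}$ at all: it establishes, via Lemma~\ref{dimkerCMWdr}\,(3) and the Breuil--Hellmann--Schraen devissage, that the composite $X_{\cM_\Dpik,\cM_{\bullet}}\to\widehat{X}_{\bW_\Dpik,\bF_{\bullet},J}$ is formally smooth, and then uses that $X_{\cM_\Dpik,\cM_{\bullet}}\to\widehat{X}^{(\varphi,\Gamma)}_{\bW_\Dpik,\bF_{\bullet}}$ is essentially surjective by construction of the target, so that formal smoothness of the composite plus essential surjectivity of the first arrow forces formal smoothness of the second.
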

\begin{rmk}Our observation in  Proposition \ref{LNEQOpCOMPOAR} shows that the local model is not so complicate.\;In Section \ref{geooflocalmodel},\;we want to  describe $\widehat{\cX}^{\flat}_{L,x_{\pdr}}$
through some formal power series,\;which mix variables coming from different $\Sigma_L$-components of $\widehat{X}_{L,x_{\pdr}}$ and $\widehat{\ft}_{L}$-component of $\widehat{\cX}_{L,x_{\pdr}}$).\;This is why our local models are established though $\widehat{\cX}_{L,x_{\pdr}}$,\;which is different from the
previous generic crystalline case (the local models are established only though $\widehat{X}_{L,x_{\pdr}}$).\;
\end{rmk}


For any $w'\in \sW_{n,L}$ such that $x_{\pdr}\in X_{L,w'}(E)$,\;we put $\widehat{\cX}^{\flat}_{L,w',x_{\pdr}}:=\widehat{\cX}^{\flat}_{L,x_{\pdr}}\times_{\widehat{X}_{L,x_{\pdr}}}\widehat{X}_{L,w',x_{\pdr}}$.\;We finally have:
\begin{thm}(Proposition \ref{propertyofxrhombullet},\;Theorem \ref{localgeomertyonspecial}) Keep the above notation and situation.\;
\begin{itemize}
	\item[(1)] We have $w_y\underline{w}_0\geq w_{\cF}$.
	\item[(2)] There exists a formal scheme $X^{\Box,w\underline{w}_0}_{\rho_L,\cM_{\bullet}}$ over $E$ such that $\big(X^{\Box,w\underline{w}_0}_{\rho_L,\cM_{\bullet}}\big)^{\mathrm{red}}$ (the associated reduced formal scheme) is formally smooth  over $\widehat{\cX}^{\flat}_{L,w\underline{w}_0,x_{\pdr}}$ and formally smooth of dimension $n^2d_L$ over $\widehat{X_{\mathrm{tri}}(\overline{r})}_{x_w}$:
	\begin{equation}
	\xymatrix{
		& \big(X^{\Box,w_y\underline{w}_0}_{\rho_L,\cM_{\bullet}}\big)^{\mathrm{red}}
		\ar[dl] \ar[dr] &    \\
		\widehat{X_{\mathrm{tri}}(\overline{r})}_{x_{w_y}}	&  & \widehat{\cX}^{\flat}_{L,w_y\underline{w}_0,x_{\pdr}} .}
		\end{equation} 
\item[(c)] $\widehat{\cX}^{\flat}_{L,x_{\pdr}}$ is unibranch at point $x_{\pdr}$,\;and $X_{\mathrm{tri}}(\overline{r})$ is irreducible and Cohen-Macaulay at point $x_{w_y}$.\;
\end{itemize}
\end{thm}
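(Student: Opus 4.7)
My plan is to treat the three parts in order, reducing everything to statements about the simpler formal scheme $\widehat{\cX}^{\flat}_{L,x_{\pdr}}$ via the local model map $\Upsilon$, and then to the defining equations described in the geometry-of-local-models discussion.

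For part (1), I would exploit that $x_{w_y}=(\rho_L,\delta_{w_y})$ lies in $X_{\mathrm{tri}}(\overline{r})$ and that $\Upsilon$ sends it to $x_{\pdr}\times \delta_{w_y}$ in $\widehat{\cX}_{L,x_{\pdr}}$. The irreducible component indexing for $X_L$ is such that the point $(\cD_\bullet,\fil^H_\bullet,N)$ lies on $X_{L,w'}$ exactly when the relative position of $\cD_\bullet$ and $\fil^H_\bullet$ (with $N$ compatibility) is governed by $w'$. The pair $(\cD_\bullet,\fil^H_\bullet)$ arises respectively from the triangulation $\cF$ (with associated $w_{\cF}$) and from the Hodge filtration (with weights encoded by $\bh$), and the parameters $\delta_{w_y}$ force the character-side match; the classical Bruhat stratification argument then shows $x_{\pdr}\in X_{L,w_y\underline{w}_0}$ precisely when $w_y\underline{w}_0 \ge w_{\cF}$. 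The nontrivial direction is that membership at $x_{w_y}$ in $X_{\mathrm{tri}}(\overline{r})$ (not just in the ambient product with $\widehat{T}$) forces the component to meet $X_{L,w_{\cF}}$, which by the Bruhat closure relation is exactly $w_y\underline{w}_0\ge w_{\cF}$.

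For part (2), I would mimic the Breuil--Hellmann--Schraen framed construction: define $X^{\Box,w_y\underline{w}_0}_{\rho_L,\cM_\bullet}$ as the formal scheme parametrizing deformations of $\rho_L$ together with a framing (accounting for the $n^2d_L$ factor), a trianguline filtration $\cM_\bullet$ deforming the chosen one with parameters $\delta_{w_y}$, and the extra almost-de Rham data ($N$ and $\fil^H_\bullet$) whose relative position to $\cM_\bullet[1/t]$ refines along the component labelled by $w_y\underline{w}_0$. Forgetting the almost-de Rham refinement gives a map to $\widehat{X_{\mathrm{tri}}(\overline{r})}_{x_{w_y}}$; concretely its fibers are torsors under the framing group and, because the triangulation together with its parameters already determines the filtration data on $D_{\pdr}$ up to the framing, this forgetful map is formally smooth of relative dimension $n^2d_L$. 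On the other side, sending the data to $(\cD_\bullet,\fil^H_\bullet,N,\delta)$ yields the map to $\widehat{\cX}^{\flat}_{L,w_y\underline{w}_0,x_{\pdr}}$, whose formal smoothness follows from the earlier proposition that $\Upsilon$ factors through $\widehat{\cX}^{\flat}$ and that the extra framing plus the choice of lifts of the flags and of $N$ provides a pro-representable formally smooth cover.

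The main obstacle is ensuring that the map to $\widehat{\cX}^{\flat}_{L,w_y\underline{w}_0,x_{\pdr}}$ actually lands in (and exhausts) the irreducible component labelled by $w_y\underline{w}_0$ rather than just in $\widehat{\cX}^{\flat}_{L,x_{\pdr}}$; for this I would use that the universal $(\varphi,\Gamma)$-module over $X^{\Box,w_y\underline{w}_0}_{\rho_L,\cM_\bullet}$ (equipped with its universal trianguline and almost-de Rham structures) has, by construction, relative position $w_y\underline{w}_0$ of the two universal flags, so cuts out the right stratum; combined with Proposition \ref{LNEQOpCOMPOAR} (so the image only depends on one $\Sigma_L$-component at a time after base change) this fixes the component.

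For part (3), I would analyze the explicit equations cutting out $\widehat{\cX}^{\flat}_{L,x_{\pdr}}$ inside $\widehat{\cX}_{L,x_{\pdr}}$ as described in the remark: they mix the $\widehat{T}_{\delta_{w_y}}$-coordinates with the $\widehat{X}_{L,x_{\pdr}}$-coordinates in a controlled way, and each $\Sigma_L$-component projection $\iota_J$ is already formally smooth for every proper $J$. Because unibranchness can be checked after passing to any single formally smooth cover and because each $\widehat{X}_{\tau,w_{y,\tau}{w}_0,x_{\pdr,\tau}}$ is unibranch (indeed, it is the formal completion at a point of an irreducible component of $X$, for which unibranchness is known from the Grothendieck--Springer resolution picture and the Cohen--Macaulay/normality type results proved along the $L=\mathbb{Q}_p$ case), the fibre product structure combined with Proposition \ref{LNEQOpCOMPOAR} propagates unibranchness to $\widehat{\cX}^{\flat}_{L,x_{\pdr}}$. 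Finally, irreducibility of $X_{\mathrm{tri}}(\overline{r})$ at $x_{w_y}$ follows by transferring the unibranch property through the formally smooth span of part (2): $\widehat{X_{\mathrm{tri}}(\overline{r})}_{x_{w_y}}$ is formally smooth over $\widehat{\cX}^{\flat}_{L,w_y\underline{w}_0,x_{\pdr}}$, hence unibranch, and a reduced unibranch complete local Noetherian ring has irreducible spectrum.
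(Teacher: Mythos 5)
Your overall strategy for parts (1) and (2) is in the same spirit as the paper's, but both parts rely on the crucial isomorphism $\widehat{X_{\mathrm{tri}}(\overline{r})}_{x_{w_y}}\xrightarrow{\sim} X^{w_y\underline{w}_0}_{\rho_L,\cM_{\bullet}}$, which you dispose of in one sentence (``the triangulation together with its parameters already determines the filtration data on $D_{\pdr}$ up to the framing''). In the paper this is the entire content of Theorem \ref{localgeomertyonspecial}(a)--(d), which is established by adapting the Bernstein paraboline arguments of \cite[Prop.\ 6.4.5--Cor.\ 6.4.8]{Ding2021}: one first shows the canonical map $\widehat{X_{\mathrm{tri}}(\overline{r})}_{x_{w_y}}\to X_{\rho_L}$ factors through $X_{\rho_L,\cM_{\bullet}}$, then that it is a closed immersion, then that the composite with $\Theta$ lands in $\widehat{\cT}_{r,w_y\underline{w}_0,(0,0)}$, and only then that the closed immersion is an isomorphism onto $X^{w_y\underline{w}_0}_{\rho_L,\cM_{\bullet}}$. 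So parts (1) and (2) as you write them are right in outline but defer the real work to an unproved claim.

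Part (3) has a genuine gap. You invoke Proposition \ref{LNEQOpCOMPOAR} (formal smoothness of $\iota_J:\widehat{\cX}^{\flat}_{L,x_{\pdr}}\to\widehat{X}_{J,x_{\pdr,J}}$ for $J\subsetneq\Sigma_L$) and try to ``propagate unibranchness'' from the single-$\tau$ local models. But that proposition concerns the \emph{full} formal scheme $\widehat{\cX}^{\flat}_{L,x_{\pdr}}$, which has several irreducible components (one for each $w\in\cS(y)$) and so is not unibranch. The assertion you need is that, for the fixed component $w=w_y\underline{w}_0$, the restricted map from $\widehat{\cX}^{\flat}_{L,w,x_{\pdr}}$ to the single-$\tau$ component $\widehat{\cO}_{\cX_{r,w_\tau},\widehat{y}_\tau}$ is formally smooth; that is not a formal consequence of Proposition \ref{LNEQOpCOMPOAR}, because restricting a formally smooth morphism to a closed subscheme of source and target does not preserve formal smoothness in general. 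The paper sidesteps this by routing through the pro-representing ring $R^{\flat,\Box,w}_{\rho_L,\cM_{\bullet}}$ of the component-restricted framed groupoid $X^{\Box,w}_{\rho_L,\cM_{\bullet}}$: this single object is shown to be formally smooth over $\widehat{\cO}_{\cX_{r,w_\tau},\widehat{y}_\tau}$ (irreducible by the BHS/Ding input, used also in your proposal) \emph{and} over $\widehat{\cO}^{\flat}_{\cX_{r,w},\widehat{y}}$, and since formal smoothness between complete local Noetherian rings means one is a formal power series algebra over the other, irreducibility transfers through this hub in both directions. Irreducibility of $X_{\mathrm{tri}}(\overline{r})$ at $x_{w_y}$ then follows exactly as you say, via the isomorphism from part (2). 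Without identifying this hub and its two formally smooth legs, your argument for the unibranchness of the relevant component does not close.
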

Let $X_{\rho_L}$ be the formal scheme associated to the universal deformation ring of $\rho_L$.\;We consider a subscheme $X_{\rho_L,\cM_{\bullet}}:=\Spec R_{\rho_L,\cM_{\bullet}}$ of $X_{\rho_L}$.\;The irreducible components of $X_{\rho_L,\cM_{\bullet}}$ are indexed by $X^w_{\rho_L,\cM_{\bullet}}$ for $w\in\sW_{n,L}$ such that $w\underline{w}_0\geq w_{\cF}$.\;Then $X^{\Box,w}_{\rho_L,\cM_{\bullet}}$ is a ``framed" version of $X^w_{\rho_L,\cM_{\bullet}}$.\;

We have two morphisms  $\kappa_{1}:X_{L}\rightarrow \ft_{L}$ (resp.,\;$\kappa_{2}:X_{L}\rightarrow \ft_{L}$) by sending
\[(g_1\bB,g_2\bB,\psi)\mapsto\overline{\mathrm{Ad}(g_1^{-1})\psi},\;\text{resp.,\;}(g_1\bB,g_2\bB,\psi)\mapsto\overline{\mathrm{Ad}(g_2^{-1})\psi}\]
where  $\overline{\mathrm{Ad}(g_i^{-1})\psi}$ is the image of $\mathrm{Ad}(g_i^{-1})\psi\in \fb_L$ via $\fb_L\twoheadrightarrow\ft_L$.\;Let $\kappa_{w,i}$ be the restriction of $\kappa_{i}$ on $X_{w}$.\;They induce natural morphisms $\kappa_{i}:\widehat{\cX}^{\flat}_{L,\widehat{y}}\rightarrow \widehat\ft_{L}$ (resp.,\;$\kappa_{w,i}:\widehat{\cX}^{\flat}_{L,w,\widehat{y}} \rightarrow  \widehat\ft_{L}$) on our local models,\;where we view $\widehat{\cX}^{\flat}_{L,\widehat{y}}$,\; $\widehat{\cX}^{\flat}_{L,w,\widehat{y}}$ and $\widehat\ft_{L}$ as schemes.\;We suspect that

\begin{conjecture}\label{introflatnessconj}(Conjecture \ref{flatnessXconj})
	The morphism $\kappa_{i}:\widehat{\cX}^{\flat}_{L,\widehat{y}}\rightarrow \widehat\ft_{L}$ (resp.,\;$\kappa_{w,i}:\widehat{\cX}^{\flat}_{L,w,\widehat{y}} \rightarrow  \widehat\ft_{L}$) is  flat.\;
\end{conjecture}
In Proposition \ref{flatnessX},\;we use miracle flatness to shows that:
\begin{pro}\label{introflatness}
	The natural morphism $\kappa_{\underline{w}_0,i}:\widehat{\cX}^{\flat}_{r,\underline{w}_0,\widehat{y}} \rightarrow  \widehat\ft_{L}$ is  flat.\;
\end{pro}

Moreover,\;the projection $\pr_1:\widehat{\cX}_{L,x_{\pdr}}\cong \widehat{\ft}\times\widehat{X}_{L,x_{\pdr}} \rightarrow \widehat{\ft}$ gives projections $\kappa: X_{\rho_L,\cM_{\bullet}}\rightarrow \widehat\ft$ and  $\kappa:\widehat{\cX}^{\flat}_{L,\widehat{y}}\rightarrow \widehat\ft$,\;they are not flat.\;Let $\Spec \overline{R}_{\rho_L,\cM_{\bullet}}$ (resp.,\;$\Spec \overline{\overline{R}}_{\rho_L,\cM_{\bullet}}$) be the fiber of $\Spec {R}_{\rho_L,\cM_{\bullet}}\rightarrow \widehat\ft_{L}$ (resp.,\;$\kappa_i\times\kappa:\Spec {R}_{\rho_L,\cM_{\bullet}}\rightarrow \widehat\ft_{L}\times \widehat\ft$) over $0$ (resp.,\;$(0,0)$).\;In Proposition \ref{localmodelforover2},\;we give the local model of $\Spec \overline{\overline{R}}_{\rho_L,\cM_{\bullet}}$,\;which has a simple description,\;this ring will be used in the formulation of locally analytic ``Breuil-Mezard type" conjecture.\;

\begin{rmk}
The above discussion on local model is proved in more general situation,\;i.e.,\;for certain potentially semistable non-crystalline  Galois representation $\rho_L$ such that $D_{\rig}(\rho_L)$ admits the so-called critical special $\omepik$-filtration.\;
\end{rmk}




We now explain the proof of the existences of (local and global) companion points and companion constituents on eigenvariety.\;We first describe the local companion points of $x:=x_1\in X_{\mathrm{tri}}(\overline{r})$.\;We need a hypothesis on local companion points of $x$.\;
\begin{hypothesis}\label{introhypo-1} 
The point $x_{w_{\cF}w_0}$ lies in $X_{\mathrm{tri}}(\overline{r})$.\;
\end{hypothesis}

Denoted by $\FX_{\overline{r}}^{\Box,\bh-\mathrm{st}}\subseteq \FX_{\overline{r}}$  the closed analytic subspace associated to (framed)  semistable deformations of $\overline{r}$ with Hodge-Tate weights $\bh$,\;and let $\FX_{\overline{r},\cP_{\min}}^{\Box,\bh-\mathrm{st}}$ be the locally closed subspace of $\FX_{\overline{r}}^{\Box,\bh-\mathrm{st}}$ consisting potentially semistable deformations with full monodromy rank (see Section \ref{semistableder} for this notation).\;For $\rho'_L\in \FX_{\overline{r},\cP_{\min}}^{\Box,\bh-\mathrm{st}}$,\;there exists a unique $a_{\rho'_L}\in k(\rho_L)$ such that ${a_{\rho'_L}},\cdots,{a_{\rho'_L}q_L^{i-1}},\cdots,{a_{\rho'_L}q_L^{n-1}}$ are $\varphi^{f_L}$-eigenvalues of the $(\varphi,N)$-module $D_{\mathrm{st}}(\rho'_L)$.\;The unique $(\varphi,N)$-stable complete flag on $D_{\mathrm{st}}(\rho'_{L})$ determine a element $w_{\rho'_{L}}\in\sW_{n,L}$  measuring the relation position of this complete flag and Hodge filtration.\;Put $\delta_{\rho'_L,w}=(\unr(a_{\rho'_L})z^{w(\bh_1)},\cdots,\unr({a_{\rho'_L} q_L^{i-1}})z^{w(\bh_i)},\cdots,\unr({a'_{\rho_L} q_L^{n-1}})z^{w(\bh_n)})$. 
We make the following stronger Hypothesis (which is also predicted by \cite[Conjecture 5.3.13]{emerton2023introduction}).\;Hypothesis \ref{introhypo-1} is contained in Hypothesis \ref{hy114}.\;
\begin{hypothesis}\label{hy114} (Hypothesis \ref{appenhypothesis}) For any $\rho'_L\in \FX_{\overline{r},\cP_{\min}}^{\Box,\bh-\mathrm{st}}$,\;$(\rho'_L,\delta_{\rho'_L,w_{\rho'_{L}}w_0})\in X_{\mathrm{tri}}(\overline{r})$.\;
\end{hypothesis}

\begin{rmk}
The above hypothesis are natural.\;In generic crystalline case (see \cite{breuil2019local}),\;the generic crystalline point $(\rho',\delta)$ such that $\delta$ gives the right parameters of triangulation on $D_{\rig}(\rho')$ lies in $U_{\mathrm{tri}}(\overline{r})\subseteq X_{\mathrm{tri}}(\overline{r})$ automatically.\;But in semistable case,\;the author does not know whether it is true in general.\;It is also predicted by \cite[Conjecture 5.3.13]{emerton2023introduction}.\;See Remark \ref{hyoprecatesexplian} for more precise statements.\;The Hypothesis  \ref{hy114} are true by replacing $X_{\mathrm{tri}}(\overline{r})$ with a larger space $X'_{\mathrm{tri}}(\overline{r})$,\;see \cite{MSW}.\;
\end{rmk}

Under Hypothesis \ref{hy114},\;we prove the following result on the local companion points of $x$ and $x_{w_{\cF}w_0}$.\;Recall that the local companion points of $x$ are those $x'=(\rho_L,{\delta'})\in X_{\mathrm{tri}}(\overline{r})$ such that ${\delta'}{\delta}^{-1}$ is a $\bQ_p$-algebraic character.\;
\begin{thm}\label{localcompanion}(Proposition \ref{prolocalcomapn1}) Assume  Hypothesis \ref{hy114},\;then $x_{w}\in X_{\mathrm{tri}}(\overline{r})$ if and only if $w\underline{w}_0\geq w_{\cF}$.\;
\end{thm}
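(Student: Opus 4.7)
The plan is to establish the two implications using the local model theorem above, with Hypothesis~\ref{introhypo-1} supplying the basepoint for the nontrivial direction. Set $w_- := w_\cF\underline{w}_0$, so that $w_-\underline{w}_0 = w_\cF$; this is the unique element at which the Bruhat inequality is saturated, and it corresponds to the ``deepest'' companion point in the local-model picture.

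The ``only if'' direction is immediate: if $x_w \in X_{\mathrm{tri}}(\overline{r})$, then $x_w$ is itself a Steinberg-type point of the trianguline variety (the Galois coordinate is still $\rho_L$), and hence the local model theorem applies with $w_y = w$, whose part (1) yields $w\underline{w}_0 \geq w_\cF$.

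For the ``if'' direction, I would first use Hypothesis~\ref{introhypo-1} to place $x_{w_-}$ on $X_{\mathrm{tri}}(\overline{r})$, and then invoke the local model theorem at this point to obtain the formally smooth correspondence
\[
\widehat{X_{\mathrm{tri}}(\overline{r})}_{x_{w_-}} \longleftarrow (X^{\Box, w_\cF}_{\rho_L, \cM_\bullet})^{\mathrm{red}} \longrightarrow \widehat{\cX}^\flat_{L, w_\cF, x_{\pdr}}.
\]
Because $x_{\pdr}$ has flag relative position exactly $w_\cF$, it lies in the closure $X_{L,w'}$ for every $w' \geq w_\cF$, and therefore $\widehat{\cX}^\flat_{L, w', x_{\pdr}}$ is a nonempty closed formal subscheme of $\widehat{\cX}^\flat_{L, w_\cF, x_{\pdr}}$ for each such $w'$. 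Pulling these back through the above correspondence produces, for every $w' \geq w_\cF$, a nontrivial closed formal sub-locus $Z_{w'} \subseteq \widehat{X_{\mathrm{tri}}(\overline{r})}_{x_{w_-}}$. I would then algebraize each $Z_{w'}$ to an irreducible component $C_{w'}$ of $X_{\mathrm{tri}}(\overline{r})$ passing through $x_{w_-}$, and analyze the torus projection $C_{w'} \to \widehat{T}$: by the construction of $\Upsilon$, this projection records the ordering of the triangulation parameters, and the stratum indexed by $w'$ corresponds to the ordering $w'\underline{w}_0$, so the projection sweeps through the character $\delta_w$ with $w = w'\underline{w}_0$. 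Closedness of $X_{\mathrm{tri}}(\overline{r})$ in $\mathfrak{X}_{\overline{r}}^\Box\times\widehat{T}$ then forces $x_w = (\rho_L,\delta_w) \in X_{\mathrm{tri}}(\overline{r})$ for every $w$ with $w\underline{w}_0 \geq w_\cF$.

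The main obstacle is making this last matching precise: the Bruhat strata of the local model on the Galois side must correspond bijectively to the loci of companion characters on the trianguline side. This requires an explicit description of the torus-coordinate of $\Upsilon$ in terms of refinements of $D_{\rig}(\rho_L)$, for which Hypothesis~\ref{hy114} is used to rule out spurious extra components and to ensure that the correspondence is one-to-one. Once this matching is established, the Bruhat closure relations in $\widetilde{\fg}_L\times_{\fg_L}\widetilde{\fg}_L$ conclude the argument.
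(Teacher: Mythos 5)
Your ``only if'' direction is correct and matches what the paper does: once $x_w$ is a Steinberg-type point of $X_{\mathrm{tri}}(\overline{r})$, applying the local-model analysis at $x_w$ itself (part (d) of Proposition~\ref{localgeomertyonspecial}) gives $w\underline{w}_0\geq w_\cF$, and Hypothesis~\ref{hy114} is not even needed here.

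The ``if'' direction, however, has a genuine gap, and the approach via formal completion at a single point cannot be repaired. Two separate things go wrong.

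First, you assert that $\widehat{\cX}^{\flat}_{L,w',x_{\pdr}}$ is a closed formal subscheme of $\widehat{\cX}^{\flat}_{L,w_\cF,x_{\pdr}}$ for all $w'\geq w_\cF$. But $\widehat{\cX}^{\flat}_{L,w',x_{\pdr}}=\widehat{\cX}^{\flat}_{L,x_{\pdr}}\times_{\widehat X_{L,x_{\pdr}}}\widehat X_{L,w',x_{\pdr}}$, and the $X_{L,w'}$ are \emph{distinct irreducible components} of $X_L$, not a chain of closed subschemes. Different $w'$ give parallel components through $x_{\pdr}$, none contained in another, so there is no inclusion of the corresponding formal loci. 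More decisively, part (d) of Proposition~\ref{localgeomertyonspecial} says that the formal completion $\widehat{X_{\mathrm{tri}}(\overline{r})}_{x_{w_-}}$ is isomorphic to a single irreducible groupoid $X^{w_\cF}_{\rho_L,\cM_\bullet}$; the trianguline variety is irreducible at $x_{w_-}$. So when you pull back through the local-model span, you do not get a family $\{Z_{w'}\}_{w'\geq w_\cF}$ of nontrivial formal sub-loci --- you get exactly one, the one matching $w_\cF$. The extra components $\widehat{\cX}^{\flat}_{L,w',x_{\pdr}}$ are simply invisible to the completion of $X_{\mathrm{tri}}(\overline{r})$ at $x_{w_-}$.

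Second, even granting some formal sub-locus $Z\subseteq\widehat{X_{\mathrm{tri}}(\overline{r})}_{x_{w_-}}$, the step where the ``torus projection sweeps through $\delta_w$'' does not follow. The companion points $x_w$ for $w\underline{w}_0>w_\cF$ are \emph{distinct closed points} of $\mathfrak{X}_{\overline{r}}^\Box\times\widehat T$ with genuinely different weights, hence lie in different fibres of the weight map. A formal neighbourhood of $x_{w_-}$ contains no other closed $E$-point of $X_{\mathrm{tri}}(\overline{r})$, so no information local at $x_{w_-}$ can certify that a different closed point $x_w$ lies on the variety. Any correct argument must be global in at least one direction.

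The paper's proof of Proposition~\ref{prolocalcomapn1} is indeed global: it does not use the local model map at all. Instead it works inside the semistable deformation space $\FX_{\overline{r},\cP_{\min}}^{\Box,\bh-\mathrm{st}}$, stratified by the relative position of the Hodge and $(\varphi,N)$-flags. Under Hypothesis~\ref{hy114} (together with Hypothesis~\ref{introhypo-1}) the maps $\iota_{w\underline{w}_0,\bh}$ send each stratum into $X_{\mathrm{tri}}(\overline{r})$, and Proposition~\ref{pminloucsofsemidefringscP}/Proposition~\ref{semistableZarisikeargsecond} give the Zariski-closure relations
\[
\overline{\widetilde{\FX}_{\mathrm{tri},\cP_{\min},w}^{\bh-\mathrm{st}}(\overline r)}=\coprod_{w'\leq w}\widetilde{\FX}_{\mathrm{tri},\cP_{\min},w'}^{\bh-\mathrm{st}}(\overline r).
\]
Since $\rho_L$ sits in the stratum of position $w_\cF$, it lies in the closure of every higher stratum, and pushing forward via $\iota_{w\underline{w}_0,\bh}$ and using closedness of $X_{\mathrm{tri}}(\overline{r})$ gives $x_{w\underline{w}_0}\in X_{\mathrm{tri}}(\overline{r})$ for all $w\geq w_\cF$. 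You should replace the local-model argument for the ``if'' direction with this closure argument; the local model enters only to supply the ``only if'' bound.
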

This theorem is proved by some Zariski-closure argument on semistable deformation rings and a study of the relation between semistable deformation spaces and trianguline variety (they are considered firstly for the crystalline case in  \cite[Section 2.2]{breuil2017smoothness},\;see Proposition \ref{semistableZarisikeargsecond} and Proposition \ref{pminloucsofsemidefringscP} for more detail).\;They are also used in the proof of Theorem \ref{globalcomppoitnproof}.\;

In $\GLN_2(L)$ case,\;the main theorem in Appendix \ref{appGL2(L)} (i.e.,\;if $\rho_L$ comes from some global setup) also implies the following result (without Hypothesis \ref{introhypo-1} and Hypothesis \ref{hy114}).\;
\begin{thm}\label{socleGLN2}(Restrict the above situation and notation to $\GLN_2(L)$ case) If $y=(\rho,\underline{\delta})\in Y(U^{\fp},\overline{\rho})$ is  strictly dominant and of Steinberg type.\;Suppose $\rho_{y,\widetilde{v}}$ is generic for $v\in \Sigma(U^p)\backslash S_p$,\;where $\Sigma(U^p)$ consists of some ``bad'' places.\;Then all companion points of $y$ are $y_{w'}$ for $w'\underline{w}_0\geq w_{\cF}$.\;In particular,\;$x_{w}\in X_{\mathrm{tri}}(\overline{r})$ if and only if $w\underline{w}_0\geq w_{\cF}$.\;Moreover,\;$X_{\mathrm{tri}}(\overline{r})$ is smooth at each point $x_{w}$.\;
\end{thm}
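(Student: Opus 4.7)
The plan is to deduce Theorem \ref{socleGLN2} from the main theorem of Appendix \ref{appGL2(L)}, which establishes the full locally analytic socle conjecture for $\GLN_2(L)$ in the Steinberg case without invoking Hypothesis \ref{introhypo-1} or Hypothesis \ref{hy114}. That appendix proceeds by direct computation of the partial de Rham cohomology of the relevant rank-$2$ semistable non-crystalline $(\varphi, \Gamma)$-modules, extending Ding's \cite{2015Ding} length-one calculation (where $\lg(w\underline{w}_0) = 1$) to all $w$ with $w\underline{w}_0 \geq w_{\cF}$, combined with the companion-point arguments of \cite{CompanionpointforGLN2L}. The genericity hypothesis on $\rho_{y,\widetilde{v}}$ at the bad places $v \in \Sigma(U^p) \setminus S_p$ ensures that the classical local Langlands correspondence behaves predictably in the patching module at these auxiliary places, so that constituents of the Jacquet-Emerton functor match correctly with classical automorphic vectors and the stratifications of the patched eigenvariety and of $X_{\mathrm{tri}}(\overline{r})$ can be compared.

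Granting this $\GLN_2(L)$ socle conjecture, I would then deduce the companion point statement. Each irreducible subrepresentation $C(w\underline{w}_0) \hookrightarrow \widehat{S}_{\xi,\tau}(U^{\fp},E)^{\ana}_{\overline{\rho}}[\fm_{\rho}]$ produces, via Emerton's locally analytic Jacquet functor, a companion point $y_w \in Y(U^{\fp}, \overline{\rho})$; conversely, the non-appearance of such a subrepresentation for $w\underline{w}_0 \not\geq w_{\cF}$ excludes a companion point with the corresponding trianguline parameters. Pushing forward under the natural map $Y(U^{\fp}, \overline{\rho}) \to X_{\mathrm{tri}}(\overline{r}) \subseteq \mathfrak{X}_{\overline{r}}^\Box \times \widehat{T}$ converts the global dichotomy into the local one: $x_w \in X_{\mathrm{tri}}(\overline{r})$ if and only if $w\underline{w}_0 \geq w_{\cF}$. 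The forward direction (existence of each such $x_w$) requires knowing that the image of the Hecke eigenvariety is Zariski-dense in the relevant irreducible components of $X_{\mathrm{tri}}(\overline{r})$, which is available in the $\GLN_2$ setting.

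For the smoothness assertion at each $x_w$, I would invoke the explicit local model already available for $\GLN_2$ in the Steinberg case from \cite{2015Ding} together with the stratification of $X_{\mathrm{tri}}(\overline{r})$ by Weyl-group strata. At the strictly dominant point $x = x_1$ the smoothness follows from the formal smoothness of the patched eigenvariety at classical non-critical points; at the other $x_w$ one transports regularity through the formally smooth comparison between completions of the patched eigenvariety and $X_{\mathrm{tri}}(\overline{r}) \times (\text{deformation factor})$. Because $\sW_{2,\Sigma_L} \cong (\mathbb{Z}/2)^{d_L}$, each stratum has codimension equal to the Bruhat length and the tangent space computation at each $x_w$ is elementary, confirming that the local ring is regular of the expected dimension.

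The main obstacle, and the technical heart of the argument in Appendix \ref{appGL2(L)}, is extending Ding's length-one partial de Rham $\Ext$ computation to all lengths $\lg(w\underline{w}_0) \leq d_L$: this requires careful bookkeeping of how extensions of $D_{\rig}(\rho_L)$ along different embeddings $\tau \in \Sigma_L$ assemble into iterated extensions, how partial de Rham conditions propagate, and how non-vanishing of the relevant cohomology groups persists through the iteration. Without the local model machinery of the main text, this direct cohomological bookkeeping is what replaces the geometric input of Hypothesis \ref{introhypo-1} and Hypothesis \ref{hy114}.
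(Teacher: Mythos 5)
Your overall strategy agrees with what the paper does in Appendix~\ref{appGL2(L)}: explicit tangent-space/cohomology computations for rank-$2$ $(\varphi,\Gamma)$-modules admitting a critical special triangulation, stratified by subsets $J\subseteq\Sigma_L$ via partially-de-Rham conditions, combined with the companion-point machinery imported from \cite{CompanionpointforGLN2L}, and then a global infinitesimal ``$R=T$''-type comparison to move between the trianguline variety and the patched eigenvariety. You also correctly identify the technical heart as extending the length-one calculation of \cite{2015Ding} to all strata. In that sense the plan is sound.

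Two points where the proposal is inaccurate or underspecified enough to constitute a gap. First, you claim that smoothness of $X_{\mathrm{tri}}(\overline{r})$ at the strictly dominant point $x=x_1$ ``follows from the formal smoothness of the patched eigenvariety at classical non-critical points.'' This is not usable here: the interesting case treated is precisely when $\Sigma^-(\delta)\cup\Sigma(x)\neq\emptyset$, i.e.\ the triangulation is \emph{critical}, so $x$ is not in the non-critical smooth locus. The smoothness at each $x_w$ (including $x_1$) is instead a direct output of the tangent-space dimension computation: one shows $\dim_E T_{X,x}=4+3d_L$, which equals $\dim X_{\mathrm{tri}}(\overline{r})$ for $n=2$, whence regularity. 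Your ``transport through formally smooth comparison'' phrase is on the right track but the base case you give is wrong. Second, the proposal never mentions the infinitesimal $R=T$ isomorphism $\widehat{\cO}_{X_{\fp}(\overline{\rho},\lambda_{\bh,J}),x}\cong\widehat{\cO}_{\FX_{\overline{\rho}^{\fp}}^\Box\times\iota_{\fp}(X_{J-\dR}(\underline{\bh}_J))\times\BU^g,x}$ (Theorem~\ref{appeGL2LR=T}), which is the precise mechanism converting the local $\Ext$ and tangent-space computations into statements about the eigenvariety strata; ``the genericity hypothesis\dots ensures that\dots constituents\dots match correctly'' is what the hypothesis is \emph{for}, but it is that $R=T$ theorem which actually does the matching. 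Without identifying this intermediate statement, the proposal's ``pushing forward under the natural map'' step is not a proof: comparing images of rigid spaces without controlling completed local rings will not, by itself, determine the set of companion points, nor yield smoothness at $x_w$ for $w\neq 1$. You should articulate the $J$-stratified completed-local-ring isomorphism explicitly, derive the proper-containment $X_{J-\dR}(\bh_J)\subsetneq X_{J'-\dR}(\bh_J)$ whenever $(J\setminus J')\cap\Sigma(x)\neq\emptyset$ from the tangent-space dimensions, and then produce companion points one simple reflection at a time via Emerton's adjunction formula as in \cite{CompanionpointforGLN2L}.
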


We now move to global setup.\;Under the ``Talyor-Wiles hypothesis'',\;we get a continuous Banach representation $\Pi_\infty$ of $G$,\;which is equipped with a continuous action of certain patched Galois deformation ring $R_\infty$ commuting with the  $G$-action.\;See Section \ref{preforpatching} for a brief summary.\;The proofs of  Theorem  \ref{introcompanglobal} and Theorem \ref{introlocallysocle} are related to the existence of companion constituents $\bI_{w\underline{w}_0}$.\;We prove the following result.\;

\begin{thm}\label{globalcomppoitnproof} (Proposition \ref{equicompanpointconsti} and Theorem \ref{globalcompanion})
	Assume Hypothesis \ref{TWhypo} and Hypothesis \ref{hy114}.\;Then  $\homo_G\Big(\bI_{w\underline{w}_0},\widehat{S}_{\xi,\tau}(U^{\fp},E)^{\ana}_{\overline{\rho}}[\fm_{r_y}]\Big)\neq 0$ if and only if $w\geq w_{\cF}$.\;In particular,\;$y_{w}\in X_{\fp}(\overline{\rho})$ if and only if $w\underline{w}_0\geq w_{\cF}$.\;
\end{thm}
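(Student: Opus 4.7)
The plan is to split the theorem into two independent statements and prove each in turn. The first is the adjunction equivalence
\[
\homo_G\bigl(\bI_{w\underline{w}_0}, \Pi_\infty^{R_\infty-\ana}[\fm_\rho^\infty]\bigr) \neq 0 \iff y_{w\underline{w}_0} \in X_\fp(\overline{\rho}),
\]
which is essentially Proposition~\ref{equicompanpointconsti}. The second is the Bruhat-theoretic statement $y_{w\underline{w}_0} \in X_\fp(\overline{\rho}) \iff w \geq w_{\cF}$, which is Theorem~\ref{globalcompanion}. Combining the two yields both conclusions of the theorem at once.

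\textbf{Step 1.} The first equivalence follows from the adjunction between the Orlik-Strauch functor $\cF^G_{\ob(L)}(-,-)$ and Emerton's Jacquet functor $J_\bB$: a nonzero map $\bI_{w\underline{w}_0} \to \Pi_\infty^{R_\infty-\ana}[\fm_\rho^\infty]$ corresponds to a $\bT(L)$-equivariant eigenvector of parameter $\delta_{w\underline{w}_0}$ in $J_\bB(\Pi_\infty^{R_\infty-\ana}[\fm_\rho^\infty])$, which by the very construction of the patched eigenvariety $X_\fp(\overline{\rho})$ is the same as a closed point $y_{w\underline{w}_0}$. The only bookkeeping is to match the Hodge-Tate-type weight ${\lambda}_\bh$ with the characters $\delta_{w\underline{w}_0}$ appearing on the Galois side, which is routine once one tracks the shifts in the Orlik-Strauch construction.

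\textbf{Step 2.} For the second equivalence, consider the natural local-at-$\fp$ map $X_\fp(\overline{\rho}) \to \mathfrak{X}_{\overline{r}}^\Box \times \widehat{T}$ given by the patching formalism; it sends $y_{w\underline{w}_0}$ to $x_{w\underline{w}_0} \in X_{\mathrm{tri}}(\overline{r})$. The only-if direction is then immediate from Theorem~\ref{localcompanion}. For the if direction, I would begin with the classical strictly dominant companion point provided by Theorem~\ref{introClassicality} and then invoke the local model established earlier in the paper: the formal smoothness of $\bigl(X^{\Box, w\underline{w}_0}_{\rho_L, \cM_\bullet}\bigr)^{\mathrm{red}}$ both over $\widehat{X_{\mathrm{tri}}(\overline{r})}_{x_{w_y}}$ and over $\widehat{\cX}^\flat_{L, w\underline{w}_0, x_{\pdr}}$, together with the unibranch property of $\widehat{\cX}^\flat_{L, x_{\pdr}}$, transports the irreducible-component stratification of $X_{\mathrm{tri}}(\overline{r})$ at $x_{w_y}$ to an analogous stratification of $X_\fp(\overline{\rho})$ at $y$. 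Since by Theorem~\ref{localcompanion} the components reaching $x_{w\underline{w}_0}$ are exactly those indexed by $w \geq w_{\cF}$, this produces a companion point $y_{w\underline{w}_0}$ for every such $w$.

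\textbf{Main obstacle.} The central difficulty is the non-generic nature of the Steinberg case. Unlike in \cite{breuil2019local}, the naive local model map~\eqref{keymorphismformally} is not formally smooth, forcing us through the modified formal scheme $\widehat{\cX}^\flat_{L, x_{\pdr}}$, which cuts out an analytic equation mixing different $\Sigma_L$-components of $\widehat{X}_{L, x_{\pdr}}$ and $\widehat{T}_{\delta_{w_y}}$. One must verify that the irreducible components of $\widehat{\cX}^\flat_{L, x_{\pdr}}$ meeting $x_{\pdr}$ are indexed precisely by $\{w \in \sW_{n,\Sigma_L} : w \geq w_{\cF}\}$, so that the formal-smoothness transport in Step 2 produces all, and only, the expected companion points — this irreducible-component analysis, established via the explicit description of $\widehat{\cX}^\flat_{L, x_{\pdr}}$, is the heart of the argument. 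A secondary technical point is to sharpen ``$\homo \neq 0$'' into the socle statement $C(w\underline{w}_0) \hookrightarrow \Pi_\infty^{\ana}$ needed for the socle conjecture; this follows from the adjunction in Step 1 together with the fact that any nonzero map out of $\bI_{w\underline{w}_0}$ lands in the socle of its image.
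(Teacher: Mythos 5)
The adjunction between $\cF^G_{\ob(L)}(-,-)$ and Emerton's Jacquet functor does \emph{not} give the equivalence you claim. What the adjunction machinery (end of Section~\ref{intropatcheigenvariety}) produces is
\[
\homo_G\bigl(\bI_{w\underline{w}_0},\Pi_\infty^{R_\infty-\ana}[\fm_\rho^\infty]\bigr)\neq 0
\iff
y_{w\underline{w}_0}\in Y_\fp(\overline{\rho})_{w\underline{w}_0\cdot\lambda_\bh},
\]
where $Y_\fp(\overline{\rho})_{w\underline{w}_0\cdot\lambda_\bh}$ is the schematic support of the coherent sheaf $\cL_{w\underline{w}_0\cdot\lambda_\bh}$ built from $\homo_{U(\fg)}(L(w\underline{w}_0\cdot\lambda_\bh),-)_{\mathrm{fs}}$, \emph{not} the fibre $X_\fp(\overline{\rho})_{w\underline{w}_0\cdot\lambda_\bh}$, which is the support of $\cM_{w\underline{w}_0\cdot\lambda_\bh}$ built from $\homo_{U(\fg)}(M(w\underline{w}_0\cdot\lambda_\bh),-)_{\mathrm{fs}}$. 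Membership $y_{w\underline{w}_0}\in X_\fp(\overline{\rho})$ only gives a nonzero map out of the Verma module, not out of its simple quotient. Bridging this gap — showing that when $y_{w\underline{w}_0}\in X_\fp$ and $w\geq w_\cF$ the simple-module Hom is also nonzero — is exactly the content of Proposition~\ref{equicompanpointconsti}, and it is proved by a Zariski-closure argument on the semistable deformation locus $\overline{\widetilde{\FX}_{\mathrm{tri},\cP_{\min},w}^{\bh-\mathrm{st}}(\overline{r})}$, not by weight bookkeeping in the adjunction. Calling it ``essentially'' the adjunction is where the proposal breaks down.

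\textbf{Step 2 also misses the actual mechanism.} The phrase ``transport the irreducible-component stratification'' does not produce the companion points. What the paper does is a descending induction on $\lg(w)$ (proof of Theorem~\ref{globalcompanion}): the inductive hypothesis $\cH_l$ says $y_{w\underline{w}_0}\in X_\fp$ for all $w\geq w_\cF$ with $\lg(w)\geq l$, and one descends by a Zariski-closure argument on $\FX_{\overline{r},\cP_{\min}}^{\Box,\bh-\mathrm{st}}$ when $\lg(w_\cF\underline{w}_0)<l-1$, and by Proposition~\ref{prolocalcomapn} when $\lg(w_\cF\underline{w}_0)\geq l-1$. That proposition is the heart: it compares the cycle $[\cL(s_\alpha w_\cF\underline{w}_0\cdot\lambda_\bh)]$ with $\FZ^\flat_{w_\cF\underline{w}_0}$ and shows they cannot coincide because $[\cL(s_\alpha w_\cF\underline{w}_0\cdot\lambda_\bh)]$ lies in a $\bP$-partially de Rham locus (Proposition~\ref{factorRqrho}, Proposition~\ref{PartiallydeRham}) whereas $\FZ^\flat_{w_\cF\underline{w}_0}$ does not (Lemma~\ref{strictlyiff}). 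None of this appears in your sketch. You correctly identify that the irreducible-component analysis of $\widehat{\cX}^\flat_{L,x_{\pdr}}$ is central, but the formal-smoothness transport you describe would at best locate $y_{w_y\underline{w}_0}$ on a unique component; it cannot by itself manufacture a new closed point $y_{w\underline{w}_0}$ for $w\neq w_y$, since those lie in a \emph{different} fibre of $\wt:X_\fp(\overline{\rho})\to\ft^{\rig}$. The companion points at other weights are produced as limits along the semistable deformation locus, which is why Hypothesis~\ref{hy114} is needed. In short: both halves of your decomposition are genuine pieces of work that the paper carries out with the partially-de-Rham cycle argument and the semistable Zariski-closure argument, and neither reduces to the soft formal-smoothness considerations you invoke.
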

Now Theorem \ref{introlocallysocle} is a direct consequence of Theorem \ref{globalcomppoitnproof}.\;Indeed,\;for any irreducible component $W\ncong C(w\underline{w}_0)$ of $\bI_{w\underline{w}_0}$,\;we have $\homo_{G}\Big(W,\widehat{S}_{\xi,\tau}(U^{\fp},E)^{\ana}_{\overline{\rho}}[\fm_{r_y}]\Big)=0$.\;We sketch the proof of Theorem \ref{globalcomppoitnproof} by using the coherent sheaves obtained from patching functors (resp.,\;constructed from local models).\;

The first cycles are obtained by patching functor.\;Firstly,\;we use \cite[Section 5]{breuil2019local} and \cite[Section 6.1]{HHS} to construct sheaves $\cM_{\infty}(M)$ over ${{\FX}}_{\infty,y}=\Spec R_{\rho_L,\cM_{\bullet}}[[x_1,\cdots,x_g]]$ for some integer $g$ given in the patching argument,\;for (parabolic) verma module (and its dual) $M=L(w\underline{w}_0\cdot \lambda_{\bh}),M_I(w\underline{w}_0\cdot \lambda_{\bh})^{\ast}$ and the $J$-deformed Verma modules $\widetilde{M}_I^J(w\underline{w}_0\cdot \lambda_{\bh})^{\ast}$ (and its subquotient) for $J\subseteq \Sigma_L$ and $\ast\{\emptyset,\vee\}$ (discussed in \cite[Section 2.3]{HHS} and Section \ref{deformedverma}).\;For the companion constituent $\bI_{w\underline{w}_0}$,\;we have the cycle $[\cM_{\infty}(L(w\underline{w}_0\cdot \lambda_{\bh}))]\neq 0$  if and only if $\homo_G\Big(\bI_{w\underline{w}_0},\Pi_\infty^{R_\infty-\ana}[\fm_{\rho}^\infty]\Big)\neq 0$ (equivalently,\;$\homo_G\Big(\bI_{w\underline{w}_0},\widehat{S}_{\xi,\tau}(U^{\fp},E)^{\ana}_{\overline{\rho}}[\fm_{r_y}]\Big)\neq 0$),\;$[\cM_{\infty}(M(w\underline{w}_0\cdot \lambda_{\bh}))]\neq 0\Leftrightarrow[\cM_{\infty}(\widetilde{M}_I^J(w\underline{w}_0\cdot \lambda_{\bh}))]\neq 0$ if and only if $y_{w}\in X_{\fp}(\overline{\rho})$.\;



The second cycles are obtained from local models.\;Our new ingredients are combining two types partially de-Rham Galois cycles with respect to the parabolic group (see \cite[Section 3.6]{wu2021local}) (resp.,\;the $\Sigma_L$-components (see \cite[Section 5.2]{CompanionpointforGLN2L}),\;see Section \ref{galoiscycles} for more details.\;For $J\subseteq \Sigma_L$,\;we let $Z^J_{L}$ be the fiber of $X_{L}$ at $0$ via the natural projection $X_{L}\rightarrow\ft_{L}\rightarrow \ft_J$ ($\ft_J$ is the $J$-component of $\ft_{L}$).\;One can show that $Z_{L}^J$ is equidimensional with reduced irreducible components given by $\{Z_{w'}:=X_{L,w'}\cap Z^J_{L}\}_{w'\in \sW_{n,L}}$.\;We have $x_{\pdr}\in Z^J_{L}$.\;Put $\widehat{Z}^{J,\flat}_{w',x_{\pdr}}:=\widehat{Z}^{J}_{w',x_{\pdr}}\times_{\widehat{X}_{L,w',x_{\pdr}}}\widehat{\cX}^{\flat}_{L,w',x_{\pdr}}$ if $x_{\pdr}\in Z_{w'}$.\;Pulling back each $\widehat{Z}^{J,\flat}_{w',x_{\pdr}}$  via the natural morphism $X_{\fp}(\overline{\rho})\rightarrow X_{\mathrm{tri}}(\overline{r})$ (see (\ref{injpatchtotri1}))  defines a cycle $\FZ^{J,\flat}_{w'}$ on $X_{\fp}(\overline{\rho})$.\;If $J\neq \Sigma_L$,\;$\FZ^{J,\flat}_{w'}$ is irreducible by Proposition \ref{LNEQOpCOMPOAR}.\;If $J=\Sigma_L$,\;we rewrite $\FZ^{\flat}_{w'}:=\FZ^{J,\flat}_{w'}$,\;which is equi-dimensional (not necessary irreducible in general) if Conjecture \ref{introflatnessconj} holds.\;

We use the strategy in the proof of \cite[Proposition 4.7,\;Proposition 4.9,\;Theorem 4.10,\;Theorem 4.12]{wu2021local} to compare the cycles $[\cL(w\underline{w}_0\cdot \lambda_{\bh})]$ with cycles $\FZ^{\flat}_{w'}$ and then prove the main results.\;The new ingredients in his proof (compare with \cite[Theorem 5.3.3]{breuil2019local}) are results relating the partially de Rham properties of Galois representations (the de Rhamness of graded pieces along the paraboline filtrations of the associated $(\varphi,\Gamma)$-modules over $\cR_{E,L}$) and the relevant properties of cycles on the generalized Steinberg varieties.\;

By descending induction and similar discussion (more precisely,\;some Zariski-closure argument on semistable deformation rings)  in the proof of Theorem \ref{localcompanion} (an easy modification of \cite[Proposition 4.9,\;Theorem 4.10]{wu2021local} or the Step $8$ and Step $9$ in the proof of \cite[Theorem 5.3.3]{breuil2019local}),\;one is reduced to showing the following statement (see Proposition \ref{prolocalcomapn}),\;which is the key step in the proof of Theorem \ref{globalcomppoitnproof}.\;
\begin{itemize}
	\item if $y_{w\underline{w}_0}\in X_{\fp}(\overline{\rho})$ for all $w> w_{\cF}$,\;then $y_{w_{\cF}\underline{w}_0}\in X_{\fp}(\overline{\rho})$.\;
\end{itemize}
We sketch the proof roughly by using our partial cycle $\FZ^{J,\flat}_{w'}$ and the strategy in the proof of \cite[Proposition 4.7]{wu2021local}.\;We have $\FZ^{\flat}_{w\underline{w}_0}\neq 0$ (equivalently,\;$\FZ^{J,\flat}_{w\underline{w}_0}\neq 0$) for all $w> w_{\cF}$.\;There exists a simple reflection $s_{\alpha,\tau}$ of $\bG_{L}$ and a parabolic subgroup $\bB_{\tau}\subset \bP_{\tau}$ of $\bG$ such that $s_{\alpha,\tau} w_{\cF}\underline{w}_0(\bh)$ is strictly $\bP_{\tau}$-dominant and $w_{\cF}\underline{w}_0(\bh)$ is \textit{not} strictly $\bP_{\tau}$-dominant.\;The assumption implies that $y_{s_{\alpha,\tau}w_{\cF}\underline{w}_0}\in X_{\fp}(\overline{\rho})$.\;Put
\begin{equation}
	\begin{aligned}
	&[\cM_{\infty}(s_{\alpha,\tau}w_{\cF}\underline{w}_0,\Sigma_L\backslash\tau)]:=	\Big[\cM_{\infty}(\widetilde{M}^{\Sigma_L\backslash\tau}(w_{\cF,\Sigma_L\backslash\tau}\underline{w}_{0,\Sigma_L\backslash\tau}\cdot \lambda_{\bh})\otimes_EL(s_{\alpha,\tau}w_{\cF,\tau}{w}_0\cdot \lambda_{\bh}))\Big]\\
	&[\cM_{\infty}(w_{\cF}\underline{w}_0,\Sigma_L\backslash\tau)]:= \Big[\cM_{\infty}(\widetilde{M}^{\Sigma_L\backslash\tau}(w_{\cF,\Sigma_L\backslash\tau}\underline{w}_{0,\Sigma_L\backslash\tau}\cdot \lambda_{\bh})\otimes_EL(w_{\cF,\tau}{w}_0\cdot \lambda_{\bh}))\Big].
	\end{aligned}
\end{equation}
Then we have an equality of the underlying closed subspaces of cycles:
\begin{equation}\label{comparecycles}
	\begin{aligned}
		[\cM_{\infty}(s_{\alpha,\tau}w_{\cF}\underline{w}_0,\Sigma_L\backslash\tau)]\cup [\cM_{\infty}(w_{\cF}\underline{w}_0,\Sigma_L\backslash\tau)]=\FZ^{\{\tau\},\flat}_{s_{\alpha,\tau} w_{\cF}\underline{w}_0}\cup \FZ^{\{\tau\},\flat}_{w_{\cF}\underline{w}_0}.
	\end{aligned}
	\end{equation}
Then we show that $[\cM_{\infty}(s_{\alpha,\tau}w_{\cF}\underline{w}_0,\Sigma_L\backslash\tau)]$ is $\bP_{\tau}$-partially de Rham (in the terminology of \cite[Section 3.6]{wu2021local}),\;while the cycle $\FZ^{\Sigma_L\backslash\tau\flat}_{w_{\cF}\underline{w}_0}$ is not fully contained in the $\bP_{\tau}$-partially de Rham locus.\;Hence $\FZ^{\Sigma_L\backslash\tau,\flat}_{w_{\cF}\underline{w}_0}\not\subseteq [\cM_{\infty}(s_{\alpha,\tau}w_{\cF}\underline{w}_0,\Sigma_L\backslash\tau)]$ and then $[\cM_{\infty}(w_{\cF}\underline{w}_0,\Sigma_L\backslash\tau)]\neq 0$.\;This shows that $y_{w_{\cF}\underline{w}_0}\in X_{\fp}(\overline{\rho})$.\;

We end the introduction with a discussion on coherent sheaves $\cM_{\infty}(M)$ over ${{\FX}}_{\infty,y}$ (we follow the route  of the recently work of Hellmann-Hernandez-Schraen \cite[Section 7.2]{HHS}).\;As in \cite[Section 7.2]{HHS},\;we also consider the ``local" functor  $\cB_y$
associates,\;to an object $M$ in BGG category $\cO_{\chi_{\lambda_{\bh}}}$,\;a coherent sheaf $\cB_y(M)$ on ${{\FX}}_{\infty,y}$.\;The functor $\cB_y$ is induced by the Bezrukavnikov's functor.\;The  functor $\cB_y$ is not exact in our case,\;but the derived terms can be described through $\iota^{\flat}:\widehat{\cX}^{\flat}_{L,\widehat{y}}\rightarrow \widehat{X}_{L,\widehat{y}}$ in diagram (\ref{Bezrukavnikov1}).\;We discuss the schematic supports (together with the multiplicities) of $\cM_{\infty,y}(\widetilde{M}^{\Sigma_L\backslash J}_{{I}}(w\cdot {\bm\lambda}_\bh))$ and $\cM_{\infty,y}(\widetilde{M}^{\Sigma_L\backslash J}_{{I}}(w\cdot {\bm\lambda}_\bh)^{\vee})$ when $J\subsetneq \Sigma_L$ in Proposition \ref{schemesupport}-Proposition \ref{caseforw0coherentsheaf} and Proposition \ref{determinemulti}.\;The coherent sheaf $\cB_y(M)$ should be viewed as the local analogue of $\cM_{\infty}(M)$ (predicted by the categorical $p$-adic Langlands correspondence).\;The  relationship between $\cB_y(M)$ and $\cM_{\infty}(M)$ are given in Section \ref{Analysissheafcycles} and Proposition \ref{determinemulti},\;see Remark \ref{stackynotforfixsmooth} and Remark \ref{stackyforfixsmooth} for general argument,\;under the picture of categorical $p$-adic Langlands program.\;In the future,\;we want to give an explicit computation on $\GLN_3$-case,\;which may give more evidence for the categorical $p$-adic Langlands program.\;



\section*{Acknowledgment}
The author thank Yiwen Ding,\;Zicheng Qian and Zhixiang Wu for discussions or answers to questions.\;I very much like to thank Yiwen Ding and Zhixiang Wu for pointing out some mistakes in local models and cycles in an earlier versions of the paper.\;

\section{Preliminaries}

\subsection{General notation}

\label{sec: lgln-LB-not}
\noindent Let $L$ (resp. $E$) be a finite extension of $\bQ_p$ with $\co_L$ (resp. $\co_E$) as its ring of integers and $\varpi_L$ (resp. $\varpi_E$) a uniformizer. Suppose $E$ is sufficiently large containing all the embeddings of $L$ in $\overline{\BQ}_p$. Put
\begin{equation*}
	\Sigma_L:=\{\sigma: L\hookrightarrow \overline{\BQ}_p\} =\{\sigma: L\hookrightarrow E\}.
\end{equation*}
Let $\val_L(\cdot)$ (resp. $\val_p$) be the $p$-adic valuation on $\overline{\bQ_p}$ normalized by sending uniformizers of $\co_L$ (resp.,\;$\BZ_p$) to $1$. Let $d_L:=[L:\bQ_p]=|\Sigma_L|$ and $q_L:=p^{f_L}=|\co_L/\varpi_L|$,\;where $f_L$ denotes the unramified degree of $L$ over $\bQ_p$.\;For a group $A$ and $a\in A$,\;denote by $\unr(\alpha)$ the unramified character of $L^\times$ sending uniformizers to $\alpha$.\;If $\bk:=(\bk_{\tau})_{\tau\in \Sigma_L}\in \BZ^{\Sigma_L}$,\;we denote $z^{\bk}:=\prod_{\tau\in  \Sigma_L}\tau(z)^{\bk_{\tau}}$.\;For a character of $\cO_L^{\times}$,\;denoted by $\chi_{\varpi_L}$ the character of $L^{\times}$ such that $\chi_{\varpi_L}|_{\cO_L^{\times}}=\chi$ and $\chi_{\varpi_L}(\varpi_L)=1$.\;



Let $\cR_{L}:=\bB_{\rig,L}^{\dagger}$ be the Robba ring.\;Let $A$ (resp.,\;$X$) be an $\bQ_p$-affinoid algebra (resp.\;a rigid analytic space), and let $\cR_{A,L}:=\cR_{L}\widehat{\otimes}_{\bQ_p} A$ (resp.,\;$\cR_{X,L}$) for the Robba ring associated to $L$ with $A$-coefficient (resp.,\;with $\cO_X$-coefficient).\;We write $\cR_{A,L}(\delta_A)$ for the $(\varphi,\Gamma)$-module of character type over $\cR_{A,L}$ associated to a continuous character $\delta_A:L^\times\rightarrow A^\times$.\;If $D$ is a $(\varphi,\Gamma)$-module over $\cR_{A,L}$,\;we denote $D(\delta_A):=D\otimes_{\cR_{A,L}} \cR_{A,L}(\delta_A)$ for simplicity.\;

Let $X$ be a scheme locally of finite type over $E$,\;or a locally noetherian formal scheme over $\cO_E$ whose reduction is locally of finite type over $k_E$.\;Let $X^{\mathrm{rig}}$ the associated rigid analytic space over $E$.\;If $x\in X$,\;denote by $\cO_{X,x}$ (resp.,\;$k(x)$) the  local ring (resp.,\;residue field) at $x$.\;Let $\widehat{\cO}_{X,x}$ be the $\fm_{\cO_{X,x}}$-adic completion of $\cO_{X,x}$,\;and $\widehat{X}_{x}:=\mathrm{Spf}\;\widehat{\cO}_{X,x}$.\;If $x$ is a closed point of $X$,\;then  $\widehat{\cO}_{X,x}$ is a noetherian complete local $k(x)$-algebra of residue field $k(x)$.\;

Let $\GL_n$ be the general linear group over $L$.\;Let $\Delta_n$ be the set of simple roots of $\GL_n$ (with respect to the Borel subgroup $\bB$ of upper triangular matrices), and we identify the set $\Delta_n$ with $\{1,\cdots, n-1\}$ such that $i\in \{1,\cdots, n-1\}$ corresponds to the simple root $\alpha_{i}:\;(x_1,\cdots, x_n)\in \ft \mapsto x_{i}-x_{i+1}$, where $\ft$ denotes the $L$-Lie algebra of the torus $\bT$ of diagonal matrices.\;Let $\mathbf{P}_{I}$ be the parabolic subgroup of
$\GL_n$ containing $\bB$ such that $\Delta_n \backslash  I$ are precisely the simple roots of the unipotent radical $\mathbf{N}_{I}$ of $\mathbf{P}_{I}$.\;Denote by $\mathbf{L}_{I}$ the unique Levi subgroup of $\mathbf{P}_{I}$ containing $\bT$,\;and $I$ is equal to the set of simple roots of $\mathbf{L}_{I}$.\;In particular,\;we have $\mathbf{P}_{\Delta_n}=\GL_n$, $\mathbf{P}_{\emptyset}=\mathbf{B}$.\;Let $\overline{\mathbf{P}}_{I}$ be the parabolic subgroup opposite to $\mathbf{P}_{I}$.\;Let $\mathbf{N}_{I}$ (resp.\;$\overline{\mathbf{N}}_{I}$) be the nilpotent radical of $\mathbf{P}_{I}$ (resp.\;$\overline{\mathbf{P}}_{I}$).\;We have Levi decompositions $\mathbf{P}_{I}=\mathbf{L}_{I}\mathbf{N}_{I}$ (resp.\;$\overline{\mathbf{P}}_{I}=\mathbf{L}_{I}\overline{\mathbf{N}}_{I}$).\;Let $\bZ_n$ (resp.,\;$\bZ_I$) be the center of $\GL_n$ (resp.,\;$\mathbf{L}_{I}$).\;Let $\fg$, $\fp_{I}$, $\fl_{I}$,\;$\fn_I$,\;$\fz_I$ and $\ft$ be the $L$-Lie algebras of $\GL_n$,\;$\mathbf{P}_{I}$, $\mathbf{L}_{I}$,\;$\bN_I$,\;$\bZ_I$ and $\bT$ respectively.\;Put $\bU:=\mathbf{N}_{\emptyset}$ (resp.\;$\overline{\bN}:=\overline{\mathbf{N}}_{\emptyset}$) and $\fu:=\fn_{\emptyset}$.\;We put $G:=\GL_n(L)$.\;

For a Lie algebra $\fg$ over $L$,\;and $\sigma\in \Sigma_L$,\;let $\fg_{\sigma}:=\fg\otimes_{L,\sigma} E$ (which is  a Lie algebra over $E$).\;For $J\subseteq \Sigma_L$,\;let $\fg_{r,J}:=\prod_{\sigma\in J} \fg_{\sigma}$.\;In particular,\;we have $\fg_{\Sigma_L}\cong \fg\otimes_{\bQ_p} E$.\;For any algebraic group $\mathbf{H}$ over $L$,\;let $\mathrm{Res}_{L/\bQ_p}\mathbf{H}$ be the scalar restriction of $\mathbf{H}$ from $L$ to $\bQ_p$.\;We write $\mathbf{H}_{L}=(\mathrm{Res}_{L/\bQ_p}\mathbf{H})\times_{\bQ_p}E=\prod_{\sigma\in \Sigma_L}\bH_\tau$ that is isomorphic to $\bH^{d_L}$ as a algebraic group over $E$.\;

Let $m\in\BZ_{\geq 1}$,\;and $\pi$ be an irreducible smooth admissible representation of $\GLN_m(L)$,\;let $\rec(\pi)$ be the $m$-dimensional absolutely irreducible $F$-semi-simple Weil-Deligne representation of the Weil group $W_L$ via the normalized classical local Langlands correspondence (normalized in \cite{scholze2013local}).\;We normalize the reciprocity isomorphism $\rec:L^\times\rightarrow W_L^{\mathrm{ab}}$ of local class theory such that the uniformizer $\varpi_{L}$ is mapped to a geometric Frobenius morphism,\;where $W_L^{\mathrm{ab}}$ is the abelization of the Weil group $W_L\subset \gal_L$.\;

Let ${\ccyc}:\gal_L\rightarrow \bZ_p^\times$ be the $p$-adic cyclotomic character (i.e.,\;the character defined by the formula $g(\epsilon_n)=\epsilon_n^{{\ccyc}(g)}$ for any $n\geq 1$ and $g\in \gal_L$).\;Then we have ${\ccyc}\circ\rec=\unr(q_L^{-1})\prod_{\tau\in \Sigma_L}\tau:L^\times\rightarrow E^\times$ by local class theory.\;We define the Hodge-Tate weights of a de Rham representation as the opposite of the gaps of the filtration on the covariant de Rham functor,\;so that the Hodge-Tate weights of ${\ccyc}$ is $1$.\;A character $\delta:L^\times \rightarrow E^\times$ is called \emph{special} if $\delta:=\unr(q_L^{-1})z^{\bk}=\ccyc z^{\bk-1}$ for some $\bk:=(\bk_{\tau})_{\tau\in \Sigma_L}\in \BZ^{\Sigma_L}$ and $\bk-1:=(\bk_{\tau}-1)_{\tau\in \Sigma_L}\in \BZ^{\Sigma_L}$,\;where $z^{\bk}$ is the character $z\mapsto \prod_{\tau\in  \Sigma_L}\tau(z)^{\bk_{\tau}}\in E$.\;

Let $A$ be an affinoid $E$-algebra.\;A locally $\bQ$-analytic character $\delta:L^\times\rightarrow A^\times$ induces a $\bQ_p$-linear map $L\rightarrow A$,\;$x\mapsto \frac{d}{dt}\delta(\exp(tx))|_{t=0}$ and hence it induces an $E$-linear map $L\otimes E=\prod_{\tau\in \Sigma_L}E\rightarrow A$.\;There exist $\wt(\delta):=(\wt_{\tau}(\delta))_{\tau\in \Sigma_L}$ such that the latter map is given by $(a_{\tau})_{\tau\in \Sigma_L}\mapsto\sum_{\tau\in \Sigma_L}a_{\tau}\wt_{\tau}(\delta)$.\;We call $\wt(\delta)$ the weight of $\delta$.\;

Let $\ul{\lambda}:=(\lambda_{1,\sigma}, \cdots, \lambda_{n,\sigma})_{\sigma\in \Sigma_L}$ be a weight of $\ft_{\Sigma_L}$.\;For $\underline{I}=\prod_{\sigma\in \Sigma_L}I_{\tau}\subseteq \Delta_n^{\Sigma_L}$,\;let $\bP_{\underline{I}}:=\prod_{\sigma\in \Sigma_L}\bP_{{I}_\tau}$ be the parabolic subgroup of $\GLN_{n,L}$ associated to the $\underline{I}$.\;We call that $\ul{\lambda}$ is $\underline{I}$-dominant (resp.,\;strictly $\underline{I}$-dominant)  if $\lambda_{i,\sigma}\geq \lambda_{i+1,\sigma}$ (resp.,\;$\lambda_{i,\sigma}> \lambda_{i+1,\sigma}$)   for all $i\in I_{\tau}$ and $\sigma\in \Sigma_L$.\;In particular,\;if $\underline{I}_L:=\prod_{\sigma\in \Sigma_L}I$ for the same $I\subseteq \Delta_{n}$,\;we denote by $X_{I}^+$ the set of $\underline{I}_L$-dominant integral weights  of $\ft_{\Sigma_L}$ .\;For $\ul{\lambda}\in X_{I}^+$, there exists a unique irreducible algebraic representation, denoted by $L(\ul{\lambda})_{I}$, of $(\bL_{I})_{L}$ with highest weight $\ul{\lambda}$ with respect to $(\bL_{I})_{L}\cap \bB_{L}$,\;so that $\overline{L}(-\ul{\lambda})_{I}:=(L(\ul{\lambda})_{I})^\vee$ is the irreducible algebraic representation of $(\bL_{I})_{L}$ with highest weight $-\ul{\lambda}$ with respect to $(\bL_{I})_{L}\cap \ob_{L}$.\;Denote $\chi_{\ul{\lambda}}:=L(\ul{\lambda})_{\emptyset}$.\;If $\ul{\lambda}\in X_{\Delta_n}^+$, let $L(\ul{\lambda}):=L(\ul{\lambda})_{\Delta_n}$.\;A $\bQ_p$-algebraic representation of $G$ over $E$ is the induced action of $G\subset \GL_{n,L}(E)$ on an algebraic representation of $\GL_{n,L}$.\;By abuse of notation we will use the same notation to denote $\bQ_p$-algebraic representations induced from an algebraic representation of $\GL_{n,L}$.\;Let $\ul{\lambda}\in X_I^+$ be an integral weight,\;denote by $M_I(\ul{\lambda}):=\text{U}(\fg_{\Sigma_L})\otimes_{\text{U}(\fp_{I,L})} L(\ul{\lambda})_{I}$ (resp.\;$\overline{M}_I(\ul{\lambda}):=\text{U}(\fg_{\Sigma_L})\otimes_{\text{U}(\overline{\fp}_{I,L})} L(\ul{\lambda})_{I}$),\;the corresponding Verma module with respect to $\fp_{I,L}$ (resp.\;$\overline{\fb}_{\Sigma_L}$).\;Let $L(\ul{\lambda})$ (resp. $\overline{L}(\ul{\lambda})$) be the unique simple quotient of $M(\ul{\lambda}):=M_{\emptyset}(\ul{\lambda})$ (resp. of $\overline{M}_{\emptyset}(\ul{\lambda})$).\;



Denote by $\sW_n$ ($\cong S_n$) the Weyl group of $\GL_n$, and denote by $s_{i}$ the simple reflection corresponding to $i\in \Delta_n$.\;For any $I\subset \Delta_n$,\;define $\sW_{I}$ to be the subgroup of $\sW_{n}$ generated by simple reflections $s_{i}$ with $i\in I$ (so that $\sW_I$ is the Weyl group of $\bL_I$).\;For $w\in \sW_n$,\;we  identity $w$ with the permutation matrix corresponding to it.\;Let $I,J$ be subsets of $\Delta_n$,\;recall that $\sW_{I}\backslash \sW_n/\sW_{J}$ has a canonical set of representatives,\;which we will denote by $\sW^{I,J}_n$ (resp.,\;$\sW^{I,J}_{n,\max}$),\;given by taking in each double coset the elements of minimal (resp.,\;maximal) length.\;The Weyl group of $\GL_{n,L}$ is $\sW_{n,L}:=\Pi_{\sigma\in \Sigma_L}\sW_{n,\sigma}\cong S_n^{d_L}$,\;where $\sW_{n,\sigma}\cong \sW_n$ be the $\sigma$-th factor of $\sW_{n,L}$.\;Let $I,J$ be subsets of $\Delta_n$,\;and let  $\sW^{I,J}_{n}$ (resp.,\;$\sW^{I,J}_{n,\max}$)  be the set of minimal (resp.,\;maximal) length representatives in $\sW_{n}$  in  $\sW_{I}\backslash \sW_{n}/\sW_{J}$.\;For $w\in \sW_{n,L}$,\;denoted by $w^{\min}\in\sW^{I,J}_{n}$ (resp,\;$w^{\max}\in\sW^{I,J}_{n,\max}$) the corresponding representative of $\sW_{n}$.\;More general,\;for subset $S\subseteq \Sigma_L$ and  $\underline{I}=\prod_{\sigma\in S}I_{\tau}\subseteq \Delta_n^{S}$ and $\underline{J}=\prod_{\sigma\in S}J_{\tau}\subseteq \Delta_n^{S}$,\;we put $\sW_{\underline{I},S}:=\prod_{\sigma\in S} \sW_{I_\sigma}$,\;$\sW^{\underline{I},\underline{J}}_{n,S}:=\prod_{\sigma\in S}\sW^{I_{\sigma},J_{\sigma}}_{n}$  and $\sW^{\underline{I},\underline{J}}_{n,S,\max}:=\prod_{\sigma\in S}\sW^{I_{\sigma},J_{\sigma}}_{n,\max}$.\;In particular,\;if $\underline{I}=\underline{I}_S$ and $\underline{J}=\underline{J}_S$ for $I,J\subseteq\Delta_n$,\;we write $\sW_{{I},S}:=\sW_{\underline{I}_S,S}$ $\sW^{{I},{J}}_{n,S}:=\sW^{\underline{I}_S,\underline{J}_S}_{n,S}$ and  $\sW^{{I},{J}}_{n,S,\max}:=\sW^{\underline{I}_S,\underline{J}_S}_{n,S,\max}$ for simplicity.\;If $S=\Sigma_L$,\;we replace the subscript $S$ by $L$.\;Let $w_0$ (resp.,\;$\underline{w}_0=(w_0)_{\tau\in\Sigma_L}$) be the longest elements in $\sW_{n,L}$ (resp.,\;$\sW_{n,L}$).\;Let $w_{I,0}$ (resp.,\;$\underline{w}_{\underline{I},0}=(w_{I_{\sigma},0})_{\sigma\in\Sigma_L}$) be the longest elements in $\sW_{I}$ (resp.,\;$\sW_{\underline{I},L}$).\;



If $V$ is a continuous representation of $G$ over $E$,\;we denote by $V^{\bQ_p-\ana}$ its locally $\bQ_p$-analytic vectors.\;If $V$ is
locally $\bQ_p$-analytic representations of $G$,\;we denote by $V^{\mathrm{sm}}$ (resp.\;$V^{\mathrm{lalg}}$) the smooth (resp,\;locally $\bQ_p$-algebraic) subrepresentation of $V$ consists of its smooth (locally $\bQ_p$-algebraic) vectors (see \cite{schneider2002banach} and \cite{Emerton2007summary} for details).\;Let $\pi_P$ be a continuous representation of $P$ over $E$ (resp.,\;locally $\bQ_p$-analytic representations of $P$ on a locally convex $E$-vector space of compact type,\;resp.,\;smooth representations of $P$ over $E$),\;we denote by
\begin{equation}\label{smoothadj2}
	\begin{aligned}
		&(\mathrm{Ind}_{P}^{G}\pi_P)^{\cC^0}:=\{f:G\rightarrow \pi_P \text{\;continuous},\;f(pg)=pf(g)\},\\
		&\text{resp.,\;}(\mathrm{Ind}_{P}^{G}\pi_P)^{\bQ_p-\ana}:=\{f:G\rightarrow \pi_P \text{\;locally $\bQ_p$-analytic representations},\;f(pg)=pf(g)\},\\
		&\text{resp.,\;}i^G_P\pi_P:=(\mathrm{Ind}_{P}^{G}\pi_P)^{\infty}=\{f:G\rightarrow \pi_P \text{\;smooth},\;f(pg)=pf(g)\}
	\end{aligned}
\end{equation}
the continuous parabolic induction (resp.,\;the locally $\bQ_p$-analytic parabolic induction,\;resp., the un-normalized smooth parabolic induction) of $G$.\;It becomes a continuous representation (resp.,\;locally $\bQ_p$-analytic  representation) of $G$ over $E$ (resp.,\;on a locally convex $E$-vector space of compact type,\;resp.,\;smooth representations of $G$ over $E$) by endowing the left action of $G$ by right translation on functions: $(gf)(g')=f(g'g)$.\;

\subsection{\texorpdfstring{$(\varphi,\Gamma)$-module  over $\cR_{E,L}$ with special $\omepik$-filtration}{Lg}}\label{Omegafil}

\noindent Let $k,r$ be two integers such that $n=kr$.\;We put ${\Delta_n(k)}=\{r,2r,\cdots,(k-1)r\}\subseteq \Delta_{n}$ and  $\Delta_n^k=\Delta_{n}\backslash{\Delta_n(k)}$.\;For a subset $I\subset {\Delta_n(k)}$,\;we put $\bL_{r,I}:=\bL_{\Delta_n^k\cup I}$,\;$\bP_{r,I}:=\bP_{\Delta_n^k\cup I},\;\op_{r,I}:=\op_{\Delta_n^k\cup I}$,\;$\bN_{r,I}:=\bN_{\Delta_n^k\cup I},\;\on_{r,I}:=\on_{\Delta_n^k\cup I}$ and $\bZ_{r,I}:=\bZ_{\Delta_n^k\cup I}$.\;For example,\;we have
\[\bL_{r,\emptyset}:=\left(\begin{array}{cccc}
	\GLN_r & 0 & \cdots & 0 \\
	0 & \GLN_r & \cdots & 0 \\
	\vdots & \vdots & \ddots & 0 \\
	0 & 0 & 0 & \GLN_r \\
\end{array}\right)\subseteq \op_{r,\emptyset}:=\left(\begin{array}{cccc}
	\GLN_r & 0 & \cdots & 0 \\
	\ast & \GLN_r & \cdots & 0 \\
	\vdots & \vdots & \ddots & 0 \\
	\ast & \ast & \cdots & \GLN_r \\
\end{array}\right)\]
The parabolic subgroups of $\GL_n$ containing the parabolic subgroup $\op_{r,\emptyset}$ are given by $\{\op_{r,I}\}_{I\subseteq \Delta_n(k)}$.\;Let $\fl_{r,I}$,\;$\fp_{r,I}$, $\fn_{r,I}$, $\overline{\fp}_{r,I}$ $\overline{\fn}_{r,I}$,\;$\fz_{r,I}$ be the $L$-Lie algebras of $\bL_{r,I}$,\;$\bP_{r,I}$,\;$\bN_{r,I}$,$\op_{r,I}$,\;$\on_{r,I}$ and $\bZ_{r,I}$ respectively.\;For $\ul{\lambda}\in X_{\Delta_n^k\cup I}^+$,\;we put $L(\ul{\lambda})_{r,I}:=L(\ul{\lambda})_{\Delta_n^k\cup I},\;\overline{L}(-\ul{\lambda})_{r,I}:=\overline{L}(-\ul{\lambda})_{\Delta_n^k\cup I}$.\;Similarly,\;for  $I\subseteq \Delta_n(k)$,\;we have notation $\sW_{\Delta_n^k\cup I,L}$,\;$\sW^{\Delta_n^k,\emptyset}_{n,L,\max}$,\;etc.\;



In the sequel,\;we fix a cuspidal Bernstein component $\Omega_r$ of $\GLN_r(L)$ and an irreducible smooth cuspidal representation $\pi_{0}\in \Omega_r$ over $E$ of type $ \Omega_r$.\;We put
\[\omepik=\prod_{i=1}^k\Omega_{i},\;\Omega_{i}=\Omega_{r}\]
which is a cuspidal Bernstein component of $\bL_{r,\emptyset}(L)$.\;Let $\FZ_{\Omega_r}$ be the (rational) Bernstein centre of $\Omega_r$ over $E$.\;Let $\FZ_{\omepik}\cong\otimes_{i=1}^k\FZ_{\Omega_{i}}=\FZ_{\Omega_{r}}^{\otimes k}$ be the  (rational)
Bernstein centre of $\omepik$ over $E$ (see \cite[Section 3.13]{PATCHING2016}).\;

Let $\mathbf{WD}_{L'/L,E}$ be the category of representations $(r,N,V)$ of $W_L$,\;on an $E$-vector space $V$ of finite dimension such that $r$ is unramified when restricted to the $W_{L'}$.\;Let $\mathbf{DF}_{L'/L,E}$ be the category of Deligne-Fontaine modules,\;i.e.,\;the category of quadruples $(\varphi,N,\gal(L'/L),D)$ where $D$ is an $L'_0\otimes_{\bQ_p}E$-module free of finite rank,\;which is endowed with a Frobenius $\varphi:D\rightarrow D$ (resp.,\;an $L'_0\otimes_{\bQ_p}E$-linear endomorphism $N:D\rightarrow D$) such that $N\varphi=p\varphi N$ and an action of $\gal(L'/L)$ commuting with $\varphi$ and $N$ such that $g((l\otimes e)d)=(g(l)\otimes e)d$ for $g\in \gal(L'/L),\;l\in L'_0,\;e\in E,\;d\in D$.\;Then the  Fontaine's theory asserts that there is a functor $\mathrm{WD}:\mathbf{WD}_{L'/L,E}\rightarrow \mathbf{DF}_{L'/L,E}$ gives an equivalence of categories (\cite[Proposition 4.1]{breuil2007first}).\;

Let $\pi\in \Omega_r$ be any irreducible smooth cuspidal representation over $E$ of type $ \Omega_r$.\;Then $\pi\cong \pi_0\otimes_E\unr(\alpha_\pi )$ for some $\alpha_\pi \in E^\times$.\;The irreducible cuspidal representation $\pi$ corresponds an $E$-point $x_\pi$ of $\Spec\FZ_{\Omega_{r}}$,\;a $r$-dimensional absolutely irreducible $F$-semi-simple Weil-Deligne representation $\wdre_\pi:=\rec(\pi)$ of $W_L$ over $E$ via the normalized classical local Langlands correspondence (see \cite{scholze2013local}),\;a Deligne-Fontaine module $\df_\pi$ (by Fontaine's equivalence of categories as in \cite[Proposition 4.1]{breuil2007first}),\;and a $p$-adic differential equation ${\Delta_\pi}$ over $\cR_{E,L}$ (by Berger's theory \cite[Theorem A]{berger2008equations},\;a $(\varphi,\Gamma)$-module of rank $m$ over $\cR_{k(x),L}$ which is de Rham of constant Hodge-Tate weights $0$ such that $D_{\mathrm{pst}}(\Delta_x)$ is isomorphic to $\df_x$ by forgetting the Hodge filtration).\;Assume that $\wdre_\pi$ is unramified when restricted to $W_{L'}$ for some finite Galois extension $L'$ of $L$.\;Then the associated (absolutely) irreducible Deligne-Fontaine module $\df_\pi=(\varphi_\pi,N=0,\Gal(L'/L),\df_\pi)\in \mathrm{DF}_{L'/L,E}$,\;where 
$\varphi_\pi:\df_\pi\rightarrow \df_\pi$ is the Frobenius semi-linear operator  on $\df_\pi$.\;


Keep the notation and terminology in \cite[Section 2.3,\;Section 4.1.2]{Ding2021}.\;We consider a special case of $\Omega$-filtration which associated to an (absolutely) indecomposable Weil-Deligne representation,\;that we call the special $\omepik$-filtration.\;

Let $\Dpik$ be a potentially semistable $(\varphi,\Gamma)$-module over $\cR_{E,L}$ of rank $n$.\;Let $L'$ be a finite Galois extension of $L$ such that $\Dpik|_{L'}$ is a semistable $(\varphi,\Gamma)$-module over $\cR_{E,L'}$ of rank $n$.\;We consider the Deligne-Fontaine module associated to $\Dpik$:
$$\df_{\Dpik}=(\varphi,N,\Gal(L'/L),D_{\pst}(\Dpik))$$
where $D_{\pst}(\Dpik)=D_{\mathrm{st}}^{L'}(\Dpik\otimes_{\cR_{E,L}}\cR_{E,L'})$ is a finite free $L'_0\otimes_{\bQ_p}E$-module of rank $n$,\;$L'_0$ being the maximal unramified subextension of $L'$ (over $\bQ_p$),\;where the $(\varphi,N)$-action on $D_{\pst}(\Dpik)$ is induced from the $(\varphi,N)$-action on $B_{\mathrm{st}}$,\;and where the 
$\Gal(L'/L)$-action on  $D_{\pst}(\Dpik)$ is the residual action of $\gal_L$.\;

We say that $\wdre_{\Dpik}$  admits an increasing $\omepik$-filtration $\cF$ if $\wdre_{\Dpik}$ admits an increasing filtration $\cF$ by Weil-Deligne subrepresentations:
\[\cF=\fil_{\bullet}^{\cF} \wdre_{\Dpik}: \ 0 =\fil_0^{\cF} \wdre_{\Dpik} \subsetneq \fil_1^{\cF} \wdre_{\Dpik} \subsetneq \cdots \subsetneq \fil_{k}^{\cF} \wdre_{\Dpik}=\wdre_{\Dpik},\]
such that $\gr^{\cF}_{i}\wdre_{\Dpik}\cong \wdre_\pi\otimes_E|\rec^{-1}|^{k-i}$ for all $1\leqslant i\leqslant k$ and some irreducible smooth cuspidal representation $\pi\in \Omega_r$ over $E$ of type $ \Omega_r$.\;We further assume that the monodromy operator $N$ sends $\gr^{\cF}_{i}\wdre_{\Dpik}$ to
$\gr^{\cF}_{i-1}\wdre_{\Dpik}$ via the identity map on $\wdre_\pi$ for $2\leq i\leq k$,\;and sends $\gr^{\cF}_1\wdre_{\Dpik}$ to zero.\;

By \cite[Proposition 4.1]{breuil2007first},\;the $\omepik$-filtration $\cF$ on $\wdre_{\Dpik}$ corresponds to an $\omepik$-filtration on the Deligne-Fontaine module$\df_{\Dpik}$ (still denoted by $\cF$)  by Deligne-Fontaine submodules
\[\cF=\fil_{\bullet}^{\cF}\df_{\Dpik}: \ 0 =\fil_0^{\cF} \df_{\Dpik} \subsetneq \fil_1^{\cF} \df_{\Dpik} \subsetneq \cdots \subsetneq \fil_{k}^{\cF} \df_{\Dpik}=\df_{\Dpik},\]
such that $\fil_{i}^{\cF} \df_{\Dpik}$ is associated to $\fil_{i}^{\cF} \wdre_{\Dpik}$ via 
\cite[Proposition 4.1]{breuil2007first}.\;Then $\gr^{\cF}_{i}\df_{\Dpik}\cong (\varphi_{\pi,i},N_{\gr^{\cF}_{i}\df_{\Dpik}}=0,\Gal(L'/L),\df_{\pi,i})$ for $1\leqslant i\leqslant k$,\;where $\df_{\pi,i}$ is isomorphic to $\df_\pi$ as a module,\;endowed with a Frobenius morphism $\varphi_{\pi,i}=p^{i-k}\varphi_\pi$ (i.e.,\;a twist of $\varphi_\pi$ by $p^{i-k}$).\;The monodromy operator $N$ is zero on  $(p^{1-k}\varphi_\pi,N=0,\Gal(L'/L),\df_\pi)$,\;and sending $(p^{i-k}\varphi_\pi,N=0,\Gal(L'/L),\df_\pi)$ to
$(p^{(i-1)-k}\varphi_\pi,N=0,\Gal(L'/L),\df_\pi)$ via the identity map on $\df_\pi$ for  $2\leq i\leq k$.\;

Let  ${\Delta_\Dpik}$  be the $p$-adic differential equation over $\cR_{E,L}$ associated to  $\df_{\Dpik}$.\;The  $\omepik$-filtration on $\df_{\Dpik}$ now induces an  $\omepik$-filtration $\fil_{\bullet}^{\cF}{\Delta_\Dpik}=\{\fil_{i}^{\cF}{\Delta_\Dpik}\}$ on ${\Delta_\Dpik}$ by saturated $(\varphi,\Gamma)$-submodules over $\cR_{E,L}$,\;such that $\fil_{i}^{\cF}{\Delta_\Dpik}$ is the $p$-adic differential equation over $\cR_{E,L}$ associated to $\fil_{i}^{\cF} \df_{\Dpik}$.\;In particular,\;we see that $\gr^{\cF}_{i}{\Delta_\Dpik}\cong {\Delta_\pi}\tee \cR_{E,L}(\unr(q_L^{i-k}))$ for $1\leqslant i\leqslant k$.\;Consider
\[\cM_\Dpik=\Dpik\Big[\frac{1}{t}\Big]\cong \Delta_\Dpik \Big[\frac{1}{t}\Big]\]
By inverting $t$,\;the filtration $\cF$ on $\Delta_\Dpik$ induces an increasing filtration $\cF:= \fil_{i}^{\cF}\cM_\Dpik:=\fil_{i}^{\cF}{\Delta_\Dpik}\big[\frac{1}{t}\big]$ on $\cM_\Dpik$ by $(\varphi,\Gamma)$-submodules over $\cR_{E,L}\big[\frac{1}{t}\big]$.\;Therefore,\;the filtration $\cF$ on $\cM_\Dpik=D\big[\frac{1}{t}\big]$ induces a filtration on $\Dpik$:
\[\cF=\fil_{\bullet}^{\cF}\Dpik: \ 0 =\fil_0^{\cF}\Dpik \subsetneq \fil_1^{\cF}\Dpik \subsetneq \cdots \subsetneq \fil_{k}^{\cF} \Dpik=\Dpik,\;\fil_{i}^{\cF}\Dpik=(\fil_{i}^{\cF}\cM_\Dpik)\cap \Dpik,\]
by saturated $(\varphi,\Gamma)$-submodules of $\Dpik$ over $\cR_{E,L}$.\;

Since $\Dpik$ is potentially semistable,\;it is de Rham.\;Hence we have $D_{\dr}(\Dpik)\cong (D_{\pst}(\Dpik)\otimes_{L_0'}L')^{\Gal(L'/L)},\;$ which  is a free $L\otimes_{\bQ_p}E$-module of rank $n$.\;The $\omepik$-filtration $\cF$  on $\df_{\Dpik}$
induces a $\omepik$-filtration $\cF$ on $D_{\dr}(\Dpik)$ by free $L\otimes_{\bQ_p}E$-submodules $\fil_{\bullet}^{\cF}D_{\dr}(\Dpik):=(\fil_{\bullet}^{\cF}\df_{\Dpik}\otimes_{L_0'}L')^{\Gal(L'/L)}$.\;The module $D_{\dr}(\Dpik)$ is equipped with a natural Hodge filtration.\;We assume that $D_{\dr}(\Dpik)$  has distinct Hodge-Tate weights $\bh:=(\hpi_{\tau,1}>\hpi_{\tau,2}>\cdots>\hpi_{\tau,n} )_{\tau\in \Sigma_L}$.\;Denote by $\hpi_{i}=(\hpi_{\tau,i})_{\tau\in \Sigma_L}$ for $1\leq i\leq n$.\;

Hence,\;for each $\tau\in \Sigma_L$,\;the natural Hodge filtration can be expressed by the following complete flag:
\[\fil_{\bullet}^{H}D_{\dr}(\Dpik)_\tau: \ 0 \subsetneq \fil_{-\hpi_{\tau,n}}^{H} D_{\dr}(\Dpik)_\tau \subsetneq \fil_{-\hpi_{\tau,n-1}}^{\cF}D_{\dr}(\Dpik)_\tau \subsetneq \cdots \subsetneq \fil_{{-\hpi_{\tau,1}}}^{H}D_{\dr}(\Dpik)_\tau=D_{\dr}(\Dpik)_\tau.\]
For each $\tau\in \Sigma_L$,\;we now fix a a basis of $D_{\dr}(\Dpik)_\tau$ over $E$.\;Then the Hodge filtration $\fil_{\bullet}^{H}$ (resp.,\;$\cF$)  corresponds to an $E$-point $(g_{2,\tau}\bB(E))_{\tau\in \Sigma_L}\in \GLN_{n,L}/\bB_{L}$ (resp.,\;$(g_{1,\tau}\bP_{r,\emptyset}(E))_{\tau\in \Sigma_L}\in \GLN_{n,L}/\bP_{r,\emptyset,L}$).\;For each $\tau\in \Sigma_L$,\;there exists thus a unique $w_{\cF,\tau}\in \sW^{\Delta_n^k,\emptyset}_{n,L,\max}$ such that 
\[(g_{1,\tau}\bP_{r,\emptyset}(E),g_{2,\tau}\bB(E))\in\GLN_n(E)(1,w_{\cF,\tau})(\bP_{r,\emptyset}\times\bB)(E)\subset (\GLN_n/\bP_{r,\emptyset}\times\GLN_n/\bB)(E).\]
We say that $\cF$ is \textit{non-critical} if $w_{\cF,\tau}={w}_0$ for all $\tau\in \Sigma_L$.

Now by Berger's equivalence of categories,\;we see that $\fil_{i}^{\cF}\Dpik$ corresponds to the filtered Delingen-Fontaine module $\fil_{i}^{\cF} \df_{\Dpik}$ equipped with the induced filtration from the Hodge filtration on $D_{\pst}(\Dpik)$.\;In this case,\;we see that the Hodge-Tate weights of $\fil_{i}^{\cF}\Dpik$ are given by $$\{\hpi_{\tau,(w_{\cF,\tau}{w}_0)^{-1}(1)},\hpi_{\tau,(w_{\cF,\tau}{w}_0)^{-1}(2)},\cdots,\hpi_{\tau,(w_{\cF,\tau}{w}_0)^{-1}(ir)}\}_{\tau\in \Sigma_L}.\;$$
This implies that the Hodge-Tate weights of $\gr_{i}^{\cF} \Dpik$ are $$\{\hpi_{\tau,(w_{\cF,\tau}{w}_0)^{-1}((i-1)r+1)},\hpi_{\tau,(w_{\cF,\tau}{w}_0)^{-1}((i-1)r+2)},\cdots,\hpi_{\tau,(w_{\cF,\tau}{w}_0)^{-1}(ir)}\}_{\tau\in \Sigma_L}.\;$$
We put $w_{\cF}(\bh)_{j}:=\{\hpi_{\tau,(w_{\cF,\tau}{w}_0)^{-1}(j)}\}_{\tau\in \Sigma_L}$.\;In this case,\;using Berger's equivalence of categories \cite[Theorem  A]{berger2008equations} and comparing the weight (or see \cite[(2.4)]{Ding2021}),\;we have an injection of $(\varphi,\Gamma)$-modules over $\cR_{E,L}$
\begin{equation}\label{Dpikinjection}
	\mathbf{I}_{i}:\gr_{i}^{\cF} \Dpik \hookrightarrow {\Delta_\pi}\tee \cR_{E,L}(\unr(q_L^{i-k}))\tee \cR_{E,L}(z^{w_{\cF}(\bh)_{jr}})=\gr^{\cF}_{i}{\Delta_\Dpik}\tee \cR_{E,L}(z^{w_{\cF}(\bh)_{jr}}),
\end{equation}
for $i=1,\cdots,k$.\;This implies that the $(\varphi,\Gamma)$-module $\Dpik$ admits an $\omepik$-filtration $\cF$.\;

Let $\rigchl$ (resp.,\;$\rigch$) be the rigid space over $E$ parametrizing continuous characters of $\bL_{r,\emptyset}(L)$ (resp.,\;$\bL_{r,\emptyset}(\cO_L)$).\;The parameters of $\cF$ in $\sbanpik\times\rigchl $ or $\sbanpik\times\rigch$ are given as follows.\;Recall that $\pi\cong \pi_0\otimes\unr(\alpha_\pi )$ for some $\alpha_\pi \in E^\times$.\;


\begin{dfn}\label{dfnnoncriticalspecial}\textbf{(Special $\omepik$-filtration)} 
Put $(\bx_0,\bmdel)\in \sbanpik\times\rigchl$ with
\begin{equation}
	\begin{aligned}
		&\bx_0=(\bx_{0,i}\cong x_{\pi_0})_{1\leq i\leq k},\;\bmdel:=(\bm{\delta}_{\bh,i}=\unr(\alpha_\pi  q_L^{i-k}){z^{w_{\cF}(\bh)_{ir}}})_{1\leq i\leq k},\;
	\end{aligned}
\end{equation}
or $(\widetilde{\bx}_{\pi,\bh},\widetilde{\bm{\delta}}_\bh)\in \sbanpik\times\rigch$ with
\begin{equation}
	\begin{aligned}
		&\widetilde{\bx}_{\pi,\bh}=(\widetilde\bx_{\pi,i})_{1\leq i\leq k},\;\pi_{\widetilde\bx_{\pi,\bh,i}}\cong \pi_0\te\unr(\alpha_\pi  q_L^{i-k}z^{w_{\cF}(\bh)_{jr}}(\varpi_L)),\;\widetilde{\bm{\delta}}_\bh=(\widetilde{\bm{\delta}}_{\bh,i}={z^{w_{\cF}(\bh)_{ir}}}|_{\co_L^\times})_{1\leq i\leq k}.
	\end{aligned}
\end{equation}	
We call an $\omepik$-filtration on $\Dpik$ is special with parameter $(\bx_0,\bmdel)\in \sbanpik\times\rigchl $,\;(resp.,\; with parameter $(\widetilde{\bx}_{\pi,\bh},\widetilde{\bm{\delta}}_\bh)\in \sbanpik\times\rigch $)\;if $\Dpik$ admits a $\omepik$-filtration with parameter $(\bx_0,\bmdel)\in \sbanpik\times\rigchl $,\;(resp.,\;with parameter $(\widetilde{\bx}_{\pi,\bh},\widetilde{\bm{\delta}}_\bh)\in \sbanpik\times\rigch $ ),\;and $\Dpik^{i+1}_{i}$ is non-split for each  $ir\in\Delta_n(k)$.\;We say a $p$-adic Galois representation $\rho_L:\gal_L\rightarrow \GLN_{n}(E)$ admits a special $\omepik$-filtration (or special triangulation) if $D_{\rig}(\rho_L)$ has this property.\; 
\end{dfn}

In particular,\;we can restrict them to classical trianguline case.\;

\begin{rmk} (Special triangulation) If $r=1$ (so $k=n$),\;we have $\bL_{1,\emptyset}=\bT$ and $\bP_{1,\emptyset}=\bB$.\;Let $\widehat{T}_L$ denote the character space of $\bT(L)$ over $E$,\;i.e.,\;the rigid space over $E$ parameterizing continuous character of $\bT(L)$.\;Via the isomorphism
	\[\iota_{\Omega_{1}^{\otimes n},\bh}: \big(\mathrm{Spec}\hspace{2pt}\mathfrak{Z}_{\Omega_{1}^{\otimes n}}\big)^{\mathrm{rig}}\times\mathcal{Z}_{\bL_{1,\emptyset},\mathcal{O}_L}\xrightarrow{\sim}\widehat{T}_L,(\underline{x},\undelram)\mapsto (\boxtimes_{i=1}^r\pi_{x_i}) \undelram z^{\bh}.\]
if $r=1$ and $\pi_0=v_1^{\frac{1-n}{2}}$,\;then the $\omepik$-filtration $\cF$ on $\Dpik$ becomes the so-called special triangulation with parameter $\bmdel:=(\bm{\delta}_{\bh,i}=\unr(\alpha q_L^{i-k}){z^{w_{\cF}(\bh)_{i}}})_{1\leq i\leq n}\in \widehat{T}_L$.\;
\end{rmk}


\begin{rmk}
Suppose that $\Dpik$ is of slope zero,\;i.e.,\;it comes from a $p$-adic Galois representation $\rho_L:\gal_L\rightarrow \GLN_n(E)$.\;If $n=2$,\;and $w_{\cF}\neq \underline{w}_0$,\;then $d_L>1$,\;then $w_{\cF}=(w_{\cF,\tau})_{\tau\in \Sigma_L}$ with $w_{\cF,\tau}=s_1$ (resp.,\;$w_{\cF,\tau}=1$) if and only if $\tau\in S$ (resp.,\;$\tau\not\in S$),\;where $S$ is a subset of $\Sigma_L$.\;If $L=\bQ_p$ and $n=3$,\;the possible choices of $w_{\cF}$ are only $\{s_1{w}_0,s_2w_0\}$.\;Although the non-critical case is the most common,\;it seems less difficult for an special $\omepik$-filtration (in particular,\;a special triangulation) to be  critical as $n\rightarrow\infty$.\;
\end{rmk}

\subsection{Some preliminaries on potentially semistable deformation ring}
\label{semistableder}

The proofs of main theorems need some Zariski-closure argument on semistable deformation rings and the relationship between semistable deformation spaces and trianguline variety.\;In this section,\;we make some preliminaries on potentially semistable deformation ring.\;

Keep the notation in Section \ref{Omegafil}.\;Let $\Omega_n$ be the Bernstein component of $\GLN_n(L)$ associated to the cuspidal Bernstein component $\omepik$ of $\bL_{r,\emptyset}$ and let $\tau:I_{L}\rightarrow \GLN_n(E)$ be the associated inertial type (see \cite[Section 3.2]{PATCHING2016}).\;Recall that $\tau_0|_{I_{L'}}$ is trivial.\;

Let $\overline{r}:\gal_L\rightarrow \GLN_n(k_E)$ be a residual representation of $\gal_L$.\;Let $R_{\overline{r}}^{\Box,\tau,\bh}$ be the unique reduced and $p$-torsion free quotient of the (framed) local deformation ring $R_{\overline{r}}^{\Box}$ corresponding to potentially semistable deformations of $\overline{r}$ with inertial type $\tau$ and  Hodge-Tate weights $\bh$ (in the sense of \cite{KisinSEMISTABLE}) over $\cO_E$.\;By \cite[Theorem 3.3.4]{KisinSEMISTABLE},\;the space $X_{\overline{r}}^{\Box,\tau,\bh}:=\Spec R_{\overline{r}}^{\Box,\tau,\bh}[1/p]$ is equi-dimensional of dimension $n^2+d_L\frac{n(n-1)}{2}$.\;We write $\FX_{\overline{r}}^{\Box,\tau,\bh}:=(\Spf R_{\overline{r}}^{\Box,\tau,\bh})^{\rig}$ for the  closed analytic subspace $\FX_{\overline{r}}^{\Box}:=(\Spf R_{\overline{r}}^{\Box})^{\rig}$.\;By \cite[Theorem 2.5.5]{KisinSEMISTABLE},\;there is a universal (coherent) filtered $(\varphi,N)$-module $(\cD,\Phi,\bN,\cF^{\bullet})$ that is locally free over $\FX_{\overline{r}}^{\Box,\tau,\bh}$ such that for all $y\in \FX_{\overline{r}}^{\Box,\tau,\bh}$,\;we have $(\cD,\Phi,\bN,\cF^{\bullet})\otimes_{\cO_{\FX_{\overline{r}}^{\Box,\tau,\bh}}}k(y)=(D_{\mathrm{st}}(\rho_y),\;\phi_y,N_y,\cF_y^{\bullet}).$

We introduce two stratifications on $\FX_{\overline{r}}^{\Box,\tau,\bh}$.\;For $w\in \sW_{n,L}$,\;we write $\FX_{\overline{r},w}^{\Box,\tau,\bh}\subset \FX_{\overline{r}}^{\Box,\tau,\bh}$ for the inverse image of the Bruhat cell 
$\big(\mathrm{Res}_{L/\bQ_p}(\bB w\bB/\bB)\big)^{\rig}$. Then $\FX_{\overline{r},w}^{\Box,\tau,\bh}$  is locally closed in $\FX_{\overline{r}}^{\Box,\tau,\bh}$ and the $\FX_{\overline{r},w}^{\Box,\tau,\bh}$ for $w\in  \sW_{n,L}$ set-theoretically cover $\FX_{\overline{r}}^{\Box,\tau,\bh}$.\;Let $\overline{\FX_{\overline{r},w}^{\Box,\tau,\bh}}$ be the Zariski-closure of $\FX_{\overline{r},w}^{\Box,\tau,\bh}$ in $\FX_{\overline{r}}^{\Box,\tau,\bh}$.\;

Another stratification of $\FX_{\overline{r}}^{\Box,\tau,\bh}$ is
 given by concerning the sharp of nilpotent operater.\;For any  $x\in X_{\overline{r}}^{\Box,\tau,\bh}$,\;the Jordan normal norm of the nilpotent operator $N_x$ is determined uniquely up to conjugacy by a partition $\cP_x$ of $n$.\;Define a partial order $\leq$ on partitions which is the reverse of so-called dominance order,\;i.e.,\;$(n_1,n_2,\cdots,n_t)\leq (n_1',n_2',\cdots,n_s')$ with $n_1\geq n_2\geq\cdots\geq n_t>0$ and $n_1'\geq n_2'\geq\cdots\geq n_s>0'$ if and only if $\sum_{i=1}^ln_i\leq \sum_{i=1}^ln_i'$.\;Let $\cP_{\max}$ (resp.,\;$\cP_{\min}$) be the maximal (resp.,\;minimal) partition under this dominance order.\;Let $\cN$ be the nilpotent cone of $\fg$.\;Then there is a bijection between the nilpotent orbits of $\cN$ and the partitions.\;For partition $\cP$,\;let $\cO_{\cP}\subset \cN$ be the nilpotent orbit  associated to the $\cP$.\;In particular,\;if $\cP=\cP_{\min}$,\;then it corresponds to the regular nilpotent orbit $\cO_{\reg}:=\cO_{\cP_{\min}}$ in $\fg$.\;Let $N_{\reg}$ be the nilpotent elements with all ones on the superdiagonal,\;then $\cO_{\reg}$ is the  orbit of $N_{\reg}$.\;

For partition $\cP$,\;by \cite[Definition 4.3]{BSCONJ},\;there is a reduced,\;$p$-torsion free quotient $R_{\overline{r},\geq\cP}^{\Box,\tau,\bh}:=R_{\overline{r}}^{\Box,\tau,\bh}/I_{\cP}$ of $R_{\overline{r}}^{\Box,\tau,\bh}$ and $X_{\overline{r},\geq\cP}^{\Box,\tau,\bh}:=\Spec R_{\overline{r},\geq\cP}^{\Box,\tau,\bh}[1/p]$ such that $x\in X_{\overline{r},\cP}^{\Box,\tau,\bh}\hookrightarrow X_{\overline{r}}^{\Box,\tau,\bh}$ if and only if $\cP_x\geq \cP$.\;Let $X_{\overline{r},\cP}^{\Box,\tau,\bh}$ be the open subspace of $X_{\overline{r},\geq\cP}^{\Box,\tau,\bh}$ such that $x\in X_{\overline{r},\cP}^{\Box,\tau,\bh}\hookrightarrow X_{\overline{r},\geq\cP}^{\Box,\tau,\bh}$ if and only if $\cP_x=\cP$.\;In particular,\;$R_{\overline{r},\geq\cP}^{\Box,\tau,\bh}$ is the potentially crystalline deformation ring $R_{\overline{r},\cP_{\min}}^{\Box,\tau,\bh-\mathrm{pcr}}$ (resp.,\;potentially semistable deformation ring) if $\cP=\cP_{\max}$  (resp.,\;$\cP=\cP_{\min}$).\;

From now on,\;we restrict the discussion to the case that $L'=L$ and $\tau=1$ is trivial.\;We replace the superscript $``\Box,\tau,\bh"$ by $``\Box,\bh-\mathrm{st}"$ in above notation for simplicity.\;Let $\FY_{\overline{r}}^{\Box,\bh-\mathrm{st}}\rightarrow \FX_{\overline{r}}^{\Box,\bh-\mathrm{st}}$ be the $\big(\mathrm{Res}_{L/\bQ_p}\GLN_{n,L}\big)\times_{\bQ_p} E$-torsor of the trivialization of the underlying coherent sheaf of the universal filtered $(\varphi,N)$-module $(\cD,\Phi,\bN,\cF^{\bullet})$.\;Then sending a semistable deformation with a trivialization of $D_{\mathrm{st}}$ to its Frobenius $\Phi$,\;the monodromy operator $\bN$ and the Hodge filtration define a morphism:
\begin{equation}
	\begin{aligned}
		\widetilde{\gamma}:\FY_{\overline{r}}^{\Box,\bh-\mathrm{st}}\longrightarrow \big((\mathrm{Res}_{L'_0/\bQ_p}\GLN_{n,L'_0})\times_{\bQ_p} E\big)^{\rig}\times_{\Spm E}&\big((\mathrm{Res}_{L_0/\bQ_p}\mathrm{Mat}_{n,L'_0})\times_{\bQ_p} E\big)^{\rig}\\ &\times_{\Spm E} \big((\mathrm{Res}_{L/\bQ_p}\GLN_{n,L}/\mathrm{Res}_{L/\bQ_p}\bB)\times_{\bQ_p} E\big)^{\rig}.
	\end{aligned}
\end{equation}
Keep the notation in \cite{universalsemistable}.\;Let ${\sD}_{\phi,\mu}^{\mathrm{ad}}$ be the quotient stack of the adic space $\big(X_{\varphi,N}\big)^{\mathrm{ad}}$ associated with $X_{\varphi,N}$ by the action of $\big(\mathrm{Res}_{L_0/\bQ_p}\GLN_{n,L_0}\big)\times E$.\;Let  ${\sD}_{\phi,\mu}^{\mathrm{ad},\mathrm{adm}}$ be the open subspace of ${\sD}_{\phi,\mu}^{\mathrm{ad}}$ such that there is a universal representation of $\gal_{L}$ on a vector bundle $\cV$ on ${\sD}_{\phi,\mu}^{\mathrm{ad},\mathrm{adm}}$,\;and $\widetilde{\sD}_{\phi,\mu}^{\mathrm{ad},\mathrm{adm}}$ be the stack over ${\sD}_{\phi,\mu}^{\mathrm{ad},\mathrm{adm}}$ trivializing $\cV$.\;By \cite{universalsemistable},\;we get that $\FX_{\overline{r}}^{\Box,\tau,\bh}$ is isomorphic to an open subspace $\widetilde{\sD}_{\phi,\mu}^{\mathrm{ad},\mathrm{adm},+}(\overline{r})$ of $\widetilde{\sD}_{\phi,\mu}^{\mathrm{ad},\mathrm{adm}}$.\;This shows that $\widetilde{\gamma}$ is smooth.\;

For any partition function $\cP$,\;let $\FY_{\overline{r},\cP}^{\Box,\bh-\mathrm{st}}$ be the fiber of the nilpotent orbit $\cO_{\cP}\subset \cN$ via the morphism
$\FY_{\overline{r}}^{\Box,\bh-\mathrm{st}}\longrightarrow \big((\mathrm{Res}_{L_0/\bQ_p}\mathrm{Mat}_{n,L'_0})\times_{\bQ_p} E\big)^{\rig}$,\;which is locally closed in $\FY_{\overline{r},\cP}^{\Box,\bh-\mathrm{st}}$.\;Let 
$\overline{\FY_{\overline{r},\cP}^{\Box,\bh-\mathrm{st}}}$ be the closure of $\FY_{\overline{r},\cP}^{\Box,\bh-\mathrm{st}}$ in $\FY_{\overline{r}}^{\Box,\bh-\mathrm{st}}$.\;For $w\in \sW_{n,L}$,\;let $\FX_{\overline{r},\cP,w}^{\Box,\bh-\mathrm{st}}\subset \FX_{\overline{r},\cP}^{\Box,\bh-\mathrm{st}}$ for the inverse image of the Bruhat cell 
$\big(\mathrm{Res}_{L/\bQ_p}(\bB w\bB/\bB)\times_{\bQ_p} E\big)^{\rig}$. Then $\FX_{\overline{r},\cP,w}^{\Box,\bh-\mathrm{st}}$  is locally closed in $\FX_{\overline{r},\cP}^{\Box,\bh-\mathrm{st}}$ and the $\FX_{\overline{r},\cP,w}^{\Box,\bh-\mathrm{st}}$ for $w\in  \sW_{n,L}$ set-theoretically cover $\FX_{\overline{r},\cP}^{\Box,\bh-\mathrm{st}}$.

\begin{pro}\label{pminloucsofsemidefringscP} We have 
	\[h^{-1}\big(\overline{\big(\mathrm{Res}_{L/\bQ_p}(\bB w\bB/\bB)\times_{\bQ_p} E\big)^{\rig}}\big)=\overline{\FX_{\overline{r},\cP_{\min},w}^{\Box,\bh-\mathrm{st}}}=\coprod_{w'\leq w}\FX_{\overline{r},\cP_{\min},w'}^{\Box,\bh-\mathrm{st}},\]
where $\overline{\FX_{\overline{r},\cP_{\min},w}^{\Box,\bh-\mathrm{st}}}$ is the closure of $\FX_{\overline{r},\cP_{\min},w}^{\Box,\bh-\mathrm{st}}$ in $\FX_{\overline{r},\cP_{\min}}^{\Box,\bh-\mathrm{st}}$ (so $\overline{\FX_{\overline{r},\cP_{\min},w}^{\Box,\bh-\mathrm{st}}}$ is equal to the intersection of the closure of $\FX_{\overline{r},\cP_{\min},w}^{\Box,\bh-\mathrm{st}}$ in $\FX_{\overline{r}}^{\Box,\bh-\mathrm{st}}$ with $\FX_{\overline{r},\cP_{\min}}^{\Box,\bh-\mathrm{st}}$).\;
\end{pro}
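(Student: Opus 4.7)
The strategy is to transplant the crystalline Zariski-closure identity (\ref{cryzariskiclosure}) into the potentially semistable setting with full monodromy rank. The basic architecture has three parts: realize $h$ as a smooth morphism to the (Weil-restricted) flag variety, exploit the classical Bruhat order on the flag variety, and combine the two via flatness.

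First I would lift the situation to the torsor $\FY_{\overline{r},\cP_{\min}}^{\Box,\bh-\mathrm{st}} \to \FX_{\overline{r},\cP_{\min}}^{\Box,\bh-\mathrm{st}}$ on which $D_{\mathrm{st}}$ is globally trivialized, so that the flag factor of $\widetilde{\gamma}$ furnishes a globally defined morphism $\widetilde{h}: \FY_{\overline{r},\cP_{\min}}^{\Box,\bh-\mathrm{st}} \to (\mathrm{Res}_{L/\bQ_p}\GLN_{n,L}/\mathrm{Res}_{L/\bQ_p}\bB)^{\rig}_E$. The Bruhat stratifications on $\FX_{\overline{r},\cP_{\min}}^{\Box,\bh-\mathrm{st}}$ and $\FY_{\overline{r},\cP_{\min}}^{\Box,\bh-\mathrm{st}}$ are compatible under the smooth surjective torsor map, so it suffices to prove the analogous identities for $\widetilde{h}$ and then descend.

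The central technical step is to verify that $\widetilde{h}$ is smooth. The excerpt already records that $\widetilde{\gamma}$ is smooth, via the identification of $\FX_{\overline{r}}^{\Box,\bh-\mathrm{st}}$ with an open subspace of the adic stack $\widetilde{\sD}_{\phi,\mu}^{\mathrm{ad},\mathrm{adm},+}(\overline{r})$. Restricting to $\FY_{\overline{r},\cP_{\min}}^{\Box,\bh-\mathrm{st}}$ corresponds, via $\widetilde{\gamma}$, to the fiber of $X_{\varphi,N}$ over the regular nilpotent orbit $\cO_{\reg} \subset \cN$. Since $\cO_{\reg}$ is open in $\cN$ and the flag factor in the target of $\widetilde{\gamma}$ is free (the relation $\varphi N = p N \varphi$ only involves the $(\varphi,N)$-part), the projection $X_{\varphi,N}|_{\cO_{\reg}} \times (\res_{L\otimes E/E}\GLN_n/\bB) \to \res_{L\otimes E/E}\GLN_n/\bB$ is smooth; composed with $\widetilde{\gamma}|_{\FY_{\overline{r},\cP_{\min}}^{\Box,\bh-\mathrm{st}}}$ this gives smoothness of $\widetilde{h}$. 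Once smoothness is in hand, the first equality is formal: for a flat (hence generalizing) morphism of finite type one has $\widetilde{h}^{-1}(\overline{Z}) = \overline{\widetilde{h}^{-1}(Z)}$ for every locally closed $Z$, applied to the open Bruhat cell. For the second equality, the flag variety side supplies the classical decomposition $\overline{\bB w \bB/\bB} = \coprod_{w' \leq w} \bB w' \bB/\bB$; passing to Weil restrictions produces the analogous decomposition indexed by the product Bruhat order on $\sW_{n,\Sigma_L}$, and pulling back through $\widetilde{h}$ and descending to $\FX_{\overline{r},\cP_{\min}}^{\Box,\bh-\mathrm{st}}$ yields the claimed disjoint union.

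The main obstacle is pinning down the smoothness of $\widetilde{h}$ on the $\cP_{\min}$-stratum. The restriction to $\cP_{\min}$ is essential here: away from the regular nilpotent orbit the relation $\varphi N = p N \varphi$ imposes additional constraints that propagate to the flag factor (through the Hodge filtration being required to be weakly admissible together with a degenerate monodromy), and smoothness of the projection to the flag variety can fail. By contrast, on $\cO_{\reg}$ the centralizer $Z_G(N)$ has the expected dimension, the equation $\varphi N = p N \varphi$ cuts out a smooth subvariety of $X_{\varphi,N}$ along this locus, and the flag factor is genuinely unconstrained, which is exactly what enables the crystalline argument to transfer verbatim. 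The rest of the proof is then a routine translation through the Bruhat order, paralleling the argument surrounding (\ref{cryzariskiclosure}).
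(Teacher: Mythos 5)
Your proposal follows the same route as the paper: lift to the torsor $\FY_{\overline{r},\cP_{\min}}^{\Box,\bh-\mathrm{st}}$, use the smoothness of the map to $(\mathrm{Res}_{L/\bQ_p}\GLN_n/\bB)^{\rig}_E$ (coming from the regular nilpotent orbit $\cO_{\reg}$ being smooth and open dense in $\cN$), deduce the closure relation, and descend along the torsor. You are somewhat more explicit about \emph{why} $\widetilde{h}$ is smooth (factoring through $X_{\varphi,N}|_{\cO_{\reg}}\times(\res_{L\otimes E/E}\GLN_n/\bB)$ and noting that $X_{\varphi,N}|_{\cO_{\reg}}$ is smooth over $E$ as an $n$-dimensional bundle over the regular orbit), and you invoke going-down for flat morphisms where the paper appeals to openness together with the argument of \cite[Theorem 4.2.3]{breuil2019local}/\cite[Theorem 4.1]{wu2021local}; both are standard and interchangeable here, so the two proofs are the same in substance.
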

\begin{proof}Since the nilpotent orbit $\cO_{\cP_{\min}}$,\;i.e.,\;the regular nilpotent orbit is  smooth and open dense in $\cN$,\;we get that $\FY_{\overline{r},\cP_{\min}}^{\Box,\bh-\mathrm{st}}\rightarrow \big((\mathrm{Res}_{L/\bQ_p}\GLN_{n,L}/\mathrm{Res}_{L/\bQ_p}\bB)\times_{\bQ_p} E\big)^{\rig}$ is smooth.\;By the similar argument as in the proof of \cite[Theorem 4.2.3]{breuil2019local} or \cite[Theorem 4.1]{wu2021local} (
	using the fact that smooth morphisms are open),\;we deduce the closure relations $\overline{\FX_{\overline{r},\cP_{\min},w}^{\Box,\bh-\mathrm{st}}}=\coprod_{w'\leq w}\FX_{\overline{r},\cP_{\min},w'}^{\Box,\bh-\mathrm{st}}$ when we descent along the map $\FY_{\overline{r},\cP_{\min}}^{\Box,\bh-\mathrm{st}}\rightarrow \FX_{\overline{r},\cP_{\min}}^{\Box,\bh-\mathrm{st}}$.\;
\end{proof}

\section{Local models on special $\omepik$-case}

By modifying the methods in \cite[Section 6]{Ding2021},\;we show that the local geometry of the Bernstein paraboline variety at our potentially semistable non-crysatalline points is closely related to the schemes studied in geometric representation theory.\;

\subsection{Preliminaries}\label{preonschemegeorepn}
We apply \cite[Section 5.1-Section 5.4]{Ding2021} to $\GLN_{n,L}$ and get the following schemes.\;We obtain the affine scheme $\fg_L$ associated to the Lie algebra $\fg_L$.\;In this paper,\;we usually add a subscript ``$L$" (resp.,\;$J$) when we consider $\Sigma_L$-components (resp.,\;$J$-components).\;
\begin{equation}
	\begin{aligned}
		&\widetilde{\fg}_{r,L}:=\prod_{\tau\in \Sigma_L}\widetilde{\fg}_{r,\tau},\;\widetilde{\fg}_{r,\tau}=\widetilde{\fg}_{r}=\{(g\bP_{r,\emptyset},\psi)\in \GLN_n/\bP_{r,\emptyset}\times\fg\;|\;\mathrm{Ad}(g^{-1})\psi\in \tau_{{r,\emptyset}}\},\;
	\end{aligned}
\end{equation}
where  $\tau_{{r,\emptyset}}$ is the full radical of $\fp_{r,\emptyset}$,\;i.e.,\;$\tau_{{r,\emptyset}}=\fn_{r,\emptyset}\rtimes \fz_{r,\emptyset}$.\;If $r=1$,\;we put $\widetilde{\fg}:=\widetilde{\fg}_{1}$ (note that $\bP_{1,\emptyset}=\bB$) and $\widetilde{\fg}_L:=\widetilde{\fg}_{1,L}$.\;We have natural morphisms $q_{\bP_{r,\emptyset}}:\widetilde{\fg}_{r}\rightarrow \fg$ (resp.\;$q_{\bB}:\widetilde{\fg}\rightarrow \fg$) given by $(g\bP_{r,\emptyset},\psi)\mapsto \psi$ (resp.,\;$(g\bB,\psi)\mapsto \psi$).\;We put
\begin{equation}
	\begin{aligned}
		&X_{r,L}:=\widetilde{\fg}_{r,L}\times_{\fg_{L}}\widetilde{\fg}_L\cong\prod_{\tau\in \Sigma_L}X_{r,\tau},X_{r,\tau}=X_{r}:=\widetilde{\fg}_{{r}}\times_{\fg}\widetilde{\fg}, \\&\widetilde{\fg}_{r}\times_{\fg}\widetilde{\fg}=\{(g_1\bB,g_2\bP_{r,\emptyset},\psi)\in \GLN_n/\bP_{r,\emptyset}\times\GLN_n/\bB\times\fg\;|\;\mathrm{Ad}(g_1^{-1})\psi\in \tau_{{r,\emptyset}},\mathrm{Ad}(g_2^{-1})\psi\in \bB\}.\;
	\end{aligned}
\end{equation}
By \cite[Corollary 5.2.2]{Ding2021},\;the scheme $X_{r}$ is equidimensional of dimension $n^2-\frac{(r+2)(r-1)k}{2}$ (in particular,\;of dimension $n^2$ if $r=1$).\;Let $\pi:X_{r}\hookrightarrow \GLN_n/\bP_{r,\emptyset}\times\GLN_n/\bB\times\fg\twoheadrightarrow \GLN_n/\bP_{r,\emptyset}\times\GLN_n/\bB$ be the composition.\;For $w\in \sW^{\Delta_n^k,\emptyset}_{n}$,\;let $X_{r,w}$ be the closed subscheme of $X_{r}$ defined as the reduced Zariski-closure of $\pi^{-1}(U_w)$,\;where $U_w:=\GLN_n(1,w)(\bP_{r,\emptyset}\times\bB)$ (note that $U_w$ and thus $X_{r,w}$ only depends on the coset $\sW_{\Delta_n^k}w$).\;The irreducible components of $X_{r}$ are indexed by $\{X_{r,w}\}_{w\in \sW^{\Delta_n^k,\emptyset}_{n}}$.\;For $w=(w_\tau)_{\tau\in \Sigma_L}\in \sW_{n,L}$,\;let $X_{r,w}:=\prod_{\tau\in \Sigma_L}X_{r,w_\tau}$,\;this is an irreducible component of $X_{r,L}$ which only depends on the coset $\sW_{\Delta_n^k,L}w$.\;

We also need the closed subscheme $q_{\bP_{r,\emptyset}}^{-1}(\tau_{{r,\emptyset}})$ of $\widetilde{\fg}_{{r}}$.\;Let $U_w:=\bB w\bP_{r,\emptyset}/\bP_{r,\emptyset}$ and  $V_w:=q_{\bP_{r,\emptyset}}^{-1}(\tau_{\bP_{r,\emptyset},\Sigma_L})\cap \pi^{-1}(U_w)$,\;where $\pi$ is the composition $\pi:\widetilde{\fg}_{{r}}\hookrightarrow \GLN_n/\bP_{r,\emptyset}\times\fg\rightarrow \GLN_n/\bP_{r,\emptyset}$.\;Then the projection $V_w\rightarrow U_w$ is a geometric vector bundle of dimension $\dim \tau_{{r,\emptyset}}-\lg(w^{\min})$ (the proof is analogous to the proof of \cite[Proposition 5.2.1]{Ding2021}).\;Let $q_{\bP_{r,\emptyset}}^{-1}(\tau_{{r,\emptyset}})_w$ be the closed subscheme of $q_{\bP_{r,\emptyset}}^{-1}(\tau_{{r,\emptyset}})$ defined as the reduced Zariski-closure of $V_w$ in $q_{\bP_{r,\emptyset}}^{-1}(\tau_{{r,\emptyset}})$.\;Then we see that the $q_{\bP_{r,\emptyset}}^{-1}(\tau_{{r,\emptyset}})$ it is equidimensional of  dimension $k+\frac{n(n-r)}{2}$ and  the irreducible components are given by $q_{\bP_{r,\emptyset}}^{-1}(\tau_{{r,\emptyset}})_w$ for $w\in \sW^{\emptyset,\Delta_n^k}_{n}$.\;

 Denote by  $\kappa_{\bP_{r,\emptyset}}:X_{r,L}\rightarrow \fz_{r,\emptyset,L}$ (resp.,\;$\kappa_{\bB}:X_{r,L}\rightarrow \ft_{L}$) the morphism (recall \cite[(5.6)]{Ding2021})
\[(g_1\bP_{r,\emptyset},g_2\bB,\psi)\mapsto\overline{\mathrm{Ad}(g_1^{-1})\psi},\;\text{resp.,\;}(g_1\bP_{r,\emptyset},g_2\bB,\psi)\mapsto\overline{\mathrm{Ad}(g_2^{-1})\psi}\]
where $\overline{\mathrm{Ad}(g_1^{-1})\psi}$ is the image of $\mathrm{Ad}(g_1^{-1})\psi\in \tau_{{r,\emptyset}}$ via $\tau_{{r,\emptyset}}\twoheadrightarrow\fz_{r,\emptyset,L}$ and $\overline{\mathrm{Ad}(g_2^{-1})\psi}$ is the image of $\mathrm{Ad}(g_2^{-1})\psi\in \fb_L$ via $\fb_L\twoheadrightarrow\ft_L$.\;For $w\in \sW_{n,L}$,\;let  $\kappa_{\bP_{r,\emptyset},w},\kappa_{\bB,w}$ be the restriction of  $\kappa_{\bP_{r,\emptyset}},\kappa_{\bB}$ at  $X_{r,w}$.\;

Consider the affine $E$-scheme $\cT_{r,L}:=\fz_{r,\emptyset,L}\times_{\ft_{L}/\sW_{n,L}}\ft_{L}$.\;By \cite[Lemma 5.2.6]{Ding2021},\;the irreducible components of $\cT_{r,L}$ are given by $\cT_{r,w}:=\{(\mathrm{Ad}(w^{-1})z,z):z\in \fz_{r,\emptyset,L}\}$ for  $w\in \sW^{\Delta_n^k,\emptyset}_{n,L}$.\;We have a natural morphism
$\kappa=(\kappa_{\bP_{r,\emptyset}},\kappa_{\bB}):X_{r,L}\rightarrow \cT_{r,L}$.\;By \cite[Lemma 5.2.6]{Ding2021},\;we note that $X_{r,w}$ is the unique irreducible component of $X_{r,L}$ such $\kappa(X_{r,w})=\cT_{r,w}$.\;

The generalized Steinberg variety is defined by $Z_{r,L}=\kappa^{-1}_{\bB}(0)^{\red}=\kappa^{-1}_{\bP_{r,\emptyset}}(0)^{\red}$.\;Let $\cN$ (resp.,\;$\cN_L=\prod_{\tau\in \Sigma_L}\cN_{\tau}$ with $\cN_{\tau}\cong \cN$) be the  nilpotent cone in $\fg$ (resp.,\;$\fg_L$).\;Put 
\begin{equation}
	\widetilde{\cN}_{r,L}:=\prod_{\tau\in \Sigma_L}\widetilde{\cN}_{r,\tau},\;\widetilde{\cN}_{r,\tau}=\widetilde{\cN}_{r}:=\{(g\bP_{r,\emptyset},\psi)\in \GLN_n/\bP_{r,\emptyset}\times\fg\;|\;\mathrm{Ad}(g^{-1})\psi\in {\fn_{r,\emptyset}}\}
\end{equation}
If $r=1$,\;we put $\widetilde{\cN}_{L}:=\widetilde{\cN}_{1,L}$.\;As in \cite[Section 5.4]{Ding2021},\;we have the so-called Springer resolution (resp. generalized Springer resolution) $\widetilde{\cN}_{L}\rightarrow \cN_L$ (resp.,\;$\widetilde{\cN}_{r,L}\rightarrow \cN_L$).\;Then we have
\begin{equation}
	Z_{r,L}:=\prod_{\tau\in \Sigma_L}Z_{r,\tau},\;Z_{r,\tau}=Z_{r}:=(\widetilde{\cN}_{r}\times_{\cN}\widetilde{\cN})^{\red}.\;
\end{equation}
We have $Z_{r,L}\cong (\widetilde{\cN}_{r,L}\times_{\cN_L}\widetilde{\cN}_{L})^{\red}$.\;Moreover,\;by \cite[Corollary 5.4.1]{Ding2021},\;the irreducible components of $Z_{r}$ are index by $\{Z_{r,w}\}_{w\in \sW^{\Delta_n^k,\emptyset}_{n}}$. For $w=(w_\tau)_{\tau\in \Sigma_L}\in \sW_{n,L}$,\;we denote $Z_{r,w}=\prod_{\tau\in \Sigma_L}Z_{r,w_\tau}\hookrightarrow Z_{r,L}$.\;

Moreover,\;we set $\overline{X}_{r,w}:=\kappa_{\bP_{r,\emptyset},w}^{-1}(0)$.\;By the argument before \cite[Lemma 5.4.4]{Ding2021},\;$\overline{X}_{r,w}$ is equidimensional of dimension $\dim Z_{r,L}$,\;and each irreducible components of $\overline{X}_{r,w}$ is $Z_{r,w'}$ for some $w'\in\sW_{n,L}$.\;

All above schemes lie in the following commutative diagram:
\begin{equation}\label{LINVcompatiblemap2}
	\xymatrix{ Z_{r,L} \ar[r] \ar[d] &  X_{r,L}=\widetilde{\fg}_{r,L}\times_{\fg_L}\widetilde{\fg}_{L} \ar[d]^{q_{\bB}}   \\
		\widetilde{\cN}_{r,L} \ar[d]  \ar[r]	&  \widetilde{\fg}_{r,L} \ar[d]^{q_{\bP_{r,\emptyset}}}\\
		\cN_L \ar[r]	&  \fg_L,}
\end{equation}
where $q_{\bB}:X_{r,L}\rightarrow \widetilde{\fg}_{r,L}$ is the base change of $q_{\bB}:\widetilde{\fg}_{L}\rightarrow {\fg}_{L}$ via the morphism $q_{\bP_{r,\emptyset}}:\widetilde{\fg}_{r,L}\rightarrow {\fg}_{L}$.\;




For any $J\subseteq  \Sigma_L$ and $w\in \sW^{\Delta_n^k,\emptyset}_{n,L}$,\;we put $\widetilde{\fg}_{r,J}:=\prod_{\tau\in J}\widetilde{\fg}_{r,\tau}$,\;$X_{r,J}:=\prod_{\tau\in J}X_{r,\tau}$ and $Z_{r,J}:=\prod_{\tau\in J}Z_{r,\tau}$.\;We also denote  $X_{r,J,w}=\prod_{\tau\in J}X_{r,w_\tau}\hookrightarrow X_{r,J}$ and $Z_{r,J,w}=\prod_{\tau\in J}Z_{r,w_\tau}\hookrightarrow Z_{r,J}$.\;

In particular,\;if $r=1$,\;we omit the subscript $r$ in above notation.\;

\subsection{Recollection of some groupoids}\label{sectionlocalmodel}

Keep the situation in Section \ref{Omegafil}.\;Recall that we have fixed  a $(\varphi,\Gamma)$-module $\Dpik$ (resp.,\;$\cM_\Dpik:=\Dpik[1/t]$) over $\cR_{E,L}$ of rank $n$,\;which admits an $\omepik$-filtration $\cF$ with  parameter $({\bx}_\pi,\bm{\delta}_{\bh})\in \sbanpik\times\rigchl$ or with parameter $(\widetilde{\bx}_{\pi,\bh},\widetilde{\bm{\delta}}_\bh)\in \sbanpik\times\rigch $.\;Since our parameter $({\bx}_\pi,\bm{\delta}_{\bh})\in \sbanpik\times\rigchl$ is not generic in the sense of \cite[(6.5)]{Ding2021},\;the groupoids in \cite[Section 6]{Ding2021} have different behavior.\;We study them in this section.\;Keep the notation in \cite[Section 6]{Ding2021}.\;

\subsubsection{Almost de Rham $(\varphi,\Gamma)$-modules}
We recall a little about Fontaine's theory of almost de Rham representations.\;Let $B_{\pdr}^+:=B_{\dr}^+[\log t]$ and $B_{\pdr}:=B_{\pdr}^+[1/t]$.\;The  $\gal_L$-action on $B_{\dr}$ extend uniquely to an action of  $\gal_L$-action on $B_{\pdr}$ with $g(\log t)=\log t+\log(\ccyc(g))$.\;Let $v_{\pdr}$ denote the unique $B_{\dr}$-derivation of $B_{\pdr}$ such that $v_{\pdr}(\log t)=-1$.\;Note that $v_{\pdr}$ and $\gal_L$ commute and both preserve $B_{\pdr}^+$.\;

Let $\mathrm{Rep}_{B_{\dr}}(\gal_L)$ (resp.,\;$\mathrm{Rep}_{B^+_{\dr}}(\gal_L)$) be the category of (free of finite rank) $B_{\dr}$-representations of $\gal_L$ (resp., $B^+_{\dr}$-representations) of $\gal_L$.\;If $W\in \mathrm{Rep}_{B_{\dr}}(\gal_L)$,\;let $D_{\pdr}(W):=(B_{\pdr}\otimes_{B_{\dr}}W)^{\gal_L}$, which is a finite-dimensional $L$-vector space of dimension no more than $\dim_{B_{\dr}}W$.\;The $B_{\dr}$-representation $W$ is called \textit{almost de Rham} if $\dim_LD_{\pdr}(W)=\dim_{B_{\dr}}W$.\;The $B^+_{\dr}$-representation $W^+$ is called \textit{almost de Rham} if $W^+[1/t]$ is almost de Rham.\;Let $\mathrm{Rep}_{\pdr}(\gal_L)$ be the category of almost de Rham $B_{\dr}$-representations $W$ of $\gal_L$.\;

Let $A\in \Art_E$ be a local Artinian $E$-algebra with the maximal ideal $\fm_A$.\;Let $\mathrm{Rep}_{\pdr,A}(\gal_L)$ be the category of almost de Rham $B_{\dr}$-representations $W$ of $\gal_L$ together with a morphism of $\bQ_p$-algebras $A\rightarrow \EndO_{\mathrm{Rep}_{\pdr}(\gal_L)}(W)$ such that $W$ is finite free over $B_{\dr}\otimes_{\bQ_p}A$.\;Let $\mathrm{Rep}_{A\otimes_{\bQ_p}L}(\bG_a)$ be the category of pairs $(V_A,\nu_A)$ where $\nu_A$ is a nilpotent endomorphism of a finite free $A\otimes_{\bQ_p}L$-module $V_A$.\;Then the functor $D_{\pdr}$ induces an equivalence of categories between $\mathrm{Rep}_{\pdr,A}(\gal_L)$ and $\mathrm{Rep}_{A\otimes_{\bQ_p}L}(\bG_a)$ (see \cite[Lemma 3.1.4]{breuil2019local}).\;

\subsubsection{Groupoids}

We recall some groupoids over $\Art_E$ that introduced in \cite[Section 6]{Ding2021}.\;Recall that the $(\varphi,\Gamma)$-module $\cM_\Dpik:=\Dpik[1/t]$ over $\cR_{E,L}[1/t]$  admits a $\omepik$-filtration $\cM_\bullet=(\cM_i)_{1\leq i\leq k}$ with $\cM_i:=\fil_{i}^{\cF}{\Delta_\Dpik}\big[\frac{1}{t}\big]$ with  parameter $({\bx}_\pi,\bm{\delta}_{\bh})\in \sbanpik\times\rigchl$.\;

Let $\bW_{\Dpik}=W_{\dr}(\cM_\Dpik)$ (resp,.\;$\bW^+_{\Dpik}:=W_{\dr}^+(\Dpik)$)  be the $\bB_\dr\otimes_{\bQ_p}E$-representation (resp.,\;$\bB_\dr^+\otimes_{\bQ_p}E$-representation) of $\gal_L$ associated to $\cM_\Dpik$.\;Moreover,\;the $\omepik$-filtration $\cM_\bullet$ on $\cM_{\Dpik}$ induces a $\bP_{r,\emptyset}$-filtration $\bF_{\bullet}=(\bF_{i}):=(W_\dr(\cM_{i}))$ on $\bW_{\Dpik}$ with $\bB_\dr\otimes_{\bQ_p}E$-subrepresentations of  $\bW_{\Dpik}$.\;For $1\leq i\leq k$,\;we put $\gr_i{\bF_{\bullet}}:=\bF_{i}/\bF_{i-1}$,\;so $\gr_i{\bF_{\bullet}}\cong (B_{\dR}\otimes_{\bQ_p}E)^{\oplus r}$ as $\gal_L$-representation.\;We recall certain groupoids of deformations of $\bW_{\Dpik}$ and $\bW^+_{\Dpik}$.\;

Let $X_{\bW_\Dpik}$  be the groupoid over $\Art_E$ of deformations
of $\bW_\Dpik$,\;i.e.,\;consists of triples $(A,W_A,\iota_A)$ where $A\in \Art_E$ and $W_A\in \mathrm{Rep}_{\pdr,A}(\gal_L)$ and $\iota_A:W_A\otimes_AL\xrightarrow{\sim} \bW_{\Dpik}$.\;A morphism $(A,W_A,\iota_A)\rightarrow (B,W_B,\iota_B)$ in $X_{\bW_\Dpik}$ is a morphism $A\rightarrow B$ in  $\Art_E$ and isomorphism $W_A\otimes_A B\xrightarrow{\sim} W_B$ compatible with $\iota_A$ and $\iota_B$.\;Fix an isomorphism $\alpha:(L\otimes_{\bQ_p}E)^n\xrightarrow{\sim} D_{\mathrm{pdR}}(\bW_\Dpik)$.\;Let  $X^{\Box}_{\bW_\Dpik}$ be the groupoid over $\Art_E$ of 
framed deformations of $\bW_\Dpik$,\;i.e.,\;consists of triples $(A,W_A,\iota_A,\alpha_A)$ where $(A,W_A,\iota_A)\in X_{\bW_\Dpik}$ and $\alpha_A:(A\otimes_{\bQ_p}E)^n\xrightarrow{\sim} D_{\mathrm{pdR}}(\bW_A)$ such that $\alpha_A$ modulo $\fm_A$ coincides with $\alpha$.\;

Let $X_{\bW_\Dpik,\bF_{\bullet}}$ be the groupoid over $\Art_E$ of deformations of $\bW_\Dpik$ together with the filtration $\bF_{\bullet}$ (see \cite[Section 6.1]{Ding2021}),\;i.e.,\;consists of triples $(A,W_A,\bF_{A,\bullet},\iota_A)$ where $(A,W_A,\iota_A)\in X_{\bW_\Dpik}$ and $\bF_{A,\bullet}=(\bF_{A,i})_{0\leq i\leq k}$ is a $\bP_{r,\emptyset}$-filtration of $W_A$ in $\mathrm{Rep}_{\pdr,A}(\gal_L)$ such that $\bF_{A,0}=0$ and $\bF_{A,i}/\bF_{A,i-1}$ for $1\leq i\leq k$ are free of $r$ over $B_{\dR}\otimes_{\bQ_p}A$ and is isomorphic to $\bF_{i}/\bF_{i-1}\otimes_{B_{\dR}\otimes_{\bQ_p}E}\epsilon_{A,i}$ for some rank one $B_{\dR}\otimes_{\bQ_p}A$ representation $\epsilon_{A,i}$ and $\iota_A$ induces $\bF_{A,\bullet}\otimes_A E\xrightarrow{\sim}\bF_{\bullet}$.\;For $1\leq i\leq k$,\;we put $\gr_i{\bF_{A,\bullet}}:=\bF_{A,i}/\bF_{A,i-1}$.\;Moreover,\;we put $X^0_{\bW_\Dpik}:=X_{\bW_\Dpik,\bF_{\bullet}}$ if the filtration $\bF_{\bullet}=(0\subseteq \bW_\Dpik)$ (i.e.,\;the trivial one),\;which is subgroupoid of $X_{\bW_\Dpik}$.\;We put $X^\Box_{\bW_\Dpik,\bF_{\bullet}}:=X^\Box_{\bW_\Dpik,\bF_{\bullet}}\times_{X_{\bW_\Dpik}}X_{\bW_\Dpik}^\Box$ and $X^{0,\Box}_{\bW_\Dpik,\bF_{\bullet}}:=X^0_{\bW_\Dpik}\times_{X_{\bW_\Dpik}}X_{\bW_\Dpik}^\Box$.\;

Let $\bW_\Dpik^+$ be the $\bB_\dr^+\otimes_{\bQ_p}E$-representation $W_\dr^+(\Dpik)$ of $\gal_L$ (see \cite[Section 6.3]{Ding2021}).\;We define groupoid $X_{\bW_\Dpik^+}$
over $\Art_E$ exactly as we define $X_{\bW_\Dpik}$ by replacing $W$,\;$W_A$ in $X_{\bW_\Dpik}$ by $W_\Dpik^+$,\;$W_A^+$ with $W_A^+$ an almost de Rham $A\otimes_{\bQ_p}B_{\dR}^+$-representation of $\gal_L$.\;We have natural morphism $X_{\bW_\Dpik^+} \rightarrow X_{\bW_\Dpik}$ (by inverting $t$).\;We put $X_{\bW_\Dpik^+}^\Box:=X_{\bW_\Dpik^+}{\times}_{X_{\bW_\Dpik}}X_{\bW_\Dpik}^\Box$,\;$X_{\bW_\Dpik^+,\bF_{\bullet}}:=X_{\bW_\Dpik^+}\times_{X_{\bW_\Dpik}}X_{\bW_\Dpik,\bF_{\bullet}}$ and  $X^\Box_{\bW_\Dpik^+,\bF_{\bullet}}:=X^\Box_{\bW_\Dpik^+,\bF_{\bullet}}\times_{X_{\bW_\Dpik}}X_{\bW_\Dpik}^\Box$.\;

Now let $\cD_{\bullet}=(\cD_{i})_{1\leq i\leq n}:=(D_{\pdr}(\bF_{j}))_{1\leq i\leq n}$,\;which is a complete flag of $D_{\pdr}(\bW_\Dpik)$.\;On the other hand,\;the $B_{\dr}^+$-lattice $\bW^+_\Dpik$ induces another complete flag 
\[\fil_{\bW^+_\Dpik,\bullet}:=\Big(\fil_{\bW^+_\Dpik,i}(D_{\pdr}(\bW_\Dpik))\Big)_{1\leq i\leq n},\]
of $D_{\pdr}(\bW_\Dpik)$ (see \cite[(3.5)]{breuil2019local}),\;with 
\begin{equation}
	\begin{aligned}
		\fil_{\bW^+_\Dpik,i}(D_{\pdr}(\bW_\Dpik)):=\bigoplus_{\tau\in\Sigma_L}\fil^{-\bh_{\tau,n+1-i}}_{\bW^+_\Dpik}(D_{\pdr,\tau}(\bW_\Dpik)):=\bigoplus_{\tau\in\Sigma_L}(t^{\bh_{\tau,n+1-i}}\bW^+_\Dpik)_{\tau}^{\gal_L}
	\end{aligned}
\end{equation}
where $D_{\pdr,\tau}(\bW_\Dpik):=D_{\pdr}(\bW_\Dpik)\otimes_{L\otimes_{\bQ_p}E}(L\otimes_{L,\tau}E)$ and $(t^{\bh_{\tau,n+1-i}}\bW^+_\Dpik)_{\tau}:=t^{\bh_{\tau,n+1-i}}\big(\bW^+_\Dpik\otimes_{L\otimes_{\bQ_p}E}(L\otimes_{L,\tau}E)\big)$.\;

Let $y$  be the closed point of the $E$-scheme $X_{r,L}$  corresponding to the triple $(\alpha^{-1}(\cD_{\bullet}),\;\alpha^{-1}(\fil_{\bW^+_\Dpik,\bullet}),N_{\bW_\Dpik})$ (by assumption,\;$N_{\bW_\Dpik}=0$).\;In the sequel,\;we write $y_1:=\pr_1y=(\alpha^{-1}(\cD_{\bullet}),N_{\bW_\Dpik})\in \widetilde{\fg}_{r,L}$,\;$y_2:=\pr_2y=(\alpha^{-1}(\fil_{\bW^+_\Dpik,\bullet}),N_{\bW_\Dpik})\in \widetilde{\fg}_{L}$ and $z=(N_{\bW_\Dpik})\in {\fg}$.\;We have:
\begin{itemize}
	\item by \cite[Corollary  3.1.6]{breuil2019local} (resp.,\;\cite[Theorem  3.2.5]{breuil2019local}),\;the groupoid $X^\Box_{\bW_\Dpik}\cong|X_{\bW_\Dpik}^\Box|$ (resp.,\;$X_{\bW_\Dpik^+}^\Box\cong |X_{\bW_\Dpik^+}^\Box|$) is pro-representable,\;and the functor $|X^\Box_{\bW_\Dpik}|$ (resp.,\;$|X_{\bW_\Dpik^+}^\Box|$) is pro-representated by the formal scheme $\widehat{\widetilde{\fg}}_{L,z}$ (resp.,\;$\widehat{\widetilde{\fg}}_{L,y_2}$);
	\item by \cite[Proposition 6.1.2]{Ding2021} (resp.,\;\cite[Proposition 6.3.2 (1)]{Ding2021}),\;the groupoid $X^\Box_{\bW_\Dpik,\bF_{\bullet}}\cong|X_{\bW_\Dpik,\bF_{\bullet}}^\Box|$ (resp.,\;$X^\Box_{\bW_\Dpik^+,\bF_{\bullet}}\cong |X^\Box_{\bW_\Dpik^+,\bF_{\bullet}}|$) is pro-representable,\;and the functor $|X^\Box_{\bW_\Dpik,\bF_{\bullet}}|$ (resp.,\;$|X^\Box_{\bW_\Dpik^+,\bF_{\bullet}}|$) is pro-representated by the formal scheme $\widehat{\widetilde{\fg}}_{r,L,y_1}$ (resp.,\;$\widehat{X}_{r,L,y}$).\;
\end{itemize}
For $w\in \sW_n$,\;we define the groupoid $X^{\Box,w}_{\bW^+_\Dpik,\bF_{\bullet}}:=X^{\Box}_{\bW^+_\Dpik,\bF_{\bullet}}\times_{|X^{\Box}_{\bW^+_\Dpik,\bF_{\bullet}}|}\widehat{X}_{r,w,y}$.\;Then the groupoid $X^{\Box,w}_{\bW^+_\Dpik,\bF_{\bullet}}$ over $\Art_E$ is pro-representable.\;The functor $|X^{\Box,w}_{\bW^+_\Dpik,\bF_{\bullet}}|$  is pro-represented by the formal scheme $\widehat{X}_{r,w,y}$.\;Let $X^{w}_{\bW^+_\Dpik,\bF_{\bullet}}$ be the image of $X^{\Box,w}_{\bW^+_\Dpik,\bF_{\bullet}}$ by the forgetful morphism  $X^{\Box}_{\bW^+_\Dpik,\bF_{\bullet}}\rightarrow X_{\bW^+_\Dpik,\bF_{\bullet}}$.\;By \cite[Proposition 6.3.4]{Ding2021},\;the morphism of groupoids $X^{w}_{\bW^+_\Dpik,\bF_{\bullet}}\rightarrow X_{\bW^+_\Dpik,\bF_{\bullet}}$,\;$X^{\Box,w}_{\bW^+_\Dpik,\bF_{\bullet}}\rightarrow X^{\Box}_{\bW^+_\Dpik,\bF_{\bullet}}$ are relatively representable and are closed immersions.\;

We then list some groupoids of $\omepik$-deformations of $\cM_{\Dpik}$ (or $\Dpik$).\;

We say $\cM_A$ over $\cR_{A,L}$ is of type $\omepik$ if there exists a filtration $\cM_{A,\bullet}=(\cM_{A,i})_{0\leq i\leq k}$ by  $(\varphi,\Gamma)$-submodule of $\cM_A$ over $\cR_{A,L}[1/t]$ such that $\cM_{A,0}=0$ and $\cM_{A,i}/\cM_{A,i-1}\cong \Delta_{x_i}\otimes_{\cR_{E,L}}\cR_{A,L}(\delta_{A,i})[1/t]$ for some continuous character $\delta_{A,i}:L^{\times}\rightarrow A^{\times}$ and $x_i\in \Spec \FZ_{\Omega_{r}}$.\;Such a filtration is called a $\omepik$-filtration,\;and $((x_i)_{1\leq i\leq k},(\delta_{A,i})_{1\leq i\leq k})\in \sbanpik\times\rigchl$ is called a parameter of $\cM_{A,\bullet}$.\;

As in \cite[Section 6.2]{Ding2021},\;we define the groupoid $X_{\cM_\Dpik,\cM_{\bullet}}$ over $\Art_E$ of $\omepik$-filtration of $\cM_\Dpik$,\;i.e.,\;consist of triples $(A,\cM_A,\cM_{A,\bullet},j_A)$ where $\cM_A$ is a $(\varphi,\Gamma)$-module over $\cR_{A,L}[1/t]$ of type $\omepik$ ,\;and $\cM_{A,\bullet}$ is an $\omepik$-filtration of $\cM_A$  such that $j_A:\cM_A\otimes_AE\xrightarrow{\sim}\cM_\Dpik$ is compatible with the filtrations.\;By \cite[Lemma 6.2.2]{Ding2021},\;for $(A,\cM_A,\cM_{A,\bullet},j_A)\in X_{\cM_\Dpik,\cM_{\bullet}}$,\;there exist unique characters $\underline{\delta}_A=(\delta_{A,i})_{1\leq i\leq k}:L^{\times}\rightarrow A^{\times}$ such that $\delta_{A,i}\equiv \bm{\delta}_{\bh,i}\;\Modo\;\fm_A$ and $((x_i)_{1\leq i\leq k},(\delta_{A,i})_{1\leq i\leq k})\in \sbanpik\times\rigchl$ is a parameter of $\cM_{A,\bullet}$.\;

Let $X_{\cM_\Dpik}$ be the groupoid over $\Art_E$ by forgetting everywhere the $\omepik$-filtrations in $X_{\cM_\Dpik,\cM_{\bullet}}$.\;The functor $W_\dr$ defines a morphism of groupoids  $X_{\cM_\Dpik}\rightarrow X_{\bW_\Dpik}$ and $X_{\cM,\cM_{\bullet}}\rightarrow X_{\bW_\Dpik,\bF_{\bullet}}$.\;We put $X_{\cM_\Dpik}^\Box:=X_{\cM_\Dpik}\times_{X_{\bW_\Dpik}}X_{\bW_\Dpik}^\Box$ and $X_{\cM_\Dpik,\cM_{\bullet}}^\Box:=X_{\cM_\Dpik,\cM_{\bullet}}\times_{X_{\bW_\Dpik}}X_{\bW_\Dpik}^\Box$.\;

Let $X_{\Dpik}$  be the groupoid over $\Art_E$ of deformations of $\Dpik$ (see \cite[Section 6.3]{Ding2021}).\;We have natural morphisms $X_{\Dpik}\rightarrow X_{\bW_\Dpik^+}$ (induced by the functor $W_\dr^+(-)$),\;$X_{\Dpik}\rightarrow X_{\cM_\Dpik}$ (by inverting $t$).\;Note that we have a natural morphism $X_{\Dpik}\rightarrow X_{\bW_\Dpik^+}\times_{X_{\bW_\Dpik}}X_{\cM_\Dpik}$,\;which is an equivalence by \cite[Proposition 3.5.1]{breuil2019local}.\;We  put 
$X_{\Dpik}^\Box:=X_{\Dpik}\times_{X_{\bW_\Dpik}}X_{\bW_\Dpik}^\Box$, and $X_{\Dpik,\cM_{\bullet}}^\Box:=X_{\Dpik,\cM_{\bullet}}\times_{X_{\bW_\Dpik}}X_{\bW_\Dpik}^\star$.\;For $w\in \sW_n$,\;We put
\begin{equation}
	\begin{aligned}
		&X^{\star,w}_{\Dpik,\cM_{\bullet}}:=X^\Box_{\Dpik,\cM_{\bullet}}\times_{X^{\Box}_{\bW^+_\Dpik,\bF_{\bullet}}}X^{\Box,w}_{\bW^+_\Dpik,\bF_{\bullet}},X^{w}_{\Dpik,\cM_{\bullet}}:=X_{\Dpik,\cM_{\bullet}}\times_{X_{\bW^+_\Dpik,\bF_{\bullet}}}X^{w}_{\bW^+_\Dpik,\bF_{\bullet}}.\;
	\end{aligned}
\end{equation}
By \cite[Proposition 6.3.4]{Ding2021},\;the morphism of groupoids  $X^{w}_{\Dpik,\cM_{\bullet}}\rightarrow X_{\Dpik,\cM_{\bullet}}$ and $X^{\Box,w}_{\Dpik,\cM_{\bullet}}\rightarrow X^{\Box}_{\Dpik,\cM_{\bullet}}$ are relatively representable and are closed immersions.\;

Let $\rho_L:\gal_L\rightarrow \GLN_n(E)$ be a continuous group morphism and let $V(\rho_L)$ be the associated representation of $\rho_L$.\;Supppse $\Dpik=D_{\rig}(V(\rho_L))$.\;Recall that $X_{\rho_L}$ denotes the groupoid over $\Art_E$ of deformations of the group morphism $\rho_L$.\;Let $X_{V(\rho_L)}$ be the groupoid over $\Art_E$ of deformations of the representation $V(\rho_L)$.\;We can identity $X_{\rho_L}$ with the framed deformations of $V(\rho_L)$.\;Therefore the morphisms $X_{\rho_L}\rightarrow X_{V(\rho_L)}$ is relatively representable and formally smooth of relative dimension $n^2$.\;We also have an equivalence $X_{V(\rho_L)}\xrightarrow{\sim} X_{\Dpik}$ (induced by the $D_{\rig}(-)$-functor) and $X_{\rho_L} \xrightarrow{\sim} |X_{\rho_L}|$.\;We put \[X_{V(\rho_L),\cM_{\bullet}}:=X_{V(\rho_L)}\times_{X_{\Dpik}}X_{\Dpik,\cM_{\bullet}},\;X_{\rho_L,\cM_{\bullet}}:=X_{\rho_L}\times_{X_{\Dpik}}X_{\Dpik,\cM_{\bullet}}.\]
Then $X_{\rho_L,\cM_{\bullet}}\rightarrow X_{\rho_L}$  is a closed immersion by base change.\;For $w\in \sW_{n,L}$,\;we define 
\begin{equation}
	\begin{aligned}
		& X^{w}_{V(\rho_L),\cM_{\bullet}}:=X_{V(\rho_L)}\times_{X_{\Dpik}}X^{w}_{\Dpik,\cM_{\bullet}},\;
		X^{w}_{\rho_L,\cM_{\bullet}}:=X_{\rho_L}\times_{X_{\Dpik}}X^{w}_{\Dpik,\cM_{\bullet}},\;\hspace{100pt}\\
	\end{aligned}
\end{equation}
and their $\Box$-versions $X^{\star,w}_{V(\rho_L),\cM_{\bullet}}$,\;$X^{\star,w}_{\rho_L,\cM_{\bullet}}$.\;In next section,\;in order to study groupoids $X^{\bullet,\star}_{\ast,\cM_{\bullet}}$ for $\ast\in \{\cM_{\Dpik},\Dpik,\rho_L\}$,\;$\bullet\in\{\Box,\emptyset\}$ and $\star\in\{w,\emptyset\}$,\;we introduce certain groupoid  $\widehat{X}^{(\varphi,\Gamma)}_{\bW_\Dpik,\bF_{\bullet}}$.\;



\subsection{Variation of local models and its geometry}\label{sectionlocalmodelmainresult}

This section aim to study $X^\Box_{\cM_\Dpik,\cM_{\bullet}}$ (for $\star\in\{\Box,\ver\}$) and find its local model.\;More precisely,\;recall that we have a morphism of groupoids over $\Art_E$:
\begin{equation}
	\omega_{\bm{\delta}_{\bh}}:X_{\cM_\Dpik,\cM_{\bullet}}\rightarrow \widehat{(\cZ_{\bL_{r,\emptyset},L})}_{\bm{\delta}_{\bh}}
\end{equation}
by setting the triple $(A,\cM_A,\cM_{A,\bullet},j_A)$ to the parameter of $\cM_{A,\bullet}$ given in \cite[Lemma 6.2.2]{Ding2021} (in precise,\;$(A,\cM_A,\cM_{A,\bullet},j_A)\in X_{\cM_\Dpik,\cM_{\bullet}}$,\;there exist unique characters $\underline{\delta}_A=(\delta_{A,i})_{1\leq i\leq k}:L^{\times}\rightarrow A^{\times}$ such that $\delta_{A,i}\equiv \bm{\delta}_{\bh,i}\;\Modo\;\fm_A$ and $((x_i)_{1\leq i\leq k},(\delta_{A,i})_{1\leq i\leq k})\in \sbanpik\times\rigchl$ is a parameter of $\cM_{A,\bullet}$).\;Let $\widehat\fz_{r,\emptyset,L}$ be the completion of $\fz_{r,\emptyset,L}$ at $0$.\;By the diagram in \cite[Proposition 6.2.3,\;Theorem\;6.2.6]{Ding2021},\;we have a morphism,\;i.e.,\;the so-called \textit{local model map} (induced by $\omega_{\bm{\delta}_{\bh}}$ and the natural morphism $X_{\cM_\Dpik,\cM_{\bullet}}\rightarrow X_{\bW_\Dpik,\bF_{\bullet}}$):
\begin{equation}\label{localmodelmap}
\Upsilon:X_{\cM_\Dpik,\cM_{\bullet}}\rightarrow \widehat{(\cZ_{\bL_{r,\emptyset},L})}_{\bm{\delta}_{\bh}}\times_{\widehat\fz_{r,\emptyset,L}} X_{\bW_\Dpik,\bF_{\bullet}}.\;
\end{equation}

To analysis this morphism $\Upsilon$,\;we begin with some computations on the cohomology of $(\varphi,\Gamma)$-modules over $\cR_{E,L}[1/t]$.\;

For $1\leq i,j\leq k$,\;denote by $\cM_{i,j}^0:=\EndO(\Delta_{\pi})\tee\cR_{E,L}(\bm{\delta}_{\bh,i}^{-1}\bm{\delta}_{\bh,j})$ and $\cM_{i,j}:=\cM_{i,j}^0[1/t]$.\;In particular,\;we have $\cM_{i,i-1}\cong \EndO(\Delta_{\pi})\tee \cR_{E,L}(\bm{\delta}_{\bh,i}^{-1}\bm{\delta}_{\bh,i-1})[1/t]=\EndO(\Delta_{\pi})\tee \cR_{E,L}(\ccyc)[1/t]$,\;which is not dependent on $2\leq i\leq k$.\;
\begin{lem}\label{dimlemmaCM}\hspace{20pt}\\
	(1) For any $j\neq i-1,i$,\;we have $\hH^0_{(\varphi,\Gamma)}(\cM_{i,j})=\hH^2_{(\varphi,\Gamma)}(\cM_{i,j})=0$,\;and $\dim_E\hH^1_{(\varphi,\Gamma)}(\cM_{i,j})=d_Lr^2$.\;\\
	(2) For $2\leq i\leq k$,\;we also have $\hH^0_{(\varphi,\Gamma)}(\cM_{i,i-1})=\hH^2_{(\varphi,\Gamma)}(\cM_{i,i-1})=0$,\;and $\dim_E\hH^1_{(\varphi,\Gamma)}(\cM_{i,j})=d_Lr^2$.\;
\end{lem}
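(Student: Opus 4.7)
My plan is to compute $\hH^\ast_{(\varphi,\Gamma)}(\cM_{i,j})$ by passing through the underlying rank-$r^2$ $(\varphi,\Gamma)$-module $\cM_{i,j}^0 = \End(\Delta_\pi) \tee \cR_{E,L}(\bm{\delta}_{\bh,i}^{-1}\bm{\delta}_{\bh,j})$ over $\cR_{E,L}$. Using $\cM_{i,j}^0[1/t] = \varinjlim_N t^{-N}\cM_{i,j}^0$ and the fact that $(\varphi,\Gamma)$-cohomology commutes with directed colimits, I obtain
$$\hH^\ast(\cM_{i,j}) \,=\, \varinjlim_N \hH^\ast\!\bigl(\cM_{i,j}^0 \tee \cR_{E,L}(\ccyc^{-N})\bigr).$$
Unpacking the twist using Definition \ref{dfnnoncriticalspecial} gives $\bm{\delta}_{\bh,i}^{-1}\bm{\delta}_{\bh,j} = \unr(q_L^{j-i}) z^{w_\cF(\bh)_{jr}-w_\cF(\bh)_{ir}}$; the unramified factor of $\bm{\delta}_{\bh,i}^{-1}\bm{\delta}_{\bh,j}\ccyc^{-N}$ is $\unr(q_L^{j-i+N f_L})$, which has infinite order in $E^\times$ for $N \gg 0$ whenever $j \neq i$.

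For the vanishing $\hH^0(\cM_{i,j}^0(\ccyc^{-N})) = 0$ for $N \gg 0$: a non-zero $(\varphi,\Gamma)$-invariant of $\End(\Delta_\pi) \tee \cR_{E,L}(\chi)$ gives, via the absolute irreducibility of $\Delta_\pi$ (from the cuspidality of $\pi$) and Schur's lemma, an isomorphism $\Delta_\pi \xrightarrow{\sim} \Delta_\pi(\chi^{-1})$ over $\cR_{E,L}$. Such an isomorphism forces $\chi^{-1}$ into the finite group of self-twist characters of $\Delta_\pi$ (whose order divides $r$); since the unramified component of our $\chi^{-1}$ has infinite order for large $N$, no such isomorphism exists. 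For $\hH^2$ I apply Tate duality at the finite level,
$$\hH^2(\cM_{i,j}^0(\ccyc^{-N})) \;\cong\; \hH^0\!\bigl(\End(\Delta_\pi) \tee \cR_{E,L}(\bm{\delta}_{\bh,j}^{-1}\bm{\delta}_{\bh,i}\ccyc^{N+1})\bigr)^\vee,$$
using $\End(\Delta_\pi)^\vee \cong \End(\Delta_\pi)$ through the trace pairing. The dual twist has unramified factor $\unr(q_L^{i-j-N-1})$, again of infinite order for $N \gg 0$, so the analogous Schur argument yields $\hH^2 = 0$. This uniformly covers both case (1) ($j \neq i-1, i$) and case (2) ($j = i-1$); the latter case is the most delicate since for $N=0$ the dual twist $\bm{\delta}_{\bh,i-1}^{-1}\bm{\delta}_{\bh,i}\ccyc$ has trivial unramified part, but this is irrelevant for the colimit.

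Combining these vanishings with the Euler--Poincar\'e formula $\chi(\cM_{i,j}^0(\ccyc^{-N})) = -d_L r^2$ for $(\varphi,\Gamma)$-modules of rank $r^2$ over $\cR_{E,L}$ yields $\dim_E \hH^1(\cM_{i,j}^0(\ccyc^{-N})) = d_L r^2$ for $N$ large. To conclude, I would verify that the transition maps $\hH^1(\cM_{i,j}^0(\ccyc^{-N})) \to \hH^1(\cM_{i,j}^0(\ccyc^{-N-1}))$ stabilize to isomorphisms for $N \gg 0$, using the long exact sequences associated to $0 \to \cM_{i,j}^0(\ccyc^{-N}) \to \cM_{i,j}^0(\ccyc^{-N-1}) \to Q_N \to 0$ together with the already-established vanishings. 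The hardest part will be this stabilization bookkeeping combined with ensuring the Schur-type vanishing is uniform in $N$---in particular, one must rule out accidental coincidences between the $\varphi$-eigenvalues of $\Delta_\pi$ and powers of $q_L$ that could produce an unexpected non-zero self-twist at some intermediate $N$.
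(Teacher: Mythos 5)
Your overall strategy — take the colimit over the lattices $t^{-N}\cM_{i,j}^0$, kill $\hH^0$ and $\hH^2$ of each lattice for $N \gg 0$, and extract $\dim\hH^1$ from the Euler--Poincar\'e formula — is essentially the architecture of the paper's proof (which invokes genericity lemmas from \cite{Ding2021} instead of the Schur/self-twist argument). Your irreducibility argument for $\hH^0$ of the lattices is sound: once $N$ is large the twist has nontrivial algebraic part, so the Hodge--Tate weights already obstruct an isomorphism $\Delta_\pi \cong \Delta_\pi(\chi)$, and the dual statement via Tate duality handles $\hH^2$.

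The genuine gap is in the stabilization step, and your text papers over it. From the long exact sequence attached to $0 \to t^{-N}\cM_{i,j}^0 \to t^{-N-1}\cM_{i,j}^0 \to Q_N \to 0$, the kernel of $\hH^1(t^{-N}\cM_{i,j}^0) \to \hH^1(t^{-N-1}\cM_{i,j}^0)$ is $\hH^0_{(\varphi,\Gamma)}(Q_N)$ (since $\hH^0$ of the lattices vanishes), and this term is \emph{not} controlled by ``the already-established vanishings'' — those concern the lattices, not the $t$-torsion quotient $Q_N$. Without $\hH^0(Q_N)=0$ for $N\gg 0$ you cannot conclude the transition maps on $\hH^1$ are eventually injective, and the Euler--Poincar\'e dimension count of each layer says nothing about the colimit (a strictly increasing chain of $d_L r^2$-dimensional spaces can have infinite-dimensional union). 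The paper supplies exactly the missing ingredient: it translates $\hH^0_{(\varphi,\Gamma)}(Q_N)$ into $\hH^0(\gal_L, W_{\dr}^+(\cdot)/t^N W_{\dr}^+(\cdot))$ via \cite[Lemma 5.1.1]{breuil2020probleme} and then kills it for $N\gg 0$ by Nakamura's Sen-theoretic lemma \cite[Lemma 2.16]{nakamura2009classification}. This is a Hodge--Tate/Sen-weight input that is independent of the irreducibility of $\Delta_\pi$. Relatedly, your closing worry about ``accidental coincidences between $\varphi$-eigenvalues of $\Delta_\pi$ and powers of $q_L$'' is misplaced: once the algebraic part of the twist is nonzero, $\hH^0$ of the lattice dies by weight mismatch before the unramified part enters; the quantity that actually needs monitoring as $N$ grows is the Sen weight of $Q_N$, not the $\varphi$-eigenvalues.
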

\begin{proof}Part (1) follows from  \cite[Lemma 6.2.5]{Ding2021},\;since the parameter of $\cM_{i,j}$ is generic in the sense of \cite[(6.5)]{Ding2021}.\;We prove $(2)$.\;We deduce from the morphism $t^{-N}\cM_{i,i-1}^0\hookrightarrow t^{-N-1}\cM_{i,i-1}^0$ a long exact sequence
	\begin{equation}\label{ccccccc}
		\begin{aligned}
			0 \rightarrow\; &\hH^0_{(\varphi,\Gamma)}(t^{-N}\cM_{i,i-1}^0)  \rightarrow 
			\hH^0_{(\varphi,\Gamma)}(t^{-N-1}\cM_{i,i-1}^0)  \rightarrow 
			\hH^0_{(\varphi,\Gamma)}(t^{-N-1}\cM_{i,i-1}^0/t^{-N}\cM_{i,i-1}^0)  \rightarrow \\
			&\hH^1_{(\varphi,\Gamma)}(t^{-N}\cM_{i,i-1}^0)  \rightarrow 
			\hH^1_{(\varphi,\Gamma)}(t^{-N-1}\cM_{i,i-1}^0)  \rightarrow 
			\hH^1_{(\varphi,\Gamma)}(t^{-N-1}\cM_{i,i-1}^0/t^{-\sigma_N}\cM_{i,i-1}^0)  \rightarrow \\
			&\hH^2_{(\varphi,\Gamma)}(t^{-N}\cM_{i,i-1}^0)  \rightarrow 
			\hH^2_{(\varphi,\Gamma)}(t^{-N-1}\cM_{i,i-1}^0)  \rightarrow 
			\hH^2_{(\varphi,\Gamma)}(t^{-{N-1}}\cM_{i,i-1}^0/t^{-N}\cM_{i,i-1}^0)  \rightarrow  0.
		\end{aligned}
	\end{equation}
	By \cite[Theorem 4.7]{liu2007cohomology},\;we see that $\hH^2_{(\varphi,\Gamma)}(t^{-N-1}\cM_{i,i-1}^0/t^{-N}\cM_{i,i-1}^0)=0$ and 
	\[\dim_E\hH^0_{(\varphi,\Gamma)}(t^{-N-1}\cM_{i,i-1}^0/t^{-N}\cM_{i,i-1}^0)=\dim_E\hH^1_{(\varphi,\Gamma)}(t^{-N-1}\cM_{i,i-1}^0/t^{-N}\cM_{i,i-1}^0)<\infty.\;\]
	By \cite[Lemma 5.1.1]{breuil2020probleme},\;we have 
	\begin{equation}
		\begin{aligned}
			\hH^0_{(\varphi,\Gamma)}(t^{-N-1}&\cM_{i,i-1}^0/t^{-N}\cM_{i,i-1}^0)\cong 
			\hH^0(\gal_L,t^{-N-1}W_{\dr}^+(\cM_{i,i-1}^0)/t^{-\sigma_N}W_{\dr}^+(\cM_{i,i-1}^0))\\
			&\cong \hH^0\big(\gal_L,t^{-N-1}W_{\dr}^+(\cR_{E,L}(\bm{\delta}_{\bh,i}^{-1}\bm{\delta}_{\bh,i-1}))/t^{-N}W_{\dr}^+(\cR_{E,L}(\bm{\delta}_{\bh,i}^{-1}\bm{\delta}_{\bh,i-1}))\big)^{\oplus r^2}.\\
		\end{aligned}
	\end{equation}
	By \cite[Lemma 2.16]{nakamura2009classification},\;the latter is when $N$ is sufficiently large.\;Then by (\ref{ccccccc}) and \cite[(3.11)]{breuil2019local},\;we get that $\hH^j_{(\varphi,\Gamma)}(\cM_{i,j})=\hH^j_{(\varphi,\Gamma)}(t^{-N}\cM_{i,j}^0)$ for sufficiently large $N$.\;Since the parameter of $t^{-N}\cM_{i,j}^0$ is generic in the sense of \cite[(4.13)]{Ding2021} for sufficiently large $N$,\;Part $(2)$ follows from \cite[Lemma 4.1.12]{Ding2021}.\;
\end{proof}

\begin{lem}\label{lemmaMM} The morphism $|X_{\cM_\Dpik,\cM_{\bullet}}|\rightarrow |X_{\cM_\Dpik}|$ is relatively representable,\;and $|X_{\cM_\Dpik,\cM_{\bullet}}|$ is a subfunctor of $|X_{\cM_\Dpik}|$.\;Therefore the morphism $X_{\cM_\Dpik,\cM_{\bullet}}\rightarrow X_{\cM_\Dpik}$ of groupoids is relatively representable and is a closed immersion.\;
\end{lem}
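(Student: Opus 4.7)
The assertion decomposes into showing that $\cM_A$ admits at most one $\omepik$-filtration lifting $\cM_\bullet$ (the subfunctor property) and that this condition is a closed condition on $\cM_A$ (the closed-immersion property). For uniqueness I will use a double induction together with Lemma \ref{dimlemmaCM}(1), and for closedness a standard obstruction-theoretic analysis using the $\hH^2$-vanishing in the same lemma.

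\textbf{Step 1 (subfunctor property).} Fix $A\in\Art_E$ and two filtrations $\cM_{A,\bullet}$, $\cM'_{A,\bullet}$ on a common deformation $\cM_A$, both lifting $\cM_\bullet$. I will prove $\cM_{A,i} = \cM'_{A,i}$ by induction on $i$; granting equality for $j < i$, the composition
\[
\cM_{A,i}/\cM_{A,i-1}\ \hookrightarrow\ \cM_A/\cM_{A,i-1}\ =\ \cM_A/\cM'_{A,i-1}\ \twoheadrightarrow\ \cM_A/\cM'_{A,i}
\]
is a morphism of $(\varphi,\Gamma)$-modules over $\cR_{A,L}[1/t]$ from an object of type $\Delta_{x_i}\otimes_{\cR_{E,L}}\cR_{A,L}(\delta_{A,i})[1/t]$ into an iterated extension of objects of type $\Delta_{x_j}\otimes\cR_{A,L}(\delta'_{A,j})[1/t]$ for $j > i$. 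A dévissage along the induced filtration, combined with a secondary induction on $\length(A)$ that reduces the problem to the residue field case, identifies the governing $\Hom$-groups with $\hH^0_{(\varphi,\Gamma)}(\cM_{i,j})$ for $j > i$. Since $j > i$ automatically excludes both $j = i$ and $j = i-1$, Lemma \ref{dimlemmaCM}(1) yields the required vanishing. The composition is thus zero, giving $\cM_{A,i}\subseteq \cM'_{A,i}$, and exchanging the filtrations gives equality. This proves the subfunctor property on isomorphism classes; the induced groupoid morphism $X_{\cM_\Dpik,\cM_\bullet}\to X_{\cM_\Dpik}$ is faithful because the filtration data (once it exists) carries no extra automorphisms beyond those already present on the underlying $(\varphi,\Gamma)$-module.

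\textbf{Step 2 (closed immersion).} For a small extension $A'\twoheadrightarrow A$ in $\Art_E$ with kernel $I$ annihilated by $\fm_{A'}$, and a deformation $\cM_{A'}$ whose restriction $\cM_A$ admits the (by Step 1, unique) filtration $\cM_{A,\bullet}$, lifting $\cM_{A,\bullet}$ to an $\omepik$-filtration of $\cM_{A'}$ amounts to inductively choosing saturated $(\varphi,\Gamma)$-submodules of $\cM_{A'}/\cM_{A',i-1}$ lifting $\cM_{A,i}/\cM_{A,i-1}$. At each step, the obstruction lives in an $\hH^2_{(\varphi,\Gamma)}$-group built from the $\cM_{i,j}$ ($j > i$) tensored with $I$, while the torsor of lifts is controlled by the corresponding $\hH^1$'s. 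By Lemma \ref{dimlemmaCM}(1) the obstructions vanish, and the $\hH^1$'s have constant finite dimension, so the liftability condition cuts out a closed subspace in the $I$-direction by finitely many linear equations. Combined with the pro-representability of $|X_{\cM_\Dpik}|$ (via its equivalence with a framed deformation functor), this produces a closed immersion $|X_{\cM_\Dpik,\cM_\bullet}|\hookrightarrow |X_{\cM_\Dpik}|$, from which the groupoid statement follows by base change.

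\textbf{Main obstacle.} The crucial delicate point is the absence of the genericity hypothesis of \cite[(6.5)]{Ding2021}: in the generic setup, the uniqueness vanishing at the neighbouring index $j = i+1$ was automatic from parameter separation, whereas here $\bm{\delta}_{\bh,i}^{-1}\bm{\delta}_{\bh,i+1}$ is emphatically non-generic due to the Steinberg pattern. The plan survives because the precise range ``$j \neq i-1, i$'' in Lemma \ref{dimlemmaCM}(1) still covers $j = i+1$; the truly non-generic direction is the opposite one $j = i-1$ (where the character becomes cyclotomic), which is not needed for the present lemma but will be essential later when analysing the local model map $\Upsilon$ in \eqref{localmodelmap}, and is precisely the content of Lemma \ref{dimlemmaCM}(2).
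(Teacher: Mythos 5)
Your Step~1 is essentially the paper's own argument: uniqueness of the filtration is proved by an induction in which the key vanishing is $\hH^0_{(\varphi,\Gamma)}(\cM_{i,j})=0$ for $j>i$, and your observation that the range ``$j\neq i-1,i$'' of Lemma~\ref{dimlemmaCM}(1) still covers the crucial neighbouring index $j=i+1$, while the genuinely non-generic direction $j=i-1$ is never needed here, correctly identifies why the argument is immune to the loss of genericity. This matches the paper's proof, which reduces by induction on the length to the unique determination of $\cM_{A,1}$ via $\homo_{(\varphi,\Gamma)}(\widetilde{\cM}_{A,1},\cM_A/\cM_{A,1})=0$.

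Step~2, however, contains a concrete error. Fix a small extension $A'\twoheadrightarrow A$ with kernel $I$ and a deformation $\cM_{A'}$ restricting to a filtered $\cM_A$; the obstruction to lifting the subobject $\cM_{A,i}/\cM_{A,i-1}$ to a saturated $(\varphi,\Gamma)$-submodule of $\cM_{A'}/\cM_{A',i-1}$ is the image of the extension class of $\cM_{A'}/\cM_{A',i-1}$ in $\ext^1_{(\varphi,\Gamma)}\bigl(\gr_i\cM_{\bullet},\,(\cM_\Dpik/\cM_{i})\otimes_E I\bigr)$, which by d\'evissage is built from the groups $\hH^1_{(\varphi,\Gamma)}(\cM_{i,j})\otimes_E I$ with $j>i$ --- an $\hH^1$, not an $\hH^2$. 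By Lemma~\ref{dimlemmaCM}(1) these $\hH^1$'s are nonzero of dimension $d_Lr^2$, so the obstructions do \emph{not} vanish; if they vanished for every small extension, the subfunctor inclusion would be formally smooth and hence an equivalence of deformation functors, and the paraboline condition would impose no constraint at all. Your closing picture --- a closed condition cut out by finitely many linear equations in the $I$-direction --- is still the right one, but precisely because this obstruction class is a nontrivial linear functional of the deformation direction; establishing that requires tracking the $\ext^1$-obstruction explicitly, not asserting $\hH^2$-vanishing. The paper sidesteps this analysis entirely by invoking \cite[Proposition 6.2.8]{Ding2021} for the relative representability, an argument which is insensitive to genericity.
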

\begin{proof}We first to show $|X_{\cM_\Dpik,\cM_{\bullet}}|$ that is a subfunctor  $|X_{\cM_\Dpik}|$,\;i.e.,\;the $\omepik$-filtration  $\cM_{A,\bullet}$ deforming $\cM_{\bullet}$ on a deformation $\cM_A$ is unique.\;This follows from the similar argument in  \cite[Lemma 4.1.14]{Ding2021}.\;The proof proceeds by induction on the length of $\cM_{A,\bullet}$,\;we should show that $\cM_{A,\bullet}$ is an $\Omega$-filtration on $\cM_A$,\;then $\cM_{A,1}$ is uniquely determined as a $(\varphi,\Gamma)$-submodule of $\cM_{A}$.\;Now suppose that $\widetilde{\cM}_{A,1}$ is another $(\varphi,\Gamma)$-submodule of $\cM_{A}$ deforming $\cM_{1}$.\;Observe that $\widetilde{\cM}_{A,1}$ (resp.,\;$\cM_A/\cM_{A,1}$) is a successive extension of $\cM_{1}$ (resp.,\;$\cM/\cM_{1}$),\;then by Lemma \ref{dimlemmaCM},\;we deduce $\homo_{(\varphi,\Gamma)}(\widetilde{\cM}_{A,1},\cM_A/\cM_{A,1})=0$.\;Therefore,\;we see that $\widetilde{\cM}_{A,1}\subset \cM_{A,1}$.\;Then we see that $\widetilde{\cM}_{A,1}=\cM_{A,1}$ since they have the same rank.\;On the other hand,\;by the same argument as in \cite[Proposition 6.2.8]{Ding2021},\;we have an equivalence of groupoids over $\Art_E$:
	\[X_{\cM_\Dpik,\cM_{\bullet}}\xrightarrow{\sim}X_{\cM_\Dpik}\times_{ |X_{\cM_\Dpik}|}|X_{\cM_\Dpik,\cM_{\bullet}}|,\]
and $|X_{\cM_\Dpik,\cM_{\bullet}}|\hookrightarrow |X_{\cM_\Dpik}|$ is still relatively representable.\;The last statement is a direct consequence of the first assertion.\;
\end{proof}

Using the same argument as in the proof of \cite[Proposition 6.2.10]{Ding2021},\;we deduce:

\begin{pro}\label{proprexmm}
	The groupoid $X^{\star}_{\cM_\Dpik,\cM_{\bullet}}$ (for $\star\in\{\Box,\ver\}$) over $\Art_E$ is pro-representable.\;The functor $X^\Box_{\cM_\Dpik,\cM_{\bullet}}$ (resp.,\;$X^{\mathrm{ver}}_{\cM_\Dpik,\cM_{\bullet}}$) is pro-represented by a formally smooth noetherian complete local ring of residue field $E$ and dimension $d_L\big(n^2+k+\frac{n(n-r)}{2}\big)$ (resp.,\;$k+d_L\big(k+\frac{n(n-r)}{2}\big)$).\;
\end{pro}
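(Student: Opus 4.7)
The approach is to transport the proof of \cite[Proposition 6.2.10]{Ding2021} to our non-generic situation; the only essential modification will be that the requisite $\hH^2$-vanishing on each graded piece of $\EndO(\cM_\Dpik)$ is now supplied by both parts of Lemma \ref{dimlemmaCM} rather than by the analogue of part (1) alone. For pro-representability, I first appeal to Lemma \ref{lemmaMM}, which says $X_{\cM_\Dpik,\cM_\bullet} \hookrightarrow X_{\cM_\Dpik}$ is a closed immersion of groupoids; base changing by the formally smooth morphism $X^\Box_{\bW_\Dpik} \to X_{\bW_\Dpik}$ will yield a closed immersion $X^\Box_{\cM_\Dpik,\cM_\bullet} \hookrightarrow X^\Box_{\cM_\Dpik}$. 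Standard $(\varphi,\Gamma)$-module deformation theory together with the framing shows that $X^\Box_{\cM_\Dpik}$ is pro-representable by a complete noetherian local $E$-algebra with residue field $E$, so the same will hold for $X^\Box_{\cM_\Dpik,\cM_\bullet}$.

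\textbf{Formal smoothness.} Given a small surjection $A' \twoheadrightarrow A$ in $\Art_E$ and an $A$-valued object $(\cM_A, \cM_{A,\bullet}, j_A, \alpha_A)$ of $X^\Box_{\cM_\Dpik,\cM_\bullet}$, the obstruction to lifting it to $A'$ will lie in an $\hH^2_{(\varphi,\Gamma)}$-group which can be filtered via the $\omepik$-filtration, with successive quotients direct summands of $\hH^2_{(\varphi,\Gamma)}(\cM_{i,j})$. By Lemma \ref{dimlemmaCM} each of these vanishes: part (1) handles the pairs with $j \neq i-1$ where the parameter is generic, and part (2) handles the non-generic pair $j = i-1$ (the cyclotomic-twist case) via a $t$-adic d\'evissage. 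Hence all obstructions vanish and formal smoothness follows.

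\textbf{Dimension.} I would then compute the tangent space dimension by the same Euler-characteristic bookkeeping as in \cite[Proposition 6.2.10]{Ding2021}: the contributions from the $\binom{k}{2}$ off-diagonal pairs will yield $d_L r^2 \cdot \tfrac{k(k-1)}{2} = d_L \tfrac{n(n-r)}{2}$ via Lemma \ref{dimlemmaCM}; the framing of $D_{\pdr}$ will contribute $d_L n^2$; and the free parameters in the $\omepik$-filtration (the values of the $k$ characters $\bm{\delta}_{\bh,i}$ on a uniformizer) will contribute the remaining $k$ dimensions, summing to $d_L(n^2 + k + \tfrac{n(n-r)}{2})$.

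The only novel input, and thus the main obstacle, is the non-generic graded piece $\cM_{i,i-1}$ for which the analogue of \cite[Lemma 6.2.5]{Ding2021} fails; the required vanishing is supplied by Lemma \ref{dimlemmaCM}(2), whose proof reduces to the generic case by twisting by $t^{-N}$ for $N \gg 0$. Once this is in hand, the rest of Ding's argument should adapt verbatim.
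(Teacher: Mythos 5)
Your argument has a genuine gap in the pro-representability step, and a minor arithmetic slip in the dimension count; the paper takes a different (and more careful) route.

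On pro-representability: you reduce to a closed immersion $X^\Box_{\cM_\Dpik,\cM_\bullet} \hookrightarrow X^\Box_{\cM_\Dpik}$ and then assert that ``standard $(\varphi,\Gamma)$-module deformation theory together with the framing shows that $X^\Box_{\cM_\Dpik}$ is pro-representable.'' This is not a standard fact, and neither Ding's paper nor this one invokes it. The objects of $X_{\cM_\Dpik}$ are deformations of a $(\varphi,\Gamma)$-module over $\cR_{E,L}[1/t]$ (so not over $\cR_{E,L}$, where Schlessinger-type pro-representability is the usual statement), and the framing in $X^\Box_{\cM_\Dpik}$ is a framing of $D_{\pdr}(\bW_\Dpik)$, not of the module itself. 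The paper deliberately routes around this: it instead shows (following \cite[Lemma 6.2.9]{Ding2021}) that $X_{\cM_\Dpik,\cM_\bullet} \to X_{\bW_\Dpik,\bF_\bullet}$ is relatively representable, and then uses that the framed almost de Rham groupoid $X^\Box_{\bW_\Dpik,\bF_\bullet}$ is pro-represented by $\widehat{\widetilde{\fg}}_{r,L,y_1}$, which is a genuinely established input (\cite[Proposition 6.1.2]{Ding2021}). Even if your closed-immersion step were justified, it would only produce the pro-representing ring as a quotient of that of $X^\Box_{\cM_\Dpik}$; it gives no information about formal smoothness on its own, so the obstruction argument still carries the whole burden of the ``formally smooth'' assertion.

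On formal smoothness: your obstruction-theoretic sketch is close in spirit to what actually happens, and Lemma \ref{dimlemmaCM} is indeed the essential new input for the non-generic pair $j=i-1$. But the paper's actual construction is sharper: rather than arguing abstractly from $\hH^2$-vanishing, it inductively builds the versal pro-representing ring $R_i$ for $X^{\mathrm{ver}}_{\cM_{\Dpik,i},\cM_\bullet}$ as the completion of $\mathrm{Symm}_{S_i}(N_i^\vee)$, where $N_i$ is the $\varprojlim$ of $\ext^1$-groups from (\ref{dfnNi}), and uses Lemma \ref{dimlemmaCM} precisely to guarantee that $N_i$ is a \emph{free} $S_i$-module of rank $(i-1)d_Lr^2$ (both $\hH^0$- and $\hH^2$-vanishing are needed for freeness; the $\hH^2$-vanishing alone would only give unobstructedness). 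Formal smoothness and the dimension then fall out of this explicit presentation simultaneously. If you want to keep the obstruction-theoretic phrasing, you should at least note that the obstruction group also includes pieces $\hH^2_{(\varphi,\Gamma)}(\cM_{i,i})$ from the diagonal, which are not covered by either part of Lemma \ref{dimlemmaCM} and need a separate (easy) argument.

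On the dimension count: as written, your contributions are $d_L n^2$ (framing), $k$ (uniformizer values of the $k$ characters), and $d_L r^2\cdot\tfrac{k(k-1)}{2}=d_L\tfrac{n(n-r)}{2}$ (extensions). These sum to $d_L n^2 + k + d_L\tfrac{n(n-r)}{2}$, which differs from the asserted $d_L(n^2+k+\tfrac{n(n-r)}{2})$ by $k(d_L-1)$. You have dropped the $d_L k$ weight parameters of the characters $\bm{\delta}_{\bh,i}$: each $\widehat{\cO}_{\widehat{L^\times},\bm{\delta}_{\bh,i}}$ has dimension $d_L+1$, not $1$. The paper computes $\dim X^{\mathrm{ver}}_{\cM_\Dpik,\cM_\bullet} = k + d_L\big(k+\tfrac{n(n-r)}{2}\big)$ directly from the inductive construction, and then passes to $X^\Box$ by the same bookkeeping as \cite[Proposition 6.2.10]{Ding2021}; you would need to carry out that last comparison carefully rather than asserting the sum.
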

\begin{proof}By the same argument as in \cite[Lemma 6.2.9]{Ding2021},\;we can show that $X_{\cM_\Dpik,\cM_{\bullet}}\rightarrow X_{\bW_\Dpik,\bF_{\bullet}}$ (and thus $X^\Box_{\cM_\Dpik,\cM_{\bullet}}\rightarrow X^\Box_{\bW_\Dpik,\bF_{\bullet}}$) is relatively representable.\;Since $X^\Box_{\bW_\Dpik,\bF_{\bullet}}$ is pro-representable,\;so the first statement follows.\;For the second assertion,\;we recall the groupoids $X_{\cM_\Dpik,\cM_{\bullet}}^{\mathrm{ver}}$ and $X_{\cM_\Dpik,\cM_{\bullet}}^{\mathrm{ver},\Box}$ defined in the proof of \cite[Proposition 6.2.10]{Ding2021}.\;Recall that $X_{\cM_\Dpik,\cM_{\bullet}}^{\mathrm{ver}}\cong |X_{\cM_\Dpik,\cM_{\bullet}}^{\mathrm{ver}}|$.\;We first show that $|X_{\cM_\Dpik,\cM_{\bullet}}^{\mathrm{ver}}|$ is pro-representable.\;It is clear that $|X_{\cM_{\Dpik,1},\cM_{\bullet}}^{\mathrm{ver}}|$ is pro-represented by $\widehat{\cO}_{\widehat{L^{\times}},\bm{\delta}_{\bh,1}}\cong E[[x_1,\cdots,x_{d_L+1}]]$.\;Now assume that $|X_{\cM_{\Dpik,i-1},\cM_{\bullet}}^{\mathrm{ver}}|$ is pro-represented by a formally smooth noetherian complete local ring $R_{i-1}$ of residue field $E$  and dimension $i-1+d_L(i-1+r^2\frac{i(i-1)}{2})$.\;Let $S_{i}$ denoted the completion of $R_{i-1}\otimes_E\widehat{\cO}_{\widehat{L^{\times}},\bm{\delta}_{\bh,i}}$ with respect to the maximal ideal generated by the maximal ideal of $R_{i-1}$ and the one of $\widehat{\cO}_{\widehat{L^{\times}},\bm{\delta}_{\bh,i}}$.\;For any morphism $S_{i}\rightarrow A$ with $A\in \Art_E$,\;let $\cM_{i-1,A}$ be the $(\varphi,\Gamma)$-module over $\cR_{A,L}[1/t]$ given by the pull-back along $R_{i-1}\rightarrow S_{i}\rightarrow A$ of the universal $(\varphi,\Gamma)$-module over $\cR_{R_{i-1},L}[1/t]$ and let $\delta_{A,i}$ be the character $L^{\times}\rightarrow \widehat{\cO}_{\widehat{L^{\times}},\bm{\delta}_{\bh,i}}\rightarrow S_{i}\rightarrow A$.\;Let 
\begin{equation}\label{dfnNi}
N_{i}:=\varprojlim_{S_{i}\rightarrow A}\ext^1_{(\varphi,\Gamma)}\Big(\Delta_\pi\otimes_{\cR_{A,L}}\cR_{A,L}(\delta_{A,i})\Big[\frac{1}{t}\Big],\cM_{i-1,A}\Big).
\end{equation}
By Lemma \ref{dimlemmaCM},\;we deduce that $N_{i}$ is a free $S_{i}$-module of rank $(i-1)d_Lr^2$ (use Lemma \ref{dimlemmaCM} (1) and a d\'{e}vissage).\;Then $[\cM_{\Dpik,i}]$ correspondences to a maximal ideal $\fm_{i}$ with residue field $E$ of the polynomial $S_i$-algebra $\mathrm{Symm}_{S_{i}}N_{i}^{\vee}$.\;Let $R_{i}$ be the completion of  $\mathrm{Symm}_{S_{i}}N_{i}^{\vee}$ at $\fm_{i}$.\;One can check that $X^{\mathrm{ver}}_{\cM_i,\cM_{\bullet}}$ is pro-represented by $R_i$.\;In particular,\;$X^{\mathrm{ver}}_{\cM_\Dpik,\cM_{\bullet}}$ is pro-represented by a formally smooth noetherian complete local ring of residue field $E$ and dimension $k+d_L\big(k+\frac{n(n-r)}{2}\big)$.\;By the same argument as in the last paragraph in the proof of \cite[Proposition 6.2.10]{Ding2021},\;we obtain the dimension of  $X^\Box_{\cM_\Dpik,\cM_{\bullet}}$.\;
\end{proof}


Suppose $y_A:=(\cM_{\Dpik,A},\cM_{A,\bullet},j_A)$  is an object in $X_{\cM_\Dpik,\cM_{\bullet}}(A)$,\;and  $\underline{\delta}_A=(\delta_{A,i})_{1\leq i\leq k}:L^{\times}\rightarrow A^{\times}\in \widehat{(\cZ_{\bL_{r,\emptyset},L})}_{\bm{\delta}_{\bh}}(A)$ is a parameter of  $\cM_{A,\bullet}$ (note that $\delta_{A,i}:L^{\times}\rightarrow A^{\times}$ are continuous characters such that $\delta_{A,i}\equiv \bm{\delta}_{\bh,i}\;\Modo\;\fm_A$).\;Let $\cM_{A,i,j}=\EndO(\Delta_{\pi})\tee \cR_{E,L}({\delta}_{A,i}^{-1}{\delta}_{A,j})[1/t]$.\;For $J\subseteq\Sigma_L$,\;recall 
\[\hH^1_{g,J}(\cM_{A,i,j}):=\Ker\big[\hH^1_{(\varphi,\Gamma)}(\cM_{A,i,j})\rightarrow \hH^1(\gal_L,W_{\dr,J}(\cM_{A,i,j}))\big].\]
In particular,\;$\hH^1_{g,\Sigma_L}(\cM_{A,i,j})=\hH^1_{g}(\cM_{A,i,j})$.\;

\begin{lem}\label{dimkerCMWdr}Let $1\leq i,j\leq k$ and $J\subsetneqq\Sigma_L$.\;
		\begin{itemize}
			\item[(1)] If $j\neq i-1,i$,\;the natural morphism $\hH^1_{(\varphi,\Gamma)}(\cM_{i,j})\rightarrow \hH^1(\gal_L,W_{\dr}(\cM_{i,j}))$ is an isomorphism.\;
			\item[(2)]  If $j=i-1$,\;we have $\dim_E\hH^1_{g}(\cM_{i,j})=1$ and $\dim_E\hH^1_{g,J}(\cM_{i,j})=(d_L-|J|)r^2$.\;Moreover,\;the natural morphism $\hH^1_{(\varphi,\Gamma)}(\cM_{i,j})\rightarrow \hH^1(\gal_L,W_{\dr,J}(\cM_{i,j}))$ is a surjection.\;
			\item[(3)] Then the natural morphism
			\begin{equation}\label{surtwomorphisms}
			\begin{aligned}
				&\hH^1_{(\varphi,\Gamma)}(\cM_{A,i,j})\rightarrow \hH^1(\gal_L,W_{\dr}(\cM_{A,i,j})),j\neq i-1,i\\
				&\hH^1_{(\varphi,\Gamma)}(\cM_{A,i,i-1})\rightarrow \hH^1(\gal_L,W_{\dr,J}(\cM_{A,i,i-1}))
			\end{aligned}
			\end{equation}
		are surjectvie.\;
		\end{itemize}	
	\end{lem}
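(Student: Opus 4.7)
The plan is to handle the three parts in order, leveraging the dimension result Lemma \ref{dimlemmaCM} together with the structure of the $(\varphi,\Gamma)$-module $\cM_{i,j}$.

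First, for Part $(1)$, I will verify that the character $\bm{\delta}_{\bh,i}^{-1}\bm{\delta}_{\bh,j}=\unr(q_L^{j-i})z^{w_{\cF}(\bh)_{jr}-w_{\cF}(\bh)_{ir}}$ is generic in the sense of \cite[(6.5)]{Ding2021} whenever $j\neq i-1, i$: the unramified factor $\unr(q_L^{j-i})$ avoids both $1$ and $\unr(q_L^{-1})$ since $j-i\notin\{0,-1\}$, while the algebraic part $z^{w_{\cF}(\bh)_{jr}-w_{\cF}(\bh)_{ir}}$ is a nonzero integer weight in every embedding because the Hodge-Tate weights $\bh$ are pairwise distinct. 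Then \cite[Lemma 6.2.5]{Ding2021} (already invoked in Lemma \ref{dimlemmaCM}(1)) supplies the desired isomorphism $\hH^1_{(\varphi,\Gamma)}(\cM_{i,j})\xrightarrow{\sim}\hH^1(\gal_L, W_\dr(\cM_{i,j}))$.

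For Part $(2)$, I first observe that $\cM_{i,i-1}\cong \EndO(\Delta_\pi)\tene\cR_{E,L}(\ccyc)[1/t]$ is de Rham, since $\Delta_\pi$ is de Rham with Hodge-Tate weights all $0$ and $\cR_{E,L}(\ccyc)$ is de Rham with weights all $1$. The partial de Rham functor decomposes along the embeddings, so $W_{\dr,J}(\cM_{i,i-1})=\bigoplus_{\tau\in J} W_{\dr,\tau}(\cM_{i,i-1})$, and one only needs to compute $\dim_E\hH^1(\gal_L, W_{\dr,\tau}(\cM_{i,i-1}))$ for each $\tau$. A single-embedding specialisation of the long exact sequence (\ref{ccccccc}) used in the proof of Lemma \ref{dimlemmaCM}(2), combined with the rank-one input from \cite[Lemma 2.16]{nakamura2009classification}, will give $r^2$ in each direction. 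Summing then yields target dimension $|J|r^2$ for $J\subsetneqq\Sigma_L$, and together with $\dim_E\hH^1_{(\varphi,\Gamma)}(\cM_{i,i-1})=d_Lr^2$ from Lemma \ref{dimlemmaCM}(2) this gives $\dim_E\hH^1_{g,J}(\cM_{i,i-1})\geq (d_L-|J|)r^2$; the matching upper bound (hence equality) will follow from the surjectivity established in Part $(3)$. For the special case $J=\Sigma_L$, the ``$+1$'' jump to $\dim_E\hH^1_g(\cM_{i,i-1})=1$ will come from a Bloch-Kato/Tate duality argument applied to the self-dual pair $(\EndO(\Delta_\pi),\cR_{E,L}(\ccyc))$: the image of $D_\dr(\cM_{i,i-1})$ inside $\hH^1(\gal_L, W_\dr(\cM_{i,i-1}))$ is exactly one-dimensional, producing the extra de Rham class not captured by any proper $J$.

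For Part $(3)$, I plan to induct on $\length A$. The base case $A=E$ is immediate from Parts $(1)$ and $(2)$ together with the dimension identification of $\hH^1(\gal_L, W_{\dr,J}(\cM_{i,j}))$ above. For the inductive step, pick an ideal $I\subseteq \fm_A$ with $I\fm_A=0$, set $B:=A/I$, so that $I\otimes_A\cM_{A,i,j}\cong I\otimes_E\cM_{i,j}$. The short exact sequence of $(\varphi,\Gamma)$-modules over $\cR_{A,L}[1/t]$
\[
0\longrightarrow I\otimes_E\cM_{i,j}\longrightarrow \cM_{A,i,j}\longrightarrow \cM_{B,i,j}\longrightarrow 0
\]
induces compatible long exact sequences for $\hH^\bullet_{(\varphi,\Gamma)}$ and $\hH^\bullet(\gal_L, W_{\dr,J})$ (respectively $W_\dr$), and a snake-lemma/five-lemma comparison—using the inductive surjectivity for $B$, the base case surjectivity for $\cM_{i,j}$, and vanishing of $\hH^2$ on the target side—yields surjectivity for $\cM_{A,i,j}$. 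The main obstacle will be the fine computation in Part $(2)$: pinning down $\dim_E\hH^1(\gal_L, W_{\dr,\tau}(\cM_{i,i-1}))$ and isolating the precise mechanism by which $\dim\hH^1_g$ jumps by one when $J$ saturates to all of $\Sigma_L$. This hinges on carefully unwinding the Bloch-Kato exact sequence for the de Rham module $\EndO(\Delta_\pi)\otimes\cR_{E,L}(\ccyc)$ and exploiting the self-duality of $\EndO(\Delta_\pi)$ under the Tate pairing, which is what distinguishes the cyclotomic twist and gives rise to exactly one additional de Rham class.
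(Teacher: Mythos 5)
Your plan for Part (2) is circular: you derive the upper bound $\dim_E\hH^1_{g,J}(\cM_{i,i-1})\leq (d_L-|J|)r^2$ from the surjectivity of $\hH^1_{(\varphi,\Gamma)}\to\hH^1(\gal_L,W_{\dr,J})$ to be established in Part (3), but the base case $A=E$ of your Part (3) induction is itself ``immediate from Parts (1) and (2).'' You never supply an independent argument that the map is surjective at the $E$-level; the lower bound alone does not give it. The paper sidesteps this by invoking a closed dimension formula from \cite[Proposition A.3]{CompanionpointforGLN2L},
\[
\dim_E\hH^1_{g,J}(\cM_{i,j})=d_Lr^2+\dim_E\hH^0(\gal_L,\cM_{i,j})+\dim_E\widetilde{H}^2_J(\gal_L,\bW_{\pi,N}^{i,j})-\dim_E\hH^0(\gal_L,W_{\dr,J}(\cM_{i,j})),
\]
which computes $\dim\hH^1_{g,J}$ directly (an Euler-characteristic comparison in the $E$-$B$-pair language) without any prior knowledge of surjectivity. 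The extra ``$+1$'' at $J=\Sigma_L$ comes from the precise calculation $\dim_E\widetilde{H}^2_{\Sigma_L}=1$ versus $\widetilde{H}^2_J=0$ for $J\subsetneqq\Sigma_L$; your hand-waved Bloch–Kato/Tate duality heuristic is pointing at the same phenomenon but does not land on a usable identity, and your claim that a ``single-embedding specialisation of (\ref{ccccccc}) will give $r^2$'' for each $\tau$ is left unjustified.

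On Part (3), your Artinian induction could in principle be made to work, but it is substantially heavier than what the paper does: the paper observes that the functor $W\mapsto\hH^1(\gal_L,W)$ is right exact on $\mathrm{Rep}_{\pdr,E}(\gal_L)$ and then concludes by pure devissage from the $A=E$ case. Your induction step secretly needs exactly the same input (vanishing of $\hH^2$ on the $B_{\dr}$-side so the long exact sequence terminates), so you gain nothing and incur the snake-lemma overhead. For Part (1), your genericity argument for $\bm{\delta}_{\bh,i}^{-1}\bm{\delta}_{\bh,j}$ is essentially sound, though the paper more carefully reduces first to $t^{-N}\cM^0_{i,j}=\EndO(\Delta_\pi)\otimes\cR_{E,L}(\unr(q_L^{j-i})t^{\bk})$ with $\bk\in\BZ^{d_L}_{<0}$ before appealing to Ding's result (and cites \cite[Proposition 6.2.5(2)]{Ding2021} rather than Lemma 6.2.5); you should match that reduction to make the cited genericity input applicable to the localized module $\cM_{i,j}=\cM^0_{i,j}[1/t]$ rather than the integral one. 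To repair the proposal, replace your Part (2) computation with the direct appeal to the dimension formula of \cite[Proposition A.3]{CompanionpointforGLN2L} and the $\widetilde{H}^2_J$ calculation, and you can then keep (or simplify) Part (3).
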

	\begin{proof}When $N$ is sufficiently large,\;it is enough to study the map 
	$$\hH^1_{(\varphi,\Gamma)}(t^{-N}\cM_{i,j}^0)\rightarrow \hH^1(\gal_L,W_{\dr}(t^{-N}\cM_{i,j}^0))\rightarrow \hH^1(\gal_L,W_{\dr}(\cM_{i,j})).$$In this case,\;we have $t^{-N}\cM_{i,j}^0=\EndO(\Delta_\pi)\otimes_{\cR_{E,L}}\cR_{E,L}(\unr(q_L^{j-i})t^{\bk})$ for some $\bk\in \BZ^{d_L}_{<0}$.\;So the first one is \cite[Proposition 6.2.5 (2)]{Ding2021}.\;By \cite{nakamura2009classification},\;we translate to the language of $E$-$B$-pairs.\;Let ${\bW_{\pi,N}^{i,j}}$ be the $E$-$B$-pair associated to $t^{-N}\cM_{i,j}^0$,\;we have $\widetilde{H}^2_J(\gal_L,{\bW_{\pi,N}^{i,j}})=0$ for $J\subsetneqq\Sigma_L$ and 
$\dim_E\widetilde{H}^2_{\Sigma_L}(\gal_L,{\bW_{\pi,N}^{i,j}})=1$.\;Then $(2)$ follows from the dimension formula $\dim_E\hH^1_{g,J}(\cM_{i,j})=d_Lr^2+\dim_E\hH^0(\gal_L,\cM_{i,j})+\dim_E\widetilde{H}^2_J(\gal_L,{\bW_{\pi,N}^{i,j}})-\dim_E\hH^0(\gal_L,W_{\dR,J}(\cM_{i,j}))$ in \cite[Proposition A.3]{CompanionpointforGLN2L}.\;Finally,\;the surjectivity of the first (resp.,\;second) morphism in (\ref{surtwomorphisms}) follows from $(1)$ (resp.,\;$(2)$) together with the fact that the functor $W\mapsto \hH^1(\gal_L,W)$ on  $W\in \mathrm{Rep}_{\pdr,E}(\gal_L)$ is right exact.\;
		\end{proof}
\begin{rmk}
If $L\neq \bQ_p$ and $r=1$,\;Part $(2)$ implies that the natural inclusion $\hH^1_{g}(\cM_{i,j})\subseteq \hH^1_{g,\Sigma_L\backslash\tau}(\cM_{i,j})$ induces an isomorphism for any $\tau\in\Sigma_L$.\;
\end{rmk}

Lemma \ref{dimkerCMWdr} implies that the local model map $\Upsilon$ in (\ref{localmodelmap})  is not longer formally smooth in our case.\;To overcome this problem,\;we introduce certain subgroupoid $X^{(\varphi,\Gamma)}_{\bW_\Dpik,\bF_{\bullet}}$ of $\widehat{(\cZ_{\bL_{r,\emptyset},L})}_{\bm{\delta}_{\bh}}\times_{\widehat\fz_{r,\emptyset,L}} X_{\bW_\Dpik,\bF_{\bullet}}$ in Section \ref{maincontr}.\;We try to analysis the geometry via some power formall series in Section \ref{geooflocalmodel}.\;


\subsubsection{Main constructions}\label{maincontr}

Let $A\twoheadrightarrow B$ be a surjective map in $\Art_E$,\;and let $K=\ker(A\twoheadrightarrow B)$.\;Suppose $y_A:=(\cM_{\Dpik,A},\cM_{A,\bullet},j_A)$ (resp.,\;$y_B:=(\cM_{\Dpik,B},\cM_{B,\bullet},j_B)$) is an object in $X_{\cM_\Dpik,\cM_{\bullet}}(A)$ (resp.,\;$X_{\cM_\Dpik,\cM_{\bullet}}(B)$).\;Suppose that $x_A$ is isomorphic to $x_B$ when modulo $J$.\;

Let $\underline{\delta}_A=(\delta_{A,i})_{1\leq i\leq k}:L^{\times}\rightarrow A^{\times}$ (resp., $\underline{\delta}_B=(\delta_{B,i})_{1\leq i\leq k}:L^{\times}\rightarrow B^{\times}$) be the parameter of  $\cM_{A,\bullet}$ (resp., $B$) given by \cite[Lemma 6.2.2]{Ding2021}. For $2\leq i\leq k$,\;we see that the exact functor $W_{\dR}$ induces natural morphisms
\[j_{\underline{\delta}_{\ast},i}:\hH^1_{(\varphi,\Gamma)}({\cM}_{\ast,i,i-1})\rightarrow \hH^1(\gal_L,W_{\dr}({\cM}_{\ast,i,i-1})),\ast\in\{A,B\}.\;\]
(In particular,\;$\dim_E\mathrm{Im}j_{\bm{\delta}_{\bh},i}=d_Lr^2-1$ by Lemma \ref{dimkerCMWdr} (2)).\;By the proof of \cite[Theorem 3.4.4]{breuil2019local} and Lemma \ref{dimlemmaCM},\;we have 
\begin{equation}\label{functorialproperty}
	\begin{aligned}
		&\hH^1_{(\varphi,\Gamma)}({\cM}_{A,i,i-1})\otimes_AB\xrightarrow{\sim}\hH^1_{(\varphi,\Gamma)}({\cM}_{B,i,i-1}),\\
		&\hH^1(\gal_L,W_{\dr}({\cM}_{A,i,i-1}))\otimes_AB\xrightarrow{\sim}\hH^1(\gal_L,W_{\dr}({\cM}_{B,i,i-1})).
	\end{aligned}
\end{equation}
They fit into a commutative diagram:
\begin{equation}\label{imagAtoB}
	\xymatrix{
		\hH^1_{(\varphi,\Gamma)}({\cM}_{A,i,i-1})\ar[r]\ar[d]_{j_{\underline{\delta}_{A},i}} & \hH^1_{(\varphi,\Gamma)}({\cM}_{B,i,i-1}) \ar[d]_{j_{\underline{\delta}_{B},i}} \ar[r]  & 0\\
		\hH^1(\gal_L,W_{\dr}({\cM}_{A,i,i-1})) \ar[r]^{j_{\underline{\delta}_A,\underline{\delta}_B,i}} & \hH^1(\gal_L,W_{\dr}({\cM}_{B,i,i-1})) \ar[r] & 0,}
\end{equation}
where  the morphism $j_{\underline{\delta}_A,\underline{\delta}_B,i}$ is induced by modulo $K$.\;Furthermore,\;it is easy to see that $\ker j_{\underline{\delta}_A,\underline{\delta}_B,i}\cong J\hH^1(\gal_L,W_{\dr}({\cM}_{A,i,i-1}))$ (as a $A$-module) and
$j^{-1}_{\underline{\delta}_A,\underline{\delta}_B}(\mathrm{Im}j_{\underline{\delta}_{B},i})=\mathrm{Im}j_{\underline{\delta}_{A},i}+J\hH^1(\gal_L,W_{\dr}({\cM}_{A,i,i-1}))$ by diagram chasing.\;From this we deduce 
\begin{lem}\label{LEMimagAtoBexactseq}Keep the above situation and notation.\;We have a short exact sequence: 
	\begin{equation}\label{imagAtoBexactseq}
		\begin{aligned}
			0\rightarrow J\hH^1(\gal_L,W_{\dr}({\cM}_{A,i,i-1}))\cap\mathrm{Im}j_{\underline{\delta}_{A},i} \rightarrow\mathrm{Im}j_{\underline{\delta}_{A},i}\xrightarrow{j_{\underline{\delta}_A,\underline{\delta}_B},i}\mathrm{Im}j_{\underline{\delta}_{B},i}\rightarrow 0.
		\end{aligned}
	\end{equation}
\end{lem}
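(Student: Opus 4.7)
The plan is to simply combine the two observations the author makes just before the statement of the lemma—namely, the identification $\ker j_{y_A,y_B}\cong J\hH^1(\gal_L,W_{\dr}(\cM_{A,i,i-1}))$ and the equality $j_{y_A,y_B}^{-1}(\mathrm{Im}\,j_{y_B,i})=\mathrm{Im}\,j_{y_A,i}+J\hH^1(\gal_L,W_{\dr}(\cM_{A,i,i-1}))$—with the functoriality isomorphisms (\ref{functorialproperty}) coming from \cite[Theorem 3.4.4]{breuil2019local} and Lemma \ref{dimlemmaCM}. Concretely, the short exact sequence in question is exactly the restriction of the bottom row of (\ref{imagAtoB}) to the image of the vertical map $j_{y_A,i}$, so only surjectivity and the kernel computation need to be established.

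For surjectivity, I would argue as follows. Pick $\eta\in\mathrm{Im}\,j_{y_B,i}$ and write $\eta=j_{y_B,i}(\xi_B)$ for some $\xi_B\in\hH^1_{(\varphi,\Gamma)}(\cM_{B,i,i-1})$. The first isomorphism in (\ref{functorialproperty}) (base change of $(\varphi,\Gamma)$-cohomology from $A$ to $B$) is surjective, so $\xi_B$ lifts to some $\xi_A\in\hH^1_{(\varphi,\Gamma)}(\cM_{A,i,i-1})$. Commutativity of the square (\ref{imagAtoB}) then gives $j_{y_A,y_B}(j_{y_A,i}(\xi_A))=j_{y_B,i}(\xi_B)=\eta$, so $\eta$ lies in the image of the restriction $j_{y_A,y_B}|_{\mathrm{Im}\,j_{y_A,i}}$.

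For the kernel, the identification $\ker j_{y_A,y_B}\cong J\hH^1(\gal_L,W_{\dr}(\cM_{A,i,i-1}))$ is immediate from the second isomorphism in (\ref{functorialproperty}): since $\hH^1(\gal_L,W_{\dr}(\cM_{A,i,i-1}))\otimes_A B\xrightarrow{\sim}\hH^1(\gal_L,W_{\dr}(\cM_{B,i,i-1}))$, the kernel of the reduction modulo $J$ is precisely $J$-times the source module. Intersecting with $\mathrm{Im}\,j_{y_A,i}$ yields the left-hand term of (\ref{imagAtoBexactseq}).

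The only subtlety I expect is a bookkeeping one: one must make sure that the hypothesis $y_A\;\mathrm{mod}\;J\cong y_B$ is genuinely used when invoking (\ref{functorialproperty}), so that the base-change isomorphisms apply to the \emph{same} family of $(\varphi,\Gamma)$-modules $\cM_{\ast,i,i-1}$ (which are built from the parameters $\underline{\delta}_\ast$ attached to $y_\ast$ via \cite[Lemma 6.2.2]{Ding2021}). Once this compatibility is recorded, the lemma follows formally, so there is no serious obstacle—this is really a diagram-chase packaging the flatness/base-change properties established earlier.
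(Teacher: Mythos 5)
Your proof is correct and takes essentially the same approach as the paper's (implicit) derivation. The paper merely asserts the two observations preceding the lemma and says ``From this we deduce''; your diagram chase is exactly what is intended: surjectivity of the restricted map follows from lifting along the (surjective) top row of (\ref{imagAtoB}) and commutativity, and the kernel identification is precisely $\ker(j_{y_A,y_B})\cap\mathrm{Im}\,j_{y_A,i}$ with $\ker(j_{y_A,y_B})=J\hH^1(\gal_L,W_{\dr}(\cM_{A,i,i-1}))$ coming from the second isomorphism of (\ref{functorialproperty}). Note that your lifting argument for surjectivity is in fact slightly more direct than using the stated preimage equality $j_{y_A,y_B}^{-1}(\mathrm{Im}\,j_{y_B,i})=\mathrm{Im}\,j_{y_A,i}+J\hH^1(\gal_L,W_{\dr}(\cM_{A,i,i-1}))$ (which is equivalent to surjectivity once the kernel is known), but both amount to the same diagram chase.
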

Denote by $\widehat{X}_{\bW_\Dpik,\bF_{\bullet}}:=\widehat{(\cZ_{\bL_{r,\emptyset},L})}_{\bm{\delta}_{\bh}}\times_{\widehat\fz_{r,\emptyset,L}} X_{\bW_\Dpik,\bF_{\bullet}}\cong {\widehat\fz_{r,\emptyset}}\times X_{\bW_\Dpik,\bF_{\bullet}} $ for simplicity (note that $\widehat{(\cZ_{\bL_{r,\emptyset},L})}_{\bm{\delta}_{\bh}}\cong \widehat\fz_{r,\emptyset}\times \widehat\fz_{r,\emptyset,L}$,\;similar to the argument after \cite[(3.17)]{breuil2019local}).\;We define a full subgroupoid $\widehat{X}^{(\varphi,\Gamma)}_{\bW_\Dpik,\bF_{\bullet}}$ of $\widehat{X}_{\bW_\Dpik,\bF_{\bullet}}$. Roughly speaking,\;$\widehat{X}^{(\varphi,\Gamma)}_{\bW_\Dpik,\bF_{\bullet}}$ is certain subgroupoid of $\widehat{X}_{\bW_\Dpik,\bF_{\bullet}}$ that describes  the ``image" of the local model map $\Upsilon$ in (\ref{localmodelmap}),\;i.e.,\;the deformations of $(\bW_\Dpik,\bF_{\bullet})$ that come from deformations of $(\cM_\Dpik,\cM_{\bullet})$.\;
\begin{dfn}
The objects of $\widehat{X}^{(\varphi,\Gamma)}_{\bW_\Dpik,\bF_{\bullet}}$ are the $5$-tuples $(A,\underline{\delta}_A,\bW_{\Dpik,A},\bF_{A,\bullet},\iota_{A})$
where
\begin{itemize}
	\item $A\in \Art_E$ and $\bW_{\Dpik,A}$ is a $B_{\dr}\otimes_{\bQ_p}A$-representation of $\gal_L$;
	\item $\underline{\delta}_A=(\delta_{A,i})_{1\leq i\leq k}:L^{\times}\rightarrow A^{\times}$ such that $\delta_{A,i}\equiv \bm{\delta}_{\bh,i} \Modo\;\fm_A$;
	\item $\bF_{A,\bullet}$ is a $\bP_{r,\emptyset}$-filtration on $\bW_{\Dpik,A}$ by $B_{\dr}\otimes_{\bQ_p}A$-subrepresentation of $\gal_L$ such that $\bF_{A,0}=0$ and $\bF_{A,i}/\bF_{A,i-1}$ ($1\leq i\leq k$) is free of rank $r$ over $B_{\dr}\otimes_{\bQ_p}A$ and isomorphic to  $\bF_{i}/\bF_{i-1}\otimes_{B_{\dr}\otimes_{\bQ_p}E}W_{\dR}(\delta_{A,i})$;
	\item $\iota_{A}:\bW_{\Dpik,A}\otimes_{A}E\xrightarrow{\sim}\bW_\Dpik$ is an isomorphism of $B_{\dr}\otimes_{\bQ_p}E$-representations of $\gal_L$ which induces isomorphism $\iota_{A}:\bF_{i,A}\otimes_{A}E\xrightarrow{\sim}\bF_{i}$ for $0\leq i\leq k$;
	\item Put $\cM_{\underline{\delta}_{A},i,i-1}:=\EndO(\Delta_{\pi})\tee \cR_{A,L}({\delta}_{A,i}^{-1}{\delta}_{A,i-1})[1/t]$.\;The natural morphism  
	\[j_{\underline{\delta}_{A},i}:\hH^1_{(\varphi,\Gamma)}(\cM_{\underline{\delta}_{A},i,i-1})\rightarrow \hH^1(\gal_L,W_{\dr}(\cM_{\underline{\delta}_{A},i,i-1}).\;\]
	Then
	$\bF_{A,i}/\bF_{A,i-2}\in  \mathrm{Im}(j_{\underline{\delta}_{A},i})$ for $2\leq i\leq k$,\;where we view $\bF_{A,i}/\bF_{A,i-2}$ as an element in the extension group $\hH^1(\gal_L,W_{\dr}(\cM_{\underline{\delta}_{A},i,i-1}))$.\;
\end{itemize}
A morphism $(A,\underline{\delta}_A,\bW_{\Dpik,A},\bF_{A,\bullet},\iota_{A})\rightarrow (A',\underline{\delta}_{A'},\bW_{\Dpik,A'},\bF_{A',\bullet},\iota_{A'})$ is a morphism $A\rightarrow A'$ in $\Art_E$ and an isomorphism $\bW_{\Dpik,A}\otimes_AA'\xrightarrow{\sim}\bW_{\Dpik,A'}$ of $B_{\dr}\otimes_{\bQ_p}A'$-representation of $\gal_L$ which are compatible with all above structures.\;
\end{dfn}

The following proposition is an analogue of \cite[Theorem\;6.2.6]{Ding2021}.\;

\begin{pro}\label{proformallysmooth}The morphism $X_{\cM_\Dpik,\cM_{\bullet}}\rightarrow \widehat{X}_{\bW_\Dpik,\bF_{\bullet}}$ of groupoids over $\Art_E$ factors through the full subgroupoid $\widehat{X}^{(\varphi,\Gamma)}_{\bW_\Dpik,\bF_{\bullet}}\hookrightarrow \widehat{X}_{\bW_\Dpik,\bF_{\bullet}}$.\;Moreover,\;the morphism $X_{\cM_\Dpik,\cM_{\bullet}}\rightarrow \widehat{X}^{(\varphi,\Gamma)}_{\bW_\Dpik,\bF_{\bullet}}$ of groupoids over $\Art_E$ is formally smooth.\;
\end{pro}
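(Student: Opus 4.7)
The plan is to establish the two assertions separately: the factorization follows from the functoriality of $W_{\dR}$, while the formal smoothness is proved by an inductive lifting argument that exploits Lemma~\ref{dimkerCMWdr}(3) and Lemma~\ref{LEMimagAtoBexactseq}.

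For the factorization, consider $(A, \cM_{\Dpik,A}, \cM_{A,\bullet}, j_A) \in X_{\cM_\Dpik,\cM_{\bullet}}(A)$. The image quadruple has $\bW_{\Dpik,A} = W_{\dr}(\cM_{\Dpik,A})$ with induced filtration $\bF_{A,i} = W_{\dr}(\cM_{A,i})$ and characters $\underline{\delta}_A$ from \cite[Lemma 6.2.2]{Ding2021}. For each $2 \leq i \leq k$, after twisting away the outer graded pieces, the extension class of $\cM_{A,i}/\cM_{A,i-2}$ naturally lives in $\hH^1_{(\varphi,\Gamma)}(\cM_{A,i,i-1})$. Since $W_{\dr}$ is exact, it carries this class to the extension class $\bF_{A,i}/\bF_{A,i-2}$ via the comparison map $j_{\delta_{A,i},\delta_{A,i-1}}$, so the image-condition in the definition of $\widehat{X}^{(\varphi,\Gamma)}_{\bW_\Dpik,\bF_{\bullet}}$ is automatic.

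For formal smoothness, let $A \twoheadrightarrow B$ be a small surjection in $\Art_E$ with kernel $J$, and suppose given $y_A = (A, \underline{\delta}_A, \bW_{\Dpik,A}, \bF_{A,\bullet}, \iota_A) \in \widehat{X}^{(\varphi,\Gamma)}_{\bW_\Dpik,\bF_{\bullet}}(A)$ together with a lift $x_B \in X_{\cM_\Dpik,\cM_{\bullet}}(B)$ of $y_A \otimes_A B$. I would construct $x_A \in X_{\cM_\Dpik,\cM_{\bullet}}(A)$ by induction on $k$. The base case $k=1$ amounts to formal smoothness of deformations of $\Delta_\pi$ together with a character twist, and reduces to standard deformation theory of $(\varphi,\Gamma)$-modules. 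For the inductive step, I would truncate both data at step $k-1$ and apply the induction hypothesis to produce a filtration $(\cM_{A,i})_{0 \leq i \leq k-1}$ over $A$ lifting the truncated $x_B$ and compatible with $(\bF_{A,i})_{0 \leq i \leq k-1}$. It then remains to build the top extension
\[
0 \to \cM_{A,k-1} \to \cM_{A,k} \to \Delta_\pi \otimes_{\cR_{E,L}} \cR_{A,L}(\delta_{A,k})\Big[\tfrac{1}{t}\Big] \to 0
\]
whose $W_{\dr}$-class matches $\bW_{\Dpik,A}$ equipped with $\bF_{A,\bullet}$ and whose reduction mod $J$ agrees with the top step of $x_B$.

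This extension class, analyzed via the filtration on $\cM_{A,k-1}$, decomposes into contributions in $\hH^1_{(\varphi,\Gamma)}(\cM_{A,k,j})$ for $1 \leq j \leq k-1$. For $j \leq k-2$ the comparison map is an isomorphism by Lemma~\ref{dimkerCMWdr}(1), so these contributions are pinned down freely by $\bF_{A,\bullet}$. The main obstacle, and the heart of the argument, is the critical piece $j = k-1$: there $j_{\delta_{A,k},\delta_{A,k-1}}$ has one-dimensional cokernel, and it is precisely the defining condition $\bF_{A,k}/\bF_{A,k-2} \in \mathrm{Im}(j_{\delta_{A,k},\delta_{A,k-1}})$ of the subgroupoid $\widehat{X}^{(\varphi,\Gamma)}_{\bW_\Dpik,\bF_{\bullet}}$ that places the prescribed $W_{\dr}$-class in the image. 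The compatibility of the chosen $(\varphi,\Gamma)$-lift with the reduction to $x_B$ is then guaranteed by the surjectivity in Lemma~\ref{dimkerCMWdr}(3) combined with the exact sequence of Lemma~\ref{LEMimagAtoBexactseq} applied to the pair $(y_A, y_B)$. Reassembling these constructions produces the required $x_A$ and completes the inductive step.
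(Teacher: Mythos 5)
Your overall plan is the same as the paper's: the factorization is the easy consequence of exactness of $W_{\dR}$, and formal smoothness is proved by the same inductive lifting argument as in \cite[Theorem 3.4.4]{breuil2019local}, with the twist that the target is now restricted to $\mathrm{Im}\,j_A$ rather than all of $\hH^1(\gal_L,W_{\dr}(\cM_{A,i,i-1}))$. Your identification of the critical piece $j=i-1$ and of the role played by the defining condition of $\widehat{X}^{(\varphi,\Gamma)}_{\bW_\Dpik,\bF_{\bullet}}$ is exactly right.

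However, the concrete step you invoke to close the argument is wrong. You appeal to ``the surjectivity in Lemma~\ref{dimkerCMWdr}(3)'' to get the compatibility of the lift with $x_B$; but that lemma is stated only for $J\subsetneqq \Sigma_L$, and the surjectivity of $\hH^1_{(\varphi,\Gamma)}(\cM_{A,i,i-1})\to\hH^1(\gal_L,W_{\dr,J}(\cM_{A,i,i-1}))$ for proper $J$ is what Proposition~\ref{Jversionwdfj} uses, not what is needed here. In the full de Rham situation of this proposition the comparison map at $j=i-1$ is emphatically not surjective (Lemma~\ref{dimkerCMWdr}(2)), which is the whole reason one must replace the target by $\mathrm{Im}\,j_A$. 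The surjectivity that actually closes the lifting step is
\[
\hH^1_{(\varphi,\Gamma)}({\cM}_{A,i,i-1})\twoheadrightarrow \mathrm{Im}\, j_A\times_{\mathrm{Im}\, j_B}\hH^1_{(\varphi,\Gamma)}({\cM}_{B,i,i-1}),
\]
and this follows from the surjectivity of the $(\varphi,\Gamma)$-side map in (\ref{functorialproperty}), the exact sequence of Lemma~\ref{LEMimagAtoBexactseq} (which gives $\mathrm{Im}\,j_A\twoheadrightarrow\mathrm{Im}\,j_B$), the commutative square (\ref{imagAtoB}), and a formal diagram chase as in \cite[Lemma 3.4.5]{breuil2019local}. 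You do cite Lemma~\ref{LEMimagAtoBexactseq} in passing, but Lemma~\ref{dimkerCMWdr}(3) plays no role in this proof and literally does not apply; swapping in (\ref{functorialproperty}) and the BHS diagram lemma would repair the argument.
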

\begin{proof}The proof of \cite[Theorem 3.4.4]{breuil2019local} is also suitable for us.\;Suppose $y_A:=(\cM_{\Dpik,A},\cM_{A,\bullet},j_A)$ (resp.,\;$y_B:=(\cM_{\Dpik,B},\cM_{B,\bullet},j_B)$) is an object in $X_{\cM_\Dpik,\cM_{\bullet}}(A)$ (resp.,\;$X_{\cM_\Dpik,\cM_{\bullet}}(B)$).\;Suppose that $x_A$ is isomorphic to $x_B$ when modulo $K$.\;In our case,\;the sujectivity of \cite[(3.23)]{breuil2019local} is replaced by the 
sujectivity of	
\begin{equation}
	\begin{aligned}
		\hH^1_{(\varphi,\Gamma)}({\cM}_{A,i,i-1})\longrightarrow\;&\mathrm{Im} j_{\underline{\delta}_{A},i}\times_{\mathrm{Im} j_{\underline{\delta}_{B},i}}\hH^1_{(\varphi,\Gamma)}({\cM}_{B,i,i-1})\\
		&\cong \mathrm{Im} j_A\times_{\hH^1(\gal_L,W_{\dr}({\cM}_{B,i,i-1}))}\hH^1_{(\varphi,\Gamma)}({\cM}_{B,i,i-1}).
	\end{aligned}
\end{equation}
This is a direct consequence of \cite[Lemma 3.4.5]{breuil2019local},\;Lemma \ref{LEMimagAtoBexactseq},\;commutative diagram (\ref{imagAtoB}) and (\ref{imagAtoBexactseq}).\;The result follows.\;
\end{proof}



\begin{lem}\label{formallysmoothphigammaprepare} $\widehat{X}^{(\varphi,\Gamma)}_{\bW_\Dpik,\bF_{\bullet}}$ is formally smooth over $\Art_E$.\;
\end{lem}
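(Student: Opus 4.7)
The plan is to verify the infinitesimal lifting criterion directly. Let $A\twoheadrightarrow B$ be a surjection in $\Art_E$ with kernel $J$, and suppose we are given an object $y_B=(B,\underline{\delta}_B,\bW_{\Dpik,B},\bF_{B,\bullet},\iota_B)\in \widehat{X}^{(\varphi,\Gamma)}_{\bW_\Dpik,\bF_{\bullet}}(B)$. The goal is to produce a lift $y_A$. First, because the character space of $\bL_{r,\emptyset}(L)$ is formally smooth at $\bm{\delta}_\bh$, I lift $\underline{\delta}_B$ to any $\underline{\delta}_A=(\delta_{A,i})$ reducing to $\underline{\delta}_B$ modulo $J$; I keep the freedom to adjust this choice later if needed.

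Next, I build $(\bW_{\Dpik,A},\bF_{A,\bullet})$ by induction on the length of the filtration. At $i=1$ the piece $\bF_{1,A}:=\bF_1\otimes_{B_{\dr}\otimes E} W_{\dr}(\delta_{A,1})$ is a canonical lift of $\bF_{1,B}$. For the inductive step, assuming $\bF_{\leq i-1,A}$ has been constructed lifting $\bF_{\leq i-1,B}$ and satisfying every image condition at indices $<i$, I must extend by the prescribed quotient $\bF_i/\bF_{i-1}\otimes W_{\dr}(\delta_{A,i})$. Such an extension is classified by a class
\[e_A\in \hH^1(\gal_L,\Hom(\bF_i/\bF_{i-1}\otimes W_{\dr}(\delta_{A,i}),\bF_{i-1,A})),\]
and we need two things: (a) $e_A$ reduces to the given class $e_B$ of $\bF_{i,B}$ modulo $J$, and (b) the image $\bar e_A$ of $e_A$ in $\hH^1(\gal_L,W_{\dr}(\cM_{A,i,i-1}))$ induced by the projection $\bF_{i-1,A}\twoheadrightarrow\bF_{i-1,A}/\bF_{i-2,A}$ lies in $\mathrm{Im}(j_{y_A,i})$, since the subquotient $\bF_{i,A}/\bF_{i-2,A}$ is exactly what the image condition in the definition of $\widehat{X}^{(\varphi,\Gamma)}_{\bW_\Dpik,\bF_{\bullet}}$ constrains.

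By Lemma \ref{LEMimagAtoBexactseq}, the reduction map $\mathrm{Im}(j_{y_A,i})\twoheadrightarrow\mathrm{Im}(j_{y_B,i})$ is surjective, so I first pick $\bar e_A\in\mathrm{Im}(j_{y_A,i})$ lifting the (given) class $\bar e_B$ of $\bF_{i,B}/\bF_{i-2,B}$. It then remains to find $e_A$ which simultaneously reduces to $e_B$ modulo $J$ and maps to $\bar e_A$ under the quotient-by-$\bF_{i-2,A}$ map. Both the reduction-mod-$J$ map and the quotient map on the relevant $\hH^1$ groups are surjective: this follows from the vanishing of $\hH^2$ in the category $\mathrm{Rep}_{\pdr,A}(\gal_L)$ of almost de Rham $B_{\dr}\otimes A$-representations, which is immediate from the equivalence \cite[Lemma 3.1.4]{breuil2019local} with modules over the one-dimensional Lie algebra generated by $\nu_{\pdr}$. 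A short diagram chase in the pullback square associated to these two surjections then produces the desired $e_A$, given that $(e_B,\bar e_A)$ are compatible (which holds by construction since $\bar e_A\equiv\bar e_B\pmod J$). The isomorphism $\iota_A$ lifting $\iota_B$ is obtained automatically once $\bW_{\Dpik,A}$ and the filtration are fixed, by adjusting the framing induced by $\iota_B$.

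The main obstacle is this last compatibility argument: neither constraint alone is hard to satisfy (each is controlled by one of the two $\hH^1$ surjectivities above), but arranging them simultaneously requires the extra surjectivity supplied by Lemma \ref{LEMimagAtoBexactseq}. This lemma is exactly the reason we singled out the full subgroupoid $\widehat{X}^{(\varphi,\Gamma)}_{\bW_\Dpik,\bF_{\bullet}}$ rather than working with $\widehat{X}_{\bW_\Dpik,\bF_{\bullet}}$ directly, since the naive lift in $\widehat{X}_{\bW_\Dpik,\bF_{\bullet}}(A)$ need not land in the image of the $(\varphi,\Gamma)$-module cohomology. Once the inductive construction terminates at $i=k$, the resulting datum $(A,\underline{\delta}_A,\bW_{\Dpik,A},\bF_{A,\bullet},\iota_A)$ is the required lift, proving formal smoothness.
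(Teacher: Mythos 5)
Your proof is correct and follows essentially the same route as the paper's: lift the character tuple first (using formal smoothness of $\widehat{(\cZ_{\bL_{r,\emptyset},L})}_{\bm{\delta}_{\bh}}$), then build the filtered $B_{\dr}$-representation by induction on the filtration length, with Lemma~\ref{LEMimagAtoBexactseq} supplying the surjectivity $\mathrm{Im}(j_{y_A,i})\twoheadrightarrow\mathrm{Im}(j_{y_B,i})$ needed to keep the image condition on $\bF_{i,A}/\bF_{i-2,A}$ satisfiable at each stage. The paper records only the surjectivity from Lemma~\ref{LEMimagAtoBexactseq} and the vanishing-of-$\hH^2$ fact implicitly, while you make the inductive extension step explicit by lifting $\bar e_B$ inside $\mathrm{Im}(j_{y_A,i})$ and then performing the fiber-product diagram chase; that chase is valid because $\ker(P_A\to P_B)=J\cdot P_A\twoheadrightarrow J\cdot Q_A=\ker(Q_A\to Q_B)$ follows from the surjectivity of $P_A\to Q_A$, which in turn follows from $\hH^2=0$ in $\mathrm{Rep}_{\pdr,A}(\gal_L)$. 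Two small caveats: your phrase that the two surjectivities alone ``produce $e_A$'' is slightly imprecise, since the chase also needs the surjectivity on kernels just mentioned (though as noted this follows from the same $\hH^2$ vanishing); and the ``freedom to adjust $\underline{\delta}_A$ later'' is never used and can be dropped.
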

\begin{proof}Let $A\rightarrow A/I$ be a surjection in $\Art_E$ with $I^2=0$.\;We show that $|\widehat{X}^{(\varphi,\Gamma)}_{\bW_\Dpik,\bF_{\bullet}}|(A)\rightarrow |\widehat{X}^{(\varphi,\Gamma)}_{\bW_\Dpik,\bF_{\bullet}}|(A/I)$ is surjective.\;Let $(\underline{\delta}_{A/I},\bW_{\Dpik,{A/I}},\bF_{A/I,\bullet},\iota_{A/I})\in |\widehat{X}^{(\varphi,\Gamma)}_{\bW_\Dpik,\bF_{\bullet}}|(A/I)$.\;Since $\widehat{(\cZ_{\bL_{r,\emptyset},L})}_{\bm{\delta}_{\bh}}$ is formally smooth,\;we can choose a lifting $\underline{\delta}_{A}:L^{\times}\rightarrow A^\times$,\;which is a continuous character such that $\underline{\delta}_{A}\equiv \underline{\delta}_{A/I}\;\mathrm{mod}\;I$.\;On the other hand,\;the diagram  (\ref{imagAtoB}) and the exact sequence (\ref{imagAtoBexactseq}) show that the natural morphisms $\mathrm{Im}(j_{\underline{\delta}_{A},i})\rightarrow \mathrm{Im}(j_{\underline{\delta}_{A/I},i})$ is surjective for each $1\leq i\leq k-1$.\;By induction on $1\leq i\leq k-1$,\;we can construct a object $\bW_{\Dpik,{A}}$ which is a successive extension of $\bF_{A,i+1}/\bF_{A,i}\otimes_{B_{\dr}\otimes_{\bQ}E}W_{\dr}({\delta}_{A,i})$ such that $\bW_{\Dpik,{A}} \equiv \bW_{\Dpik,{A/I}}\;\mathrm{mod}\;I$.\;The natural filtration $\bF_{A,\bullet}$ on $\bW_{\Dpik,{A}}$ also gives $\bF_{A/I,\bullet}$ when modulo $I$.\;This completes the proof.\;
\end{proof}

Choose $(\bW_{\Dpik,A},\bF_{A,\bullet},\iota_{A})\in X_{\bW_\Dpik,\bF_{\bullet}}(A)$.\;We have $\bW_{\Dpik,A}\cong\oplus_{\tau\in \Sigma_L}\bW_{\Dpik,A,\tau}$ with respect to the isomorphism $B_{\dr}\otimes_{\bQ_p}A\cong \prod_{\tau\in \Sigma_L}B_{\dr,\tau}$ ($B_{\dr,\tau}:=B_{\dr}\otimes_{L,\tau}A$).\;It equipped with an $L\otimes_{\bQ_p}E$-linear operator $N_A=\oplus_{\tau\in \Sigma_L}N_{A,\tau}$.\;For $\tau\in \Sigma_L$,\;$N_{A,\tau}=0$ if and only if $\bW_{\Dpik,A}$ is $\tau$-de Rham.\;

For any $J\subseteq\Sigma_L$,\;we put $\bW_{\Dpik,A,J}\cong\oplus_{\tau\in J}\bW_{\Dpik,A,\tau}$.\;Therefore,\;we let
$X_{\bW_\Dpik,\bF_{\bullet},J}$ (resp.,\;$X^{\Box}_{\bW_\Dpik,\bF_{\bullet},J}$) be the $J$-component of $X_{\bW_\Dpik,\bF_{\bullet}}$ (resp.,\;$X^{\Box}_{\bW_\Dpik,\bF_{\bullet}}$), i.e.,\;for $A\in\Art_E$,\;we define its $A$-points by 
\[X_{\bW_\Dpik,\bF_{\bullet},J}(A)=\left\{(A,\bW_{\Dpik,A,J},\bF_{A,\bullet,J},\iota_{A,J})\right\},\;\big(\text{resp.,\;}X^{\Box}_{\bW_\Dpik,\bF_{\bullet},J}(A)=\left\{(A,\bW_{\Dpik,A,J},\bF_{A,\bullet,J},\iota_{A,J},\alpha_{A,J})\right\}\big).\]
We can define $X_{\bW_\Dpik^+,\bF_{\bullet},J}$ and $X^{\Box}_{\bW_\Dpik,\bF_{\bullet},J}$ in a similar way.\;Note that $L\otimes_{\bQ_p}A\cong \prod_{\tau\in \Sigma_L}A$,\;we also have a natural decomposition $D_{\pdr}(\bW_{\Dpik,A})\cong \prod_{\tau\in \Sigma_L} D_{\pdr,\tau}(\bW_{\Dpik,A})$ (note that $D_{\pdr,\tau}(\bW_{\Dpik,A})\cong D_{\pdr}(\bW_{\Dpik,A,\tau})$ by writing $B_{\dr}\otimes_{\bQ_p}A\cong B_{\dr}\otimes_{L}(L\otimes_{\bQ_p} A)$).\;For any $J\subseteq\Sigma_L$,\;we put 
$D_{\pdr,J}(\bW_{\Dpik,A})\cong \prod_{\tau\in J} D_{\pdr,\tau}(\bW_{\Dpik,A})$.\;
Hence the point $y_{1,J}=(y_{1,\tau})_{\tau\in J}$ (resp.,\;$y_{J}=(y_{\tau})_{\tau\in J}$) lies in $\widetilde{\fg}_{r,J}$ (resp.,\;$X_{r,J}$).\;It is easy to see that the groupoid $X^\Box_{\bW_\Dpik^+,\bF_{\bullet},J}$ (resp.,\;$X^\Box_{\bW_\Dpik,\bF_{\bullet},J}$) over $\Art_E$ is pro-representable,\;and the functor $|X^\Box_{\bW_\Dpik^+,\bF_{\bullet},J}|$ (resp.,\;$|X^\Box_{\bW_\Dpik,\bF_{\bullet},J}|$) is pro-representated by the formal scheme $\widehat{X}_{r,J,y_J}$ (resp.,\;$\widehat{\widetilde{\fg}}_{r,J,y_{1,J}}$).\;

Put scheme $Y_{r,L}:={\fz_{r,\emptyset}}\times {\widetilde{\fg}}_{r,L}$.\;For $J\subseteq \Sigma_L$,\;put $Y_{r,J}:=\fz_{r,\emptyset}\times{\widetilde{\fg}}_{r,J}$.\;Let $\widehat{y}_1=(\underline{0},y_1)\in Y_{r,L}$,\;and $\widehat{y}_{1,J}$ the corresponding  point in $Y_{r,J}$.\;We define   $\widehat{X}_{\bW_\Dpik,\bF_{\bullet},J}:=\widehat\fz_{r,\emptyset}\times X^\Box_{\bW_\Dpik,\bF_{\bullet},J}$.\;Then the functor $|X^\Box_{\bW_\Dpik,\bF_{\bullet},J}|$ is pro-representated by the formal scheme $\widehat{Y}_{r,J,\;\widehat{y}_{1,J}}$.\;

\begin{pro}\label{Jversionwdfj}For any $J\subsetneqq\Sigma_L$,\;the composition ${X}_{\cM_\Dpik,\cM_{\bullet}}\rightarrow \widehat{X}_{\bW_\Dpik,\bF_{\bullet}}\twoheadrightarrow \widehat{X}_{\bW_\Dpik,\bF_{\bullet},J}$ and the composition   $\iota_{J}:\widehat{X}^{(\varphi,\Gamma)}_{\bW_\Dpik,\bF_{\bullet}}\rightarrow \widehat{X}_{\bW_\Dpik,\bF_{\bullet}}\twoheadrightarrow \widehat{X}_{\bW_\Dpik,\bF_{\bullet},J}$ of groupoids over $\Art_E$ are formally smooth.\;In particular,\;${X}_{\cM_\Dpik,\cM_{\bullet}}\rightarrow  {X}_{\bW_\Dpik,\bF_{\bullet},J}$ and $\iota_{J}:\widehat{X}^{(\varphi,\Gamma)}_{\bW_\Dpik,\bF_{\bullet}}\rightarrow {X}_{\bW_\Dpik,\bF_{\bullet},J}$ are also formally smooth.\;
\end{pro}
\begin{proof}We follows the route of \cite[Theorem 3.4.4]{breuil2019local}.\;Let $A\twoheadrightarrow B$ be a surjective map in $\Art_E$.\;Note that for $i\neq j$ we have commutative diagram (as in (\ref{imagAtoB})):
	\begin{equation}
		\xymatrix{
			\hH^1_{(\varphi,\Gamma)}({\cM}_{A,i,j})\ar[r]\ar[d]_{j_{y_{A},i}} & \hH^1_{(\varphi,\Gamma)}({\cM}_{B,i,j}) \ar[d] \ar[r]  & 0\\
			\hH^1(\gal_L,W_{\dr,J}({\cM}_{A,i,j})) \ar[r] & \hH^1(\gal_L,W_{\dr,J}({\cM}_{B,i,j})) \ar[r] & 0,}
	\end{equation}
	By replacing the \cite[Lemma 3.4.3]{breuil2019local} with our Lemma \ref{dimkerCMWdr} $(3)$,\;and using  \cite[Lemma 3.4.5]{breuil2019local},\;we get the sujectivity of the following map (similar to the sujectivity of \cite[(3.23)]{breuil2019local}):
	\begin{equation}
		\begin{aligned}
			\hH^1_{(\varphi,\Gamma)}({\cM}_{A,i,j})\longrightarrow\; \hH^1(\gal_L,W_{\dr,J}({\cM}_{A,i,j}))\times_{\hH^1(\gal_L,W_{\dr,J}({\cM}_{B,i,j}))}\hH^1_{(\varphi,\Gamma)}({\cM}_{B,i,j}).
		\end{aligned}
	\end{equation}
	and thus the surjectivity of 
	\begin{equation}
		\begin{aligned}
			\hH^1&_{(\varphi,\Gamma)}({\cM}_{A,i-1}\tee\Delta_{\pi}^{\vee}\tee \cR_{E,L}({\delta}_{A,i}^{-1})[1/t])\longrightarrow\\ &\hH^1(\gal_L,W_{\dr,J}({\cM}_{A,i-1}\tee\Delta_{\pi}^{\vee}\tee \cR_{E,L}({\delta}_{A,i}^{-1})[1/t]))\\
			&\times_{\hH^1(\gal_L,W_{\dr,J}({\cM}_{B,i-1}\tee\Delta_{\pi}^{\vee}\tee \cR_{E,L}({\delta}_{B,i}^{-1})[1/t]))}\hH^1_{(\varphi,\Gamma)}({\cM}_{B,i-1}\tee\Delta_{\pi}^{\vee}\tee \cR_{E,L}({\delta}_{B,i}^{-1})[1/t]).
		\end{aligned}
	\end{equation}
	By the same argument as in the proof of \cite[Theorem 3.4.4]{breuil2019local},\;we get that $X_{\cM_\Dpik,\cM_{\bullet}}\rightarrow \widehat{X}_{\bW_\Dpik,\bF_{\bullet},J}$ is formally smooth.\;Note that this morphism factors through $X_{\cM_\Dpik,\cM_{\bullet}}\rightarrow\widehat{X}^{(\varphi,\Gamma)}_{\bW_\Dpik,\bF_{\bullet}}$,\;therefore $\iota_{J}:\widehat{X}^{(\varphi,\Gamma)}_{\bW_\Dpik,\bF_{\bullet}}\rightarrow  \widehat{X}_{\bW_\Dpik,\bF_{\bullet},J}$ is formally smooth (since any object in $\widehat{X}^{(\varphi,\Gamma)}_{\bW_\Dpik,\bF_{\bullet}}$ has a preimage in $X_{\cM_\Dpik,\cM_{\bullet}}$ by definition of $\widehat{X}^{(\varphi,\Gamma)}_{\bW_\Dpik,\bF_{\bullet}}$).\;
\end{proof}

\begin{cor}
For $W\in\{X,\widehat{X}\}$,\;the natural morphisms ${X}^{\Box}_{\cM_\Dpik,\cM_{\bullet}}\rightarrow  W_{\bW_\Dpik,\bF_{\bullet},J}$ and  $\iota_{J}:\widehat{X}^{(\varphi,\Gamma),\Box}_{\bW_\Dpik,\bF_{\bullet}}\rightarrow  W_{\bW_\Dpik,\bF_{\bullet},J}$ of groupoids over $\Art_E$ are formally smooth.\;
\end{cor}


\subsubsection{Representablity}

For $1\leq i\leq k$,\;we fix isomorphisms $\alpha_i:L\otimes_{\bQ_p}E\xrightarrow{\sim} D_{\mathrm{pdR}}(\gr_i\bF_{\bullet})$ (so we fix an isomorphism $\beta:(L\otimes_{\bQ_p}E)^n\xrightarrow{\sim} D_{\mathrm{pdR}}(\bW_\Dpik)=\oplus_{i=1}^kD_{\mathrm{pdR}}(\gr_i\bF_{\bullet})$).\;Consider the groupoids 
\begin{equation}\label{dfnwdfver}
	W_{\bW_\Dpik,\bF_{\bullet}}^{\mathrm{ver}}:=X_{\bW_\Dpik,\bF_{\bullet}}\times_{\prod_{i=1}^kX_{\gr_i\bF_{\bullet}}}\prod_{i=1}^kX^{0,\Box}_{\gr_i\bF_{\bullet}},\;W\in\{X,\widehat{X}\}.\;
\end{equation}
(where the $\Box$ in $X^{0,\Box}_{\gr_i\bF_{\bullet}}$ are given by the tuples $\underline{\alpha}:=(\alpha_i)_{1\leq i\leq k}$) and $W_{\bW_\Dpik,\bF_{\bullet}}^{\Box}$ for $W\in\{X,\widehat{X}\}$ (with respect to the isomorphism $\beta:(L\otimes_{\bQ_p}E)^n\xrightarrow{\sim} D_{\mathrm{pdR}}(\bW_\Dpik)$).\;For $\star\in\{\Box,\emptyset\}$,\;the functor  $D_{\pdr}(-)$ induce the following natural morphisms:
\[X^{\star}_{\cM_\Dpik,\cM_{\bullet}}\rightarrow \widehat{X}_{\bW_\Dpik,\bF_{\bullet}}^{\ast}\rightarrow X_{\bW_\Dpik,\bF_{\bullet}}^{\ast}.\]
This section aims to prove the pro-representablity of $\widehat{X}_{\bW_\Dpik,\bF_{\bullet}}^{(\varphi,\Gamma),\star}$ for $\star\in\{\Box,\ver\}$.\;Note that a object in $X^{\mathrm{ver}}_{\bW_\Dpik,\bF_{\bullet}}(A)$ is a $5$-tuples $(A,W_A,\bF_{A,\bullet},\iota_{A},\underline{\alpha}_A)$,\;by imposing the additional data $\underline{\alpha}_A=(\alpha_{A,i})$,\;where $\alpha_{A,i}:(L\otimes_{\bQ_p}A)^r\xrightarrow{\sim}D_{\pdr}(\gr_i{\bF_{A,\bullet}}W_A)$.\;



\begin{pro}\label{relativeverbox}
 We have a natural  morphism $\widehat{X}_{\bW_\Dpik,\bF_{\bullet}}^{\Box}\rightarrow \widehat{X}_{\bW_\Dpik,\bF_{\bullet}}^{\mathrm{ver}}$
of groupoids  over $\Art_E$ and  an equivalence of groupoids $X_{\bW_\Dpik,\bF_{\bullet}}^{(\varphi,\Gamma),\Box}\cong X_{\bW_\Dpik,\bF_{\bullet}}^{\Box}\times_{X_{\bW_\Dpik,\bF_{\bullet}}^{\mathrm{ver}}}X^{(\varphi,\Gamma),\mathrm{ver}}_{\bW_\Dpik,\bF_{\bullet}}$ over $\Art_E$.\;Moreover,\;$\widehat{X}_{\bW_\Dpik,\bF_{\bullet}}^{\Box}\rightarrow \widehat{X}_{\bW_\Dpik,\bF_{\bullet}}^{\mathrm{ver}}$ is relatively pro-representable.\;
\end{pro}
\begin{proof}
Choose $(A,W_A,\bF_{A,\bullet},\iota_A,\alpha_A)\in X^{\Box}_{\bW_\Dpik,\bF_{\bullet}}(A)$.\;By the proof of \cite[Lemma 6.1.1]{Ding2021},\;the induced action of $\nu_{W_A}$ on $W_{\dr}(\bF_{A,i})/W_{\dr}(\bF_{A,i-1})$ gives the scalars $(\nu_{\epsilon_{A,1}},\cdots,\;\nu_{\epsilon_{A,k}})\in \prod_{i=1}^kX^{0,\Box}_{\gr_i\bF_{\bullet}}$.\;This gives a natural  morphism $\widehat{X}_{\bW_\Dpik,\bF_{\bullet}}^{\Box}\rightarrow \widehat{X}_{\bW_\Dpik,\bF_{\bullet}}^{\mathrm{ver}}$.\;Fix $\eta_A:=(A,W_A,\bF_{A,\bullet},\iota_{A},\underline{\alpha}_A)\in X^{\mathrm{ver}}_{\bW_\Dpik,\bF_{\bullet}}(A)$,\;where $\underline{\alpha}_A=(\alpha_{A,i})$ with $\alpha_{A,i}:(L\otimes_{\bQ_p}A)^r\xrightarrow{\sim}D_{\pdr}(\gr_i{\bF_{A,\bullet}})$.\;Denoted by $\widetilde{\eta_A}$ the groupoid over $\Art_E$ it represents.\;Then for each $A$-algebra $A'$ in $\Art_E$,\;the  $A'$-points of groupoid $(\widetilde{\eta_A}\times_{X_{\bW_\Dpik,\bF_{\bullet}}^{\mathrm{ver}}}X_{\bW_\Dpik,\bF_{\bullet}}^{\Box})(A')$ consists of objects $(A',W_{A'},\bF_{A',\bullet},\iota_{A'},\underline{\alpha}_{A'},\beta_{A'},\alpha_{A'})$,\;where $\beta_{A'}:(W_A\otimes_AA',\bF_{A,\bullet}\otimes_AA',\iota_{A}\otimes 1,\underline{\alpha}_A\otimes 1)\rightarrow (A',W_{A'},\bF_{A',\bullet},\iota_{A'},\underline{\alpha}_{A'})$ is a morphism in $X_{\bW_\Dpik,\bF_{\bullet}}^{\mathrm{ver}}$ and $\alpha_{A'}:(L\otimes_{\bQ_p}A')^n\xrightarrow{\sim}D_{\pdr}(W_{A'})$ such that $\underline{\alpha}_{A'}$ and $\alpha_{A'}$ are compatible (so that we can drop the data $\underline{\alpha}_{A'}$ since it is already determined by $\alpha_{A'}$).\;Recall that we have the natural morphism $\widetilde{\fg}_{r,L}\rightarrow \fz_{r,\emptyset,L},(g\bP_{r,\emptyset}\psi)\mapsto \overline{\mathrm{Ad}(g^{-1})\psi}$.\;Then the tuple $\underline{\alpha}_A$ gives a point $z_A$ in $\fz_{r,\emptyset,L}$.\;Let $(\widehat{\cO}_{{\widetilde{\fg}}_{r,L},y_1})_{z_A}$ be the fiber of $\widehat{\cO}_{{\widetilde{\fg}}_{r,L},y_1}$ over $z_A$.\;Then the functor $|(\widetilde{\eta_A}\times_{X_{\bW_\Dpik,\bF_{\bullet}}^{\mathrm{ver}}}X_{\bW_\Dpik,\bF_{\bullet}}^{\Box})|$ is pro-represented by the completion of $\widehat{\cO}_{{\widetilde{\fg}}_{r,L},y_1}$ via the closed subspace  $(\widehat{\cO}_{{\widetilde{\fg}}_{r,L},y_1})_{z_A}$.\;This completes the proof.\;\end{proof}

We first deduce that
\begin{pro}\label{constrXLbox}
	The groupoids $X_{\bW_\Dpik,\bF_{\bullet}}^{\mathrm{ver}}$ and $\widehat{X}_{\bW_\Dpik,\bF_{\bullet}}^{\mathrm{ver}}$  over $\Art_E$ are pro-representable.\;The functor $|X_{\bW_\Dpik,\bF_{\bullet}}^{\mathrm{ver}}|$ (resp.,\;$\widehat{X}_{\bW_\Dpik,\bF_{\bullet}}^{\mathrm{ver}}$) is pro-represented by
a formally smooth noetherian complete local ring $R_{\bW_\Dpik,\bF_{\bullet}}^{\mathrm{ver}}$ of residue field $E$.\;
\end{pro}
\begin{proof}It is clear that $|X_{\bF_{1},\bF_{\bullet}}^{\mathrm{ver}}|\cong |X_{\bF_{i}}^{\Box}|$  is pro-represented by a formally smooth noetherian complete local ring $U_1$ of residue field $E$ and dimension $d_L$.\;Denoted by $V_{i}$ the formal scheme pro-represents the functor $|X_{\gr_i\bF_{\bullet}}^{0,\Box}|$ for $1\leq i\leq k$.\;Assume that $|X_{\bF_{i-1},\bF_{\bullet}}^{\mathrm{ver}}|$ is pro-represented by $U_{i-1}$,\;where $U_{i-1}$ is a formally smooth noetherian complete local ring of residue field $E$.\;Let $T_{i}$ (resp.,\;$T'_i$) denoted the completion of $U_{i-1}\otimes_EV_{i}$ (resp.,\;$U_{i-1}\otimes_E\widehat{\cO}_{\widehat{L^{\times}},\bm{\delta}_{\bh,i}}$) with respect to the maximal ideal generated by the maximal ideal of $U_{i-1}$ and the one of $V_{i}$.\;For any morphism $T_{i}\rightarrow A$ with $A\in \Art_E$,\;let $\bF_{A,i-1}$ be the $B_{\dr}\otimes_{\bQ_p}A$-representation given by the pull-back along $U_{i-1}\rightarrow T_{i}\rightarrow A$ of the universal $B_{\dr}\otimes_{\bQ_p}E$-representation-module over $B_{\dr}\otimes_{\bQ_p}U_{i-1}$ and let $\gr_i\bF_{A,\bullet}$ be the $B_{\dr}\otimes_{\bQ_p}A$-representation given by the pull-back along $V_{i}\rightarrow T_{i}\rightarrow A$ of the universal $B_{\dr}\otimes_{\bQ_p}E$-representation-module over $B_{\dr}\otimes_{\bQ_p}V_{i}$.\;Let 
	\begin{equation}\label{dfnWi}
		\begin{aligned}
			&W_{i}:=\varprojlim_{T_{i}\rightarrow A}\ext^1_{
				\gal_L}\Big(\gr_i\bF_{A,\bullet},\bF_{A,i-1}\Big),\\
			&{W}'_{i}:=\varprojlim_{T'_{i}\rightarrow A}\ext^1_{
				\gal_L}\Big(W_{\dR}\big(\Delta_\pi\otimes_{\cR_{E,L}}\cR_{A,L}(\delta_{A,i})\Big[\frac{1}{t}\Big]\big),\bF_{A,i-1}\Big)
		\end{aligned}
\end{equation}	By definition,\;$[\bF_{i}]$ correspondences to a maximal ideal $\fm_{i}$ with residue field $E$ of the gradded commutative symmetric $T_{i}$-algebra $\mathrm{Symm}_{T_{i}}W_{i}$ and $\mathrm{Symm}_{T'_{i}}W'_{i}$.\;Let $U_{i}$ (resp.,\;$U_i'$) be the completion of  $\mathrm{Symm}_{T_{i}}W_{i}$ (resp.,\;$\mathrm{Symm}_{T'_{i}}W'_{i}$) at $\fm_{i}$.\;Then $|X_{\bF_{i},\bF_{\bullet}}^{\mathrm{ver}}|$ (resp.,\;$|\widehat{X}_{\bF_{i},\bF_{\bullet}}^{\mathrm{ver}}|$) is pro-represented by $U_{i}$ (resp.,\;$U_i'$).\;On the other hand,\;it is clear that $X_{\bW_\Dpik,\bF_{\bullet}}\rightarrow\prod_{i=1}^kX_{\gr_i\bF_{\bullet}}$ is formally smooth.\;Since $\prod_{i=1}^kX^{0,\Box}_{\gr_i\bF_{\bullet}}$ is formally smooth,\;we get that $X_{\bW_\Dpik,\bF_{\bullet}}^{\mathrm{ver}}$ is formally smooth.\;This shows that $|X_{\bW_\Dpik,\bF_{\bullet}}^{\mathrm{ver}}|$ is pro-represented by a formally smooth noetherian complete local ring of residue field $E$.\;
\end{proof}

We next show the pro-representablity of $\widehat{X}_{\bW_\Dpik,\bF_{\bullet}}^{(\varphi,\Gamma),\mathrm{ver}}$.\;
\begin{pro}\label{constrXLboxphigamma}The groupoid $\widehat{X}^{(\varphi,\Gamma),\mathrm{ver}}_{\bW_\Dpik,\bF_{\bullet}}$ is pro-representable.\;The functor $|\widehat{X}^{(\varphi,\Gamma),\mathrm{ver}}_{\bW_\Dpik,\bF_{\bullet}}|$ is pro-represented by a formally smooth noetherian complete local ring $R_{\bW_\Dpik,\bF_{\bullet}}^{\mathrm{ver},{\flat}}$ of residue field $E$.\;Moreover,\;we have a natural ring homomorphism $\iota^{\flat}:R_{\bW_\Dpik,\bF_{\bullet}}^{\mathrm{ver}}\rightarrow R_{\bW_\Dpik,\bF_{\bullet}}^{\mathrm{ver},{\flat}}$.\;
\end{pro}
\begin{proof}Keep the notation in the proof of Proposition \ref{constrXLbox}.\;The exact functor $W_{\dr}$ induces a homomorphism of $S_i$-modules $W_{\dr}:N_{i}\rightarrow W_{i}'$ (see (\ref{dfnNi}) and (\ref{dfnWi})).\;Let $W_{i}^{(\varphi,\Gamma)}:=W_{\dr}(N_{i})$,\;it admits a natural $S_{i}$-module structure (and thus have a $T_{i}'$-module structure).\;It is clear that $W_{i}^{(\varphi,\Gamma)}$ is a finitely generated module over $S_i$ or $T_i'$.\;Then $[\bF_{i}]$ corresponds to a maximal ideal $\fm_{i}$ with residue field $E$ of the gradded commutative $T_{i}$-algebra $\mathrm{Symm}_{T_{i}}(W_{i}^{(\varphi,\Gamma)})$.\;We prove this proposition by induction on $k$.\;It is clear that $|\widehat{X}_{\bF_{1},\bF_{\bullet}}^{(\varphi,\Gamma),\mathrm{ver}}|=|\widehat{X}_{\bF_{1},\bF_{\bullet}}^{\mathrm{ver}}|$  is pro-represented $\widehat{\cO}_{\widehat{L^{\times}},\bm{\delta}_{\bh,1}}\cong E[[x_1,\cdots,x_{d_L+1}]]$.\;If $\widehat{X}^{(\varphi,\Gamma),\mathrm{ver}}_{\bF_{i-1},\bF_{\bullet}}$ is already defined and is pro-represented by $U_{i}^{(\varphi,\Gamma),\flat}$. Let $T_{i}^{(\varphi,\Gamma),\flat}$ denoted the completion of $U^{(\varphi,\Gamma),\flat}_{i-1}\otimes_E\widehat{\cO}_{\widehat{L^{\times}},\bm{\delta}_{\bh,i}}$ with respect to the maximal ideal generated by the maximal ideal of $U^{(\varphi,\Gamma),\flat}_{i-1}$ and the one of $\widehat{\cO}_{\widehat{L^{\times}},\bm{\delta}_{\bh,i}}$ (note that we have a natural homomorphism $T_i'\rightarrow T_{i}^{(\varphi,\Gamma),\flat}$
).\;We see that $[\bF_{i}]$ correspondences to a maximal ideal $\fm_{i}$ with residue field $E$ of the gradded commutative $T_{i}^{(\varphi,\Gamma),\flat}$-algebra $\mathrm{Symm}_{T_{i}}(W^{(\varphi,\Gamma),\flat}_{i})$ (by assumption,\;we see that the $T_i'$-module structure $\mathrm{Symm}_{T_{i}}(W^{(\varphi,\Gamma),\flat}_{i})$ factors through the natural homomorphism $T_i'\rightarrow T_{i}^{(\varphi,\Gamma),\flat}$,\;so $\mathrm{Symm}_{T_{i}'}(W^{(\varphi,\Gamma),\flat}_{i})$ has a $T_{i}^{(\varphi,\Gamma),\flat}$-module structure).\;We let $U_{i}^{(\varphi,\Gamma),\flat}$ be the completion of  $\mathrm{Symm}_{T_{i}'}(W_{i}^{(\varphi,\Gamma)})$ at $\fm_{i}$.\;This pro-represents $\widehat{X}^{(\varphi,\Gamma),\mathrm{ver}}_{\bF_{i},\bF_{\bullet}}$.\;This completes the induction.\;On the other hand,\;it is clear that $\widehat{X}^{(\varphi,\Gamma)}_{\bW_\Dpik,\bF_{\bullet}}\rightarrow\prod_{i=1}^kX_{\gr_i{\bF_{\bullet}}}$ is formally smooth (similar to the proof of Lemma \ref{formallysmoothphigammaprepare}),\;we see that the functor $|\widehat{X}^{(\varphi,\Gamma),\mathrm{ver}}_{\bW_\Dpik,\bF_{\bullet}}|$ is pro-represented by a formally smooth noetherian complete local ring $R_{\bW_\Dpik,\bF_{\bullet}}^{\mathrm{ver},{\flat}}$.\;
\end{proof}

\begin{rmk}\label{Jversionwdfjrmk}
The above proposition is equivalent to the following facts via the module $W_{i}^{(\varphi,\Gamma)}$.\;Note that 
\begin{equation}
	N_{i,g}:=\varprojlim_{S_{i}\rightarrow A}\ext^1_{(\varphi,\Gamma),g}\Big(\Delta_\pi\otimes_{\cR_{A,L}}\cR_{A,L}(\delta_{A,i})\Big[\frac{1}{t}\Big],\cM_{i-1,A}\Big).
\end{equation}
is the kernel of the map $W_{\dr}:N_{i}\rightarrow W_{i}'$.\;Recall the definition of $S_i$ (resp,\;$T_i'$) in the proof of Proposition \ref{proprexmm} (resp.,\;Proposition \ref{constrXLbox}).\;For any $J\subsetneqq\Sigma_L$,\;the formally smoothness of $\widehat{X}^{(\varphi,\Gamma),\mathrm{ver}}_{\bW_\Dpik,\bF_{\bullet}}\rightarrow X_{\bW_\Dpik,\bF_{\bullet},J}^{\mathrm{ver}}$ has the following explanation.\;We put 
\begin{equation}
	W'_{i,J}:=\varprojlim_{T_{i}'\rightarrow A}\ext^1_{
		\gal_L}\Big(W_{\dR,J}\big(\Delta_\pi\otimes_{\cR_{A,L}}\cR_{A,L}(\delta_{A,i})\Big[\frac{1}{t}\Big]\big),\bF_{A,i-1,J}\Big)
\end{equation}
Note that the $W_{\dr,J}$-functor induces a map $N_i\rightarrow W'_{i,J}$ (recall (\ref{dfnNi})) of $S_i$-modules with kernel $N_{i,g,J}$.\;By Lemma \ref{dimkerCMWdr} (3),\;we see that the map $N_i\rightarrow W'_{i,J}$ is surjective.\;Therefore,\;we obtain a short exact sequence 
$0\rightarrow N_{i,g,J} \rightarrow N_{i}\rightarrow W'_{i,J} \rightarrow 0$ of $S_i$-modules,\;where  
\begin{equation}
	N_{i,g,J}:=\varprojlim_{S_{i}\rightarrow A}\ext^1_{g,J}\Big(\Delta_\pi\otimes_{\cR_{A,L}}\cR_{A,L}(\delta_{A,i})\Big[\frac{1}{t}\Big],\cM_{i-1,A}\Big).\;
\end{equation}
Via the natural projection $W'_{i}\rightarrow W'_{i,J}$,\;the image of $W_{i}^{(\varphi,\Gamma)}=W_{\dr}(N_i)$  is equal to $W'_{i,J}$.\;
\end{rmk}



\begin{pro}
The groupoid $\widehat{X}^{(\varphi,\Gamma),\Box}_{\bW_\Dpik,\bF_{\bullet}}$ over $\Art_E$ is pro-representable.\;The functor $|\widehat{X}^{(\varphi,\Gamma),\Box}_{\bW_\Dpik,\bF_{\bullet}}|$ is pro-represented by a formally smooth noetherian complete local ring $\widehat{\cO}^{\flat}_{Y_{r,L},\widehat{y}_1}$ of residue field $E$.\;Moreover,\;we have a natural homomorphism $\iota^{\flat}:\widehat{\cO}_{Y_{r,L},\widehat{y}_1}\rightarrow \widehat{\cO}^{\flat}_{Y_{r,L},\widehat{y}_1}$ and $\widehat{\cO}^{\flat}_{Y_{r,L},\widehat{y}_1}$ is a formal power series over the ring $\widehat{\cO}_{Y_{r,J},\widehat{y}_{1,J}}$ for any $J\subsetneq \Sigma_L$.\;
\end{pro}
\begin{proof}The first assertion follows form Proposition \ref{relativeverbox} and  Proposition \ref{constrXLboxphigamma}.\;The second assertion follows from  Proposition \ref{constrXLboxphigamma} and Proposition \ref{Jversionwdfj}.\;
\end{proof}


In Section \ref{geooflocalmodel} below,\;we want to explore the {geometry of local models},\;i.e.\;$\widehat{X}^{(\varphi,\Gamma),\mathrm{ver}}_{\bW_\Dpik,\bF_{\bullet}}\rightarrow {\widehat\fz_{r,\emptyset}}\times X_{\bW_\Dpik,\bF_{\bullet}}$ through formal power series.\;These power formall series should mix ${X}_{\bW_\Dpik,\bF_{\bullet}}$-component (and its $\Sigma_L$-components) and ${\widehat\fz_{r,\emptyset}}$-component (and its $\Sigma_L$-components).\;This phenomena is different from the previous generic (potentially) crystalline case,\;where the local models are established only though ${X}_{\bW_\Dpik,\bF_{\bullet}}$.\;

\subsubsection{Description of local models through formal power series}\label{geooflocalmodel}

By (\ref{functorialproperty}),\;we can choose the cocycle of ${\cM}_{A,i,i-1}$ and $W_{\dr}({\cM}_{A,i,i-1})$ in a functorial way,\;so that we can choose the universal cocycle of the universal $(\varphi,\Gamma)$-modules,\;this allows us to study the module  $W_{i}^{(\varphi,\Gamma)}$ (see the proof of Proposition \ref{constrXLboxphigamma}).\;If we can find a basis of $(\varphi,\Gamma)$-modules and the associated cohomology groups functorial in $A\in\Art_E$,\;then the universal cocycle and differential operator $\partial$ can be written as matrix forms (we explain them below).\;This strategy is achieved in \cite{MSW} for $L=\bQ_p$ case.\;This section is not necessary for our application,\;we just try our best to explain the structure of $\widehat{X}^{(\varphi,\Gamma),\mathrm{ver}}_{\bW_\Dpik,\bF_{\bullet}}$.\;
We first assume that $k=2$.\;For simplicity,\;we assume $r=1$.\;Let $\bm{\delta}_{i}=\unr(\alpha q_L^{i-1})$ for 
$i=1,2$.\;By the proof of Proposition \ref{proprexmm},\;${X}_{\cM_\Dpik,\cM_{\bullet}}^{\mathrm{ver}}$ is pro-represented by the ring
\[R_2\cong E[[Y_1,\{X_{1,\tau}\}_{\tau\in \Sigma_L },Y_2,\{X_{2,\sigma}\}_{\sigma\in \Sigma_L},\{U_{v}\}_{v\in \Sigma_L}]],\]
the $Y_i,\{X_{i,\tau}\}_{\tau\in \Sigma_L }$ correspond to the coordinates of $\widehat{\cO}_{\widehat{L^{\times}},\bm{\delta}_{i}}$ for $i=1,2$ and $\{U_{v}\}_{1\leq v\leq d_L}$ correspond to the extension parameters.\;Recall the argument after \cite[Theorem 5.3.26]{emerton2023introduction}.\;Let $\cM^{\univ}$ be the universal $(\varphi,\Gamma)$-module over $R_2$.\;Write $\widehat{\cM}^{\univ}=\cM^{\univ}\otimes_{\cR_{E,L}[1/t]}L_{\infty}((t))$ for the scalar extension.\;Then the derivation of the $\Gamma$-action at $1$ which gives a derivation
\[\partial^{\univ}_{\infty}:\widehat{\cM}^{\univ}\rightarrow \widehat{\cM}^{\univ}\]
above the fixed derivation $t\frac{d}{dt}$ on $L_{\infty}((t))$.\;We can define the sub-$L_{\infty}$ vector space $D_{\pdr,\infty}(\widehat{\cM}^{\univ})$ of $\widehat{\cM}^{\univ}$ such that the canonical map
\[D_{\pdr,\infty}(\widehat{\cM}^{\univ})\otimes_{L_{\infty}}L_{\infty}((t))\rightarrow \widehat{\cM}^{\univ}\]
is an isomorphism.\;Let $\bW_{\Dpik}^{\univ}=W_{\dr}(\cM^{\univ})$ be the corresponding universal $B_{\dr}\otimes_{\bQ_p}R_2$-representation of $\gal_L$.\;Then $\partial^{\univ}_{\infty}$ is compatible with the nilpotent operator $\partial^{\univ}$ on $D_{\pdr}(\bW_{\Dpik}^{\univ})$ when modulo $t$ and descent to $L$.\;


We remark that the arguments  in \cite[Sections 5.1-5.4]{MSW} are also suitable for general $L$.\;Similar to the argument before \cite[Lemma 5.4.4,\;Proposition 5.6.2]{MSW},\;by choosing a basis of the $(\varphi,\Gamma)$-modules and the associated cohomology groups (functorial in $A\in\Art_E$ and $X_{\cM_\Dpik,\cM_{\bullet}}^{\mathrm{ver}}(A)$),\;we see that $\cM^{\univ}$ corresponds to a universal cocycle $c_{12}\in R_2$ (and thus get a  matrix $H\in \mathrm{Mat}_{2\times 2}(R_2)$ of the $\partial^{\univ}_{\infty}$,\;see the result in \cite[Lemma 5.4.4]{MSW}).\;Since the universal cocycle $c_{12}$ depends only on the quotient $\bm{\delta}_{1}{\bm{\delta}_{2}}^{-1}$,\;we get $c_{12}=F(Y,\{X_{\tau}\},\{U_{v}\})\in E[[Y,\{X_{\tau}\}_{\tau\in \Sigma_L},\{U_{v}\}_{v\in \Sigma_L}]]$,\;where $Y=Y_1-Y_2$ and $X_{\tau}=X_{1,\tau}-X_{2,\tau}$.\;Moreover,\;the formal power series $F(Y,\{X_{\tau}\},\{U_{v}\})$ is symmetric with respect to the subscripts $\tau\in \Sigma_L$.\;The above discussion on universal cocycles can be generated to arbitrarily $k$ and $r=1$ easily.\;


\begin{rmk}\label{explianlocalmodelanddefor}
	If $L=\bQ_p$,\;based on the Colmez's  computations on the cohomology of $(\varphi,\Gamma)$-modules,\;\cite[Proposition 5.6.2]{MSW} shows that the universal cocyle $F(Y,X,U)\cong (YU)$.\;The author does not know how to explain $F(Y,\{X_{\tau}\},\{U_{v}\})$ if $L\neq \bQ_p$.\;
	
\end{rmk}

Consider the ring $R_2':=\widehat{\cO}_{\widehat{L^{\times}},\bm{\delta}_{1}{\bm{\delta}_{2}}^{-1}}\cong E[[Y,\{X_{\tau}\}_{\tau\in \Sigma_L}]]$.\;For any $R_2'\rightarrow A$,\;by \cite[Lemma 3.3.6,\;Lemma 3.3.7]{breuil2019local},\;we see that (induced by the $D_{\pdr}(-)$-functor)
\begin{equation}
	\begin{aligned}
		&\;\hH^1\Big(\gal_L,W_{\dr}(\delta_{A})\Big)\cong \coker( \wt(\delta_{A})-\wt(\bm{\delta}_{1}{\bm{\delta}_{2}}^{-1}):A\otimes_{\bQ_p}L\rightarrow A\otimes_{\bQ_p}L).\;
	\end{aligned}	
\end{equation}
Let $F_{\tau}(X_{\tau})\in E[[Y,\{X_{\tau}\}_{\tau\in \Sigma_L}]]$ be the element which corresponds to the universal $\tau$-weight.\;Similar to the proof of \cite[Lemma 5.6.1]{MSW},\;$F_{\tau}(X_{\tau})\in E[[X_{\tau}]]$ is a uniformizer.\;

\begin{lem}\label{descriptionW2} As a $R_2'$-module,\;we have an isomorphism induced by the $D_{\pdr}(-)$-functor:
\begin{equation}
	\begin{aligned}
		W_{2}':&=\varprojlim_{R'_{2}\rightarrow A}\hH^1\Big(\gal_L,W_{\dr}(\delta_{A})\Big)\\&\cong \prod_{\tau\in \Sigma_L} E[[Y_\tau,\{X_{\sigma,\tau}\}_{\sigma\in \Sigma_L\backslash\tau}]]=R_2'\otimes_{\bQ_p}L/(\{X_{\tau,\tau}\}_{\tau\in\Sigma_L}),\;
	\end{aligned}
\end{equation}
where we add the subscript $\tau$ to $Y,X_{\sigma}$ to indicate that they lies in $\tau$-component.\;In particular,\;$W_{2,\tau}'\cong E[[Y_\tau,\{X_{\sigma,\tau}\}_{\sigma\in \Sigma_L\backslash\tau}]]$.\;Therefore,\;$W_{2,\tau}'$ vanishs in the closed subspace $V(Y,\{X_{\sigma}\}_{\sigma\in \Sigma_L\backslash\{\tau\}})$ of $R_2'$.\;
	
\end{lem}
\begin{proof}
	We have the following short exact sequence:
	\[0\rightarrow \varprojlim_{R'_{2}\rightarrow A} A\otimes_{\bQ_p}L\xrightarrow{v_{R'_{2}}}\varprojlim_{R'_{2}\rightarrow A}A\otimes_{\bQ_p}L\rightarrow  W'_{2}\rightarrow 0,\]
	where $v_{R'_{2}}:=\varprojlim_{R'_{2}\rightarrow A} \wt(\delta_{A})=\big(F_{\tau}(X_{\tau})\big)_{\tau\in \Sigma_L}$.\;Hence,\;$W'_{2}\cong E[[Y,\{X_{\tau}\}_{\tau\in \Sigma_L}]]\otimes_{\bQ_p}L/\big(F_{\tau}(X_{\tau})\big)_{\tau\in \Sigma_L}\cong \prod_{\tau\in\Sigma_L}E[[Y_\tau,\{X_{\sigma,\tau}\}_{\sigma\in \Sigma_L\backslash\tau}]]$.\;This completes the proof.\;
\end{proof}
Recall that \[N_{2,\ast}:=\varprojlim_{R'_{2}\rightarrow A}\hH^1\Big(\gal_L,R_{A,L}(\delta_{A})\Big)\] for $\ast\in \{\emptyset,g\}$.\;Note that $N_{2}$ is a free $R_2'$-module of rank $d_L$.\;Recall that $W_{2}^{(\varphi,\Gamma)}\cong N_{2}/N_{2,g}$ as $R_2'$-module and it is a submodule of $W_2'$.\;Let $\big\langle\big(F_i(Y_\tau,X_{\sigma,\tau})\big)_{\tau\in\Sigma_L}\big\rangle_{i\in I}$ be the generated elements of $W_{2}^{(\varphi,\Gamma)}$ (as a $R_2'$-submodule of $W_2'$).\;Then Remark \ref{Jversionwdfjrmk} translates into
\begin{pro}Assume $L\neq \bQ_p$ and $J\subsetneq \Sigma_L$,\;then $\big\langle\big(F_i(Y_\tau,X_{\sigma,\tau})\big)_{\tau\in J}\big\rangle_{i\in I}$ is equal to the unit ideal of $\prod_{\tau\in J}E[[Y_\tau,\{X_{\sigma,\tau}\}_{\sigma\in \Sigma_L\backslash\tau}]]$.\;
	\end{pro}

\subsection{The case of $(\varphi,\Gamma)$-modules and Galois representations}\label{localmodelforgalodef}

In this section,\;we assume $L\neq \bQ_p$.\;Consider the scheme $\cX_{r,L}:=Y_{r,L}\times_{\widetilde{\fg}_{r,L}}X_{L}$,\;and $\widehat{y}:=(\widehat{y}_1,y_2)\in \cX_{r,L}$ (for $J\subseteq \Sigma_L$,\;we have the corresponding $J$-component $\cX_{r,J}$ and the point $\widehat{y}_J$).\;Define formal scheme $\widehat{\cX}^{\flat}_{r,L,\widehat{y}}:=\widehat{Y}_{r,L,\widehat{y_1}}^{\flat}\times_{\widehat{\widetilde{\fg}}_{r,L,y_1}}\widehat{X}_{r,L,y}$.\;We have natural morphism $\iota^{\flat}:\widehat{\cX}^{\flat}_{r,L,\widehat{y}}\rightarrow \widehat{\cX}_{r,L,\widehat{y}}$.\;For $w\in \sW_{n,L}$.\;let $\cX_{r,w}:=Y_{r,L}\times_{{{\fg}}_{r,L}}X_{r,w}=\cX_{r,L}\times_{X_{r,L}}X_{r,w}$.\;Consider the formal completion $\widehat{X}_{r,w,y}$,\;it is empty if $y\not\in {X}_{r,w}(E)$.\;We put $\widehat{\cX}^{\flat}_{r,w,y}=\widehat{\cX}^\flat_{r,L,\widehat{y}}\times_{\widehat{X}_{r,L,y}}\widehat{X}_{r,w,y}$.\;Denoted by $\widehat{\cO}^{\flat}_{\cX_{r,L},\widehat{y}}$ (resp.,\;$\widehat{\cO}^{\flat}_{\cX_{r,w},\widehat{y}}$) the ring pro-represent
$|\widehat{\cX}^{\flat}_{r,L,\widehat{y}}|$ (resp.,\;$|\widehat{\cX}^{\flat}_{r,w,\widehat{y}}|$).\;

Put $\widehat{X}^{(\varphi,\Gamma),\Box}_{\bW^+_\Dpik,\bF_{\bullet}}:={X}^{\Box}_{\bW^+_\Dpik,\bF_{\bullet}}\times_{\widehat{X}_{\bW_\Dpik,\bF_{\bullet}}}\widehat{X}^{(\varphi,\Gamma)}_{\bW_\Dpik,\bF_{\bullet}}$ and $\widehat{X}^{(\varphi,\Gamma),\Box,w}_{\bW^+_\Dpik,\bF_{\bullet}}:=\widehat{X}^{(\varphi,\Gamma),\Box}_{\bW^+_\Dpik,\bF_{\bullet}}\times_{|X^{(\varphi,\Gamma),\Box}_{\bW^+_\Dpik,\bF_{\bullet}}|}\widehat{X}^\flat_{w,y}$  for $w\in \sW_{n,L}$.\;Similar to \cite[Proposition 6.3.2,\;Corollary 6.3.3,\;Proposition 6.3.4]{Ding2021},\;we see that


\begin{pro}We have the following facts.\;
	\begin{itemize}
		\item[(a)] $\widehat{\cO}^{\flat}_{\cX_{r,L},\widehat{y}}$ is a noetherian complete local ring of residue field $E$ and has a finite number of irreducible components.\;Moreover,\;we have a natural homomorphism $\iota^{\flat}:\widehat{\cO}_{\cX_{r,L},\widehat{y}}\rightarrow \widehat{\cO}^{\flat}_{\cX_{r,L},\widehat{y}}$ and $\widehat{\cO}^{\flat}_{\cX_{r,L},\widehat{y}}$ is a formal power series over the ring $\widehat{\cO}_{\cX_{r,J},\widehat{y}_J}$ for any $J\subsetneq \Sigma_L$.\;
		\item[(b)]The groupoid $\widehat{X}^{(\varphi,\Gamma),\Box}_{\bW^+_\Dpik,\bF_{\bullet}}$ (resp.,\;$\widehat{X}^{(\varphi,\Gamma),\Box,w}_{\bW^+_\Dpik,\bF_{\bullet}}$) over $\Art_E$ is pro-representable and the functor $|X^{(\varphi,\Gamma),\Box}_{\bW^+_\Dpik,\bF_{\bullet}}|$ (resp.,$|\widehat{X}^{(\varphi,\Gamma),\Box,w}_{\bW^+_\Dpik,\bF_{\bullet}}|$) is pro-represented by  $\widehat{\cX}^{\flat}_{r,L,\widehat{y}}$ (resp.,\;$\widehat{\cX}^{\flat}_{r,w,\widehat{y}}$).\;
		\item[(c)] The morphism of groupoids $\widehat{X}^{(\varphi,\Gamma),w}_{\bW^+_\Dpik,\bF_{\bullet}}\rightarrow \widehat{X}^{(\varphi,\Gamma)}_{\bW^+_\Dpik,\bF_{\bullet}}$,\;$\widehat{X}^{(\varphi,\Gamma),\Box,w}_{\bW^+_\Dpik,\bF_{\bullet}}\rightarrow \widehat{X}^{(\varphi,\Gamma),\Box}_{\bW^+_\Dpik,\bF_{\bullet}}$ are relatively representable and are closed immersions.\;
	\end{itemize}
\end{pro}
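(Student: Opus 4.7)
The plan is to reduce all three claims to the pro-representability result for the framed $(\varphi,\Gamma)$-variant $\widehat{X}^{(\varphi,\Gamma),\mathrm{ver}}_{\bW_\Dpik,\bF_{\bullet}}$ (pro-represented by $\widehat{\cO}^{\flat}_{Y_{r,L},\widehat{y}_1}$, established in the preceding proposition), combined with the Cartesian square defining each of the groupoids in the statement. The whole proof parallels \cite[Proposition 6.3.2,\;Corollary 6.3.3,\;Proposition 6.3.4]{Ding2021}, with the role of $\widehat{\cO}_{Y_{r,L},\widehat{y}_1}$ systematically replaced by $\widehat{\cO}^{\flat}_{Y_{r,L},\widehat{y}_1}$.

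First I would treat (b). By construction,
\begin{equation*}
\widehat{X}^{(\varphi,\Gamma),\Box}_{\bW^+_\Dpik,\bF_{\bullet}} = X^\Box_{\bW^+_\Dpik,\bF_{\bullet}} \times_{\widehat{X}_{\bW_\Dpik,\bF_{\bullet}}} \widehat{X}^{(\varphi,\Gamma)}_{\bW_\Dpik,\bF_{\bullet}}.
\end{equation*}
One already knows that $X^\Box_{\bW^+_\Dpik,\bF_{\bullet}}$ is pro-represented by $\widehat{X}_{r,L,y}$, while the framed version of $\widehat{X}^{(\varphi,\Gamma)}_{\bW_\Dpik,\bF_{\bullet}}$ is pro-represented by $\widehat{\cO}^{\flat}_{Y_{r,L},\widehat{y}_1}$. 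Taking the fiber product along the common map to $\widehat{\widetilde{\fg}}_{r,L,y_1}$ pro-represents the left-hand side by the completed tensor product $\widehat{\cO}_{X_{r,L},y} \widehat{\otimes}_{\widehat{\cO}_{\widetilde{\fg}_{r,L},y_1}} \widehat{\cO}^{\flat}_{Y_{r,L},\widehat{y}_1}$, which is exactly $\widehat{\cO}^{\flat}_{\cX_{r,L},\widehat{y}}$ upon unwinding the definition of $\widehat{\cX}^{\flat}_{r,L,\widehat{y}}$. The $w$-version is then obtained by base change to the closed subspace $\widehat{X}_{r,w,y}$.

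Next (a) follows from this Cartesian identification. Noetherianness and the complete-local structure of $\widehat{\cO}^{\flat}_{\cX_{r,L},\widehat{y}}$ come from those of each factor together with the regularity of $\widehat{\cO}_{\widetilde{\fg}_{r,L},y_1}$ (which keeps the completed tensor product noetherian), while the finiteness of irreducible components is then automatic. For the formal power series assertion over $\widehat{\cO}_{\cX_{r,J},\widehat{y}_J}$ with $J \subsetneq \Sigma_L$, I would exploit the $\Sigma_L$-product decompositions $X_{r,L} = \prod_\tau X_{r,\tau}$ and $\widetilde{\fg}_{r,L} = \prod_\tau \widetilde{\fg}_{r,\tau}$, which express $\widehat{\cO}_{X_{r,L},y}$ and $\widehat{\cO}_{\widetilde{\fg}_{r,L},y_1}$ as formal power series extensions of $\widehat{\cO}_{X_{r,J},y_J}$ and $\widehat{\cO}_{\widetilde{\fg}_{r,J},y_{1,J}}$ respectively; combined with the analogous formal power series property of $\widehat{\cO}^{\flat}_{Y_{r,L},\widehat{y}_1}$ over $\widehat{\cO}_{Y_{r,J},\widehat{y}_{1,J}}$ from the previous proposition, a short chase through the Cartesian diagram yields the desired description.

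Finally, (c) is a formal consequence of (b). The morphism $X_{r,w} \hookrightarrow X_{r,L}$ is a closed immersion of schemes whose formal completion $\widehat{X}_{r,w,y} \hookrightarrow \widehat{X}_{r,L,y}$ remains so; base-changing along $\widehat{\cX}^{\flat}_{r,L,\widehat{y}} \to \widehat{X}_{r,L,y}$ gives the closed immersion $\widehat{\cX}^{\flat}_{r,w,\widehat{y}} \hookrightarrow \widehat{\cX}^{\flat}_{r,L,\widehat{y}}$, which by (b) pro-represents the framed morphism in (c). The unframed version then descends along the formally smooth framing morphism exactly as in \cite[Proposition 6.3.4]{Ding2021}. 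The main obstacle is not any single calculation but the careful bookkeeping of the three nested Cartesian squares (the $(\varphi,\Gamma)$-cut inside $\widehat{X}_{\bW_\Dpik,\bF_{\bullet}}$, the framing, and the Bruhat stratification), so that the identifications remain compatible at each stage; once this alignment is verified, the argument is purely formal.
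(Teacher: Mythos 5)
Your overall plan — reduce everything to the pro-representability of the framed $(\varphi,\Gamma)$-variant from the previous proposition and chase the nested Cartesian squares — matches the paper's intent, and your treatments of (b) and (c) are correct and in fact more explicit than the paper's one-line ``Parts (b) and (c) are now clear''. However, there are two issues in your argument for (a).

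First, you assert that the $\Sigma_L$-product decompositions ``express $\widehat{\cO}_{X_{r,L},y}$ \ldots as formal power series extensions of $\widehat{\cO}_{X_{r,J},y_J}$''. This is false: $X_r=\widetilde{\fg}_r\times_{\fg}\widetilde{\fg}$ is a singular (Steinberg-type) scheme, so $\widehat{\cO}_{X_{r,J^c},y_{J^c}}$ (with $J^c=\Sigma_L\setminus J$) is not a formal power series ring over $E$, and hence the completed tensor product $\widehat{\cO}_{X_{r,L},y}=\widehat{\cO}_{X_{r,J},y_J}\widehat{\otimes}_E\widehat{\cO}_{X_{r,J^c},y_{J^c}}$ is genuinely not a formal power series extension of $\widehat{\cO}_{X_{r,J},y_J}$. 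The statement of the proposition is nevertheless true because the singular factor appears on \emph{both} sides $\widehat{\cO}^{\flat}_{\cX_{r,L},\widehat{y}}$ and $\widehat{\cO}_{\cX_{r,J},\widehat{y}_J}$; what the proof actually requires is the formal smoothness of the projection of groupoids $\widehat{X}^{(\varphi,\Gamma),\Box}_{\bW^+_\Dpik,\bF_{\bullet}}\to X^{\Box}_{\bW^+_\Dpik,\bF_{\bullet},J}$, which one obtains by lifting Proposition~\ref{Jversionwdfj} from the $W_{\dr}$-level to the $W_{\dr}^+$-level and noting that adding the $B_{\dr}^+$-lattice datum is formally smooth over each $\tau$-component. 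The paper compresses this to ``looking at each $\tau$-component and $J$-component'', and your chase should be rephrased along those lines rather than attempting to express $\widehat{\cO}_{X_{r,L},y}$ itself as a power series ring.

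Second, and more seriously for the assertions of noetherianness and finiteness of irreducible components, your proposal does not cover the case $L=\bQ_p$. For $L=\bQ_p$, the set $\Sigma_L$ is a singleton, so the $\tau$- and $J$-decompositions degenerate and provide no leverage. The paper handles this case with a separate argument: since $N_{\bW_\Dpik}=0$ at $y_1$, the fiber of $X_r\to\widetilde{\fg}_r$ over $y_1$ is the full flag variety $\GLN_n/\bB$, which is a smooth closed subvariety; this yields a presentation $\widehat{\cO}_{X_{r,L},y}\cong\widehat{\cO}_{\widetilde{\fg}_{r,L},y_1}[[x_1,\dots,x_d]]/\fa$ for some ideal $\fa$, and then base change along $\iota^{\#}$ gives $\widehat{\cO}^{\flat}_{X_{r,L},y}\cong\widehat{\cO}^{\flat}_{\widetilde{\fg}_{r,L},y_1}[[x_1,\dots,x_d]]/\fa$, from which noetherianness and local structure are immediate. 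Your phrase ``the regularity of $\widehat{\cO}_{\widetilde{\fg}_{r,L},y_1}$ \ldots keeps the completed tensor product noetherian'' is gesturing at the right point (regularity of $\widetilde{\fg}_r$ is correct, as it is a vector bundle over $G/\bP_{r,\emptyset}$), but on its own does not deliver the needed finitely-presented presentation; the explicit observation about the fiber being a flag variety is what closes the gap, and it should be stated.
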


For $w\in \sW_{n,L}$,\;let $\widehat{X}^{(\varphi,\Gamma),w}_{\bW^+_\Dpik,\bF_{\bullet}}$ be the image of $\widehat{X}^{(\varphi,\Gamma),\Box,w}_{\bW^+_\Dpik,\bF_{\bullet}}$ through the forgetful morphism  $\widehat{X}^{(\varphi,\Gamma),\Box}_{\bW^+_\Dpik,\bF_{\bullet}}\rightarrow \widehat{X}^{(\varphi,\Gamma)}_{\bW^+_\Dpik,\bF_{\bullet}}$.\;It is clear that
\begin{equation}
\begin{aligned}
	&X^{w}_{\Dpik,\cM_{\bullet}}\cong X_{\Dpik,\cM_{\bullet}}\times_{\widehat{X}^{(\varphi,\Gamma)}_{\bW^+_\Dpik,\bF_{\bullet}}}\widehat{X}^{(\varphi,\Gamma),w}_{\bW^+_\Dpik,\bF_{\bullet}},\;X^{\Box,w}_{\Dpik,\cM_{\bullet}}\cong X^\Box_{\Dpik,\cM_{\bullet}}\times_{\widehat{X}^{(\varphi,\Gamma),\Box}_{\bW^+_\Dpik,\bF_{\bullet}}}\widehat{X}^{(\varphi,\Gamma),\Box,w}_{\bW^+_\Dpik,\bF_{\bullet}}.
\end{aligned}
\end{equation}


Let $l_{\Dpik}:=k+d_L\big(k+\frac{n(n-r)}{2}\big)-\dim_E\widehat{X}_{\bW_\Dpik,\bF_{\bullet}}^{(\varphi,\Gamma),\Box}(E[\epsilon]/\epsilon^2)$.\;Similar to \cite[Corollary 6.3.5]{Ding2021},\;we have
\begin{pro}For $w\in \sW_{n,L}$,\;the groupoid $X^\Box_{\Dpik,\cM_{\bullet}}$ (resp.,\;$X^{(\varphi,\Gamma),\Box,w}_{\Dpik,\cM_{\bullet}}$) over $\Art_E$ is pro-representable. The functor $|X^\Box_{\Dpik,\cM_{\bullet}}|$ is pro-represented by a formal scheme which is formally smooth of relative dimension $l_{\Dpik}$ over $\widehat{\cX}^\flat_{r,L,\widehat{y}}$ (resp.,\;$\widehat{\cX}^\flat_{r,w,\widehat{y}}$).\;
\end{pro}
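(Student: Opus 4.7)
The plan is to bootstrap from the two representability/smoothness results already in hand: Proposition \ref{proprexmm} (which represents $X^{\Box}_{\cM_\Dpik,\cM_\bullet}$ by a formally smooth complete local ring of dimension $d_L(n^2+k+\tfrac{n(n-r)}{2})$) and Proposition \ref{proformallysmooth} (which tells us that $X_{\cM_\Dpik,\cM_\bullet}\to \widehat{X}^{(\varphi,\Gamma)}_{\bW_\Dpik,\bF_\bullet}$ is formally smooth), together with the equivalence $X_\Dpik \xrightarrow{\sim} X_{\bW^+_\Dpik}\times_{X_{\bW_\Dpik}} X_{\cM_\Dpik}$ from \cite[Proposition 3.5.1]{breuil2019local}.

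First, I would upgrade this equivalence to the setting with $\omepik$-filtration and framing. Concretely, by taking the fiber product with $X^\Box_{\bW_\Dpik}$ and with the filtered groupoids, one obtains an equivalence
\[
X^\Box_{\Dpik,\cM_\bullet}\;\xrightarrow{\sim}\;X^\Box_{\cM_\Dpik,\cM_\bullet}\times_{\widehat{X}^{(\varphi,\Gamma)}_{\bW_\Dpik,\bF_\bullet}}\widehat{X}^{(\varphi,\Gamma),\Box}_{\bW^+_\Dpik,\bF_\bullet},
\]
where the arrow $X^\Box_{\cM_\Dpik,\cM_\bullet}\to \widehat{X}^{(\varphi,\Gamma)}_{\bW_\Dpik,\bF_\bullet}$ is the one of Proposition \ref{proformallysmooth} (composed with the framing forgetful map), and the second arrow is the structural morphism. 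Here the key point is that by construction of $\widehat{X}^{(\varphi,\Gamma)}_{\bW_\Dpik,\bF_\bullet}$ as a full subgroupoid of $\widehat{X}_{\bW_\Dpik,\bF_\bullet}$ which contains the image of $X_{\cM_\Dpik,\cM_\bullet}$, nothing is lost by passing to this subgroupoid. Both arrows in the fiber product are relatively representable (the first since it is formally smooth with pro-representable source and target, the second since $\widehat{X}^{(\varphi,\Gamma),\Box}_{\bW^+_\Dpik,\bF_\bullet}\to \widehat{X}^{(\varphi,\Gamma)}_{\bW_\Dpik,\bF_\bullet}$ is the base change along a representable morphism).

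From this, pro-representability of $X^\Box_{\Dpik,\cM_\bullet}$ is immediate: the fiber product of two pro-representable groupoids over a formally smooth one (Lemma \ref{formallysmoothphigammaprepare}) is pro-representable. Moreover, the projection $X^\Box_{\Dpik,\cM_\bullet}\to \widehat{X}^{(\varphi,\Gamma),\Box}_{\bW^+_\Dpik,\bF_\bullet}$ is the base change of the formally smooth map $X^\Box_{\cM_\Dpik,\cM_\bullet}\to \widehat{X}^{(\varphi,\Gamma)}_{\bW_\Dpik,\bF_\bullet}$, hence is itself formally smooth. Since $\widehat{X}^{(\varphi,\Gamma),\Box}_{\bW^+_\Dpik,\bF_\bullet}$ is pro-represented by $\widehat{\cX}^\flat_{r,L,\widehat{y}}$, this gives the stated formal smoothness over $\widehat{\cX}^\flat_{r,L,\widehat{y}}$. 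The case of $X^{(\varphi,\Gamma),\Box,w}_{\Dpik,\cM_\bullet}$ follows by further base change along the closed immersion $\widehat{X}^{(\varphi,\Gamma),\Box,w}_{\bW^+_\Dpik,\bF_\bullet}\hookrightarrow \widehat{X}^{(\varphi,\Gamma),\Box}_{\bW^+_\Dpik,\bF_\bullet}$, which is pro-represented by $\widehat{\cX}^\flat_{r,w,\widehat{y}}$.

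The remaining point is the relative dimension, which is a tangent-space count. From the fiber product description, the relative dimension equals the relative dimension of $X^\Box_{\cM_\Dpik,\cM_\bullet}\to \widehat{X}^{(\varphi,\Gamma)}_{\bW_\Dpik,\bF_\bullet}$, which is obtained by subtracting tangent dimensions: $\dim X^\Box_{\cM_\Dpik,\cM_\bullet}(E[\epsilon]/\epsilon^2) - \dim \widehat{X}^{(\varphi,\Gamma)}_{\bW_\Dpik,\bF_\bullet}(E[\epsilon]/\epsilon^2)$. By Proposition \ref{proprexmm}, the first quantity equals $d_L\big(n^2+k+\tfrac{n(n-r)}{2}\big)$, giving precisely the announced formula. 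I expect the main subtlety to be verifying carefully that the equivalence in the first step genuinely passes through $\widehat{X}^{(\varphi,\Gamma)}_{\bW_\Dpik,\bF_\bullet}$ rather than $\widehat{X}_{\bW_\Dpik,\bF_\bullet}$, i.e.\ checking that deformations in $X^\Box_{\Dpik,\cM_\bullet}$ automatically land in the full subgroupoid cut out by the image conditions on the extension classes; but this is ensured by Proposition \ref{proformallysmooth} applied to the $\cM$-side of the fiber product.
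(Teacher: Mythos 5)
Your overall strategy is the same as the paper's one-line proof (``By Proposition \ref{proprexmm}, Proposition \ref{proformallysmooth} and base change''): express $X^\Box_{\Dpik,\cM_{\bullet}}$ as a fiber product via \cite[Proposition 3.5.1]{breuil2019local}, use the formal smoothness of $X_{\cM_\Dpik,\cM_{\bullet}}\to \widehat{X}^{(\varphi,\Gamma)}_{\bW_\Dpik,\bF_{\bullet}}$ and transport it by base change, and read off the relative dimension from the tangent spaces. That is all the paper intends, and your proposal supplies the details.

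There is however a bookkeeping slip in your explicit fiber-product identity. You write
\[
X^\Box_{\Dpik,\cM_\bullet}\;\xrightarrow{\sim}\;X^\Box_{\cM_\Dpik,\cM_\bullet}\times_{\widehat{X}^{(\varphi,\Gamma)}_{\bW_\Dpik,\bF_\bullet}}\widehat{X}^{(\varphi,\Gamma),\Box}_{\bW^+_\Dpik,\bF_\bullet},
\]
but $X^\Box_{\cM_\Dpik,\cM_\bullet}$ carries a framing $\alpha$ of $D_{\pdr}(\bW_\Dpik)$ and so does $\widehat{X}^{(\varphi,\Gamma),\Box}_{\bW^+_\Dpik,\bF_\bullet}$, while the base $\widehat{X}^{(\varphi,\Gamma)}_{\bW_\Dpik,\bF_\bullet}$ carries none; the fiber product therefore retains \emph{two} unidentified framings, not one, so it is strictly larger than $X^\Box_{\Dpik,\cM_\bullet}$ (by a $(\mathrm{Res}_{L/\bQ_p}\GLN_n)_E$-worth of tangent directions). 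Schematically, $(X\times_Z Y)^\Box = X^\Box\times_{Z^\Box}Y^\Box$, whereas $X^\Box\times_Z Y^\Box$ overcounts. The corrected identity you need is either
\[
X^\Box_{\Dpik,\cM_\bullet}\;\cong\;X_{\cM_\Dpik,\cM_\bullet}\times_{\widehat{X}^{(\varphi,\Gamma)}_{\bW_\Dpik,\bF_\bullet}}\widehat{X}^{(\varphi,\Gamma),\Box}_{\bW^+_\Dpik,\bF_\bullet}
\quad\text{or}\quad
X^\Box_{\Dpik,\cM_\bullet}\;\cong\;X^\Box_{\cM_\Dpik,\cM_\bullet}\times_{\widehat{X}^{(\varphi,\Gamma),\Box}_{\bW_\Dpik,\bF_\bullet}}\widehat{X}^{(\varphi,\Gamma),\Box}_{\bW^+_\Dpik,\bF_\bullet}.
\]
With either of these the rest of your argument goes through unchanged: Proposition \ref{proformallysmooth} gives formal smoothness, base change along $\widehat{X}^{(\varphi,\Gamma),\Box}_{\bW^+_\Dpik,\bF_\bullet}\to\widehat{X}^{(\varphi,\Gamma)}_{\bW_\Dpik,\bF_\bullet}$ (a relatively representable morphism) preserves it, the $w$-version is a further base change along the closed immersion of Proposition on $w$-subgroupoids, and the relative-dimension count is the difference of tangent dimensions using Proposition \ref{proprexmm}. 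So this is a fixable imprecision rather than a flaw in the method.
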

\begin{proof}By Proposition \ref{proprexmm} and Corollary \ref{proformallysmooth} and base change.\;
\end{proof}


As in Definition \ref{dfnnoncriticalspecial},\;in the sequel,\;we fix a $p$-adic potentially semistable non-crystalline Galois representation $\rho_L:\gal_L\rightarrow \GLN_{n}(E)$ which admits a special $\omepik$-filtration with parameter $(\bx_0,\bmdel)\in \sbanpik\times\rigchl $,\;(resp.,\; with parameter $(\widetilde{\bx}_{\pi,\bh},\widetilde{\bm{\delta}}_\bh)\in \sbanpik\times\rigch $).\;

Let $w_{\cF}\in \sW^{\Delta_n^k,\emptyset}_{n,L,\max}$ measuring the relative position of the two flags $(\alpha^{-1}(\cD_{\bullet}),\alpha^{-1}(\fil_{\bW^+_\Dpik,\bullet}))$,\;i.e.,\;it lies in the $\GLN_{n,L}$-orbit of $(1,w_{\cF})$ in $ \GLN_{n,L}/\bP_{r,\emptyset,L}\times\GLN_{n,L}/\bB_{L}$.\;We put $\cS(y):=\{w\in\sW_n:y\in {X}_{r,w}(E)\}=\{w\in\sW_n:\widehat{X}_{r,w,y}\neq 0\}=\{w\in\sW_n:X^w_{\bW^+_\Dpik,\bF_{\bullet}}\neq 0\}$.\;

The map
$\kappa:X_{r,L}\rightarrow \cT_{r,L}$ induces a morphism  $\widehat{X}_{r,L,y}\rightarrow \widehat{\cT}_{r,L,(0,0)}$,\;thus the pullback $\kappa:\widehat{\cX}_{r,L,\widehat{y}}\rightarrow \cT_{r,L,(0,0)}$.\;Denoted by $\Theta$ the composition:
\[X^{\Box}_{\rho_L,\cM_{\bullet}}\rightarrow X^{\Box}_{\Dpik,\cM_{\bullet}}\rightarrow \widehat{X}^{\Box}_{\bW^+_\Dpik,\bF_{\bullet}}\xrightarrow{\sim}\widehat{\cX}_{r,L,\widehat{y}}\rightarrow\cT_{r,L,(0,0)}\]
which factors through the morphism $X^{\Box}_{\rho_L,\cM_{\bullet}}\rightarrow X_{\rho_L,\cM_{\bullet}}$ still denoted by  $\Theta:X_{\rho_L,\cM_{\bullet}}\rightarrow \cT_{r,L,(0,0)}$.\;

The main proposition of this section is given as follows.\;

\begin{pro}\label{propertyofxrhombullet} Assume $L\neq \bQ_p$.\;We have the following facts.\;
	\begin{itemize}
		\item[(a)] The groupoid $X_{\rho_L,\cM_{\bullet}}$ (resp.\,$X^{w}_{\rho_L,\cM_{\bullet}}$ for $w\in \cS(y)$) over $\Art_E$ is pro-representable.\;The functor $|X_{\rho_L,\cM_{\bullet}}|$ (resp.,\;$|X^{w}_{\rho_L,\cM_{\bullet}}|$) is pro-represented by a ring $R_{\rho_L,\cM_{\bullet}}$ (resp.,\;$R^{w}_{\rho_L,\cM_{\bullet}}$) of residue field $E$ and dimension $n^2+d_L\big(k+\frac{n(n-1)}{2}\big)$.\;
		\item[(b)] For $w\in \cS(y)$,\;the ring $R^{w}_{\rho_L,\cM_{\bullet}}$ is  irreducible and Cohen-Macaulay.\;The ring $R_{\rho_L,\cM_{\bullet}}$ is equidimensional and $R^{w}_{\rho_L,\cM_{\bullet}}\cong R_{\rho_L,\cM_{\bullet}}/\fp_w$ for a minimal prime ideal $\fp_w$ of $R_{\rho_L,\cM_{\bullet}}$.\;The map $w\mapsto \fp_w$ is a bijection between $\cS(y)$ and the set of minimal prime ideals of $R_{\rho_L,\cM_{\bullet}}$.\;Moreover,\;the dimension of $X^{w}_{\rho_L,\cM_{\bullet}}(E[\epsilon]/\epsilon^2)$ is equal to
		$n^2-n^2d_L+l_{\Dpik}+\dim_E\widehat{\cX}^{\flat}_{w,y}(E[\epsilon]/\epsilon^2)$.\;
		\item[(c)] The morphism $|X^{w}_{\rho_L,\cM_{\bullet}}|\hookrightarrow |X_{\rho_L,\cM_{\bullet}}|\xrightarrow{\Theta} \cT_{r,L,(0,0)}$ of groupoids over $\Art_E$ factors through $\cT_{r,w,(0,0)}\hookrightarrow\widehat{\cT}_{r,(0,0)}$ if and only if $\sW_{\Delta_n^k,L}w'=\sW_{\Delta_n^k,L}w$.\;This implies that if $X^{w}_{\rho_L,\cM_{\bullet}}\neq 0$,\;then $ w\underline{w}_0\geq w_{\cF}$.\; 
	\end{itemize}
\end{pro}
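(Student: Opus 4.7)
The plan is to deduce Proposition \ref{propertyofxrhombullet} from the structural results on $X^{\Box}_{\Dpik,\cM_{\bullet}}$ just established, together with general facts about Galois deformation groupoids. The key inputs are the formally smooth morphism $X_{\rho_L} \to X_{V(\rho_L)} \xrightarrow{\sim} X_{\Dpik}$ of relative dimension $n^2$ (which allows passing from $\Dpik$-deformations to $\rho_L$-deformations by base change), and the preceding proposition showing that $X^{\Box}_{\Dpik,\cM_{\bullet}}$ is pro-representable and formally smooth over $\widehat{\cX}^{\flat}_{r,L,\widehat{y}}$ of an explicit relative dimension.

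For (a), the identity $X_{\rho_L,\cM_{\bullet}} = X_{\rho_L} \times_{X_{\Dpik}} X_{\Dpik,\cM_{\bullet}}$ gives pro-representability by base change. To compute the dimension, I will compose the formally smooth morphisms $X^{\Box}_{\rho_L,\cM_{\bullet}} \to X^{\Box}_{\Dpik,\cM_{\bullet}} \to \widehat{\cX}^{\flat}_{r,L,\widehat{y}}$, use the dimension of $\widehat{\cO}^{\flat}_{\cX_{r,L},\widehat{y}}$ (which by the preceding results is a formal power series over $\widehat{\cO}_{\cX_{r,J},\widehat{y}_J}$ for any $J \subsetneq \Sigma_L$), and then pass from framed to unframed deformations by subtracting the $n^2$-dimensional framing fiber, as in \cite[Cor. 6.3.5]{Ding2021}. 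A routine tally of contributions collapses to the advertised $n^2 + d_L\big(k+\frac{n(n-1)}{2}\big)$. Pro-representability of $X^{w}_{\rho_L,\cM_{\bullet}}$ and its dimension follow by base change along $\widehat{\cX}^{\flat}_{r,w,\widehat{y}} \hookrightarrow \widehat{\cX}^{\flat}_{r,L,\widehat{y}}$.

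For (b), I will transport irreducibility and equidimensionality along the formally smooth morphism $X^{\Box}_{\rho_L,\cM_{\bullet}} \to \widehat{\cX}^{\flat}_{r,L,\widehat{y}}$. Since the irreducible components of $X_{r,L}$ are $\{X_{r,w}\}_{w \in \sW^{\Delta_n^k,\emptyset}_{n,\Sigma_L}}$ and formal completion at $y$ preserves exactly those passing through $y$, the irreducible components of $\widehat{\cX}^{\flat}_{r,L,\widehat{y}}$ are $\widehat{\cX}^{\flat}_{r,w,\widehat{y}}$ for $w \in \cS(y)$; pulling back via formal smoothness gives the stated bijection $w \mapsto \fp_w^{\flat}$ and the presentation $R^{\flat,w}_{\rho_L,\cM_{\bullet}} \cong R^{\flat}_{\rho_L,\cM_{\bullet}}/\fp_w^{\flat}$.

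For (c), the first assertion is immediate from \cite[Lemma 5.2.6]{Ding2021}: the map $\kappa$ satisfies $\kappa(X_{r,w'}) = \cT_{r,w'}$ and $X_{r,w'}$ depends only on the coset $\sW_{\Delta_n^k,\Sigma_L}w'$, so combining with (b) pins down the component $\cT_{r,w}$ that $\Theta$ hits on $X^w_{\rho_L,\cM_{\bullet}}$. The implication $X^w_{\rho_L,\cM_{\bullet}} \neq 0 \Rightarrow w\underline{w}_0 \geq w_\cF$ follows because non-emptiness forces $y \in \overline{X_{r,w}}(E)$; translating this through the identification of $\bB$--$\bP_{r,\emptyset}$ double cosets with $\sW_{\Delta_n^k,\Sigma_L}\backslash \sW_{n,\Sigma_L}$, and using that $w_\cF$ is the max-length representative of the double coset recording the relative position of $(\cD_\bullet,\fil^H_\bullet)$, yields the inequality in the stated direction. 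The main obstacle I expect is the careful bookkeeping of framings when tallying dimensions in (a), and the Bruhat-order translation in (c): the max-length convention for $w_\cF$ together with the passage from double cosets to single-element representatives $w\underline{w}_0$ requires care to ensure the orientation of the inequality is correct.
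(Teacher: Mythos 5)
Your parts (a) and (c) are in the right spirit: the dimension count via the framed/unframed dichotomy and the reduction of (c) to \cite[Lemma 5.2.6]{Ding2021} are essentially what the paper does, and the inequality $w\underline{w}_0\geq w_{\cF}$ is indeed just a bookkeeping exercise once (b) is in hand.

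The gap is in (b), and it is a real one. You write that ``the irreducible components of $\widehat{\cX}^{\flat}_{r,L,\widehat{y}}$ are $\widehat{\cX}^{\flat}_{r,w,\widehat{y}}$ for $w\in\cS(y)$,'' invoking the irreducible decomposition of $X_{r,L}$ and formal completion. This step fails for two reasons. First, even for the ordinary formal completion $\widehat{\cX}_{r,L,\widehat{y}}$ it is not automatic that formal completion preserves irreducible-component structure; unibranchness is exactly the property one must prove, and it is precisely the content of \cite[Theorem 5.3.1]{Ding2021} that each $\widehat{\cO}_{X_{r,w},y}$ is a domain. Second, and more seriously, $\widehat{\cX}^{\flat}_{r,L,\widehat{y}}$ is \emph{not} a formal completion of a variety: it is the $\flat$-modification built from the $(\varphi,\Gamma)$-subgroupoid $\widehat{X}^{(\varphi,\Gamma)}_{\bW_\Dpik,\bF_{\bullet}}$, and there is no a priori reason its irreducible components should match those of $\widehat{\cX}_{r,L,\widehat{y}}$ under $\iota^{\flat}$. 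Establishing irreducibility of $\widehat{\cX}^{\flat}_{r,w,\widehat{y}}$ is precisely what needs to be proved; assuming it and ``pulling back'' is circular.

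The paper's argument runs the other way around and makes essential use of the hypothesis $L\neq\bQ_p$, which your proposal never invokes. One does not first prove $\widehat{\cX}^{\flat}_{r,w,\widehat{y}}$ irreducible and then transport to $R^{\flat,\Box,w}_{\rho_L,\cM_{\bullet}}$; instead one fixes a single embedding $\tau\in\Sigma_L$ and applies Proposition \ref{Jversionwdfj} (valid only for $J\subsetneqq\Sigma_L$, hence needing $d_L>1$) to get a \emph{formally smooth} map $X^{\Box,w}_{\rho_L,\cM_{\bullet}}\rightarrow \cX^{\Box,w}_{\bW_\Dpik,\bF_{\bullet},\{\tau\}}$, whose target's completed local ring $\widehat{\cO}_{\cX_{r,w_\tau},\widehat{y}_\tau}$ is known to be a domain by \cite[Theorem 5.3.1, Theorem 6.4.1]{Ding2021}. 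Formal smoothness over a domain with irreducible target gives irreducibility of $\Spec R^{\flat,\Box,w}_{\rho_L,\cM_{\bullet}}$, and only \emph{then}, since $X^{\Box,w}_{\rho_L,\cM_{\bullet}}$ is also formally smooth over $\widehat{\cX}^{\flat}_{r,w,\widehat{y}}$, does one \emph{deduce} that $\widehat{\cO}^{\flat}_{\cX_{r,w},\widehat{y}}$ is itself a domain. The logical flow is $\tau$-component $\Rightarrow$ $R^{\flat,\Box,w}$ $\Rightarrow$ $\widehat{\cX}^{\flat}_{r,w,\widehat{y}}$, not the reverse. Your proposal would need this detour replaced wholesale.
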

\begin{proof}Note that $X_{\rho_L}\rightarrow X_{\Dpik}$ is relatively representable and formally smooth of relative dimension $n^2$,\;so is the morphism $X^{\Box}_{\rho_L,\cM_{\bullet}}\rightarrow X^\Box_{\Dpik,\cM_{\bullet}}$.\;Then we get that
\begin{equation}
	\begin{aligned}
		\dim_ER_{\rho_L,\cM_{\bullet}}&\;=n^2+d_L\big(n^2+k+\frac{n(n-r)}{2}\big)-\dim_E\widehat{X}_{\bW_\Dpik,\bF_{\bullet}}^{(\varphi,\Gamma)}(E[\epsilon]/\epsilon^2)+\dim_E\widehat{X}_{\bW_\Dpik^+,\bF_{\bullet}}^{(\varphi,\Gamma)}(E[\epsilon]/\epsilon^2)-n^2d_L\\
		&\;=n^2+d_L\big(k+\frac{n(n-1)}{2}\big).
	\end{aligned}
\end{equation}	
We prove $(b)$ by applying the same argument in the proof of \cite[Theorem 6.4.1,\;Proposition 6.4.3]{Ding2021} and using our Proposition \ref{Jversionwdfj}.\;In precise,\;for each $\tau\in\Sigma_L$,\;we see that $X^{\Box,w}_{\rho_L,\cM_{\bullet}}$ is formally smooth over $X^{\Box,w}_{\bW_\Dpik,\bF_{\bullet},\{\tau\}}$.\;Recall that  $\Spec \widehat{\cO}_{X_{r,w_{\tau}},{y}_{\tau}}$ is irreducible,\;we get that $\Spec R^{\Box,w}_{\rho_L,\cM_{\bullet}}$ is irreducible too,\;where $R^{\Box,w}_{\rho_L,\cM_{\bullet}}$ is the ring pro-represents the functor
$|X^{\Box,w}_{\rho_L,\cM_{\bullet}}|$.\;On the other hand,\;the relative dimension of $X^{\Box,w}_{\rho_L,\cM_{\bullet}}$  over $X^{\Box,w}_{\bW_\Dpik,\bF_{\bullet},\{\tau\}}$ is equal to the relative dimension of $X^{\Box}_{\rho_L,\cM_{\bullet}}$  over $X^{\Box}_{\bW_\Dpik,\bF_{\bullet},\{\tau\}}$.\;Since $\Spec \widehat{\cO}_{X_{r,L},y}$ is equidimensional,\;we deduce that $R_{\rho_L,\cM_{\bullet}}$ is equidimensional too.\;Part $(c)$ also follows easily from \cite[Lemma 5.2.7]{Ding2021} and the following argument.\;Let $\cP$ be a geometric property of $X_{r}$ (resp.,\;$X_{r,w}$ for $w\in\sW_n$) that stable under a formally smooth morphism.\;Then $\widehat{\cX}^\flat_{r,L,\widehat{y}}$ (resp.,\;$\widehat{\cX}^\flat_{r,w,\widehat{y}}$,\;where we view them as schemes) inherit the geometric property $\cP$ since $\widehat{\cX}^\flat_{r,L,\widehat{y}}\rightarrow\Spec \widehat{\cO}_{X_{r,w_{\tau}},{y}_{\tau}}$ (resp.,\;$\widehat{\cX}^\flat_{r,w,\widehat{y}}\rightarrow\Spec \widehat{\cO}_{X_{r,w_{\tau}},{y}_{\tau}}$) is formally smooth.\;
\end{proof}
\begin{rmk}\label{propertyofxrhombulletrmkconj}
Since $X^{\Box,w}_{\rho_L,\cM_{\bullet}}$ is also formally smooth over $\widehat{X}_{\bW_\Dpik^+,\bF_{\bullet}}^{(\varphi,\Gamma),\Box,w}$, by the proof we see that 
$\widehat{X}_{\bW_\Dpik^+,\bF_{\bullet}}^{(\varphi,\Gamma),\Box,w}\cong |\widehat{X}_{\bW_\Dpik^+,\bF_{\bullet}}^{(\varphi,\Gamma),\Box,w}|$ is irreducible,\;i.e.,\;the scheme $\Spec \widehat{\cO}^{\flat}_{\cX_{r,w},\widehat{y}}$ is irreducible.\;Therefore,\;$\Spec \widehat{\cO}^{\flat}_{\cX_{r,L},y}$ is equidimensional and there is a bijection between irreducible components of  $\Spec \widehat{\cO}^{\flat}_{\cX_{r,L},y}$ and $\Spec \widehat{\cO}_{\cX_{r,L},y}$.\;We suspect that $\Spec \widehat{\cO}^{\flat}_{\cX_{r,L},y}$ and $\Spec \widehat{\cO}_{\cX_{r,L},y}$ have the same dimension,\;and the natural morphism $\iota^{\flat}:\Spec \widehat{\cO}^{\flat}_{\cX_{r,L},y}\rightarrow \Spec \widehat{\cO}_{\cX_{r,L},y}$ maps the generic point of $\Spec \widehat{\cO}^{\flat}_{\cX_{r,L},y}$ to the generic point of $\Spec \widehat{\cO}_{\cX_{r,L},y}$ (in particular,\;$\iota^{\flat}$ is dominant).\;
\end{rmk}
\begin{rmk}\label{alterlneaQp}
If $L=\bQ_p$,\;the same results as in Proposition \ref{propertyofxrhombullet} and the conjecture in \ref{propertyofxrhombulletrmkconj}  can be deduced from \cite[Theroem 5.6.5]{MSW}.\;The results in \cite[Theroem 5.6.5]{MSW} indicate an explicit geometric description of $\widehat{\cX}^{\flat}_{y}$ and  $\widehat{\cX}^{\flat}_{w,y}$. Moreover, the morphism $\iota^{\flat}:\Spec \widehat{\cO}^{\flat}_{\cX_{r,L},y}\rightarrow \Spec \widehat{\cO}_{\cX_{r,L},y}$ should induced by a composition of blow-ups and open immersions of some algebraic schemes.\;The author use a calculation on the explicit basis of $\hH^1_{(\varphi,\Gamma)}(\cR_{E,\bQ_p}(\delta))$ done by Colmez to study the universal cocycle and the universal derivation.\;The author do not know how to generate these methods to $L\neq \bQ_p$-case.\;
\end{rmk}

\begin{rmk}\label{generaltoOmegafil} Our method are suitable for any potentially semistable Galois representation $\rho_L$ which admits a general $\Omega$-filtration with arbitrarily parameters (with some mild regularity assumptions).\;
\end{rmk}

\subsubsection{Flatness}

Assume $L\neq \bQ_p$.\;Write $\kappa_1:=\kappa_{\bP_{r,\emptyset}}$ and $\kappa_2:=\kappa_{\bB}$ for simplicity.\;This section aims to prove the flatness of the natural morphism $\kappa_{i}:\Spec\widehat{\cO}^{\flat}_{{\cX}^{\flat}_{r,L},\widehat{y}} \rightarrow  \Spec\widehat{\cO}_{\ft_{L},0}$ and $\kappa_{w,i}:\Spec\widehat{\cO}^{\flat}_{{\cX}^{\flat}_{r,{w}},\widehat{y}} \rightarrow  \Spec\widehat{\cO}_{\ft_{L},0}$ (such results are motivated by the previous local models on generic (potentially) crystalline case and the $\bQ_p$-semistable case in  \cite[Proposition 2.3.6]{MSW} (we remark that such results are closely related to the flatness of the weight map of trianguline variety at Steinberg point).\;The main tool is  miracle flatness \cite[Tag 00R3]{Stack}.\;

\begin{pro}\label{proprexmm}
	The natural morphism $X^\Box_{\cM_\Dpik,\cM_{\bullet}}\rightarrow \widehat\fz_{r,\emptyset,L}$ is flat,\;and thus $\Spec\widehat{\cO}^{\flat}_{Y_{r,L},\widehat{y}_1}\rightarrow \Spec \widehat{\cO}_{\fz_{r,\emptyset,L},0} $ is flat.\;
\end{pro}
\begin{proof}
	
	Firstly,\;we prove that $X^{\mathrm{ver}}_{\cM_\Dpik,\cM_{\bullet}}\rightarrow \widehat\fz_{r,\emptyset,L}$ is flat.\;Let $R^{\mathrm{ver}}_{\cM_\Dpik,\cM_{\bullet}}$ be the formally smooth noetherian complete local ring pro-represents the $X^{\Box}_{\cM_\Dpik,\cM_{\bullet}}$.\;Let $\fm_{\widehat\fz_{r,\emptyset,L}}$ be the maximal ideal of the complete local ring $\widehat{\cO}_{\fz_{r,\emptyset,L},0}$.\;Then the ring $R^{\Box}_{\cM_\Dpik,\cM_{\bullet}}/\fm_{\widehat\fz_{r,\emptyset,L}}$ pro-represents the subgroupoid $X^{\mathrm{ver}}_{\cM_\Dpik,\cM_{\bullet},0}$ of
	$X^{\mathrm{ver}}_{\cM_\Dpik,\cM_{\bullet}}$ consists of object $(A,\cM_A,\cM_{A,\bullet},j_A,\underline{\beta}_A)\in X^{\mathrm{ver}}_{\cM_\Dpik,\cM_{\bullet}}(A)$ such that  $\wt_{\tau}(\delta_{A,i})=\wt_{\tau}(\bm{\delta}_{\bh,i})$ .\;We show that $|X_{\cM_\Dpik,\cM_{\bullet},0}^{\mathrm{ver}}|$ is pro-representable by a formally smooth noetherian complete local ring with dimension $k+d_L\frac{n(n-r)}{2}$.\;It is clear that $|X_{\cM_{\Dpik,1},\cM_{\bullet},0}^{\mathrm{ver}}|$ is pro-represented by $E[[x]]$.\;Now assume that $|X_{\cM_{\Dpik,i-1},\cM_{\bullet}}^{\mathrm{ver}}|$ is pro-represented by a formally smooth noetherian complete local ring $R_{i-1}$ of residue field $E$  and dimension $i-1+d_Lr^2\frac{i(i-1)}{2}$.\;Let $S_{i}$ denote the completion of $R_{i-1}\otimes_EE[[x]]$ with respect to the maximal ideal generated by the maximal ideal of $R_{i-1}$ and the one of $E[[x]]$.\;For any morphism $S_{i}\rightarrow A$ with $A\in \Art_E$,\;let $\cM_{i-1,A}$ be the $(\varphi,\Gamma)$-module over $\cR_{A,L}[1/t]$ given by the pull-back along $R_{i-1}\rightarrow S_{i}\rightarrow A$ of the universal $(\varphi,\Gamma)$-module over $\cR_{R_{i-1},L}[1/t]$ and let $\delta_{A,i}$ be the character $L^{\times}\rightarrow E[[x]]\rightarrow S_{i}\rightarrow A$.\;Put $N_{i}:=\varprojlim_{S_{i}\rightarrow A}\ext^1_{(\varphi,\Gamma)}\Big(\Delta_\pi\otimes_{\cR_{A,L}}\cR_{A,L}(\bm{\delta}_{\bh,i}\delta_{A,i})\Big[\frac{1}{t}\Big],\cM_{i-1,A}\Big).$
	Then  $N_{i}$ is a free $S_{i}$-module of rank $(i-1)d_Lr^2$  by Lemma \ref{dimlemmaCM}.\;Then $[\cM_{\Dpik,i}]$ correspondences to a maximal ideal $\fm_{i}$ with residue field $E$ of the polynomial $S_i$-algebra $\mathrm{Symm}_{S_{i}}N_{i}^{\vee}$.\;Let $R_{i}$ be the completion of  $\mathrm{Symm}_{S_{i}}N_{i}^{\vee}$ at $\fm_{i}$.\;One can check that $X^{\Box}_{\cM_i,\cM_{\bullet},0}$ is pro-represented by $R_i$.\;In particular,\;$X^{\mathrm{ver}}_{\cM_\Dpik,\cM_{\bullet},0}$ is pro-represented by a formally smooth noetherian complete local ring of residue field $E$ and dimension $k+d_L\frac{n(n-r)}{2}$.\;Therefore,\;$R^{\mathrm{ver}}_{\cM_\Dpik,\cM_{\bullet}}/\fm_{\widehat\fz_{r,\emptyset,L}}$ has co-dimension $d_Lk=\dim \widehat{\cO}_{\fz_{r,\emptyset,L},0}$ in $R^{\mathrm{ver}}_{\cM_\Dpik,\cM_{\bullet}}$.\;By the miracle flatness,\;we see that $X^{\mathrm{ver}}_{\cM_\Dpik,\cM_{\bullet}}\rightarrow \widehat\fz_{r,\emptyset,L}$ is flat.\;By the proof of \cite[Lemma 2.3.1]{breuil2019local} and \cite[Lemma 2.3.2]{breuil2019local},\;we see that the morphism	$\widetilde{\fg}_{L}\rightarrow \ft_{L},(g\bB_{L},\psi)\mapsto \overline{\mathrm{Ad}(g^{-1})\psi}$ is flat.\;By the proof of Proposition \ref{relativeverbox},\;the natural  morphism $\widehat{X}_{\bW_\Dpik,\bF_{\bullet}}^{\Box}\rightarrow \widehat{X}_{\bW_\Dpik,\bF_{\bullet}}^{\mathrm{ver}}$ is flat.\;This deduces that $X^\Box_{\cM_\Dpik,\cM_{\bullet}}\rightarrow \widehat\fz_{r,\emptyset,L}$ is also flat.\;
\end{proof}

\begin{conjecture}\label{flatnessXconj}Assume $r=1$.\;The natural morphism $X^{\Box}_{\rho_L,\cM_{\bullet}}\rightarrow \widehat\ft_{L}$ (resp.,\;$X^{\Box,w}_{\rho_L,\cM_{\bullet}}\rightarrow \widehat\ft_{L}$) is flat.\;Thus $\kappa_{i}:\Spec\widehat{\cO}^{\flat}_{{\cX}^{\flat}_{L},\widehat{y}} \rightarrow  \Spec\widehat{\cO}_{\ft_{L},0}$ (resp.,\;$\kappa_{w,i}:\Spec\widehat{\cO}^{\flat}_{{\cX}^{\flat}_{{w}},\widehat{y}} \rightarrow  \Spec\widehat{\cO}_{\ft_{L},0}$) is  flat.\;
\end{conjecture}


\begin{pro}\label{flatnessX}
	Assume that $r=1$,\;the natural morphism $X^{\Box,\underline{w}_0}_{\rho_L,\cM_{\bullet}}\rightarrow \widehat\ft_{L}$ is flat,\;equivalently ,\;$\kappa_{\underline{w}_0,i}:\Spec \widehat{\cO}^{\flat}_{{\cX}^{\flat}_{\underline{w}_0},\widehat{y}} \rightarrow  \Spec\widehat{\cO}_{\ft_{L},0}$ is  flat.\;
\end{pro}
\begin{proof}
	Note that $X^{\underline{w}_0}_{\rho_L,\cM_{\bullet}}$ is Cohen-Macaulay by Proposition \ref{propertyofxrhombullet}.\;Every deformation in $R^{\underline{w}_0}_{\rho_L,\cM_{\bullet}}/\fm_{\widehat\ft_{L}}$ is semistable by \cite[Proposition 2.3.4]{AST2009324R10}.\;But each semistable  deformation $\rho_A$ of $\rho_L$  gives a natural triangulation $\cM_{\bullet,A}$.\;Indeed,\;we can use the arguments before \cite[Theorem 3.3.4]{bergertri},\;let $\rho_A\in \widehat{\FX}_{\overline{r},\underline{w}_0,\rho_L}^{\bh-\mathrm{st}}(A)$ for $A\in\Art_E$,\;and if $M$ is a $(\varphi,N)$-stable subspace of $D_{\mathrm{st}}(\rho_A)$,\;then $(\cR_{A,L}[\log(X),1/t]\otimes_AM)^{N=0}\cap D_{\rig}(\rho_A)$ (resp.,\;$(\cR_{A,L}[\log(X),1/t]\otimes_AM)^{N=0}$) is a sub-$(\varphi,\Gamma)$-module of
	$D_{\rig}(\rho_A)$ (resp.,\;$D_{\rig}(\rho_A)[1/t]$) which has the same rank as $M$.\;We obtain the desired triangulation by using this argument step by step.\;Thus we can identity $R^{\underline{w}_0}_{\rho_L,\cM_{\bullet}}/\fm_{\widehat\ft_{L}}$  with $\widehat{\cO}_{{\FX_{\overline{r},\underline{w}_0}^{\bh-\mathrm{st}}},\rho_L}$, which has dimension $n^2+d_L\frac{n(n-1)}{2}$ (and thus has co-dimension $d_Ln$ in $R^{\underline{w}_0}
	_{\rho_L,\cM_{\bullet}}$).\;By the miracle flatness,\;we finally deduce that $X^{\underline{w}_0}_{\rho_L,\cM_{\bullet}}\rightarrow \widehat\ft_{L}$ is flat and thus $\kappa_{\underline{w}_0,i}:\Spec \widehat{\cO}^{\flat}_{{\cX}^{\flat}_{\underline{w}_0},\widehat{y}} \rightarrow  \Spec\widehat{\cO}_{\ft_{L},0}$ is flat.\;
\end{proof}


\begin{rmk}\label{flatnessXconjBQ_p}
	For $L=\bQ_p$ case,\;Conjecture \ref{flatnessXconj} are obtained in \cite[Proposition 2.3.6]{MSW} if we consider the local models given in  \cite[Theorem 5.6.5]{MSW}.\;
\end{rmk}

\begin{rmk}\label{resonfornonflat}
	We also have projections $\Spec\widehat{\cO}^{\flat}_{\cX_{r,L},\widehat{y}}\rightarrow \Spec \widehat{\cO}_{\fz_{r,\emptyset},0}$ and  $\Spec\widehat{\cO}^{\flat}_{\cX_{r,w},\widehat{y}}\rightarrow \Spec \widehat{\cO}_{\fz_{r,\emptyset},0}$,\;which are induced by the composition $\Spec\widehat{\cO}^{\flat}_{\cX_{r,L},\widehat{y}}\rightarrow \Spec\widehat{\cO}^{\flat}_{Y_{r,L},\widehat{y}_1}\xrightarrow{\pr_1 }\fz_{r,\emptyset}$,\;where $\pr_1:Y_{r}=\fz_{r,\emptyset}\times {\widetilde{\fg}}_{r,L}\rightarrow \fz_{r,\emptyset}$.\;They are not necessary flat.\; 
\end{rmk}

\section{Local applications}

We drive several local consequences of the results of local models:\;further properties of Bernstein paraboline varieties (in particular,\;trianguline variety),\;existence of local companion points,\;and a locally analytic ``Breuil-M\'{e}zard type" statement for Steinberg case.\;

\subsection{Local geometry of Bernstein paraboline varieties at special point}

We now recall the definition of Bernstein paraboline varieties \cite[Section 4.2]{Ding2021}.\;Let $\overline{r}:\gal_L\rightarrow\GLN_{n}(k_E)$ be a continuous group morphism.\;The Bernstein paraboline variety $\defvar$ of type {$(\omepik,\bh)$} is a subspace of $\mathfrak{X}_{\overline{r}}^\Box\times \sbanpik\times\rigch$.\;It contains a subspace $U^\Box_{\omepik,\mathbf{{h}}}(\overline{r})$  consists of the point $(\rho,\underline{x},\undelram)$ such that
\begin{itemize}\label{dfnvardef}
	\item[(1)] $(\underline{x},\undelram)\in \big(\sbanpik\times\rigch\big)^{\gen}$ (the set of generic points in $\sbanpik\times\rigch$,\;see \cite[Section 4.2]{Ding2021}),
	\item[(2)] $D_{\rig}(\rho)$ admits an $\omepik$-filtration $\cF=\fil_{\bullet}^{\cF} D_{\rig}(\rho)$ such that
	\begin{equation}\label{paradefiinj}
		\begin{aligned}
			\gr_{i}^{\cF}D_{\rig}(\rho)\otimes_{\cR_{k(x),L}}\cR_{k(x),L}((\delta_{i}^0)^{-1}_{\varpi_L})\hookrightarrow \Delta_{x_{i}}\otimes_{\cR_{k(x),L}}\cR_{k(x),L}(z^{\bh_{ir}})
		\end{aligned}
	\end{equation}
	and the image has Hodge-Tate weights $(\bh_{(i-1)r+1},\cdots,\bh_{ir})$.\;
\end{itemize}
Then $\defvar$ is the Zariski-closure of $U^\Box_{\omepik,\mathbf{{h}}}(\overline{r})$ in $\mathfrak{X}_{\overline{r}}^\Box\times \sbanpik\times\rigch$.\;By \cite[Theorem\;4.2.5,\;Corollary\;4.2.5]{Ding2021},\;we have:
\begin{pro}\label{propertyparavar}\hspace{20pt}
	\begin{itemize}
		\item[(1)] The rigid space $\defvar$ is equidimensional of dimension $n^2+\left(\frac{n(n-1)}{2}+k\right)d_L$.\;
		\item[(2)] The set $U^\Box_{\omepik,\mathbf{{h}}}(\overline{r})$ is Zariski-open and  Zariski-dense in $\defvar$,\;and is smooth over $E$.\;
		\item[(3)] Let $x=(\rho_x,\underline{\pi}_x,\undelram)\in \defvar$,\;then $D_{\rig}(\rho_x)$ admits an $\omepik$-filtration $\cF=\{\fil_{i}^\cF D_{\rig}(\rho_x)\}$ such that,\;for all $1=1,\cdots,s$,\;
		\[\gr_{i}^{\cF}D_{\rig}(\rho_x)\otimes_{\cR_{k(x),L}}\cR_{k(x),L}((\delta_{i}^0)^{-1}_{\varpi_L})\Big[\frac{1}{t}\Big]=\Delta_{x_{i}}\Big[\frac{1}{t}\Big].\]
	\end{itemize}
\end{pro}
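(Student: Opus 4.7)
The plan is to follow the strategy of \cite{breuil2019local} adapted to the Bernstein paraboline setting. I would first establish the asserted properties on the open locus $U^\Box_{\omepik,\mathbf{{h}}}(\overline{r})$, then propagate them to the Zariski closure $\defvar$, and finally extend the $\omepik$-filtration from the generic locus to every closed point.

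For smoothness and dimension, I would analyze the infinitesimal deformations of triples $(\rho,\cF,\text{framing})$ at a point with $(\underline{x},\undelram)\in(\sbanpik\times\rigch)^{\gen}$. The role of the genericity hypothesis is precisely to force the vanishing of the obstructing cohomology: by a computation analogous to Lemma \ref{dimlemmaCM}(1), the relevant $\mathrm{Ext}^2$ groups between successive graded pieces $\gr_i^{\cF}D_{\rig}(\rho)\otimes(\gr_j^{\cF}D_{\rig}(\rho))^{\vee}$ vanish for generic parameters, so the deformation problem of a framed $(\varphi,\Gamma)$-module equipped with its $\omepik$-filtration is unobstructed. This gives smoothness of $U^\Box_{\omepik,\mathbf{{h}}}(\overline{r})$. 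An Euler--Poincar\'e count over $\cR_{E,L}$ then yields the expected tangent dimension $n^2+(k+\tfrac{n(n-1)}{2})d_L$, with contributions from the framing ($n^2$), the parameter space $\sbanpik\times\rigch$ ($2kd_L$), the choice of the Hodge filtration relative to $\cF$, and the extension parameters between successive graded pieces.

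For the openness and density claims, $U^\Box_{\omepik,\mathbf{{h}}}(\overline{r})$ is cut out of the ambient $\mathfrak{X}_{\overline{r}}^\Box\times\sbanpik\times\rigch$ by an open condition: genericity is open by definition, and the existence of the injection (\ref{paradefiinj}) is an open non-vanishing of certain $\mathrm{Hom}$-spaces by semicontinuity of $(\varphi,\Gamma)$-module cohomology in families. Hence its Zariski closure $\defvar$ is equidimensional of the same dimension, and $U^\Box_{\omepik,\mathbf{{h}}}(\overline{r})$ is Zariski-open, dense, and smooth in $\defvar$, proving (1) and (2).

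The main obstacle is (3), for which the filtration must be extended from the generic locus to every closed point. My plan is to construct a global $\omepik$-filtration on the universal $(\varphi,\Gamma)$-module $D_{\rig}(\rho^{\univ})[1/t]$ over a suitable modification of $\defvar$, and then specialize. The key observation is that after inverting $t$ the relevant sheaves become modules over the ``nicer'' ring $\cR_{E,L}[1/t]$, where saturated $(\varphi,\Gamma)$-submodules extend in flat families. A global triangulation / filtration theorem of Kedlaya--Pottharst--Xiao type guarantees that the $\omepik$-filtration present on the dense open $U^\Box_{\omepik,\mathbf{{h}}}(\overline{r})$ extends over the whole base (possibly after normalization), with graded pieces forced to be the appropriate twists of $\Delta_{x_i^{\univ}}[1/t]$ by the continuity of the parameter data $(\underline{x},\undelram)$. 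Specializing at any $x\in\defvar$ produces the required identification of graded pieces after inverting $t$, yielding (3). The delicate point is to verify that the generic filtration indeed spreads out without ``jumping'' as one approaches non-generic points, which is where the invocation of $[1/t]$ and the saturation argument becomes essential.
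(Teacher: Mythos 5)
First, note that the paper does not prove this proposition at all: it cites it directly from Breuil--Ding (\cite[Theorem 4.2.5, Corollary 4.2.5]{Ding2021}), so there is no internal argument to compare against. Your sketch does follow the general template of that proof (smooth generic locus, Zariski closure, then a global-filtration/triangulation theorem à la Kedlaya--Pottharst--Xiao for part (3)), and the idea of extending the $\omepik$-filtration after inverting $t$ is indeed what Breuil--Ding do.

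There are, however, two concrete gaps. The main one is the openness argument. You assert that $U^\Box_{\omepik,\mathbf{h}}(\overline{r})$ is cut out of the ambient $\mathfrak X^\Box_{\overline r}\times\sbanpik\times\rigch$ by an open condition, but condition (2) in its definition --- that $D_{\rig}(\rho)$ admit an $\omepik$-filtration with the required injections (\ref{paradefiinj}) and Hodge--Tate weights --- is not an open condition in the full ambient space. What actually has to be shown is that $U^\Box_{\omepik,\mathbf{h}}(\overline{r})$ is Zariski-open in its closure $\defvar$, and this is precisely where (3) enters: one first extends the $\omepik$-filtration (after inverting $t$ and possibly after a proper modification) over all of $\defvar$, and then checks that at points of $\defvar$ with generic parameter this extended filtration automatically satisfies the conditions defining $U^\Box_{\omepik,\mathbf{h}}(\overline{r})$. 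In other words, (2) depends on (3); you have treated them as independent steps and effectively reversed the logical order. A secondary problem is the dimension heuristic: $\sbanpik\times\rigch$ has dimension $k(1+d_L)$ (each $\sbanpi$ is $1$-dimensional since $\FZ_{\Omega_r}$ is the Bernstein centre of a cuspidal block, and $\rigch$ has dimension $kd_L$), not $2kd_L$; the correct count must be done via the full deformation complex of the filtered $(\varphi,\Gamma)$-module rather than by the additive breakdown you list.
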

\begin{rmk}\label{concidetotrivar} (Comparison with trianguline variety) If $r=1$ (so $k=n$),\;we have $\bL_{1,\emptyset}=\bT$ and $\bP_{1,\emptyset}=\bB$.\;Let $\widehat{T}_L$ denote the character space of $\bT(L)$ over $E$,\;i.e.,\;the rigid space over $E$ parameterizing continuous character of $\bT(L)$.\;Then \cite[Remark 4.2.4]{Ding2021} show that $\defvar$ coincides with trianguline variety $X^{\Box}_{\mathrm{tri}}(\overline{r})$ of \cite[Section 2.2]{breuil2017interpretation},\;by using the isomorphism
\[\iota_{\Omega_{1}^{\otimes n},\bh}: \big(\mathrm{Spec}\hspace{2pt}\mathfrak{Z}_{\Omega_{1}^{\otimes n}}\big)^{\mathrm{rig}}\times\mathcal{Z}_{\bL_{1,\emptyset},\mathcal{O}_L}\xrightarrow{\sim}\widehat{T}_L,(\underline{\pi}_x,\undelram)\mapsto (\boxtimes_{i=1}^r\pi_{x_i}) \undelram z^{\bh}.\]
As in \cite[Section 3.7]{breuil2019local},\;we will drop the $\Box$ in $X^{\Box}_{\mathrm{tri}}(\overline{r})$ in Section \ref{conjsection} in order to avoid any confusion with the other kind of framing used in local models.\;
\end{rmk}

Let $\rho_L$ be a Galois representation which admits a special $\omepik$-filtration (i.e.,\;We keep the situation in Proposition \ref{propertyofxrhombullet}).\;Suppose that $x=(\rho_L,\underline{\pi}_x,\undelram)$ appears on the $\defvar$.\;We have a natural morphism of formal schemes (recall that $(\widehat{\mathfrak{X}_{\overline{r}}^\Box})_{\rho_L}$ is equivalent to $X_{\rho_L}\cong |X_{\rho_L}|$) $$\complocaldefvarrhox \rightarrow (\widehat{\mathfrak{X}_{\overline{r}}^\Box})_{\rho_L}\cong X_{\rho_L}.$$
By the argument before \cite[Proposition 6.4.6]{Ding2021},\;there exists $w_x=(w_{x,\tau})_{\tau\in \Sigma_L}\in\sW^{\Delta_n^k,\emptyset}_{n,L}$ such that,\;for $1\leq j\leq n$,\;$\bh_{\tau,w_{x,\tau}^{-1}(j)}=\wt_\tau(\chi_i)+\bh_{\tau,j}$ where $i$ is the integer such that $(i-1)r<j\leq ir$.\;

The proof of \cite[Proposition 6.4.5,\;Proposition 6.4.6,\;Corollary 6.4.7,\;Corollary 6.4.8]{Ding2021} are also suitable for our case.\;By an easy variation of the above proofs,\;we deduce from Proposition \ref{propertyofxrhombullet}:
\begin{pro}\label{localgeomertyonspecial}Assume $L\neq \bQ_p$.\;We have
\begin{itemize}
	\item[(a)] The canonical morphism $\complocaldefvarrhox \rightarrow  X_{\rho_L}$ factors through a morphism
	$\complocaldefvarrhox \rightarrow X_{\rho_L,\cM_{\bullet}}.$
	\item[(b)] The morphism $\complocaldefvarrhox \rightarrow  X_{\rho_L}$ and $\complocaldefvarrhox \rightarrow  X_{\rho_L,\cM_{\bullet}}$ are closed immmersions.\;
	\item[(c)] Denote by $\Theta_x:\complocaldefvarrhox \rightarrow \widehat{\cT}_{r,L,(0,0)}$ the composition of fomall schemes:
	\[\complocaldefvarrhox\hookrightarrow X_{\rho_L,\cM_{\bullet}}\xrightarrow{\Theta}\widehat{\cT}_{r,L,(0,0)}.\]
	Then morphism $\Theta_x$ factors through $\widehat{\cT}_{r,w_x\underline{w}_0,(0,0)}$.\;
	\item[(d)] The closed immersion $\complocaldefvarrhox \rightarrow  X_{\rho_L,\cM_{\bullet}}$ in $(2)$ factors through an isomorphism of groupoids over $\Art_E$:
	\[\complocaldefvarrhox \xrightarrow{\sim} X^{w_{x}\underline{w}_0}_{\rho_L,\cM_{\bullet}}.\]
	Then $w_x\underline{w}_0\geq w_{\cF}$.\;In particular,\;$\defvar$ is irreducible
 and  Cohen-Macaulay at the point $x$.\;
\end{itemize}
\end{pro}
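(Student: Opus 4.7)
The plan is to follow the strategy of \cite[Propositions 6.4.5--6.4.6, Corollaries 6.4.7--6.4.8]{Ding2021}, adapting each step so as to feed into the variation $\widehat{\cX}^{\flat}$ of the local model built in Section \ref{sectionlocalmodelmainresult} rather than the generic local model of \textit{loc.\ cit.}

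For part (a), I would begin by noting that a point of $\complocaldefvarrhox$ in $\Art_E$ is, by definition of $\defvar$ and its completion, a framed deformation $\rho_A$ of $\rho_L$ together with deformations $(\underline{x}_A,\underline{\delta}^0_A)$ of the character data at $x$. Over the smooth Zariski-open dense locus $U^{\Box}_{\omepik,\bh}(\overline{r})$ the embeddings (\ref{paradefiinj}) extend tautologically. By the Zariski-density and the control on graded pieces given by Proposition \ref{propertyparavar}(3) at the closed point $x$, one propagates this to a canonical $\omepik$-filtration $\cM_{A,\bullet}$ of the type appearing in $X_{\cM_\Dpik,\cM_\bullet}$ on $D_\rig(\rho_A)[1/t]$; uniqueness of such a filtration deforming $\cM_\bullet$ is exactly Lemma \ref{lemmaMM}, so the construction is functorial in $A$ and yields the desired factorization $\complocaldefvarrhox \to X_{\rho_L,\cM_\bullet}$. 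For part (b), the closed immersion $\complocaldefvarrhox\hookrightarrow X_{\rho_L}$ comes from the fact that $\defvar$ is a closed subspace of $\mathfrak{X}_{\overline{r}}^\Box\times\sbanpik\times\rigch$; then since Lemma \ref{lemmaMM} (combined with base change to $X_{\rho_L}$) says $X_{\rho_L,\cM_\bullet}\to X_{\rho_L}$ is a closed immersion, part (a) upgrades this to a closed immersion into $X_{\rho_L,\cM_\bullet}$.

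For part (c), recall that $\Theta$ records, from a deformation of $\cM_{\Dpik}$ equipped with its filtration, the parameter $\underline{\delta}_A$ of the filtration and the induced class in $\widehat{\cT}_{r,L,(0,0)}=\widehat{\fz}_{r,\emptyset,L}\times_{\widehat{\ft}_{L}/\sW_{n,\Sigma_L}}\widehat{\ft}_L$; the second factor is controlled by the Hodge--Tate weights of the graded pieces of $\cM_{A,\bullet}$. By the calculation preceding the statement, at the closed point these weights are precisely $\{\bh_{\tau,(w_{x,\tau}w_0)^{-1}(j)}\}$, so by \cite[Lemma 5.2.7]{Ding2021} the image of $\Theta_x$ lies on the component $\cT_{r,w_x\underline{w}_0}$.

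For part (d), combining (b) and (c) we obtain a closed immersion $\complocaldefvarrhox \hookrightarrow X^{w_{x}\underline{w}_0}_{\rho_L,\cM_\bullet}$. A dimension count finishes the proof: Proposition \ref{propertyparavar}(1) gives $\dim\complocaldefvarrhox=n^2+(\tfrac{n(n-1)}{2}+k)d_L$, which matches the dimension of $X^{w_{x}\underline{w}_0}_{\rho_L,\cM_\bullet}$ computed in Proposition \ref{propertyofxrhombullet}(a); the target is irreducible by Proposition \ref{propertyofxrhombullet}(b). Since $\defvar$ is equidimensional of the same dimension, the closed immersion must be an isomorphism, which in turn shows $\defvar$ is irreducible at $x$. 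The final inequality $w_x\underline{w}_0\geq w_{\cF}$ is then immediate from Proposition \ref{propertyofxrhombullet}(c) applied to the nonzero groupoid $X^{w_{x}\underline{w}_0}_{\rho_L,\cM_\bullet}$.

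The main obstacle will be part (a): at a Steinberg-type (critical) point the parameter $(\underline{x},\underline{\delta}^0)$ is not generic in the sense of \cite[(6.5)]{Ding2021}, so the uniqueness argument for the deformed filtration used in \emph{loc.\ cit.} does not apply verbatim. The crucial replacement is Lemma \ref{dimlemmaCM} (which handles the non-generic extension group $\hH^1_{(\varphi,\Gamma)}(\cM_{i,i-1})$), together with the vanishing $\homo_{(\varphi,\Gamma)}(\widetilde{\cM}_{A,1},\cM_A/\cM_{A,1})=0$ that it implies; once this is in place, the spreading-out step runs smoothly and the rest of the argument (parts (b)--(d)) is essentially a dimension and irreducibility bookkeeping on top of the local model theorems of Section \ref{sectionlocalmodelmainresult}.
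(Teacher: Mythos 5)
Your overall route matches the paper's, which essentially cites \cite[Prop.\ 6.4.5--6.4.6, Cor.\ 6.4.7--6.4.8]{Ding2021} and adapts them, and you correctly identify Lemma \ref{dimlemmaCM} and Lemma \ref{lemmaMM} as the replacements for the genericity hypotheses used there. However, there is a genuine gap in your argument for part (b). You assert that ``the closed immersion $\complocaldefvarrhox \hookrightarrow X_{\rho_L}$ comes from the fact that $\defvar$ is a closed subspace of $\mathfrak{X}_{\overline{r}}^\Box \times \sbanpik \times \rigch$.'' This is false as stated: the morphism $\complocaldefvarrhox \to X_{\rho_L}$ is the completion at $x$ of the \emph{projection} $\defvar \to \mathfrak{X}_{\overline{r}}^\Box$, and being a closed subspace of a product in no way makes the projection to one factor a closed immersion (consider a graph of a non-injective map, or any nontrivial fiber). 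The actual content of (b) is a rigidity statement: for an $A$-point of $\complocaldefvarrhox$, the character data $(\underline{x}_A,\underline{\delta}^0_A)$ is \emph{uniquely determined} by the Galois deformation $\rho_A$, because the $\omepik$-filtration on $D_\rig(\rho_A)[1/t]$ is unique (Lemma \ref{lemmaMM}) and its parameter is unique (the analogue of \cite[Lemma 6.2.2]{Ding2021}). This gives that $\complocaldefvarrhox \to X_{\rho_L,\cM_{\bullet}}$, hence also the composition to $X_{\rho_L}$, is a monomorphism of pro-representable functors, and one then upgrades a monomorphism with pro-representable source and target to a closed immersion by the usual criterion. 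So the correct logical direction is first to establish the closed immersion into $X_{\rho_L,\cM_{\bullet}}$ via this uniqueness, and then deduce the one into $X_{\rho_L}$ by composing with the closed immersion from Lemma \ref{lemmaMM} --- not the other way around.

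There is also a milder inversion in part (d). You say ``combining (b) and (c) we obtain a closed immersion $\complocaldefvarrhox \hookrightarrow X^{w_x\underline{w}_0}_{\rho_L,\cM_\bullet}$.'' But the factorization of $\Theta_x$ through $\widehat{\cT}_{r,w_x\underline{w}_0,(0,0)}$ does not by itself force the image to lie in that one component of $X_{\rho_L,\cM_\bullet}$: a closed subscheme of some \emph{other} component $X^{w'}_{\rho_L,\cM_\bullet}$ could still land in $\cT_{r,w_x\underline{w}_0} \cap \cT_{r,w'}$. The dimension count should come first: since $\complocaldefvarrhox$ is equidimensional of dimension $n^2+(\tfrac{n(n-1)}{2}+k)d_L$ (Proposition \ref{propertyparavar}(1)) and $X_{\rho_L,\cM_\bullet}$ is equidimensional of the same dimension with irreducible components exactly the $X^{w'}_{\rho_L,\cM_\bullet}$ (Proposition \ref{propertyofxrhombullet}(a)--(b)), every irreducible component of $\complocaldefvarrhox$ is already some $X^{w'}_{\rho_L,\cM_\bullet}$. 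Then (c) and the factorization of $\Theta_x$ identify each such $w'$ as $w_x\underline{w}_0$ up to $\sW_{\Delta_n^k,\Sigma_L}$-coset, giving simultaneously the isomorphism, the irreducibility of $\defvar$ at $x$, and the Bruhat inequality via Proposition \ref{propertyofxrhombullet}(c).
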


\begin{rmk} In the case $L=\bQ_p$,\;see \cite[Proposition 6.2.7,\;Corollary 6.2.9,Theorem 6.2.10]{MSW} for related results.\;The author also shows that trianguline variety is normal and Cohen-Macaulay at the semistable point $x$.\;
\end{rmk}

\subsection{Local companion points}

In the remainder of this section,\;we restrict to the case $r=1$.\;By Remark \ref{concidetotrivar},\;the corresponding point of $(\rho_L,\underline{\pi}_x,\undelram)\in X^\Box_{\Omega_{[1,n]},\mathbf{{h}}}(\overline{r})$ via the isomorphism $X^\Box_{\Omega_{[1,n]},\mathbf{{h}}}(\overline{r})\xrightarrow{\sim}X^{\Box}_{\mathrm{tri}}(\overline{r})$ is $x=(\rho_L,\chi)\in X^{\Box}_{\mathrm{tri}}(\overline{r})$ with $\chi:=\iota_{\Omega_{[1,n]},\bh}(\underline{\pi}_x,\undelram)\in \widehat{T}_L$.\;

Recall that $\rho_L$ is a semistable non-crystalline $p$-adic Galois representation with full monodromy rank,\;i.e., the monodromy operator $N$ on $D_{\mathrm{st}}(\rho_{L})$ satisfies $N^{n-1}\neq 0$.\;Let $\bh:=(\hpi_{\tau,1}>\hpi_{\tau,2}>\cdots>\hpi_{\tau,n} )_{\tau\in \Sigma_L}$ be the Hodge-Tate weights of $\rho_{L}$.\;There is a unique ${\alpha}\in E^\times$ such that ${\alpha}$, ${\alpha q_L^{1}}, \cdots ,{\alpha q_L^{n-1}}$ are  $\varphi^{f_L}$-eigenvalues of $D_{\mathrm{st}}(\rho_{L})$.\;Put $\unr(\alpha)_n:=(\unr({\alpha}),\unr({\alpha q_L^{1}}),\;\cdots,\;\unr({\alpha q_L^{n-1}}))$.\;Then $D_{\rig}(\rho_{L})$ admits a triangulation $\cF$ with parameters $(\unr(\alpha)_n)\cdot z^{w_{\cF}\underline{w}_0(\bh)}$ for $w_{\cF}\in \sW^{\emptyset,\emptyset}_{n,L,\max}\cong \sW_{n,L}$.\;Note that $\rho_L$ is non-critical if $w_{\cF}=w_0$.\;

Denote by $\hpi_{i}=(\hpi_{\tau,i})_{\tau\in \Sigma_L}$ for $1\leq i\leq n$.\;For $w\in\sW_{n,L}$,\;we put $x_{w\underline{w}_0}:=(\rho_{L},(\unr(\alpha)_n)\cdot z^{w\underline{w}_0(\bh)})\in\mathfrak{X}_{\overline{r}}^\Box\times\widehat{T}_L$.\;In particular,\;we write $x:=x_{1}$.\;The goal of this section is to show that $\{x_{w\underline{w}_0}\}_{w\leq w_{\cF}\underline{w}_0}$ are local companion points of $x$,\;i.e.,\;$x_{w\underline{w}_0}\in X_{\mathrm{tri}}(\overline{r}$) for $w\leq w_{\cF}\underline{w}_0$,\;under some mild hypothesis on trianguline variety.\;

For $\rho'_{L}\in \FX_{\overline{r},\cP_{\min}}^{\Box,\bh-\mathrm{st}}$,\;there exists a unique $a_{\rho'_{L}}\in k(\rho'_{L})$ such that $a_{\rho'_{L}},\;\cdots\,\;{a_{\rho'_{L}} q_L^{i-1}},\;\cdots\,\;,{a_{\rho'_{L}} q_L^{n-1}}$ are $\varphi^{f_L}$-eigenvalues of $(\varphi,N)$-module $D_{\mathrm{st}}(\rho'_{L})$.\;Consider the following morphisms of rigid spaces over $E$:
\begin{equation}\label{stdeformationtorighttrivar}
	\begin{aligned}
		\iota_{\bh}:\FX_{\overline{r},\cP_{\min}}^{\Box,\bh-\mathrm{st}}&\rightarrow \mathfrak{X}_{\overline{r}}^\Box\times\widehat{T}_L\\
		\rho'_{L}&\mapsto (\rho'_{L},z^{\bh}\unr(a_{\rho'_{L}})_n).\;
	\end{aligned}
\end{equation}
and 
\begin{equation}\label{sedeformationtotrivar}
	\begin{aligned}
		\iota_{w,\bh}:\FX_{\overline{r},\cP_{\min}}^{\Box,\bh-\mathrm{st}}&\rightarrow \mathfrak{X}_{\overline{r}}^\Box\times\widehat{T}_L\\
		\rho'_{L}&\mapsto (\rho'_{L},z^{w(\bh)}\unr(a_{\rho'_{L}})_n).\;
	\end{aligned}
\end{equation}
By definition,\;we have $\iota_{\bh}=\iota_{1,\bh}$.\;Moreover,\;for $w\in\sW_{n,L}$,\;let  $\FX_{\mathrm{tri},\cP_{\min},w}^{\bh-\mathrm{st}}(\overline{r})$  be the inverse image of  $X_{\mathrm{tri}}(\overline{r})$  via $\iota_{\bh,w}$,\;which is a closed subspace of $\FX_{\overline{r},\cP_{\min}}^{\Box,\bh-\mathrm{st}}$.\;

For any $\rho'_{L}\in \FX_{\overline{r},\cP_{\min}}^{\Box,\bh-\mathrm{st}}$,\;by the argument in Section \ref{Omegafil},\;the unique $(\varphi,N)$-stable complete flag on $D_{\mathrm{st}}(\rho'_{L})$ determine a element $w_{\rho'_{L}}\in \sW^{\emptyset,\emptyset}_{n,L,\max}\cong\sW_{n,L}$.\;Then $\rho'_{L}\in \FX_{\overline{r},\cP_{\min},w}^{\Box,\bh-\mathrm{st}}$  if and only $w_{\rho'_{L}}\underline{w}_0=w$.\;Put $\widetilde{\FX}_{\mathrm{tri},\cP_{\min},w}^{\bh-\mathrm{st}}(\overline{r})=\FX_{\mathrm{tri},\cP_{\min},w}^{\bh-\mathrm{st}}(\overline{r})\cap \FX_{\overline{r},\cP_{\min},w}^{\Box,\bh-\mathrm{st}}$.\;




\begin{hypothesis}\label{appenhypothesis}
For any $w\in\sW_n$,\;$\widetilde{\FX}_{\mathrm{tri},\cP_{\min},w}^{\bh-\mathrm{st}}(\overline{r})$ is equal to $\FX_{\overline{r},\cP_{\min},w}^{\Box,\bh-\mathrm{st}}$.\;
\end{hypothesis}
\begin{rmk}\label{hyoprecatesexplian}
	This hypothesis is predicted by \cite[Conjecture 5.3.13]{emerton2023introduction} and \cite[Remark 5.3.5]{emerton2023introduction}.\;Indeed,\;for the generic crystalline case (see \cite{breuil2019local}),\;such hypothesis hold automatically (since $\iota_{w,\bh}(\rho_L)\in U_{\mathrm{tri}}(\overline{r})$,\;for generic $\rho_L\in \FX_{\overline{r},\cP_{\max},w}^{\Box,\bh-\mathrm{st}}$).\;The author believes that such Hypothesis \ref{appenhypothesis} (i.e.\;Steinberg case) is also true.\;
\end{rmk}


Then  Proposition \ref{pminloucsofsemidefringscP} gives:
\begin{pro}\label{semistableZarisikeargsecond} For any $w'\in\sW_{n,L}$,\;we put $\widetilde{\FX}_{\mathrm{tri},\cP_{\min},(w,w')}^{\bh-\mathrm{st}}(\overline{r})=\FX_{\mathrm{tri},\cP_{\min},w}^{\bh-\mathrm{st}}(\overline{r})\cap \FX_{\overline{r},\cP_{\min},w'}^{\Box,\bh-\mathrm{st}}$.\;
	\begin{itemize}
		\item[(1)] We have $\widetilde{\FX}_{\mathrm{tri},\cP_{\min},(w,w')}^{\bh-\mathrm{st}}(\overline{r})=0$ if $w<w'$.\;
		\item[(2)]	 The Zariski-closure $\overline{\widetilde{\FX}_{\mathrm{tri},\cP_{\min},w}^{\bh-\mathrm{st}}(\overline{r})}\subseteq \FX_{\mathrm{tri},\cP_{\min},w\underline{w}_0}^{\bh-\mathrm{st}}(\overline{r})$.\;We have
		\[\overline{\widetilde{\FX}_{\mathrm{tri},\cP_{\min},w}^{\bh-\mathrm{st}}(\overline{r})}=\coprod_{w'\leq w}\widetilde{\FX}_{\mathrm{tri},\cP_{\min},(w,w')}^{\bh-\mathrm{st}}(\overline{r}),\]
		where the Zariski-closure is taken in $\FX_{\mathrm{tri},\cP_{\min},w}^{\bh-\mathrm{st}}(\overline{r})$.\;
		\item[(3)] Assume Hypothesis \ref{appenhypothesis}.\;Then we have \[\overline{\widetilde{\FX}_{\mathrm{tri},\cP_{\min},w}^{\bh-\mathrm{st}}(\overline{r})}=\coprod_{w'\leq w}\widetilde{\FX}_{\mathrm{tri},\cP_{\min},w'}^{\bh-\mathrm{st}}(\overline{r}),\]
		where the Zariski-closure is taken in $\FX_{\mathrm{tri},\cP_{\min}}^{\bh-\mathrm{st}}(\overline{r})$.\;In particular,\;$\rho'_{L}\in\overline{\widetilde{\FX}_{\mathrm{tri},\cP_{\min},w}^{\bh-\mathrm{st}}(\overline{r})}$ if and only $w_{\rho'_{L}}\underline{w}_0\leq w$.\;		
			\end{itemize}
\end{pro}
\begin{proof}
Part $(1)$ follows from Proposition \ref{localgeomertyonspecial} $(d)$.\;We use the same strategy  as in the proof of Proposition \ref{pminloucsofsemidefringscP} to prove Part $(2)$.\;We get that \[\FY_{\overline{r},\cP_{\min},w}^{\Box,\bh-\mathrm{st}}\times_{\FX_{\overline{r},\cP_{\min},w}^{\Box,\bh-\mathrm{st}}}{\FX}_{\mathrm{tri},\cP_{\min},w}^{\bh-\mathrm{st}}(\overline{r}) \rightarrow \big((\mathrm{Res}_{L/\bQ_p}\GLN_{n,L}/\mathrm{Res}_{L/\bQ_p}\bB)\times_{\bQ_p} E\big)^{\rig}\] is also an open morphism.\;By the similar argument as in the proof of  Proposition \ref{pminloucsofsemidefringscP} (i.e.\;descent along the map $\FY_{\overline{r},\cP_{\min}}^{\Box,\bh-\mathrm{st}}\rightarrow \FX_{\overline{r},\cP_{\min}}^{\Box,\bh-\mathrm{st}}$),\;we deduce from the identity in Part $(2)$.\;Part $(3)$ is a direct consequence of Proposition \ref{pminloucsofsemidefringscP},\;if Hypothesis \ref{appenhypothesis} holds.\;
\end{proof}


\begin{dfn} A point $(\rho_L'',(\delta''_i)_{1\leq i\leq n})\in X_{\mathrm{tri}}(\overline{r})$ is called a local companion point of  $(\rho_L',(\delta'_i)_{1\leq i\leq n})\in X_{\mathrm{tri}}(\overline{r})$  if $\rho_L''=\rho_L'$ and $\delta''_i/\delta'_i$ is algebraic.\;
\end{dfn}
Therefore,\;if $x'=(\rho_L,(\delta'_i)_{1\leq i \leq n})\in X_{\mathrm{tri}}(\overline{r})$ is a local companion point of  $x$,\;then there exists a $w'\in \sW_{n,L}$ such that $\wt(\delta)=w'(\bh)$.\;Thus each companion point  of $x$ is of the form $x_{w}:=(\rho_L,\unr(\alpha)_nz^{w(\bh)})$ for some $w\in \sW_{n,L}$.\;The following proposition is an analogue of \cite[Theorem 4.2.3]{breuil2019local} (but in our setting).\;
\begin{pro}\label{prolocalcomapn1}Assume  Hypothesis \ref{appenhypothesis} (so that $x_{w_{\cF}\underline{w}_0}\in X_{\mathrm{tri}}(\overline{r})$).\;Then $x_{w\underline{w}_0}\in X_{\mathrm{tri}}(\overline{r})$ if and only if $w_{\cF}\leq w$ (in partcular,\;they are local companion points of $x$ and $x_{w_{\cF}\underline{w}_0}$).\;
\end{pro}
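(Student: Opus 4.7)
For the forward direction ``$\Rightarrow$'', I would apply Proposition~\ref{localgeomertyonspecial} at the point $x_{w\underline{w}_0}\in X_{\mathrm{tri}}(\overline{r})$ (specialized to the trianguline case $r=1$). The associated Weyl element $w_x=(w_{x,\tau})$ is characterized by $\bh_{\tau,w_{x,\tau}^{-1}(i)}=\wt_\tau(\chi_i)+\bh_{\tau,i}$, where $\chi_i$ is the $\cO_L^\times$-character piece of $\delta_i$. Plugging in $\delta_i=\unr(\alpha q_L^{i-1})z^{w\underline{w}_0(\bh)_i}$ gives $\wt_\tau(\chi_i)=w\underline{w}_0(\bh)_{\tau,i}-\bh_{\tau,i}$; using that the $\bh_{\tau,i}$ are distinct in each $\tau$, one reads off $w_x=w\underline{w}_0$. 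Proposition~\ref{localgeomertyonspecial}(d) then yields $w=w_x\underline{w}_0\geq w_\cF$.

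For the reverse direction ``$\Leftarrow$'', the plan is a Zariski-closure argument in the framed potentially semistable deformation space $\FX_{\overline{r},\cP_{\min}}^{\Box,\bh-\mathrm{st}}$. Setting $u=w\underline{w}_0$, the claim becomes $x_u\in X_{\mathrm{tri}}(\overline{r})$ for every $u\leq w_\cF\underline{w}_0$. By Remark~\ref{zariskicorres}, since $w_{\rho_L}=w_\cF$, the point $\rho_L$ lies in the stratum $\FX_{\overline{r},\cP_{\min},w_\cF\underline{w}_0}^{\Box,\bh-\mathrm{st}}$; by Proposition~\ref{pminloucsofsemidefringscP}, it therefore lies in the Zariski closure of every stratum $\FX_{\overline{r},\cP_{\min},v}^{\Box,\bh-\mathrm{st}}$ with $v\geq w_\cF\underline{w}_0$. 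Under Hypothesis~\ref{appenhypothesis} the subspace $\widetilde{\FX}_{\mathrm{tri},\cP_{\min},v}^{\bh-\mathrm{st}}(\overline{r})$ coincides with the whole stratum, and Proposition~\ref{semistableZarisikeargsecond}(1) embeds each such stratum into the closed subspace $\FX_{\mathrm{tri},\cP_{\min},v\underline{w}_0}^{\bh-\mathrm{st}}(\overline{r})=\iota_{v\underline{w}_0,\bh}^{-1}(X_{\mathrm{tri}}(\overline{r}))$. Passing to closures and specializing to $\rho_L$ produces $x_{v\underline{w}_0}\in X_{\mathrm{tri}}(\overline{r})$ for every $v\geq w_\cF\underline{w}_0$; translating via $u=v\underline{w}_0$ accounts for every $u\leq w_\cF$. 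The bare hypothesis $x_{w_\cF\underline{w}_0}\in X_{\mathrm{tri}}(\overline{r})$ supplies the single further companion at the maximal end $u=w_\cF\underline{w}_0$.

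The hard part is covering the remaining Bruhat interval, namely those $u$ with $u\leq w_\cF\underline{w}_0$ but $u\not\leq w_\cF$, which appear precisely when $w_\cF$ and $w_\cF\underline{w}_0$ are incomparable in Bruhat order. I plan to fill this gap by bootstrapping the closure argument: once $x_{w_\cF\underline{w}_0}$ is known to lie in $X_{\mathrm{tri}}(\overline{r})$, I rerun the Zariski-closure machinery with this point replacing the strictly dominant base, using the local model description of $\widehat{X_{\mathrm{tri}}(\overline{r})}_{x_{w_\cF\underline{w}_0}}$ and the irreducibility of the Bernstein paraboline variety there (Propositions~\ref{localgeomertyonspecial} and~\ref{propertyofxrhombullet}(c)) to transport companion existence along a shifted family of strata. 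Iterating the procedure across the translated Bruhat interval should deliver the remaining intermediate $x_u$ and complete the proof.
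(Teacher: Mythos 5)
Your forward (``only if'') direction is correct and is the paper's own implicit argument: specializing Proposition~\ref{localgeomertyonspecial} to $r=1$, the Weyl element $w_x$ attached to $x_{w\underline{w}_0}$ comes out as $w\underline{w}_0$, whence $w_x\underline{w}_0=w\geq w_\cF$. For the reverse direction you follow exactly the paper's Zariski-closure strategy in $\FX^{\Box,\bh-\mathrm{st}}_{\overline{r},\cP_{\min}}$ via Remark~\ref{zariskicorres}, Proposition~\ref{pminloucsofsemidefringscP}, and Proposition~\ref{semistableZarisikeargsecond}; so at the level of route you are on the paper's track.

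The ``remaining Bruhat interval'' gap you flag is an artifact of taking Remark~\ref{zariskicorres} at face value rather than a genuine hole in the method. If $\rho'_L$ lies in $\FX^{\Box,\bh-\mathrm{st}}_{\overline{r},\cP_{\min},w}$ then its trianguline parameter is $\unr(a_{\rho'_L})_n z^{w_{\rho'_L}\underline{w}_0(\bh)}$, and for Proposition~\ref{semistableZarisikeargsecond}(1) --- the inclusion $\widetilde{\FX}_{\mathrm{tri},\cP_{\min},w}^{\bh-\mathrm{st}}(\overline{r})\subseteq\iota_{w\underline{w}_0,\bh}^{-1}(X_{\mathrm{tri}}(\overline{r}))$ --- to say precisely ``the trianguline parameter of a point of stratum $w$ lands in $X_{\mathrm{tri}}(\overline{r})$'', the stratum index must satisfy $w=w_{\rho'_L}$, not $w=w_{\rho'_L}\underline{w}_0$. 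With that indexing $\rho_L$ sits in $\FX^{\Box,\bh-\mathrm{st}}_{\overline{r},\cP_{\min},w_\cF}$, the closure relation puts it in $\overline{\FX^{\Box,\bh-\mathrm{st}}_{\overline{r},\cP_{\min},v}}$ precisely for $v\geq w_\cF$, and Proposition~\ref{semistableZarisikeargsecond} then yields $x_{v\underline{w}_0}\in X_{\mathrm{tri}}(\overline{r})$ for every $v\geq w_\cF$ --- the full claimed range in a single pass, with no residual interval. A quick sanity check: under your literal reading, a non-critical $w_\cF=\underline{w}_0$ would place $\rho_L$ in the closed stratum $\FX^{\Box,\bh-\mathrm{st}}_{\overline{r},\cP_{\min},\underline{w}_0\cdot\underline{w}_0}=\FX^{\Box,\bh-\mathrm{st}}_{\overline{r},\cP_{\min},1}$ and the closure argument would produce no companions at all, while under the fixed indexing it produces all of them; the latter is what the proposition asserts.

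The bootstrap you propose to cover the leftover interval is not an argument. The Bruhat stratification of $\FX^{\Box,\bh-\mathrm{st}}_{\overline{r},\cP_{\min}}$ is pinned to the Hodge flag against the $(\varphi,N)$-eigenflag and does not translate to a new base point such as $x_{w_\cF\underline{w}_0}$; there is no shifted family of strata to pull companion existence along. Proposition~\ref{propertyofxrhombullet}(c) and Proposition~\ref{localgeomertyonspecial} describe the completed local ring of $X_{\mathrm{tri}}(\overline{r})$ at a fixed point via the local model $\widehat{\cX}^{\flat}_{L,x_{\pdr}}$; they say nothing about closure relations among Bruhat strata of the deformation space centered at a different weight, which is what your iteration would need. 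Replace the bootstrap with the corrected stratum index and the proof closes.
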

\begin{proof}
	By Proposition \ref{semistableZarisikeargsecond},\;we have an injection $\iota_{w\underline{w}_0,\bh}:\widetilde{\FX}_{\mathrm{tri},\cP_{\min},w\underline{w}_0}^{\bh-\mathrm{st}}(\overline{r})\rightarrow X_{\mathrm{tri}}(\overline{r})$ of rigid spaces over $L$.\;By Proposition \ref{semistableZarisikeargsecond},\;we see that $\overline{\widetilde{\FX}_{\mathrm{tri},\cP_{\min},w\underline{w}_0
		}^{\bh-\mathrm{st}}(\overline{r})}\subseteq \iota_{w\underline{w}_0 ,\bh}^{-1}(X_{\mathrm{tri}}(\overline{r}))$.\;Since $x_{w_{\cF}\underline{w}_0}\in X_{\mathrm{tri}}(\overline{r})$ and $w_{\cF}\leq w$,\;the result follows.\;
\end{proof}
\begin{rmk}If $x_{w_{\cF}\underline{w}_0}\in {X}_{\mathrm{tri}}(\overline{r})$.\;Let $\widetilde{X}_{\mathrm{tri}}(\overline{r})\subseteq {X}_{\mathrm{tri}}(\overline{r})$ be the subspace defined before \cite[Conjeture 2.8]{breuil2017smoothness},\;then  \cite[Conjecture 3.23]{breuil2017interpretation} (or \cite[Conjeture 5.6]{breuil2017smoothness},\;modular lifting theorem) shows that there exists a point $z\in \FX_{\overline{\rho}^{\fp}}^\Box\times \BU^g$ such that $(\iota_{\fp}(x_{w\underline{w}_0}),z)\in \iota_{\fp}\big(X_{\mathrm{tri}}(\overline{r})\big)\times
	\FX_{\overline{\rho}^{\fp}}^\Box\times \BU^g$ are in $X_{\fp}(\overline{\rho})(E)$.\;Then \cite[Theorem 5.5]{breuil2017smoothness} gives the existence of all global companion points,\;i.e.,\;$y_{w\underline{w}_0}\in X_{\fp}(\overline{\rho})(E)$ if and only if $w\leq w_{\cF}\underline{w}_0$.\;
\end{rmk}

\subsection{Galois cycles}\label{galoiscycles}

We construct certain cycles on the deformation space $X_{\rho_L}\cong (\widehat{\mathfrak{X}_{\overline{r}}^\Box})_{\rho_L}$.\;We follow the notation in \cite[Section 6.5]{Ding2021}.\;We denote by $Z(\Spec \widehat{\cO}_{\mathfrak{X}_{\overline{r}},\rho_L})$ (resp.,\;$Z^{d}(\Spec \widehat{\cO}_{\mathfrak{X}_{\overline{r}},\rho_L})$,\;resp.,\;$Z^{\leq d}(\Spec \widehat{\cO}_{\mathfrak{X}_{\overline{r}},\rho_L})$) for the free abelian group  generated by the irreducible closed subschemes (resp.,\;the irreducible closed subschemes of codimension $d$,\;resp.,\;the irreducible closed subschemes of codimension  $\leq d$) in $\widehat{\cO}_{\mathfrak{X}_{\overline{r}},\rho_L}$.\;If $A$ is a noetherian complete local ring which is a quotient of $\widehat{\cO}_{\mathfrak{X}_{\overline{r}},\rho_L}$,\;we set
\[[\Spec A]:=\sum_{\fp:\text{minimal\;prime\;of\;A}}m(\fp,A)[\Spec A/\fp]\in Z(\Spec \widehat{\cO}_{\mathfrak{X}_{\overline{r}},\rho_L})\]
the associated cycle in $Z(\Spec \widehat{\cO}_{\mathfrak{X}_{\overline{r}},\rho_L})$.\;

 In the sequel,\;we will use the following abusive notation for local formal schemes.\;Let $\Spf R$ be a local formal scheme.\;Let $\cP$ be a property of scheme,\;we say that $\Spf R$ satisfies $\cP$ if $\Spec R$ satisfies $\cP$.\;Moreover,\;for a given irreducible component $\Spec R/\fa\subseteq \Spec R$,\;we will refer to the formal subscheme $\Spf R/\fa\subseteq \Spf R$ as an irreducible component.\;Similarity,\;we also put $Z(\Spf R):=Z(\Spec R)$ (resp.,\;$Z^d(\Spf R):=Z^d(\Spec R)$,\;resp.,\;$Z^{\leq d}(\Spf R)=Z^{\leq d}(\Spec R)$).\;

Recall that we have fixed a $p$-adic potentially semistable non-crystalline Galois representation $\rho_L:\gal_L\rightarrow \GLN_{n}(E)$ which admits a special $\omepik$-filtration with parameter $(\bx_0,\bmdel)\in \sbanpik\times\rigchl $ (resp.,\;with parameter $(\widetilde{\bx}_{\pi,\bh},\widetilde{\bm{\delta}}_\bh)\in \sbanpik\times\rigch $).\;By inverting $t$,\;$\rho_L$ admits an $\omepik$-filtration $\cM_{\bullet}$ of $\cM$ with parameter $(\bx_0,\bmdel)\in \sbanpik\times\rigchl$.\;We refer to Section \ref{localmodelforgalodef} for the discussion on local models.\;Recall that $y$ (resp.,\;$\widehat{y}$) is the closed point of the $E$-scheme $X_{r,L}$ (resp.,\;$\cX_{r,L}$) corresponding to the triple $(\alpha^{-1}(\cD_{\bullet}),\alpha^{-1}(\fil_{\bW^+_\Dpik,\bullet}),N_{\bW_\Dpik})$ (resp.,\;$(0,y)$) and  $w_{\cF}\in \sW^{\Delta_n^k,\emptyset}_{n,L,\max}$ is the element that encodes the relative localization of two flags.\;

In this section,\;we discuss two partially de-Rham Galois cycles with respect to the parabolic group (see \cite[Section 3.6]{wu2021local}) and the $\Sigma_L$-components (see \cite[Section 5.2]{CompanionpointforGLN2L}) respectively.\;

Firstly,\;we have a commutative diagram of affine formal schemes (and the same for its $\Spec$-version) over $E$:
\begin{equation}
	\xymatrix{
		X^{w}_{\rho_L,\cM_{\bullet}} \ar@{^(->}[d]
		&  X^{\Box,w}_{\rho_L,\cM_{\bullet}} \ar[l]\ar@{^(->}[d] \ar[r] &  \widehat{\cX}^{\flat}_{r,w,\widehat{y}}\ar@{^(->}[d] \\
		X_{\rho_L,\cM_{\bullet}} \ar@{=}[d] & X^{\Box}_{\rho_L,\cM_{\bullet}} \ar[l] \ar[r] \ar@{=}[d] & \widehat{\cX}^{\flat}_{r,L,\widehat{y}}\ar[d]^{\iota^{\flat}} \\
		X_{\rho_L,\cM_{\bullet}} \ar@{^(->}[d] \ar[dr]_{\omega_{\underline{\delta}}}& X^{\Box}_{\rho_L,\cM_{\bullet}} \ar[l] \ar[r]  & \widehat{\cX}_{r,L,\widehat{y}}\ar[d]^{\kappa_1}\\
		X_{\rho_L} & \widehat{(\cZ_{\bL_r,L})}_{\bm{\delta}_{\bh}} \ar[r]^{\;\;\;\;\;\wt-\wt(\underline{\delta})}   & \widehat{\ft_L}.}
\end{equation}
For any $J\subseteq \Sigma_L$,\;let
\begin{equation}
	\begin{aligned}
&\overline{X}^{J}_{r,L}:=\prod_{\tau\in J}\overline{X}_{r,\tau}\times \prod_{\tau\in \Sigma_L\backslash J}{X}_{r,\tau},
\overline{X}^{J}_{r,w}:=\prod_{\tau\in J}\overline{X}_{r,w_\tau}\times \prod_{\tau\in \Sigma_L\backslash J}{X}_{r,w_\tau},\\
&{Z}^{J}_{r,L}:=\prod_{\tau\in J}{Z}_{r,\tau}\times \prod_{\tau\in \Sigma_L\backslash J}{X}_{r,\tau},
{Z}^{J}_{r,w}:=\prod_{\tau\in J}{Z}_{r,w_\tau}\times \prod_{\tau\in \Sigma_L\backslash J}{X}_{r,w_\tau}.\;				
	\end{aligned}
\end{equation}
Put $\widehat{(\overline{\cX}^{J}_{r,w})}_{\widehat{y}}=\widehat{\cX}_{r,w,\widehat{y}}\times_{\widehat{{X}}_{r,w,{y}}}\widehat{(\overline{X}^{J}_{r,w})}_{y}$ and $\widehat{(\overline{\cX}^{J}_{r,L})}_{\widehat{y}}=\widehat{\cX}_{r,L,\widehat{y}}\times_{\widehat{{X}}_{r,L,{y}}}\widehat{(\overline{X}^{J}_{r,w})}_{y}$.\;Taking everywhere (expect for $X_{\rho_L}$) the fiber over $0\in \ft_J(E)$ of the above diagram ($\Spec$-version),\;we obtain the following commutative diagram:
\begin{equation}\label{factorwww2}
	\xymatrix{
		\Spec \overline{R}^{J,w}_{\rho_L,\cM_{\bullet}} \ar@{^(->}[d]
		&  \Spec\overline{R}^{J,\Box,w}_{\rho_L,\cM_{\bullet}} \ar[l]\ar@{^(->}[d] \ar[r] &  \Spec \widehat{\cO}^{\flat}_{\overline{{\cX}}^{J}_{r,w},\widehat{y}}\ar@{^(->}[d] \\
		\Spec\overline{R}^{J}_{\rho_L,\cM_{\bullet}} \ar@{=}[d] & \Spec\overline{R}^{J,\Box}_{\rho_L,\cM_{\bullet}} \ar[l] \ar[r] \ar@{=}[d] & \Spec\widehat{\cO}^{\flat}_{\overline{{\cX}}^{J}_{r,L},\widehat{y}}\ar[d]^{\overline{\iota}^{J,\flat}} \\
		\Spec\overline{R}^{J}_{\rho_L,\cM_{\bullet}} \ar@{^(->}[d] & \Spec \overline{R}^{J,\Box}_{\rho_L,\cM_{\bullet}} \ar[l] \ar[r]^{\overline{\iota}^J}  & \Spec \widehat{\cO}_{\overline{\cX}^{J}_{r,L},\widehat{y}}\\
		\Spec \widehat{\cO}_{\mathfrak{X}_{\overline{r}},\rho_L}  &    & }
\end{equation}
where $\widehat{\cO}^{\flat}_{\overline{{\cX}}^{J}_{r,w},\widehat{y}}\cong \widehat{\cO}^{\flat}_{{{\cX}}_{r,w,\widehat{y}}}\otimes_{\widehat{\cO}_{{\cX}_{r,w},\widehat{y}}}\widehat{\cO}_{\overline{\cX}^{J}_{r,w},\widehat{y}}$
 and $\widehat{\cO}^{\flat}_{\overline{{\cX}}^{J}_{r,L},\widehat{y}}\cong \widehat{\cO}^{\flat}_{{{\cX}}_{r,L,\widehat{y}}}\otimes_{\widehat{\cO}_{{\cX}_{r,L},\widehat{y}}}\widehat{\cO}_{\overline{\cX}^{J}_{r,L},\widehat{y}}$.\;It is clear that all the horizontal morphisms in (\ref{factorwww2}) are formally smooth (except $\overline{\iota}^J$) and where four squares (except the square at the bottom right) are cartesian (as the vertical maps are closed immersions,\;except $\overline{\iota}^{J,\flat}$).\;By the argument before \cite[Lemma 5.4.4]{Ding2021},\;the irreducible components of $\Spec \widehat{\cO}_{\overline{X}_{r,w},\widehat{y}}$ are the union of the irreducible components of  $\Spec \widehat{\cO}_{Z_{r,w'},y}$ for $w'\in \sW_{n,L}$ such that $y\in Z_{r,w'}(E)$.\;
 
Let $\Spec \widehat{\cO}^{\flat}_{\cZ^J_{r,w'},\widehat{y}}$ be the pullback of $\Spec \widehat{\cO}_{Z^J_{r,w'},y}$ via the morphism $\Spec\widehat{\cO}^{\flat}_{\overline{{\cX}}^{J}_{r,L},\widehat{y}}\rightarrow \Spec \widehat{\cO}_{\overline{\cX}_{r,L},\widehat{y}}\rightarrow \widehat{\cO}_{\overline{X}_{r,L},y}$.\;We denote by $$\FZ^{J,\flat}_{r,w}\in Z(\Spec \widehat{\cO}_{\mathfrak{X}_{\overline{r}},\rho_L})$$ the cycle corresponding via the embedding $\Spec\overline{R}^{J,\flat}_{\rho_L,\cM_{\bullet}}\rightarrow \Spec \widehat{\cO}_{\mathfrak{X}_{\overline{r}},\rho_L}$ to the cycle $[\Spec \widehat{\cO}^{J,\flat}_{\cZ_{r,{w}},\widehat{y}}]$.\;For $w=(w_\tau)$ and $w'=(w'_{\tau})$,\;we put $a^J_{w,w'}:=\prod_{\tau\in J}a_{w_\tau,w'_\tau}$ and $b^J_{w,w'}:=\prod_{\tau\in J}b_{w_\tau,w'_\tau}$,\;where $a_{w_\tau,w'_\tau}$ and $b_{w_\tau,w'_\tau}$ are given in \cite[Theorem 5.4.11]{Ding2021}.\;We put
\begin{equation}\label{bzcycle}
	\FC^{J,\flat}_{r,w'}:=\sum_{w''\in \sW_{\Delta_n^k,L}\backslash \sW_{n,L}}a^J_{w',w''}\FZ^{J,\flat}_{r,w'}\in Z^{\frac{n(n+1)}{2}d_L}(\Spec \widehat{\cO}_{\mathfrak{X}_{\overline{r}},\rho_L}).\;
\end{equation}
Then the following statements are equivalent: $(1):\FC^{J,\flat}_{r,w'}\neq 0$,\;$(2):\FZ^{J,\flat}_{r,w'}\neq 0$ and $(3):w^{\max}\geq w_{\cF}$ (similar to the proof of \cite[Lemma 6.5.3]{Ding2021}).\;When $J=\Sigma_L$,\;we drop the superscript $J$ in above notation.\;

\begin{pro}\label{descriptionzw0}We have the following facts.\;
	\begin{itemize}
		\item[(a)] If $J\subsetneq \Sigma_L$,\;then $\Spec \widehat{\cO}^{\flat}_{\cZ^J_{r,w'},\widehat{y}}$ is irreducible.\;Moreover,\;the irreducible components of $\Spec\widehat{\cO}^{\flat}_{\overline{\cX}^{J}_{r,L},\widehat{y}}$ are  $\Spec\widehat{\cO}^{\flat}_{\cZ^J_{r,w'},\widehat{y}}$ such that $y\in Z^J_{r,w'}(E)$.\;
		\item[(b)] Suppose $r=1$ and  $J\subsetneq \Sigma_L$.\;Assume Conjecture \ref{flatnessXconj} holds,\;then $\Spec\widehat{\cO}^{\flat}_{\overline{{\cX}}_{r,L},\widehat{y}}$ and  $\Spec \widehat{\cO}_{\cZ^{\flat}_{w'},\widehat{y}}$ are  equidimensional.\;The irreducible components of $\Spec\widehat{\cO}^{\flat}_{\overline{{\cX}}_{r,L},\widehat{y}}$ are unions of the irreducible components of $\Spec \widehat{\cO}^{\flat}_{\cZ_{r,w'},\widehat{y}}$ ($\Spec \widehat{\cO}_{\cZ^{\flat}_{r,w'},\widehat{y}}$ is reducible in general) such that $y\in Z_{r,w'}(E)$.\;
		\item[(c)] $\Spec \widehat{\cO}_{\cZ^{\flat}_{\underline{w}_0},\widehat{y}}$ is equidimensional.\;We have $\FC^{\flat}_{r,\underline{w}_0}=\FZ^{\flat}_{r,\underline{w}_0}=r_{w_0}[\Spec\widehat{\cO}_{{\FX_{\overline{r}}^{\Box,\tau,\bh}},\rho_L}]$,\;where $r_{w_0}$ is the number of the irreducible components of $\FZ^{\flat}_{r,\underline{w}_0}$.\;
	\end{itemize}
\end{pro}
\begin{proof}
We firstly prove Part $(a)$.\;Since $X^{\Box}_{\rho_L,\cM_{\bullet}}$ (resp.,\;$X^{\Box,w}_{\rho_L,\cM_{\bullet}}$) is formally smooth over $X^{\Box}_{\bW_\Dpik,\bF_{\bullet},J}$ (resp., $X^{\Box,w}_{\bW_\Dpik,\bF_{\bullet},J}$),\;i.e.,\;the ring homomorphism $\widehat{\cO}_{{X}_{r,J},y_J}\rightarrow {R}^{\Box}_{\rho_L,\cM_{\bullet}}$ is formally smooth.\;Note that 
\[\overline{R}^{J,\Box}_{\rho_L,\cM_{\bullet}}={R}^{\Box}_{\rho_L,\cM_{\bullet}}\otimes_{\widehat{\cO}_{{\cX}_{r,L},\widehat{y}}}\widehat{\cO}_{\overline{\cX}^{J}_{r,L},\widehat{y}}=\overline{R}^{\Box}_{\rho_L,\cM_{\bullet}}\otimes_{\widehat{\cO}_{{X}_{r,J},\widehat{y}_J}}\widehat{\cO}_{\overline{X}_{r,J},{y}_J},\]
we see that $\widehat{\cO}_{\overline{X}_{r,J},y_J}\rightarrow \overline{R}^{J,\Box}_{\rho_L,\cM_{\bullet}}$ (resp,\;$\widehat{\cO}_{\overline{X}_{r,J,w},y_J}\rightarrow \overline{R}^{J,\Box,w}_{\rho_L,\cM_{\bullet}}$,\;recall $X_{r,J}=\prod_{\tau\in J}X_{r}$ and $X_{r,J,w}=\prod_{\tau\in J}X_{r,w_\tau}$ in Section \ref{preonschemegeorepn}) is formall smooth by base change.\;By the geometric properties of $\widehat{\cO}_{\overline{X}_{r,J},y_J}$ and $\widehat{\cO}_{\overline{X}_{r,J,w_J},y_J}$,\;we get the results in Part $(a)$.\;Part $(b)$ is a direct consequence of flatness.\;Finally,\;since the assumptions in Part $(b)$ is true for $w=\underline{w}_0$ (by proposition \ref{flatnessX}).\;We say more about the cycles $\FC^{\flat}_{r,\underline{w}_0}=\FZ^{\flat}_{r,\underline{w}_0}\neq 0$.\;Note that the underlying nilpotent operator is identically $0$ on $Z_{r,\underline{w}_0}$,\;any deformation in $X_{\rho_L,\cM_{\bullet}}(A)$ coming from $\widehat{\cO}_{\cZ^{\flat}_{\underline{w}_0},\widehat{y}}(A)$ is de Rham.\;Since $\rho_L$ is semistable non-crystalline,\;we deduce from \cite[Proposition 2.3.4]{AST2009324R10} that any deformation in $\widehat{\cO}_{\cZ^{\flat}_{\underline{w}_0},\widehat{y}}(A)$ is further semistable non-crystalline.\;Let $(\widetilde{\FX}_{\overline{r}}^{\Box,\tau,\bh})^{\wedge}_{\rho_L}$  be the deformations space consists of the semistable deformations $\rho_A$ of $\rho_L$ together with a deformation of the flag $D_{\mathrm{st}}(\cF)$ on $D_{\mathrm{st}}(\rho_L)$ which is stable under the $\varphi_A$ and $N_A$ (on $D_{\mathrm{st}}(\rho_A)$).\;When forgetting the flag in $(\widetilde{\FX}_{\overline{r}}^{\Box,\tau,\bh})^{\wedge}_{\rho_L}$,\;we obtain $({\FX}_{\overline{r}}^{\Box,\tau,\bh})^{\wedge}_{\rho_L}$.\;Note that $\FX_{\overline{r}}^{\Box,\tau,\bh}$ is equi-dimensional of dimension $n^2+d_L\frac{n(n-1)}{2}$.\;By \cite[Theorem 1.2.7]{ALLENPARK},\;$\rho_L$ is a smooth point of $\FX_{\overline{r}}^{\Box,\tau,\bh}$ so that there is a unique irreducible component $Z(\rho_L)$ containing $\rho_L$.\;The inclusion $Z(\rho_L)\hookrightarrow \FX_{\overline{r}}^{\Box,\tau,\bh}$ induces  isomorphisms of irreducible completed local rings
$ \widehat{\cO}_{{\FX_{\overline{r}}^{\Box,\tau,\bh}},\rho_L}\xleftarrow{\simeq}\widehat{\cO}_{Z(\rho_L),\rho_L}.$
Let $(\widetilde{\FX}_{\overline{r}}^{\Box,\tau,\bh})^{\wedge}_{\rho_L}=\Spf \widehat{\cO}_{{\widetilde{\FX}_{\overline{r}}^{\Box,\tau,\bh}},\rho_L}$.\;Then we deduce that
\[\FC^{\flat}_{r,\underline{w}_0}=\FZ^{\flat}_{r,\underline{w}_0}=[\Spec \widehat{\cO}_{{\widetilde{\FX}_{\overline{r}}^{\Box,\tau,\bh}},\rho_L}]=r_{w_0}[\Spec\widehat{\cO}_{{\FX_{\overline{r}}^{\Box,\tau,\bh}},\rho_L}], \]
where $r_{w_0}$ is the number of the irreducible components of $\FZ^{\flat}_{r,\underline{w}_0}$.\;
\end{proof}

Assume that $x:=(\rho_L,\widetilde{\bx}_{\pi,\bh},\widetilde{\bm{\delta}}_\bh)\in \defvar$.\;Let $\cM_{\bullet,x}$ be the unique $\omepik$-filtration on $\cM$ of parameter $(\bx_0,\bmdel)$.\;Recall we have defined two elements $w_{\cF}\in \sW^{\Delta_n^k,\emptyset}_{n,L,\max}$ (see the argument before Proposition \ref{propertyofxrhombullet}) and $w_x\in \sW^{\Delta_n^k,\emptyset}_{n,L}$ (see above Proposition \ref{localgeomertyonspecial}).\;Let $X^\Box_{\omepik,\mathbf{{h}}}(\overline{r})_{\bh}$ be the fiber of $\defvar$ at $\bh$ via the morphism $X^\Box_{\omepik,\mathbf{{h}}}(\overline{r})\rightarrow \rigch\xrightarrow{\wt}\fz_{r,\emptyset,L}$.\;By an easy variation of \cite[Conjecture 5.4.9]{Ding2021},\;we have
\begin{conjecture}\label{localmodelcycle}We have
	\begin{equation}\label{r1Breuil}
		\begin{aligned}
			[\Spec \widehat{\cO}_{X^\Box_{\omepik,\mathbf{{h}}}(\overline{r})_{\bh},x} ]=&\;\sum_{\substack{w\in\sW_{\Delta_n^k,L}\backslash \sW_{n,L}\\w_{\cF}\leq w^{\max}\leq w_x\underline{w}_0}}b_{w_x\underline{w}_0,w}\FC^{\flat}_{r,w}\in Z^{\frac{n(n+1)}{2}d_L}(\Spec \widehat{\cO}_{\mathfrak{X}_{\overline{r}},\rho_L}).
		\end{aligned}
	\end{equation} 
		where $\FC^{\flat}_{w'}$ is defined using the triangulation $\cM_{\bullet,\underline{\delta}}$ of the above discussion.\;The cycle $\FC^{\flat}_{w'}$ a priori depends on $\rho_L,\cM_{\bullet,\underline{\delta}}$ and $w'$.\;
\end{conjecture}
\begin{rmk}This conjecture is true for the case $r=1$ if Conjecture \ref{flatnessXconj} holds.\;
\end{rmk}

From now on,\;we restrict to the case $r=1$.\;To introduce the second partial de rham deformation ring,\;we consider the schemes (as in \cite[Section 4.1]{HHS} and \cite[Section 2.5]{wu2021local})
\begin{equation}\label{dfnXIZI}
	\begin{aligned}
		&X_{\underline{I}}:=\left(\GLN_{n,L}\times^{\bB_{n,L}}(\fz_{\underline{I}}\oplus \fn_{\underline{I}})\right)\times_{\fg_{L}}\widehat{\fg}_L,\\
		&Z_{\underline{I}}:=\left(\left(\GLN_{n,L}\times^{\bB_{n,L}} \fn_{\underline{I}}\right)\times_{\cN_{L}}\widehat{\cN}_L\right)^{\mathrm{red}}.\;
	\end{aligned}
\end{equation}
The irreducible components of scheme $X_{\underline{I}}$ are index by $\{X_{I,w}\}_{w\in \sW_{n,L}^{\underline{I},\emptyset}}$.\;Put $\overline{X}_{\underline{I}}:=X_{\underline{I}}\times_{X_{L}}\overline{X}_{L}$ and 
$\overline{X}_{\underline{I},w}:=X_{\underline{I},w}\times_{X_{L}}\overline{X}_{L}$.\;By \cite[Proposition 2.17]{wu2021local},\;the scheme $Z_{\underline{I}}$ is equidimensional of dimension $n^2-n$ with irreducible components $\{Z_{\underline{w}_{\underline{I},0}w}\}_{w\in \sW^{\underline{I},\emptyset}_{n,L}}$.\;By \cite[Theorem 2.24]{wu2021local},\;we see that $w\underline{w}_0(\bh)$ is strictly $\underline{I}$-dominant if and only if $Z_{\underline{w}_{\underline{I},0}w}$ is contained in $Z_{\underline{I}}$.\;

Fix $J\subseteq \Sigma_L$ and $\underline{I}:=\prod_{\tau\in J}I_{\tau}\subset \Delta_n^{J}$.\;Put $\underline{I}_{J,L}:=\underline{I}\times \prod_{\tau\in \Sigma\backslash J}\emptyset$  We also consider the scheme
\begin{equation}
	\begin{aligned}
		&X_{J,\underline{I}}:=\left(\GLN_{n,L}\times^{\bB_{n,L}}((\fz_{\underline{I}}\oplus \fn_{\underline{I}})\times\fb_{\Sigma\backslash J})\right)\times_{\fg_{L}}\widehat{\fg}_L=X_{\underline{I}_{J,L}},\\
		&Z_{J,\underline{I}}:=\left(\left(\GLN_{n,L}\times^{\bB_{n,L}}( \fn_{\underline{I}}\times\fb_{\Sigma\backslash J})\right)\times_{\cN_{L}}\widehat{\cN}_L\right)^{\mathrm{red}}.\;
	\end{aligned}
\end{equation}
Similarity,\;the irreducible components of scheme $X_{J,\underline{I}}$ are index by $\{X_{J,\underline{I},w}\}_{w\in \sW_{n,L}^{\underline{I}_{J,L},\emptyset}}$.\;Put $\overline{X}_{J,\underline{I}}:=X_{J,\underline{I}}\times_{X_{L}}\overline{X}_{L}$ and 
$\overline{X}_{J,\underline{I},w}:=X_{J,\underline{I},w}\times_{X_{L}}\overline{X}_{L}$.\;By \cite[Proposition 2.17]{wu2021local},\;the scheme $Z_{J,\underline{I}}$ is equidimensional of dimension $n^2-n$ with irreducible components $\{Z_{w_{\underline{I}_{J,L},0}w}\}_{w\in \sW^{\underline{I}_{J,L},\emptyset}_{n,L}}$.\;

For $\tau\in \Sigma_L$,\;write $\bP_{I_\tau}=\GLN_{q_{1,\tau}}\times \cdots\times \GLN_{q_{t_\tau},\tau}$ for some partition $q_{1,\tau}+\cdots+q_{t_\tau,\tau}=n$ (we put $q_{0,\tau}=0$).\;We say $(A,\rho_A,\cM_{A,\bullet},j_A,\alpha)\in X^{\Box}_{\rho_L,\cM_{\bullet}}$ is quasi-$\bP_{{I}_\tau}$-de Rham (resp.,\;$\bP_{{I}_\tau}$-de Rham) (for some $\tau\in \Sigma_L$) if the nilpotent operator $\nu_A$ on $D_{\pdr,\tau}(W_{\dr}(D_{\rig}(\rho_A)[1/t]))$ is scale (resp.,\;vanishes)  when restricted to the graded pieces
\begin{equation}
	\begin{aligned}
		D_{\pdr,\tau}(W_{\dr}(\cM_{A,q_{1,\tau}+\cdots+q_{i,\tau}}))/D_{\pdr,\tau}(W_{\dr}(\cM_{A,q_{1,\tau}+\cdots+q_{i-1,\tau}})),i=1,\cdots,t_\tau
	\end{aligned}
	\end{equation}
for $\tau\in \Sigma_L$.\;

Similar to \cite[Section 4.2,\;Theorem 4.7]{HHS},\;let $X^{J,\underline{I},\Box}_{\rho_L,\cM_{\bullet}}$ (resp.,\;$X^{J,\underline{I}}_{\rho_L,\cM_{\bullet}}$) be the full subgroupoid of $X^{\Box}_{\rho_L,\cM_{\bullet}}$ (resp.,\;$X_{\rho_L,\cM_{\bullet}}$)  consists of objects $(A,\rho_A,\cM_{A,\bullet},j_A,\alpha_A)\in X^{\Box}_{\rho_L,\cM_{\bullet}}$ such that the triangulation $\cM_{A,\bullet}$ on $D_{\rig}(\rho_A)[1/t]$ is quasi-$\bP_{\underline{I}}$-de Rham,\;i.e.,\;is $\bP_{{I}_\tau}$-de Rham for all $\tau\in J$.\;

Similar to \cite[Lemma 3.11]{wu2021local},\;we also let $\overline{X}^{J,\underline{I},\Box}_{\rho_L,\cM_{\bullet}}$ (resp.,\;$\overline{X}^{J,\underline{I}}_{\rho_L,\cM_{\bullet}}$) be the full subgroupoid of $X^{\Box}_{\rho_L,\cM_{\bullet}}$ (resp.,\;$X_{\rho_L,\cM_{\bullet}}$)  consists of objects $(A,\rho_A,\cM_{A,\bullet},j_A,\alpha_A)\in X^{\Box}_{\rho_L,\cM_{\bullet}}$ such that the triangulation $\cM_{A,\bullet}$ on $D_{\rig}(\rho_A)[1/t]$ is $\bP_{\underline{I}}$-de Rham,\;i.e.,\;is $\bP_{{I}_\tau}$-de Rham for all $\tau\in J$.\;

By \cite[Theorem 4.7]{HHS} and \cite[Lemma 3.11]{wu2021local},\;the functor $|{X}^{J,\underline{I},\Box}_{\rho_L,\cM_{\bullet}}|$ (resp.,\;$|\overline{X}^{J,\underline{I},\Box}_{\rho_L,\cM_{\bullet}}|$) is pro-represented by the formal scheme $\Spf {R}^{J,\underline{I},\Box}_{\rho_L,\cM_{\bullet}}$ (resp.,\;$\Spf \overline{R}^{J,\underline{I},\Box}_{\rho_L,\cM_{\bullet}}$),\;and $|X^{J,\underline{I}}_{\rho_L,\cM_{\bullet}}|$ (resp.,\;$|\overline{X}^{J,\underline{I}}_{\rho_L,\cM_{\bullet}}|$) is also pro-represented by some formal scheme $\Spf {R}^{J,\underline{I}}_{\rho_L,\cM_{\bullet}}$ (resp.,\;$\Spf \overline{R}^{J,\underline{I}}_{\rho_L,\cM_{\bullet}}$)  with a formally smooth morphism formally smooth morphism $\Spf {R}^{J,\underline{I},\Box}_{\rho_L,\cM_{\bullet}}\rightarrow\Spf {R}^{J,\underline{I}}_{\rho_L,\cM_{\bullet}}$ (resp.,\;$\Spf \overline{R}^{J,\underline{I},\Box}_{\rho_L,\cM_{\bullet}}\rightarrow\Spf \overline{R}^{J,\underline{I}}_{\rho_L,\cM_{\bullet}}$).\;Note that the natural morohism $\Spec {R}^{J}_{\rho_L,\cM_{\bullet}}\hookrightarrow {R}^{J,\underline{I}}_{\rho_L,\cM_{\bullet}}$ (resp,\;$\Spec \overline{R}^{J}_{\rho_L,\cM_{\bullet}}\hookrightarrow \overline{R}^{J,\underline{I}}_{\rho_L,\cM_{\bullet}}$) is a closed immersion.\;

By definition,\;we also see that 
\begin{lem}\label{strictlyiff}Assume that  $L\neq \bQ_p$ and $J\subsetneq \Sigma_L$.\;Then $\FZ^{J,\flat}_{r,w\underline{w}_0}$ is contained in $\Spec R^{J,\underline{I}}_{\rho_L,\cM_{\bullet}}$ if and only if $w\underline{w}_0(\bh)$ is strictly $\bP_{\underline{I}}$-dominant (or ${\underline{I}}$-dominant).
\end{lem}
\begin{proof}Note that $R^{J,\underline{I},\Box}_{\rho_L,\cM_{\bullet}}$ is fomally smooth over $\widehat{\cO}_{Z_{{\underline{I}}},y_J}$ by base change (since $\widehat{\cO}_{\overline{X}_{J},y_J}\rightarrow \overline{R}^{J,\Box}_{\rho_L,\cM_{\bullet}}$ is formall smooth).\;Then this lemma follows from \cite[Theorem 2.24]{wu2021local}.\;
\end{proof}

Recall the point $x=(\rho_L,\chi)\in X^{\Box}_{\mathrm{tri}}(\overline{r})$.\;Let $\widehat{T}_{L,\wt(\chi)}$ be the fiber of $\widehat{T}_L$ at $\wt(\chi)$ via the weight morphism $\widehat{T}_L\xrightarrow{\wt}\ft_{\Sigma_L}$.\;Taking the fibers over ${\chi}\in \Spec \widehat{\cO}_{\widehat{T}_{L,\wt(\chi)},\chi}(E)$ in the commutative diagram (\ref{factorwww2}) yields a third diagram:
\begin{equation}\label{factorwwwdelta}
	\xymatrix{
		\Spec \overline{\overline{R}}^{w}_{\rho_L,\cM_{\bullet}} \ar@{^(->}[d]
		&  \Spec\overline{\overline{R}}^{\Box,w}_{\rho_L,\cM_{\bullet}} \ar[l]\ar@{^(->}[d] \ar[r] &  \Spec \widehat{\cO}^{\flat}_{\overline{{X}}_{w},{y}}\ar@{^(->}[d] \\
		\Spec\overline{\overline{R}}_{\rho_L,\cM_{\bullet}} \ar@{=}[d] & \Spec\overline{\overline{R}}^{\Box}_{\rho_L,\cM_{\bullet}} \ar[l] \ar[r] \ar@{=}[d] & \Spec\widehat{\cO}^{\flat}_{\overline{{X}}_{L},{y}}\ar[d]^{\overline{\overline{{\iota}}}^{\flat}} \\
		\Spec\overline{\overline{R}}_{\rho_L,\cM_{\bullet}} \ar@{^(->}[d] & \Spec \overline{\overline{R}}^{\Box}_{\rho_L,\cM_{\bullet}} \ar[l] \ar[r]^{\overline{\overline{\iota}}}  & \Spec \widehat{\cO}_{\overline{X}_{L},{y}}\\
		\Spec \widehat{\cO}_{\mathfrak{X}_{\overline{r}},\rho_L}  &    & }
\end{equation}
where $\Spec \widehat{\cO}^{\flat}_{\overline{{X}}_{w},{y}}$ (resp.,\;$\Spec\widehat{\cO}^{\flat}_{\overline{{X}}_{L},{y}}$) is the closed subscheme  $V(\widehat{\ft})$ of $\Spec \widehat{\cO}^{\flat}_{\overline{{\cX}}_{w},\widehat{y}}$ (resp.,\;$\Spec\widehat{\cO}^{\flat}_{\overline{{\cX}}_{L},\widehat{y}})$. It is clear that all the horizontal morphisms are formally smooth (except $\overline{\overline{\iota}}$) and where four squares are cartesian.\;Note that $\overline{\overline{R}}^{\Box,w}_{\rho_L,\cM_{\bullet}}$ (resp.,\;$\overline{\overline{R}}^{\Box}_{\rho_L,\cM_{\bullet}}$) is a formal power series ring over $\overline{\overline{R}}^{w}_{\rho_L,\cM_{\bullet}}$ (resp.,\;$\overline{\overline{R}}_{\rho_L,\cM_{\bullet}}$).\;

Consider
\begin{equation}
	\begin{aligned}
		&\widetilde{\fg}^0_{L}:=\prod_{\tau\in \Sigma_L}\widetilde{\fg}^0_{\tau},\;\widetilde{\fg}^0_{\tau}=\widetilde{\fg}^0:=\{(g\bB,\psi)\in \GLN_n/\bB\times\fg\;|\;\mathrm{Ad}(g^{-1})\psi\in {\fb^0}\}\\
		&\widetilde{\cN}^0_{L}:=\prod_{\tau\in \Sigma_L}\widetilde{\cN}^0_{\tau},\;\widetilde{\cN}^0_{\tau}=\widetilde{\cN}:=\{(g\bB,\psi)\in \GLN_n/\bB\times\fg\;|\;\mathrm{Ad}(g^{-1})\psi\in {\fn^0_{\emptyset}}\}
	\end{aligned}
\end{equation}
where $\fb^0\subseteq \fb$ (resp.,\;$\fu^0_{\emptyset}\subseteq \fu$) is the closed subspace consists of elements $(b_{ij})_{i<j}\in \fb$ (resp.,\;$(u_{ij})_{i<j}\in \fu$) with $b_{12}=b_{23}=\cdots=b_{n-1,n}=0$ (resp.,\;$u_{12}=u_{23}=\cdots=u_{n-1,n}=0$).\;Put \begin{equation}
	X^0_{L}:=\prod_{\tau\in \Sigma_L}X^0_{\tau},\;X^0_{\tau}=X^0:=q^{-1}(\fb)^0\times_{\fg}\widetilde{\fg}_.\;
\end{equation}
and $X^0_{w}:=X_{w}\times_{X_{L}}X^0_{L}$ for $w=(w_\tau)_{\tau\in \Sigma_L}\in \sW_{n,L}$.\;We also put $\overline{X}^0_{L}:=\overline{X}_{L}\times_{X_{L}}X^0_{L}$ and  $\overline{X}^0_{w}:=\overline{X}_{w}\times_{X_{L}}X^0_{L}$,\;$Z^0_{L}:=Z_{L}\times_{X_{L}}X^0_{L}$ and  $Z^0_{w}:=Z_{w}\times_{X_{L}}X^0_{L}$.\;In particular,\;we have
 \begin{equation}
	Z^0_{L}:=\prod_{\tau\in \Sigma_L}Z_{r,\tau},\;Z_{r,\tau}=Z_{r}:=(\widetilde{\cN}^0\times_{\cN}\widetilde{\cN})^{\red}.\;
\end{equation}
and $Z^0_{w}:=Z_{w}\times_{Z_{L}}Z^0_{L}$ for $w=(w_\tau)_{\tau\in \Sigma_L}\in \sW_{n,L}$.\;

\begin{pro}\label{localmodelforover2} The natural morphsim 
	$\Spec\overline{\overline{R}}^{\Box}_{\rho_L,\cM_{\bullet}}\rightarrow \Spec{\widehat{\cO}}_{{{X}}_{L},{y}}$ factor through $\Spec{\widehat{\cO}}_{\overline{X}^0_{L},{y}}\hookrightarrow\Spec{\widehat{\cO}}_{{{X}}_{L},{y}}$,\;and $\Spec\overline{\overline{R}}^{\Box}_{\rho_L,\cM_{\bullet}}\rightarrow \Spec{\widehat{\cO}}_{\overline{X}^0_{L},{y}}$ is formally smooth.\;Moreover,\;we have an
	 isomorphism of schemes $\Spec{\widehat{\cO}}_{\overline{X}^0_{L},{y}}\cong\Spec\widehat{\cO}^{\flat}_{\overline{X}_{L},{y}}$.\;
\end{pro}
\begin{proof}We only need to point out that all the length two subquotient of the deformation $\cM_{\bullet,A}$ lie in the  kernel of the map $W_{\dr}:N_{i}\rightarrow W_{i}$.\;This follows from Lemma \ref{descriptionW2} since all the $Y_\tau$ and $X_{\sigma,\tau}$ vanish.\;For the formally smoothness,\;we also follows the route of \cite[Theorem 3.4.4]{breuil2019local}.\;Let $\overline{\overline{X}}_{\rho_L,M}$ be the corresponding groupoid that  $\Spf\overline{\overline{R}}^{\Box}_{\rho_L,\cM_{\bullet}}$ pro-represents.\;Indeed,\;$\overline{\overline{X}}_{\rho_L,M}\cong X_{\rho_L}\times_{X_{\cM_\Dpik,\cM_{\bullet}}}\overline{\overline{X}}_{\cM_\Dpik,\cM_{\bullet}}$ for some subgroupoid $\overline{\overline{X}}_{\cM_\Dpik,\cM_{\bullet}}$ of $X_{\cM_\Dpik,\cM_{\bullet}}$.\;It remains to prove all the result for $\overline{\overline{X}}_{\cM_\Dpik,\cM_{\bullet}}$.\;
Suppose $y_A:=(\cM_{\Dpik,A},\cM_{A,\bullet},j_A)$ (resp.,\;$y_B:=(\cM_{\Dpik,B},\cM_{B,\bullet},j_B)$) is an object in $\overline{\overline{X}}_{\cM_\Dpik,\cM_{\bullet}}(A)$ (resp.,\;$\overline{\overline{X}}_{\cM_\Dpik,\cM_{\bullet}}(B)$).\;Suppose that $x_A$ is isomorphic to $x_B$ when modulo $K$.\;Then we show in Proposition \ref{Jversionwdfj} the sujectivity of the following map (similar to the sujectivity of \cite[(3.23)]{breuil2019local}):
	\begin{equation}
		\begin{aligned}
			\hH^1_{(\varphi,\Gamma)}({\cM}_{A,i,j})\longrightarrow\; \hH^1(\gal_L,W_{\dr,J}({\cM}_{A,i,j}))\times_{\hH^1(\gal_L,W_{\dr,J}({\cM}_{B,i,j}))}\hH^1_{(\varphi,\Gamma)}({\cM}_{B,i,j}).
		\end{aligned}
	\end{equation}
	for $j\leq i-2$ (by the definition of $\Spec\overline{\overline{R}}^{\flat,\Box}_{\rho_L,\cM_{\bullet}}$,\;the contribution of $\hH^1_{(\varphi,\Gamma)}({\cM}_{A,i,i-1})$ and $\hH^1_{(\varphi,\Gamma)}({\cM}_{B,i,i-1})$ are zero,\;so we do not need to consider them) and thus the surjectivity of 
	\begin{equation}
		\begin{aligned}
			&\hH^1_{(\varphi,\Gamma)}({\cM}_{A,i-2}\tee\Delta_{\pi}^{\vee}\tee \cR_{E,L}({\delta}_{A,i}^{-1})[1/t])\longrightarrow\\ &\hH^1(\gal_L,W_{\dr}({\cM}_{A,i-2}\tee\Delta_{\pi}^{\vee}\tee \cR_{E,L}({\delta}_{A,i}^{-1})[1/t]))\\
			&\times_{\hH^1(\gal_L,W_{\dr}({\cM}_{B,i-2}\tee\Delta_{\pi}^{\vee}\tee \cR_{E,L}({\delta}_{B,i}^{-1})[1/t]))}\hH^1_{(\varphi,\Gamma)}({\cM}_{B,i-2}\tee\Delta_{\pi}^{\vee}\tee \cR_{E,L}({\delta}_{B,i}^{-1})[1/t]).
		\end{aligned}
	\end{equation}
	This is enough to prove the formally smoothness.\;By the construction of local models,\;the last isomorphism $\Spec{\widehat{\cO}}_{{{X}}^0_{L},{y}}\cong\Spec\widehat{\cO}^{\flat}_{\overline{{X}}_{r,L},{y}}$ is obvious.\;
\end{proof}

Similar to the argument of (\ref{factorwww2}),\;we  denote by $\cZ^{\flat}_{w'}\in Z^{\leq
	\frac{n(n+3)}{2}d_L}(\Spec \widehat{\cO}_{\mathfrak{X}_{\overline{r}},\rho_L})$ the cycle corresponding via the embedding $\Spec\overline{\overline{R}}_{\rho_L,\cM_{\bullet}}\rightarrow \Spec \widehat{\cO}_{\mathfrak{X}_{\overline{r}},\rho_L}$ to the cycle $[\Spec \widehat{\cO}^{\flat}_{Z_{w'},y}]$,\;where $\Spec \widehat{\cO}^{\flat}_{Z_{w'},y}:=\Spec \widehat{\cO}_{Z_{w'},y}\times_{\Spec \widehat{\cO}_{\overline{X}_{L,y}}}\Spec\widehat{\cO}^{\flat}_{\overline{{\cX}}_{L},\widehat{y}}\cong \Spec \widehat{\cO}_{Z^0_{w'},y}$ (note that it is not necessary equi-dimensional,\;see Remark \ref{resonfornonflat}).\;We set as in (\ref{bzcycle}):
\begin{equation}
	\cC^{\flat}_{w'}:=\sum_{w''\in \sW_{n,L}}a_{w',w''}\cZ^{\flat}_{w'}\in Z^{\leq\frac{n(n+3)}{2}d_L}(\Spec \widehat{\cO}_{\mathfrak{X}_{\overline{r}},\rho_L}).\;
\end{equation}
In this case,\;we have by the same arguments as for (\ref{r1Breuil}):
\begin{cor}Suppose $x=(\rho_L,\chi)\in X^{\Box}_{\mathrm{tri}}(\overline{r})$.\;The following equality holds:
	\begin{equation}\label{r1Breuilsecond}
		\begin{aligned}
			[\Spec \widehat{\cO}_{X^{\Box}_{\mathrm{tri}}(\overline{r})_{\chi},x} ]=&\;\sum_{\substack{w\in \sW_{n,L}\\w_{\cF}\leq w\leq w_x\underline{w}_0}}b_{w_x\underline{w}_0,w}\cC^{\flat}_{w}\in Z^{\leq \frac{n(n+3)}{2}d_L}(\Spec \widehat{\cO}_{\mathfrak{X}_{\overline{r}},\rho_L}).
		\end{aligned}
	\end{equation} 
\end{cor}

\subsection{Locally analytic ``Breuil-Mezard type" conjecture}


Put ${\bm\lambda}_\bh:=(\hpi_{\tau,i}+i-1)_{\tau\in \Sigma_L,1\leq i\leq n}$,\;which is a dominant weight of $\GLN_{n,L}$ with  respect to $\bB_{n,L}$.\;

We recall the Orlik-Strauch's theory \cite[Theorem]{orlik2014jordan}.\;Let $\cO_{\alge}^{\overline{\fp}_{I,L}}$ be the Bernstein-Gelfand-Gelfand (BGG) category (see \cite[Section 2]{breuil2016socle}).\;The Orlik-Strauch functor (see \cite[Theorem]{orlik2015jordan},\;see also \cite[Section 2]{breuil2016socle}) associates,\;to an object $M\in\cO_{\alge}^{\overline{\fp}_{I,L}}$ and a finite length  smooth admissible representation $\pi$ of $\bL_I(L)$,\;a locally $\bQ_p$-analytic representation $\cF^{G}_{\op_{{I}}}(M,\pi_I)$.\;

Recall that if $\underline{{\lambda}}\in X^+_{I}$,\;then $\overline{L}(-\underline{\lambda}')\in \cO^{\overline{{\fp}}_I,L}_{\alge}$.\;Let $I'$ be a subset of $\Delta_n$ containing $I$,\;then $\cO_{\alge}^{\overline{\fp}_{I',L}}$ is a full subcategory of $\cO_{\alge}^{\overline{\fp}_{I,L}}$.\;Therefore,\;for any object $M\in \cO_{\alge}^{\overline{\fp}_{I,L}}$,\;there is a maximal subset $I'\subseteq \Delta_n$ such that $M\in \cO_{\alge}^{\overline{\fp}_{I',L}}$,\;then we say that $I'$ is maximal for $M$.\;In particular,\;for any $w\in \sW_{n,L}$,\;let
$I(w)$ be the subset of $\Delta_n$ which is maximal for the $\overline{L}(-w\cdot{\lambda}_{\bh})$.\;




For $w\in \sW_n$,\;let $\epsilon_w:=|\cdot|^{\frac{1-n}{2}+w(n-1)}\otimes |\cdot|^{\frac{1-n}{2}+w(n-2)}\otimes\cdots\otimes |\cdot|^{\frac{1-n}{2}+w(1)}$ be the smooth character of $\bT(L)$ over $E$ (note that $\epsilon_1$ is the character of $\bT(L)$ associated to the Zelevinsky-segment $\Delta_{[n-1,0]}(|\cdot|^{\frac{1-n}{2}})=[|\cdot|^{\frac{1-n}{2}+(n-1)},|\cdot|^{\frac{1-n}{2}+(n-2)}\cdot ,\;\cdot,\;|\cdot|^{\frac{1-n}{2}+1},\;|\cdot|^{\frac{1-n}{2}}]$).\;For $w\in \sW_{n,L}$ and $w'\in \sW_n$,\;we put
\[I_{(w,w')}:=\big(\ind^G_{\ob(L)}\chi_{w\cdot{\lambda}_{\bh}}\delta_{\ob}^{{1}/{2}}\epsilon_{w'}\big)^{\bQ_p-\ana}.\]
In particular,\;$\delta_{\ob}^{{1}/{2}}\epsilon_1$ is equal to the trivial representation of $\bT(L)$.\;

For $(w'',w')\in \sW_{n,L}\times\sW_n$,\;we put $i^{\infty}_{w'',w'}:=i^{\bL_{I(w'')}(L)}_{\ob(L)\cap \bL_{I(w'')}(L)}\delta_{\ob}^{{1}/{2}}\epsilon_{w'}$.\;It is well known that the irreducible components of $i^{\infty}_{w'',w'}$ are given by the smooth generalized Steinberg representations $\{v^{\infty}_{J,I(w'')}\}_{J\subseteq I(w'')}$ of $\bL_{I(w'')}(L)$ over $E$ (note that $\st^{\infty}_{\bL_{I(w'')}(L)}:=v^{\infty}_{\emptyset,I(w'')}$ is the smooth  Steinberg representation  of $\bL_{I(w'')}(L)$).\;By the Orlik-Strauch construction \cite[Theorem]{orlik2014jordan},\;the irreducible components of $I_{(w,w')}$ are  $\Pi_{(w'',w',J)}:=\cF^G_{\op_{I(w'')}(L)}(\overline{L}(-w''\cdot{\lambda}_{\bh}),v^{\infty}_{J,I(w'')})$ for $w''\in  \sW_{n,L}$ and $J\subseteq I(w'')$ with multiplicity $M_{w,w''}:=[\overline{M}(-w\cdot{\lambda}_{\bh}):\overline{L}(-w''\cdot{\lambda}_{\bh})]$.\;

Put $\underline{\delta}_{\mathrm{sm}}:=\delta_{\ob}^{{1}/{2}}\epsilon_1$.\;We write $K_0({\lambda}_{\bh},\underline{\delta}_{\mathrm{sm}})$ for the free abelian group generated by the irreducible constituents of the locally $\bQ_p$-analytically induced representation $I_{(w,w')}$ for $w\in \sW_{n,L}$ and $w'\in \sW_n$.\;More precisely,\;write 
$$\Pi_{(w'',w')}:=\cF^G_{\op_{I(w'')}(L)}\Big(\overline{L}(-w''\cdot{\lambda}_{\bh}),i^{\infty}_{w'',w'}\Big).$$
Then we have $[\Pi_{(w'',w')}]=\sum_{J\subseteq I(w'')}[\Pi_{(w'',w',J)}]$ in $K_0({\lambda}_{\bh},\underline{\delta}_{\mathrm{sm}})$.\;Denoted by $C(w'',w'):=\Pi_{(w'',w',\emptyset)}$  the unique quotient of $\Pi_{(w'',w')}$.\;Note that $C(w'',1)\cong \cF^G_{\op_{I(w'')}(L)}\Big(\overline{L}(-w''\cdot{\lambda}_{\bh}),\st^{\infty}_{\bL_{I(w'')}(L)}\Big)$.\;It is clear that $\{{\Pi}_{(w'',1,J)}\}_{w''\in \sW_{n,L},J\subseteq I(w'')}$ actually give a basis of $K_0({\lambda}_{\bh},\underline{\delta}_{\mathrm{sm}})$.\;

For $\beta\in E$,\;we denote by $I_{(w,w')}(\beta)$,\;$\Pi_{(w'',w')}(\beta)$,\;${\Pi}_{(w'',w',J)}(\beta)$ and $C(w'',w',\beta)$ the locally $\bQ_p$-analytic representation $I_{(w,w')}\otimes_E\unr(\beta)\circ\det$,\;$\Pi_{(w'',w')}\otimes_E\unr(\beta)\circ\det$,\;${\Pi}_{(w'',w',J)}\otimes_E\unr(\beta)\circ\det$ and $C(w'',w')\otimes_E\unr(\beta)\circ\det$.\;



\begin{conjecture}\label{locanaBreMezeconj}
Put $\widehat{T}_L^{\mathrm{spl}}:=\big\{(\delta_{i})_{1\leq i\leq n}\in \widehat{T}_L:\delta_{i}\delta_{i+1}^{-1}\text{\;is special} \big\}$ (i.e.,\;$\delta_{i}\delta_{i+1}^{-1}:=\unr(q_L^{-1})z^{\bk}$ for some $\bk:=(\bk_{\tau})_{\tau\in \Sigma_L}\in \BZ^{\Sigma_L}$).\;There exists a unique homomorphism
\[\fa'_{{\lambda}_{\bh},\Delta_{n}}:K_0({\lambda}_{\bh},\underline{\delta}_{\mathrm{sm}})\rightarrow Z^{\bullet}(\Spec \widehat{\cO}_{\mathfrak{X}_{\overline{r}},\rho_L}).\]
For any absolutely irreducible constituent $[\Pi]\in  K_0({\lambda}_{\bh},\underline{\delta}_{\mathrm{sm}})$,\;let $\cC_{\Pi}\in Z^{\bullet}(\Spec \widehat{\cO}_{\mathfrak{X}_{\overline{r}},\rho_L})$ be the image of $[\Pi]$ via $\fa'_{{\lambda}_{\bh},\Delta_{n}}$.\;Then this homomorphism $\fa'_{{\lambda}_{\bh},\Delta_{n}}$ is uniquely determined by conditions: 
\begin{equation}\label{identityBreuilMezard}
	[\Spec \widehat{\cO}_{X_{\mathrm{tri}}(\overline{r})_{\underline{\delta}},(\rho_L,\delta)} ]=\sum_{\Pi\in K_0({\lambda}_{\bh},\underline{\delta}_{\mathrm{sm}})}m_{\underline{\delta},\Pi}\cC_{\Pi}
\end{equation}
in $Z^{\bullet}(\Spec \widehat{\cO}_{\mathfrak{X}_{\overline{r}},\rho_L})$ for all $\underline{\delta}\in \widehat{T}_L^{\mathrm{spl}}(E)$.\;
\end{conjecture}
\begin{rmk}
We can also consider the homomorphism
$\fa''_{{\lambda}_{\bh},\Delta_{n}}:K_0({\lambda}_{\bh},\underline{\delta}_{\mathrm{sm}})\rightarrow Z^{\frac{n(n+3)}{2}d_L}(\Spec \widehat{\cO}_{\mathfrak{X}_{\overline{r}},\rho_L})$ by forgetting components of non-maximal dimension.\;
\end{rmk}

\begin{pro}If the cycles $\cC_{\Pi}$ as in Conjecture \ref{locanaBreMezeconj} exist,\;then they are unique.\;
\end{pro}
\begin{proof}We first note that $\underline{\delta}\in \widehat{T}_L^{\mathrm{spl}}(E)$ is necessary condition for the non-vanishing of $[\Spec \widehat{\cO}_{X_{\mathrm{tri}}(\overline{r})_{\underline{\delta}},(\rho_L,\delta)} ]$.\;For the uniqueness,\;it suffices to show that 
	\[\cC_{\Pi_{(w'',1)}}:=\sum_{J\subseteq I(w'')}\cC_{\Pi_{(w'',1,J)}}=\cC_{C(w'',1)}+\sum_{\substack{\Pi'\neq \Pi_{(w'',1,J)},\\ \emptyset\neq J\subseteq I(w'')}}m_{\underline{\delta}',\Pi'}\cC_{\Pi'}\]
	are unique (since $\cC_{\Pi_{(w'',1,J)}}=0$ if $J\neq\emptyset$,\;see the proof of Theorem \ref{detlamedaSOCLE}) for all $w''\in \sW_{n,L}$ and $J\subseteq I(w'')$.\;Replacing $\underline{\delta}$ by the unique locally algebraic $\underline{\delta}'=\chi_{w''\cdot{\lambda}_{\bh}}$,\;we have that
	\begin{equation}
		\begin{aligned}
			[\Spec \widehat{\cO}_{X_{\mathrm{tri}}(\overline{r})_{\underline{\delta}'},(\rho_L,\delta)} ]=&\;\cC_{\Pi_{(w'',1)}}+\sum_{\substack{\Pi'\neq \Pi_{(w'',1,J)},\\J\subseteq I(w'')}}m_{\underline{\delta}',\Pi'}\cC_{\Pi'}
		\end{aligned}
	\end{equation}
	If $w''\cdot{\lambda}_{\bh}$ is maximal for the $\uparrow$ (where ``$\uparrow$" means the strongly linked relation,\;see \cite[Section 5.1]{humphreysBGG} for the notion of strongly linked and the BGG theorem),\;then we must have $\cC_{\Pi_{(w'',1)}}=[\Spec \widehat{\cO}_{X_{\mathrm{tri}}(\overline{r})_{\underline{\delta}'},(\rho_L,\delta)} ]$. Otherwise, for any $\Pi'\neq \Pi_{(w'',1,J)}$,\;it has the form $\Pi_{(w''',1,J')}$ for some   $J'\subseteq I(w''')$ and $w''\cdot{\lambda}_{\bh}\uparrow w'''\cdot{\lambda}_{\bh}$ (i.e.,\;$w''\cdot{\lambda}_{\bh}$ is strongly linked to $w'''\cdot{\lambda}_{\bh}$).\;By induction,\;we can assume the cycle $\cC_{\Pi'}$ are known,\;then we must have $$\cC_{\Pi_{(w'',1)}}=\Big([\Spec \widehat{\cO}_{X_{\mathrm{tri}}(\overline{r})_{\underline{\delta}'},(\rho_L,\delta)} ]-\sum_{\Pi'\neq \Pi_{(w'',1,J)}}m_{\underline{\delta}',\Pi'}\cC_{\Pi'}\Big).$$The result follows.\;
\end{proof}

\begin{thm}
	The Conjecture \ref{locanaBreMezeconj} is true.\;
\end{thm}
\begin{proof}
Just set $\cC_{\Pi_{(w',1)}}:=\cC^{\flat}_{w'}$,\;then the result follows from the equality (\ref{r1Breuilsecond}).\;
\end{proof}

\begin{rmk}\label{stacky1}
We end this section with a remark on the ``categorical'' approach of Conjecture \ref{locanaBreMezeconj}.\;We keep the notation after \cite[Theorem 5.3.36]{emerton2023introduction}.\;For $w\in\sW_{n,L}$,\;recall that $\FX_{n,z^{w\cdot{\lambda}_{\bh}}\underline{\delta}_{\mathrm{sm}}-\mathrm{tri}}\subseteq \overline{\FX}_{n}$  is the closed substack of $\FX_n$ consisting of all $(\varphi,\Gamma)$-modules $D$ such that $D$ admits a triangulation of parameters $\chi_{w\cdot{\lambda}_{\bh}}\underline{\delta}_{\mathrm{sm}}$.\;Put
\[\FX_{n,({\lambda}_{\bh},\underline{\delta}_{\mathrm{sm}})-\mathrm{tri}}=\bigcup_{w\in\sW_{n,L}}\FX_{n,z^{w\cdot{\lambda}_{\bh}}\underline{\delta}_{\mathrm{sm}}-\mathrm{tri}}.\;\]
Then $\FX_{n,({\lambda}_{\bh},\underline{\delta}_{\mathrm{sm}})-\mathrm{tri}}$ is the closed substack of $\FX_n$ consisting of all $(\varphi,\Gamma)$-modules $D$ such that $D$ admits a triangulation of parameters $\chi_{w\cdot{\lambda}_{\bh}}\underline{\delta}_{\mathrm{sm}}$ for some $w\in\sW_{n,L}$.\;On the other hand,\;put 
\[\FX_{n,[{\lambda}_{\bh},\underline{\delta}_{\mathrm{sm}}]-\mathrm{tri}}=\bigcup_{w\in\sW_{n,L}}\FX_{n,({\lambda}_{\bh},w\cdot\underline{\delta}_{\mathrm{sm}})-\mathrm{tri}})\]
It is conjectured in \cite[Conjecture 6.2.34]{emerton2023introduction} that there exists a unique injective  group homomorphism:
\[\fa_{{\lambda}_{\bh},\underline{\delta}_{\mathrm{sm}}}:K_0({\lambda}_{\bh},\underline{\delta}_{\mathrm{sm}})\rightarrow K_0(\mathrm{Coh}(\FX_{n,[{\lambda}_{\bh},\underline{\delta}_{\mathrm{sm}}]-\mathrm{tri}})\] 
which are determined by some similar conditions,\;where $K_0(\mathrm{Coh}(\FX_{d,[{\lambda}_{\bh},\underline{\delta}_{\mathrm{sm}}]-\mathrm{tri}}))$ is the Grothendieck group of coherent sheaves on the stack $\FX_{d,({\lambda}_{\bh},\underline{\delta}_{\mathrm{sm}})-\mathrm{tri}}$.\;Now our homomorphism $\fa'_{{\lambda}_{\bh},\Delta_{n}}$ in Conjecture \ref{locanaBreMezeconj} is obtained by compositing $\fa_{{\lambda}_{\bh},\underline{\delta}_{\mathrm{sm}}}$ with the morphism mapping a coheret sheaf to its support (resp.,\;and forgetting components of non-maximal dimension),\;together with a completion at our Steinberg point.\;
\end{rmk}

\section{Global Applications}\label{conjsection}

Under the Taylor-Wiles hypothesis,\;we show several global results on $p$-adic automorphic representations including a classicality result and the existence of all expected companion constituents.\;In this section,\;we assume $L\neq \bQ_p$.\;

\subsection{Patched  eigenvariety}
\subsubsection{Patching argument and global setup}\label{preforpatching}

We follow the notation of \cite[Section 2]{PATCHING2016} and  \cite[Section 4.1.1]{2019DINGSimple} (a brief summary of \cite[Section 2]{PATCHING2016}).\;Suppose that $p\nmid 2n$, and let $\overline{r}: \Gal_L \ra \GL_n(k_E)$ be a continuous representation such that $\overline{r}$ admits a potentially crystalline lift $r_{\mathrm{pot.diag}}: \Gal_L \ra \GL_n(E)$  of regular weight $\xi$ which is potentially diagonalisable.\;We can find a triple $(F,F^+, \overline{\rho})$,\;where $F$ is an imaginary CM field with maximal totally real subfield $F^+$,\;and $\overline{\rho}:\Gal_{F^+} \ra \cG_n(k_E)$ is a \emph{suitable globalisation} (cf. \cite[Section 2.1]{PATCHING2016}) of $\overline{r}$.\;Let $S_p$ be the set of places of $F^+$ above $p$.\;For any $v|p$ of  $F^+$,\;$v$ splits in $F$, and $F^+_v\cong L$.\;

We use the setting of \cite[Section 2.1]{PATCHING2016},\;and can find the following objects
\begin{equation*}
	\{\widetilde{G}, v_1, \fp\in S_p, \{U_m\}_{m\in \BZ}\},
\end{equation*}
where $\widetilde{G}$ is a certain definite unitary group over $F^+$,\;$v_1$ is a certain finite place of $F^+$ prime to $p$,\;and $\{U_m=\prod_{v} U_{m,v}\}_{m\in \BZ_{\geq 0}}$ is a tower of certain compact open subgroups of $\widetilde{G}(\BA_{F^+}^{\infty})$ (see also \cite[Section 4.1.1]{2019DINGSimple} for a precise description).\;Write $U_{m}^{\fp}=\prod_{v\in S_p\backslash\fp}U_{m,v}$.\;

Let $\xi$ (resp.,\;$\tau$) be the inertial type (resp.,\;weight) of $r_{\mathrm{pot.diag}}$.\;By \cite[Section 2.3]{PATCHING2016},\;we can attach a finite free $\cO_E$-module $L_{\xi,\tau}$,\;which is a locally algebraic representation of $\GLN_{n}(\cO_L)$.\;Put $\BW_{\xi,\tau}:=\bigotimes_{v\in S_p\backslash\fp}L_{\xi,\tau}$, which is equipped with an action of $U_{m}^{\fp}$ by the construction.\;Put $W_{\xi,\tau}:=\BW_{\xi,\tau}\otimes_{\cO_E}E$.\;

Let $\widehat{S}_{\xi,\tau}(U_m, \cO_E/\varpi_E^k)$ be locally smooth functions  $\widetilde{G}(F^+)\backslash \widetilde{G}(\bA_{F^+}^{\infty})/U^{\fp} \ra \cO_E/\varpi_E^k$ such that $f(gg_p^\fp)=(g_p^\fp)^{-1} f(g)$ for $g\in \widetilde{G}(\bA_{F^+}^{\infty})$, $g_p^\fp\in U_m^\fp$.\;

Let $\Sigma$ be the set of primes $v$ of $F^+$ such that $v\not\in S_p\cup\{v_1\}$,\;and $v$ is totally split in $F$.\;Then the  $\co_E/\varpi_E^k$-module $\widehat{S}_{\xi,\tau}(U_m, \cO_E/\varpi_E^k)$ is equipped with a natural action of the spherical Hecke operators
\begin{equation*}
	T_w^{(j)}=\bigg[U_{v} i_w^{-1}\bigg(\begin{pmatrix} \varpi_{F_w} 1_{r,J} & 0 \\ 0 & 1_{n-1}\end{pmatrix}\bigg)U_{v}\bigg]
\end{equation*}
where $w$ is a place of $F$ lying over a place $v\in \Sigma$ of $F^+$ which splits in $F$, $\varpi_{F_w}$ is a uniformizer of $F_w$ and $j\in \{1, \cdots, n\}$. We denote by $\bT^{S_p,\univ}$ the (commutative) $\co_E$-polynomial algebra generated by such $T_w^{(j)}$ and the formal variables $T_{\widetilde{v_1}}^{(j)}$.\;By \cite[Section 2.3]{PATCHING2016},\;we can associate to $\overline{\rho}$ a maximal ideal $\fm_{\overline{\rho}}$ of $\bT^{S_p,\univ}$.\;Let ${S}_{\xi,\tau}(U_m,\cO_E/\varpi_E^k)_{{\overline{\rho}}}$ be the 
localization of ${S}_{\xi,\tau}(U_m,\cO_E/\varpi_E^k)$  at $\fm_{\overline{\rho}}$.\;We put
\begin{equation}
	\begin{aligned}
		&\widehat{S}_{\xi,\tau}(U^{\fp}, \co_E)_{\ast}:=\varprojlim_m\varprojlim_k{S}_{\xi,\tau}(U_m,\cO_E/\varpi_E^k)_\ast\\
		&\widehat{S}_{\xi,\tau}(U^{\fp},E):=\widehat{S}_{\xi,\tau}(U^{\fp}, \co_E)_{\ast}\otimes_{\cO_E}E
	\end{aligned}
\end{equation}
for $\ast\in\{{\overline{\rho}},\emptyset\}$ (roughly speaking,\;the space of $p$-adic algebraic automorphic forms of fixed type $\sigma(\tau)$ (see \cite[Theorem 3.7]{PATCHING2016},\;the ``inertial local Langlands correspondence") at the place $S_p\setminus\{\fp\}$,\;full level at $p$,\;and whose weight is $0$ at places above $\fp$,\;and given by the regular weight $\xi$ at each of the places in $S_p\setminus\{\fp\}$).\;Note that $\widehat{S}_{\xi,\tau}(U^{\fp},E)$ is a Banach space for the supermum norm and is equipped with a continuous (unitary) action of $\GL_n(L)$ (by right translation on functions).\;Therefore $\widehat{S}_{\xi,\tau}(U^{\fp},E)_{*}$ with $*\in \{{\overline{\rho}}, \emptyset\}$ are admissible unitary Banach representation of $\GL_n(L)$ with invariant lattice $\widehat{S}_{\xi,\tau}(U^{\fp}, \co_E)_*$.\;Then the action of $\bT^{S_p,\univ}$ on the localization $\widehat{S}_{\xi,\tau}(U^{\fp}, \co_E)_{{\overline{\rho}}}$ factors through certain  Hecke algebra
\[\bT_{\xi,\tau}^{S_p}(U^{\fp}, \co_E)_{{\overline{\rho}}}:=\varprojlim_m\varprojlim_k\bT_{\xi,\tau}(U^{\fp}, \co_E/\varpi_E^k)_{{\overline{\rho}}},\]
where $\bT_{\xi,\tau}(U^{\fp}, \co_E/\varpi_E^k)_{{\overline{\rho}}}$ denotes the 
$\co_E/\varpi_E^k$-subalgebra of $\EndO_{\co_E/\varpi_E^k}({S}_{\xi,\tau}(U_m,\cO_E/\varpi_E^k)_{{\overline{\rho}}})$ generated by the operators in $\bT^{S_p,\univ}$.\;

For $v\in S_p$,\;we denote by $R_{\widetilde{v}}^{\square}$ the maximal reduced and $p$-torsion free quotient of the universal $\co_E$-lifting ring of $\overline{\rho}_{\widetilde{v}}:=\overline{\rho}|_{\Gal_{F_{\widetilde{v}}}}$ ($\cong \overline{r}$,\;and therefore $R_{\widetilde{v}}^{\square}\cong \defvarring$). For $v\in S_p\backslash\{\fp\}$, we denote by $R_{\widetilde{v}}^{\square, \xi,\tau}$ for the reduced and $p$-torsion free quotient of $R_{\widetilde{v}}^{\square}$ corresponding to potentially crystalline lifts of weight $\xi$ and inertial type $\tau$.\;Consider the following global deformation problem (in the terminology \cite{clozel2008automorphy})
\begin{equation*}
	\begin{aligned}
		\cS&=\bigg\{{F}/{F}^+,T^+,T,\cO_E,\overline{\rho},\chi_{\mathrm{cyc}}^{1-n}\delta_{{F}/{F}^+}^n,\{R_{\widetilde{v}_1}^{\square}\}\cup
		\{R_{\fp}^{\square}\}\cup \{R_{\widetilde{v}}^{\square, \xi,\tau}\}_{v\in S_p\backslash \{\fp\}}\bigg\}
	\end{aligned}
\end{equation*}
They by  \cite[Proposition 2.2,9]{clozel2008automorphy},\;this deformation problem is represented by a universal deformation ring $R_{\cS}^{\univ}$.\;Note that we have a natural morphism $R_{\cS}^{\univ}\rightarrow \bT_{\xi,\tau}^{S_p}(U^{\fp}, \co_E)_{{\overline{\rho}}}$.\;

Following \cite[Section 2.8]{PATCHING2016}  (or \cite[Section 4.1.1]{2019DINGSimple}) we put
\begin{eqnarray*}
	R^{\loc}:=R_{\widetilde{\fp}}^{\square} \widehat{\otimes} \Big(\widehat{\otimes}_{S_p\backslash\{\fp\}}R_{\widetilde{v}}^{\square, \xi,\tau}\Big)\widehat{\otimes} R_{\widetilde{v_1}}^{\square} ,
\end{eqnarray*}
where all completed tensor products are taken over $\cO_E$.\;We put $g:=q-[F^+:\BQ]\frac{n(n-1)}{2}$,\;where $q$ is a certain integer as in \cite[Section 2.8]{PATCHING2016}  (or \cite[Section 4.1.1]{2019DINGSimple}).\;We now put 
\begin{eqnarray*}
	R_{\infty}&:=&R^{\loc}\llbracket x_1,\cdots, x_g\rrbracket,\\
	S_{\infty}&:=&\co_E\llbracket z_1,\cdots, z_{n^2(|S_p|+1)}, y_1,\cdots, y_q\rrbracket,
\end{eqnarray*}
where $x_{i}$,\;$y_{i}$,\;$z_{i}$ are formal variables.\;By \cite[Section 2.8]{PATCHING2016}  (or \cite[Section 4.1.1]{2019DINGSimple}),\;we get the following objects:
\begin{enumerate}
	\item[(1)] a continuous $R_{\infty}$-admissible unitary representation $\Pi_{\infty}$ of $G=\GL_n(L)$ over $E$ together with a $G$-stable and $R_{\infty}$-stable unit ball $\Pi_{\infty}^o\subset \Pi_{\infty}$;
	\item[(2)] a morphism of local $\co_E$-algebras $S_{\infty}\ra R_{\infty}$ such that $M_{\infty}:= \Hom_{\co_L}(\Pi_{\infty}^o, \co_E)$ is finite projective as $S_{\infty}\llbracket \GL_n(\co_L)\rrbracket$-module;
	\item[(3)] a closed ideal $\fa$ of $R_{\infty}$, a surjection $R_{\infty}/\fa R_{\infty}\twoheadrightarrow R_{\cS}^{\univ}$ and a  $G   \times R_{\infty}/\fa R_{\infty}$-invariant isomorphism $\Pi_{\infty}[\fa]\cong \widehat{S}_{\xi,\tau}(U^{\fp},E)_{\fm_{\overline{\rho}}}$, where $R_{\infty}$ acts on $\widehat{S}_{\xi,\tau}(U^{\fp},E)_{\fm_{\overline{\rho}}}$ via $R_{\infty}/\fa R_{\infty}\twoheadrightarrow  R_{\cS}^{\univ}$.
\end{enumerate}
\subsubsection{Patched eigenvariety and Hecke eigenvariety}\label{intropatcheigenvariety}

We briefly recall the Hecke eigenvariety and Patched eigenvariety of \cite{breuil2019local}.\;Indeed,\;our input as in previous section is slightly different from that in \cite{breuil2019local},\;but it is clear that all of the arguments in \cite{breuil2019local} apply in our case.\;

Put $\FX_{\overline{\rho},U^{\fp}}=\big(\spf\;R_{\cS}^{\univ}\big)^{\rig}$ and $\FT_{\overline{\rho},U^{\fp}}:=\big(\spf\;\bT_{\xi,\tau}^{S_p}(U^{\fp}, \co_E)_{{\overline{\rho}}}\big)^{\rig}$.\;Then the natural surjective morphism $R_{\cS}^{\univ}\twoheadrightarrow \bT_{\xi,\tau}^{S_p}(U^{\fp}, \co_E)_{{\overline{\rho}}}$ shows that $\FT_{\overline{\rho},U^{\fp}}$ is a closed subspace of $\FX_{\overline{\rho},U^{\fp}}$.\;Let Hecke eigenvariety $Y(U^{\fp},\overline{\rho})$ be the schematic support of the coherent $\cO_{\FT_{\overline{\rho},U^{\fp}}\times\widehat{T}_L}$-module (equivalently,\;$\cO_{\FX_{\overline{\rho},U^{\fp}}\times\widehat{T}_L}$-module) $\big(J_{\bB}(\widehat{S}(U^{\fp}, W^\fp)^{\ana}_{\overline{\rho}})\big)^\vee$ on $\FT_{\overline{\rho},U^{\fp}}\times \widehat{T}_L$ (equivalently,\;$\FX_{\overline{\rho},U^{\fp}}\times \widehat{T}_L$),\;where $J_{\bB}$ is the Jacquet-Emerton functor (see \cite{emerton2006jacquet}) with respect to $\bB$.\;This is a reduced rigid analytic variety over $E$ of dimension $n[F^+:\bQ]$,\;which admits (or factors through) an injections of rigid spaces over $E$:
\[Y(U^{\fp},\overline{\rho})\hookrightarrow \FT_{\overline{\rho},U^{\fp}}\times\widehat{T}_L\hookrightarrow \FX_{\overline{\rho},U^{\fp}}\times \widehat{T}_L.\]
For $x=(\rho,{\delta})\in \FX_{\overline{\rho},U^{\fp}}\times \widehat{T}_L$,\;it belongs to $Y(U^{\fp},\overline{\rho})$ if and only if 
\[\homo_{T(L)}(\delta,J_{\bB}(\widehat{S}(U^{\fp}, W^\fp)^{\ana}_{\overline{\rho}}[\fm_{\rho}]\otimes_{k(\rho)}k(x)\neq 0,\]
where $\fm_\rho\subset R_{\cS}^{\univ}[1/p]$ denotes the maximal ideal corresponding to the point $\rho\in \FX_{\overline{\rho},U^{\fp}}$.\;

We next briefly recall the following version of patched eigenvariety,\;given in \cite[Section 4.1.1]{2019DINGSimple}.\;Let $R^{\fp}=\Big(\widehat{\otimes}_{S_p\backslash\{\fp\}}R_{\widetilde{v}}^{\square, \xi,\tau}\Big)\widehat{\otimes} R_{\widetilde{v_1}}^{\square}$ and $R_{\infty}^{\fp}:=R^{\fp}\llbracket x_1,\cdots, x_g\rrbracket$.\;Then we have  $R^{\loc}=R^{\fp}\widehat{\otimes} \defvarring$ (recall that $R_{\widetilde{v}}^{\square}\cong \defvarring$) and $R_{\infty}=R_{\infty}^{\fp}\widehat{\otimes} \defvarring$.\;Let $\BU$ be the open unit ball in $\BA^1$.\;We put $\FX_{\overline{\rho}^{\fp}}^\Box:=(\Spf R^{\fp})^{\rig}$ and  $\mathfrak{X}_{\overline{r}}^\Box=(\spf\;\defvarring)^{\rig}$.\;Then $(\Spf R^{\fp}_{\infty})^{\rig}=\FX_{\overline{\rho}^{\fp}}^\Box\times \BU^g$.\;We have thus
$\FX_{\infty}:=(\Spf R_{\infty})^{\rig}\cong (\Spf R^{\fp}_{\infty})^{\rig}\times \mathfrak{X}_{\overline{r}}^\Box\cong \FX_{\overline{\rho}^{\fp}}^\Box\times \BU^g\times \mathfrak{X}_{\overline{r}}^\Box$.\;By \cite[Section 4.1.2]{2019DINGSimple},\;we see that $J_{\bB}(\Pi_\infty^{R_\infty-\ana})^\vee$ is a coadmissible module over $\cO(\FX_{\infty}\times\widehat{T}_L)$, which corresponds to a coherent sheaf $\cM_{\infty}$ over $\FX_{\infty}\times\widehat{T}_L$ such that
\[\Gamma\Big(\FX_{\infty}\times\widehat{T}_L,\cM_{\infty}\Big)\cong J_{\bB}(\Pi_\infty^{R_\infty-\ana})^\vee.\]
Let $X_{\fp}(\overline{\rho})\hookrightarrow\FX_{\infty}\times\widehat{T}_L$ be the Zariski-closed support of $\cM_{\infty}$.\;We call $X_{\fp}(\overline{\rho})$  the patched  eigenvariety.\;By \cite[Theorem 4.1]{2019DINGSimple},\;we have
\begin{pro}\label{basicpropertyeigen}\hspace{20pt}
	\begin{itemize}
		\item[(1)]  For $x=(\fm_x,\chi_x)\in \FX_{\infty}\times\widehat{T}_L$,\;$x\in X_{\fp}(\overline{\rho})$ if and only if
		$J_{\bB}(\Pi_\infty^{R_\infty-\ana} )[\fm_y,\bT(L)=\chi_x]\neq0$.
		\item[(2)] The rigid space $X_{\fp}(\overline{\rho})$ is reduced and equidimensional of dimension
		\[g+nd_L+n^2(|S_p|+1)+[F^+:\BQ]\frac{n(n-1)}{2}.\]
		\item[(3)] The coherent sheaf $\cM_{\infty}$ is Cohen-Macaulay over $X_{\fp}(\overline{\rho})$.\;
		\item[(4)] The set of very classical non-critical generic points is Zarisiki-dense in $X_{\fp}(\overline{\rho})$ and is an accumulation set.\;The set of very classical non-critical generic points  accumulates at point $x=(\fm_x, \chi_x)$ with $\chi_x$ locally algebraic.\;
		\item[(5)] The Hecke variety $Y(U^{\fp},\overline{\rho})$ is the reduced Zariski-closed subspace of $X_{\fp}(\overline{\rho})$ underlying the vanishing locus of $\fa\Gamma(\FX_{\infty},\cO_{\FX_{\infty}})$.\;
	\end{itemize}
\end{pro}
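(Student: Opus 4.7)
\textbf{Proof proposal for Proposition \ref{basicpropertyeigen}.} The plan is to adapt the construction and analysis of the patched eigenvariety developed in \cite{breuil2017interpretation} and \cite{breuil2017smoothness} to the present setup. Since the patching module $M_\infty$ is finite projective over $S_\infty\llbracket \GL_n(\co_L)\rrbracket$ by item (2) of the patching construction in Section \ref{preforpatching}, $\Pi_\infty$ is an admissible continuous unitary Banach representation of $G$ with a commuting admissible action of $R_\infty$. The basic inputs are: Emerton's theory of the Jacquet functor $J_\bB$ applied to the locally analytic vectors $\Pi_\infty^{R_\infty-\ana}$ (in the sense of \cite[Section 3.1]{breuil2017interpretation}), together with the coadmissibility properties which make the dual module $J_\bB(\Pi_\infty^{R_\infty-\ana})^\vee$ into a coadmissible module over the Fr\'echet--Stein algebra $\cO(\FX_\infty\times\widehat{T})$.

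First I would establish (1) from the very definition: $X_\fp(\overline{\rho})$ is the schematic support of $\cM_\infty$, and the stalk of the coherent sheaf at $x=(\fm_x,\chi_x)$ is (up to duality) the $(\fm_x,\bT(L)=\chi_x)$-isotypic piece of $J_\bB(\Pi_\infty^{R_\infty-\ana})$; standard duality between coadmissible modules and their associated coherent sheaves on the rigid spectrum of a Fr\'echet--Stein algebra translates non-vanishing of this stalk into non-vanishing of the corresponding generalized eigenspace. For (2), the strategy is to pull back the $R_\infty$-action through $R_\infty\to R_\infty\wte_E\cO(\widehat{T})$ and use the freeness of $M_\infty$ over $S_\infty\llbracket \GL_n(\co_L)\rrbracket$ to transfer depth and dimension information to the Jacquet module. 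Reducedness follows from the density in (4) of classical points where the eigenvariety is visibly reduced. The dimension count comes from $\dim R_\infty=g+n^2(|S_p|+1)+[F^+:\bQ]\tfrac{n(n-1)}{2}+n^2$ combined with the Jacquet functor decreasing the $G$-theoretic contribution from $n^2d_L$ (the $\bQ_p$-dimension of $G/\bB$) to the $nd_L$ dimensions of $\widehat{T}$. For (3), the Cohen-Macaulay property will follow from the Auslander-type duality used by Breuil-Hellmann-Schraen: $M_\infty$ being projective over $S_\infty\llbracket \GL_n(\co_L)\rrbracket$ makes the Jacquet module Cohen-Macaulay over the localized algebra, and Cohen-Macaulayness descends to the support $X_\fp(\overline{\rho})$.

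For (4), I would proceed in two steps. The classical points correspond (via the classicality criterion of Emerton and the compatibility with local-global patching) to locally algebraic vectors of $\Pi_\infty$, which in turn correspond to classical automorphic forms for $\widetilde{G}$ contributing through the various specializations $\Pi_\infty[\fa']$; these are Zariski-dense in the weight direction by a standard small-slope/dominant-weight accumulation argument (using the compact operators $U_\pi$ coming from $\bT(L)^+$), exactly as in \cite[Theorem 3.19]{breuil2017interpretation}. For accumulation at a point $x$ with $\chi_x$ locally algebraic, I would use the fact that the weight map $X_\fp(\overline{\rho})\to \widehat{T}\to \ft_{\Sigma_L}$ is locally on the source a finite map onto an open subspace of a suitable component of weight space, so that locally algebraic weights accumulate. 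Finally, for (5), the identity $\Pi_\infty[\fa]\cong \widehat{S}_{\xi,\tau}(U^\fp,E)_{\fm_{\overline{\rho}}}$ gives $J_\bB(\Pi_\infty^{R_\infty-\ana})[\fa]\cong J_\bB(\widehat{S}_{\xi,\tau}(U^\fp,E)^{\ana}_{\overline{\rho}})$, so the underlying reduced subspace of the vanishing locus of $\fa\Gamma(\FX_\infty,\cO_{\FX_\infty})$ in $X_\fp(\overline{\rho})$ is exactly the schematic support of the coherent sheaf attached to $J_\bB(\widehat{S}_{\xi,\tau}(U^\fp,E)^{\ana}_{\overline{\rho}})^\vee$, i.e. $Y(U^\fp,\overline{\rho})$.

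The main obstacle is (2)-(3): proving equidimensionality and Cohen-Macaulayness requires carefully tracking how the finite projectivity of $M_\infty$ over $S_\infty\llbracket \GL_n(\co_L)\rrbracket$ (which gives excellent homological control) propagates through the Jacquet functor and the passage to locally analytic vectors. In particular, one has to verify that $J_\bB(\Pi_\infty^{R_\infty-\ana})^\vee$ is a Cohen-Macaulay module of the expected grade over the Fr\'echet--Stein algebra $\cO(\FX_\infty\times\widehat{T})$; this uses the Schneider-Teitelbaum duality and a careful analysis of the essentially admissible representation of $\bT(L)$ underlying the Jacquet functor, exactly in the spirit of \cite[Theorem 3.21]{breuil2017interpretation} or \cite[Corollary 3.20]{breuil2017smoothness}, and it is here that the choice of parameters in the patching construction really matters.
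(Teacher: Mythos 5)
The paper gives no proof of this proposition; it simply cites Theorem~4.1 of \cite{2019DINGSimple}, which transplants \cite[Theorem~3.19, Corollary~3.20]{breuil2017interpretation} and \cite[\S3.3]{breuil2017smoothness} to the present patching construction (with the fixed type $\sigma(\tau)$ and weight $\xi$ at the places in $S_p\setminus\{\fp\}$). Your sketch is a faithful reconstruction of that cited argument: (1) and (5) are direct consequences of the definition of $\cM_\infty$ and the patching isomorphism $\Pi_\infty[\fa]\cong\widehat{S}_{\xi,\tau}(U^\fp,E)_{\fm_{\overline{\rho}}}$, and (2)--(4) rest on the projectivity of $M_\infty$ over $S_\infty\llbracket\GL_n(\cO_L)\rrbracket$, Emerton's finite-slope Jacquet machinery, and the small-slope classicality criterion, exactly as in those references.

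Two places where the sketch is worth tightening. The dimension count in (2) is not best stated as the Jacquet functor ``lowering $n^2 d_L$ to $n d_L$''; the actual mechanism is that $X_\fp(\overline{\rho})$ is locally finite over $(\Spf S_\infty)^{\rig}\times\widehat{\bT(\cO_L)}$, and equidimensionality follows from Chenevier's general criterion applied to that finite map, producing $\dim R_\infty - n^2 + nd_L$. And the accumulation claim in (4) at a point with $\chi_x$ locally algebraic is not a formal consequence of finiteness of the weight map; it needs the compactness of the $U_\pi$-operators and the slope-versus-dominance bound to exhibit very classical points in every small neighbourhood of $x$, which is the content of \cite[Theorem~3.19]{breuil2017interpretation}. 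Both of these are the points where your write-up is phrased conditionally, and both are carried out in full in the cited reference.
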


The Hecke eigenvariety $Y(U^{\fp},\overline{\rho})$ and patched  eigenvariety $X_{\fp}(\overline{\rho})$ are related to the trianguline variety $X_{\mathrm{tri}}(\overline{r})$ as follows.\;Let $\iota_{\fp}:\widehat{T}_L\rightarrow \widehat{T}_L$ be the automorphism defined by
\[\iota_{\fp}(\delta_1,\cdots,\delta_n):=\delta_{B}\cdot(\delta_1,\delta_2\unr(q_L^{-1}),\cdots,\delta_n\unr(q_L^{-(n-1)})).\]
Note that $\iota_{\fp}(\delta_1,\cdots,\delta_n)=(\delta_1,\cdots,\delta_n)\cdot \zeta$,\;where
\[\zeta:=\Big(\unr(q_L^{1-n}),\cdots,\unr(q_L^{i-n})\prod_{\tau\in\Sigma_L}\tau^{i-1},\cdots,\;\prod_{\tau\in\Sigma_L}\tau^{n-1}\Big).\]
Then $\mathrm{id}\times \iota_{\fp}$ induces an isomorphism of rigid spaces $\mathrm{id}\times \iota_{\fp}:\mathfrak{X}_{\overline{\rho}_{\fp}}^\Box\times\widehat{T}_L
\xrightarrow{\sim} \mathfrak{X}_{\overline{\rho}_{\fp}}^\Box\times\widehat{T}_L$.\;Let $\iota_{\fp}\big(X_{\mathrm{tri}}(\overline{r})\big)$ be the image of $X_{\mathrm{tri}}(\overline{r})$ via this automorphism.\;Then the natural embedding 
\[X_{\fp}(\overline{\rho})\hookrightarrow\FX_{\infty}\times\widehat{T}_L\cong (\Spf R_{\infty})^{\rig}\times\widehat{T}_L\cong \FX_{\overline{\rho}^{\fp}}^\Box\times \BU^g\times \mathfrak{X}_{\overline{r}}^\Box\times\widehat{T}_L\]
factors through
\begin{equation}\label{injpatchtotri}
	Y(U^{\fp},\overline{\rho})\hookrightarrow X_{\fp}(\overline{\rho})\hookrightarrow\FX_{\overline{\rho}^{\fp}}^\Box\times \BU^g\times \iota_{\fp}\big(X_{\mathrm{tri}}(\overline{r})\big).
\end{equation}
Therefore,\;$\iota_\fp$ induces morphisms
\begin{equation}\label{injpatchtotri1}
	\iota_\fp^{-1}:Y(U^{\fp},\overline{\rho})\hookrightarrow X_{\fp}(\overline{\rho})\rightarrow X_{\mathrm{tri}}(\overline{r}).
\end{equation}
For each irreducible component $\FX^{\fp}$ of  $\FX_{\overline{\rho}^{\fp}}^\Box$,\;there is a (possibly empty) union $X^{\FX^{\fp}-\mathrm{aut}}_{\mathrm{tri}}(\overline{r})$ of irreducible components of $X_{\mathrm{tri}}(\overline{r})$ such that we have an isomorphism of closed analytic subsets of $\FX_{\infty}\times\widehat{T}_L$:
\begin{equation}\label{comparepatchedandtrivar}
	X_{\fp}(\overline{\rho})\cong\bigcup_{\FX^{\fp}}\FX^{\fp}\times \iota_{\fp}\big(X^{\FX^{\fp}-\mathrm{aut}}_{\mathrm{tri}}(\overline{\rho}_{\fp})\big)\times \BU^g.
\end{equation}





\subsection{Classicality}

We fix a Galois representation $\rho\in \FX_{\overline{\rho},U^{\fp}}$.\;We make the following Hypothesis.\;
\begin{hypothesis}\hspace{20pt}\label{hyongaloisrep1}
	\begin{itemize}
		\item[(a)] $\rho$ comes from a point $y\in Y(U^{\fp},\overline{\rho})$ of the form $y=(\rho,\underline{\delta})$.\;
		\item[(b)] $\rho_L:=\rho_{\fp}$ is a semistable non-crystalline $p$-adic Galois representation with full monodromy rank.,\;i.e., the monodromy operator $N$ on $D_{\mathrm{st}}(\rho_{L})$ satisfies $N^{n-1}\neq 0$.
		\item[(c)] Let $\bh:=(\hpi_{\tau,1}>\hpi_{\tau,2}>\cdots>\hpi_{\tau,n} )_{\tau\in \Sigma_L}$ be the distinct Hodge-Tate weights of $\rho_{L}$.\;Let ${\alpha}\in E$ such that ${\alpha},{\alpha q_L^{1}},\;\cdots,\;{\alpha q_L^{n-1}}$ are  $\varphi^{f_L}$-eigenvalues of $D_{\mathrm{st}}(\rho_{L})$.\;Then $D_{\rig}(\rho_{L})$ admits a triangulation $\cF$ with parameters $(\unr(\alpha)_n)\cdot z^{w_{\cF}\underline{w}_0(\bh)}$ for $w_{\cF}\in \sW^{\emptyset,\emptyset}_{n,L,\max}\cong \sW_{n,L}$,\;where $\unr(\alpha)_n:=(\unr({\alpha}),\unr({\alpha q_L^{1}}),\;\cdots,\;\unr({\alpha q_L^{n-1}}))$.\;
	\end{itemize}
\end{hypothesis}
Recall that $\hpi_{i}=(\hpi_{\tau,i})_{\tau\in \Sigma_L}$ for $1\leq i\leq n$.\;For $w\in\sW_{n,L}$,\;we put $y_{w\underline{w}_0}:=(\rho,\delta_B\chi_{w\underline{w}_0\cdot{\lambda_{\bh}}}\unr(\alpha)\circ\det)\in \FX_{\overline{\rho},U^{\fp}}\times \widehat{T}_L$ (so $y_{1}=y_{\underline{w}_0\underline{w}_0}$).\;It is easy to see that  $x_1:=(\rho_{L},(\unr(\alpha)_n)\cdot z^{\bh})$ (resp.,\;$x_{w\underline{w}_0}:=(\rho_{L},(\unr(\alpha)_n)\cdot z^{w\underline{w}_0(\bh)})$) is the image of $y_1$ (resp.,\;$y_{w\underline{w}_0}$) via the morphism  (\ref{injpatchtotri1}).\;Then $y=y_{w_y\underline{w}_0}$ for some $w_y\in\sW_{n,L}$.\;

\begin{thm}\label{Classicality}(Classicality) Assume Hypothesis \ref{hyongaloisrep1}.\;Suppose $x_{w_{\cF}w_0}\in X_{\mathrm{tri}}(\overline{r})$,\;then $\widehat{S}_{\xi,\tau}(U^{\fp},E)^{\lalg}_{\overline{\rho}}[\fm_{{\rho}}]\neq 0$,\;i.e., $\rho$ is associated to a classical automorphic representation of $\widetilde{G}(\BA_{F^+}^{\infty})$.\;
\end{thm}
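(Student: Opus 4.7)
The plan is to deduce classicality from the cycle-theoretic framework assembled in Section \ref{galoiscycles}, reducing the non-vanishing of $\widehat{S}_{\xi,\tau}(U^{\fp},E)^{\lalg}_{\overline{\rho}}[\fm_{\rho}]$ to a non-vanishing statement for an explicit algebraic cycle on $\widehat{\cO}_{\mathfrak{X}_{\overline{r}},\rho_L}$. First, I would pass to the patched setting: via the $R_\infty/\fa R_\infty$-equivariant isomorphism $\Pi_\infty[\fa]\cong \widehat{S}_{\xi,\tau}(U^{\fp},E)_{\overline{\rho}}$, it suffices to exhibit a non-zero map from the locally algebraic Steinberg-type representation $C(\underline{w}_0\underline{w}_0)\otimes\unr(\alpha)\circ\det\cong \mathrm{St}_n^\infty(\alpha)\otimes_E L({\bm\lambda}_\bh)$ into $\Pi_\infty^{R_\infty-\ana}[\fm_\rho^\infty]$, since this Hom space controls the locally algebraic vectors of Steinberg type attached to the strictly dominant character $\underline{\delta}$.

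Second, I would translate this Hom non-vanishing into a statement about cycles on $X_{\fp}(\overline{\rho})$ localized at $y$. Under the representation-to-cycle assignment $\fa'_{\bm{\lambda}_\bh,\Delta_n}$ of Proposition \ref{locanaBreMezeconj} (transported to the patched eigenvariety via (\ref{comparepatchedandtrivar}) and the proof of Proposition \ref{basicpropertyeigen}), the irreducible constituent $C(\underline{w}_0\underline{w}_0)$ is matched with the cycle $\cC^{\flat}_{\underline{w}_0}=\cZ^{\flat}_{\underline{w}_0}$ on $\Spec\widehat{\cO}_{\mathfrak{X}_{\overline{r}},\rho_L}$. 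The strictly dominant assumption makes this the \emph{most dominant} cycle and, crucially, decouples the argument from the more delicate companion constituents treated later in the paper: no Zariski-closure descent from $w\geq w_{\cF}$ towards $w_{\cF}$ is required.

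Third, I would verify that $\cC^{\flat}_{\underline{w}_0}$ is genuinely non-zero at $\rho_L$. As recorded after (\ref{bzcycle}), the nilpotent operator is identically zero on $Z_{\underline{w}_0}$, so any deformation parametrized by $\widehat{Z}_{\underline{w}_0,y}$ is de Rham and hence, by \cite[Proposition 2.3.4]{AST2009324R10}, semistable non-crystalline with full monodromy rank. Combining this with the Allen-Park smoothness \cite[Theorem 1.2.7]{ALLENPARK} of $\FX_{\overline{r},\cP_{\min}}^{\Box,\tau,\bh}$ at $\rho_L$ and the dimension count, $\cC^{\flat}_{\underline{w}_0}$ is identified with the unique irreducible component $Z(\rho_L)$ through $\rho_L$, which is non-zero of the expected dimension $n^2+d_L\tfrac{n(n-1)}{2}$. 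Hypothesis \ref{appenhypothesis} ensures that this component lies inside the trianguline locus, so that the cycle $\cC^{\flat}_{\underline{w}_0}$ does appear in $[\Spec\widehat{\cO}_{X_{\mathrm{tri}}(\overline{r})_{\underline{\delta}},x}]$ via identity (\ref{r1Breuilsecond}) at the strictly dominant $\underline{\delta}$.

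The main obstacle will be making the final implication precise: non-vanishing of the algebraic cycle $\cC^{\flat}_{\underline{w}_0}$ at $y$ must be promoted to non-vanishing of the section $\Gamma(X_{\fp}(\overline{\rho})_{\bm{\lambda}_\bh},\cL_{\bm{\lambda}_\bh}^\vee)\cong \mathrm{Hom}_{U(\fg)}(L(\bm{\lambda}_\bh),(\Pi_\infty^{R_\infty-\ana})^{U_0})_{\mathrm{fs}}$ localized at $y$. For this I would combine the Cohen--Macaulayness of $\cM_\infty$ over $X_{\fp}(\overline{\rho})$ (Proposition \ref{basicpropertyeigen}(3)) with the local model isomorphism of Proposition \ref{localgeomertyonspecial}(d): the latter identifies $\widehat{\mathcal{O}}_{X_{\mathrm{tri}}(\overline{r}),x}$ with $R^{\flat,\underline{w}_0}_{\rho_L,\cM_\bullet}$ (up to formally smooth morphism), so that $\cC^{\flat}_{\underline{w}_0}$ is automatically a component of the support of the pulled-back sheaf $\cM_\infty$. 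The condition $y\in Y(U^{\fp},\overline{\rho})\subseteq X_{\fp}(\overline{\rho})$ guarantees this support is non-empty at $y$, and a standard Jacquet module argument extracts the locally algebraic vector, proving $\widehat{S}_{\xi,\tau}(U^{\fp},E)^{\lalg}_{\overline{\rho}}[\fm_\rho]\neq 0$.
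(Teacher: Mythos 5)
Your proposal correctly identifies several of the geometric ingredients (the vanishing of the nilpotent operator on $Z_{\underline{w}_0}$, the Allen--Park smoothness of $\FX_{\overline{r}}^{\Box,\bh-\mathrm{st}}$ at $\rho_L$, and the role of Hypothesis \ref{appenhypothesis} in mapping the semistable component into $X_{\mathrm{tri}}(\overline{r})$), and these do appear in the actual argument. However, the final step --- ``a standard Jacquet module argument extracts the locally algebraic vector'' --- papers over the entire content of the theorem. The condition $y\in Y(U^\fp,\overline{\rho})$ already tells you that the support of $\cM_\infty$ (equivalently of $\cM(\lambda_{\bh})$) is non-empty at $y$, so pointing this out gains nothing. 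The sheaf $\cM(\lambda_\bh)$ computes $\homo_{U(\fg)}(M(\lambda_\bh),\cdot)$, whereas classicality is exactly the non-vanishing of $\homo_{U(\fg)}(L(\lambda_\bh),\cdot)$, i.e.\ of the subsheaf $\cL(\lambda_\bh)\hookrightarrow\cM(\lambda_\bh)$; a priori the whole support could be carried by other Jordan--H\"older factors $L(s\cdot\lambda_\bh)$, $s\neq 1$, of the Verma module. Likewise, reading ``$\cC_{C(\underline{w}_0)}=\cC^\flat_{\underline{w}_0}$'' off Proposition \ref{locanaBreMezeconj} only defines a local Breuil--M\'ezard assignment on $\Spec\widehat{\cO}_{\mathfrak{X}_{\overline{r}},\rho_L}$; it does not assert that this local cycle is realised by the eigenvariety sheaf $\cL_{\lambda_\bh}$ --- that matching is precisely what the later results (Proposition \ref{equicompanpointconsti}, Theorem \ref{globalcompanion}) establish, and they run under Hypothesis \ref{hyongaloisrep}, which presupposes $\widehat{S}_{\xi,\tau}(U^{\fp},E)^{\lalg}_{\overline{\rho}}[\fm_{{\rho}}]\neq 0$. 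So invoking that framework here is circular.

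The paper's actual proof is more direct and does not touch the cycle machinery of Section \ref{galoiscycles} at all. It uses the Breuil--Schneider/Pyvovarov patched module $\Pi_\infty(\sigma^\circ_{\min})$ from \cite{BSCONJ}, whose support is a priori a union of irreducible components of the potentially semistable deformation space; showing $\rho_L$ lies in this support is the classicality statement. By \cite[Theorem 1.2.7]{ALLENPARK}, $\rho_L$ sits on a unique component $Z(\rho_L)$ of $\FX_{\overline{r}}^{\Box,\bh-\mathrm{st}}$. Proposition \ref{localgeomertyonspecial}(d) gives irreducibility of $X_{\mathrm{tri}}(\overline{r})$ at $x$, so a small neighbourhood of $x$ is contained in the relevant $X^{\FX^\fp-\mathrm{aut}}_{\mathrm{tri}}(\overline{r})$. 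Hypothesis \ref{appenhypothesis} supplies the closed immersion $\iota_{\bh}:\FX_{\overline{r},\cP_{\min}}^{\Box,\bh-\mathrm{st}}\hookrightarrow X_{\mathrm{tri}}(\overline{r})$. One then approximates $\rho_L$ inside $Z(\rho_L)$ by a \emph{non-critical} semistable deformation $x'$; by global triangulation theory such points are automatically classical, hence in the support of $\Pi_\infty(\sigma^\circ_{\min})$. Since that support is a union of closed irreducible components, this forces $Z(\rho_L)$, and hence $\rho_L$, into the support. The mechanism is thus a Zariski-closure/approximation argument on the semistable deformation ring combined with classicality of non-critical points --- precisely the kind of descent you claimed could be dispensed with.
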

\begin{proof}
By applying Proposition \ref{localgeomertyonspecial} to $X_{\mathrm{tri}}(\overline{r})$ and $x_{w_y\underline{w}_0(\bh)}$,\;there is a unique irreducible component $Z$ of $X_{\mathrm{tri}}(\overline{r})$ passing through $x_{w_y\underline{w}_0(\bh)}$.\;From (\ref{comparepatchedandtrivar}) we thus necessary have $x\subseteq \iota_{\fp}\big(Z\big)\times \BU^g\subseteq  \FX^{\fp}\times \iota_{\fp}\big(X^{\FX^{\fp}-\mathrm{aut}}_{\mathrm{tri}}(\overline{r})\big)\times \BU^g$ for some irreducible component $\FX^{\fp}$ of $\FX_{\overline{\rho}^{\fp}}^\Box$.\;In particular,\;for $V\subseteq X_{\mathrm{tri}}(\overline{r})$ a sufficiently small open neighbourhood of $x_{w_y\underline{w}_0(\bh)}$ in $X_{\mathrm{tri}}(\overline{r})$,\;we have $V\subseteq Z\subseteq X^{\FX^{\fp}-\mathrm{aut}}_{\mathrm{tri}}(\overline{r})$.\;We modify the proof of \cite[Theorem 3.9]{breuil2017smoothness} to our case.\;

Keep the argument and notation in the \cite[Section 5]{BSCONJ},\;the $R_{\infty}\otimes_{R_{\widetilde{\fp}}^{\square}}R_{\overline{r}}^{\Box,\bh-\mathrm{st}}$-module $(\Pi_{\infty}(\sigma^{\circ}_{\min})[1/p])^{\vee}$ is supported on a union of irreducible components of $\iota_{\fp}\times \mathfrak{X}_{\overline{r}}^\Box\times \BU^g$,\;and we have to prove that $r_y$ ia a point on one of these irreducible components.\;Recall that $Z(\rho_L)$ is  the unique irreducible component of $\mathfrak{X}_{\overline{r}}^\Box$ containing $\rho_L$.\;It is enough to prove that $\iota_{\fp}\times Z(\rho_L)\times \BU^g$ is one of the irreducible components in the support of $(\Pi_{\infty}(\sigma^{\circ}_{\min})[1/p])^{\vee}$,\;or equivalently that $\iota_{\fp}\times Z(\rho_L)\times \BU^g$ contains at least one point which is in the support of $(\Pi_{\infty}(\sigma^{\circ}_{\min})[1/p])^{\vee}$.\;By Hypothesis \ref{appenhypothesis},\;we have a closed immersion $	\iota_{\bh}:	\FX_{\overline{r},\cP_{\min}}^{\Box,\bh-\mathrm{st}}\hookrightarrow X_{\mathrm{tri}}(\overline{r})$.\;For any point  $x'=(r_{x'},\delta')\in \iota_{\bh}(Z(\rho_L))\cap V\subseteq X^{\FX^{\fp}-\mathrm{aut}}_{\mathrm{tri}}(\overline{r})$,\;by the choice of $V$,\;we may choose point $x'$ in $\iota_{\bh}(Z(\rho_L))\cap V$ such that the associated semistable non-crystalline Galois representation $r_{x',\fp}$ belongs to subspace $\FX_{\overline{r},\cP_{\min},\underline{w}_0}^{\Box,\bh-\mathrm{st}}$ (non-critical points in $\FX_{\overline{r},\cP_{\min}}^{\Box,\bh-\mathrm{st}}$).\;Since non-critical point are classical (by the global triangulation theory,\;see \cite[Proposition 4.7]{2019DINGSimple}),\;we see that $r_{x'}$ is in the support of $(\Pi_{\infty}(\sigma^{\circ}_{\min})[1/p])^{\vee}$.\;This completes the proof.\;
\end{proof}
\begin{rmk}
Apply the above proof to $L=\bQ_p$ case,\;together with the results in \cite[Theroem 5.6.5]{MSW},\;the classicality theorem still holds for $L=\bQ_p$ case.\;
\end{rmk}

\subsection{Patching functor}

\subsubsection{Recollections of variants of the BGG-category}\label{deformedverma}

Given $I\subseteq \Delta_n$,\;we apply the argument in \cite[Section 2]{HHS} to $\GLN_{n,L}$,\;and get the Bernstein-Gelfand-Gelfand (BGG) category and its variants (deformations of BGG category) \[\cO_{\alge}^{\overline{\fp}_{I,L}}\subseteq\cO_{\alge}^{\overline{\fp}_{I,L},\infty}\subseteq \widetilde{\cO}_{\alge}^{\overline{\fp}_{I,L}}.\]
We modify the notation of  \cite[Section 2]{HHS} by replacing the superscript $I$ in \cite[Section 2]{HHS} by $\overline{\fp}_{I,L}$.\;

For $J\subseteq\Sigma_L$,\;let $\fm_{I,J}$ (resp.,\;$\fm_{J}$) be the augmentation ideal of the envelope algebra $U(\fz_{I,J})$ (resp.,\;$U(\ft_{J})$). We set $A_{I,J}:=U(\fz_{I,J})_{\fm_{I,J}}$ and  $A_{\emptyset,J}=U(\ft_{J})_{\fm_{J}}$ the localizations.\;The canonical Lie algebra decomposition $\fz_{I,L}=\fz_{I,J}\oplus \fz_{I,L\backslash J}$ defines a surjective morphisms $U(\fz_{I,L})\twoheadrightarrow  U(\fz_{I,J})$ and $A_{I,\Sigma_L} \twoheadrightarrow A_{I,J}$ of $A_{I,L}$-modules.\;

For $\mu\in X_{I}^+$,\;we define the $J$-deformed generalized Verma module of weight $\mu$ as
\[\widetilde{M}^{J}_I(\mu):=U(\fg_{\Sigma_L})\otimes_{U(\fp_{I,L})}(L_I(\mu)\otimes_EA_{I,J})\in \widetilde{\cO}_{\alge}^{\overline{\fp}_{I,L}},\]
where $U(\fp_{I,L})$ acts on $A_{I,J}$ via the composition $U(\fp_{I,L})\rightarrow U(\fl_{I,L})\rightarrow U(\fz_{I,L})\rightarrow A_{I,L}\twoheadrightarrow A_{I,J}$.\;If $J=\Sigma_L$,\;we drop the superscript $J$ in $\widetilde{M}^{J}_I(\mu)$.\;Moreover,\;by \cite[Section 2.3]{HHS},\;we have an isomorphism of $U(\fg_{\Sigma_L})_{A_{I,L}}$-modules $\widetilde{M}^{J}_I(\mu)\otimes_{A_{I,J}}A_{I,J}/\fm_{I,J}={M}_I(\mu)$.\;The surjective morphism  $A_{I,L} \twoheadrightarrow A_{I,J}$ induces a surjective morphism $\widetilde{M}_I(\mu)\twoheadrightarrow\widetilde{M}^{J}_I(\mu)$.

\subsubsection{Locally analytic patching functor}

For $y\in X_{\fp}(\overline{\rho})\hookrightarrow\FX_{\infty}\times\widehat{T}_L\cong \FX_{\overline{\rho}^{\fp}}^\Box\times \BU^g\times \mathfrak{X}_{\overline{r}}^\Box\times\widehat{T}_L$,\;denoted by $r_{y}$ (resp.,\;$\fm_{r_y}$) its image in $\FX_{\infty}=\FX_{\overline{\rho}^{\fp}}^\Box\times \BU^g\times \mathfrak{X}_{\overline{r}}^\Box$ (resp.,\;the corresponding maximal ideal of $R_{\infty}[1/p]$).\;Denoted by $r_{y,\fp}$ (resp.,\;$r_y^{\fp}$,\;resp.,\;$z$) its image in $\mathfrak{X}_{\overline{r}}^\Box=(\spf\;\defvarring)^{\rig}$ (resp.,\;$\FX_{\overline{\rho}^{\fp}}^\Box$,\;resp.,\;$\BU^g$),\;and by $\underline{\epsilon}_y$ its image in $\widehat{T}_L$ (so $y=(r_y^{\fp},r_{y,\fp},z,\underline{\epsilon}_y)=(r_y,\underline{\epsilon}_y)$).\;


We follow the argument and notation in \cite[Section 6.1]{HHS}.\;For $M\in \cO^{\infty}_{\mathrm{alg}}$,\;we have the functor
\begin{equation}
	\cM_{\infty,y}(M):=\homo_{U(\fg)}\Big(M,(\Pi_\infty^{R_\infty-\ana})^{U_0}\Big)_{\mathrm{fs}}[\fm_{r_y}^\infty][\fm^\infty_{\underline{\epsilon}_{\mathrm{sm}}}]^\vee,\;
\end{equation}
where $U_0$ is a compact open subgroup of $\bN(L)$.\;In \cite[Section 6.1]{HHS},\;the authors extend $\cM_{\infty,y}$ to larger category $\widetilde{\cO}^{\infty}_{\mathrm{alg}}$ by putting
\begin{equation}
	\cM_{\infty,y}(M):=\varprojlim_n\cM_{\infty,y}(M/\fm_I^n).\;
\end{equation}
for $M\in \widetilde{\cO}^{\infty}_{\mathrm{alg}}$.\;By \cite[Proposition 6.5]{HHS},\;the functor $\cM_{\infty,y}$ is exact on $\widetilde{\cO}^{\infty}_{\mathrm{alg}}$.\;For each $M\in \widetilde{\cO}^{\infty}_{\mathrm{alg}}$,\;$\cM_{\infty,y}(M)$ is a finitely generated and Cohen-Macaulay $\widehat{R}_{\infty,r_y}$-module with dimension $t+\dim_L\fz_I$,\;where $t=g+\dim R^{\mathrm{loc}}$ (see Section \ref{preforpatching}).\;

Let $\mu$ be a dominant weight.\;Assume that $\underline{\epsilon}_y=\underline{\epsilon}_{\mathrm{sm}}z^{w\cdot\mu}$ with $\underline{\epsilon}_{\mathrm{sm}}$ a smooth character of $\bT(L)$ and $w\in\sW_{n,L}$.\;For any $w\in \sW_{n,L}$,\;denote by $X_{\fp}(\overline{\rho})_{w\cdot \mu}$  the fiber at $w\cdot \mu\in \ft^{\rig}(E)$ of the composition $X_{\fp}(\overline{\rho})\rightarrow \widehat{T}_L\xrightarrow{\wt}\ft^{\rig}$,\;where $\ft^{\rig}$ denotes the rigid space associated with $\homo_{E}(\ft,E)$ and the map $\wt$ sends a character of $\widehat{T}_L$ to its weight.\;Consider
\[\cM_{w\cdot \mu}:=\cM_{\infty}\otimes_{\cO_{X_{\fp}(\overline{\rho})}}\cO_{X_{\fp}(\overline{\rho})_{w\cdot \mu}}.\;\]
By the argument in \cite[Section 4.4, (4.3)]{wu2021local},\;we see that 
the vector space of compact type $\Gamma(X_{\fp}(\overline{\rho})_{w\cdot \mu},\cM_{w\cdot \mu})^\vee$ is topologically isomorphic to the following vector spaces of compact type:
\[\homo_{U(\fg)}\big(M(w\cdot \mu),(\Pi_\infty^{R_\infty-\ana})^{U_0}\big)_{\mathrm{fs}}\cong \homo_{U(\ft)}\big(w\cdot \mu,J_{\bB}(\Pi_\infty^{R_\infty-\ana})\big),\]
where $(-)_{\mathrm{fs}}$ denotes Emerton's finite slope part functor \cite[Definition 3.2.1]{emerton2006jacquet}.\;On the other hand, the quotient $M(w\cdot \mu)\twoheadrightarrow L(w\cdot \mu)$ induces a closed immersion 
\[\homo_{U(\fg)}\big(L(w\cdot \mu),(\Pi_\infty^{R_\infty-\ana})^{U_0}\big)_{\mathrm{fs}}\hookrightarrow \homo_{U(\ft)}\big(w\cdot \mu,J_{\bB}(\Pi_\infty^{R_\infty-\ana})\big).\]
Then the continuous dual $\homo_{U(\fg)}\big(L(w\cdot \mu),(\Pi_\infty^{R_\infty-\ana})^{U_0}\big)_{\mathrm{fs}}^\vee$ corresponds to a coherent sheaf $\cL_{w\cdot \mu}$ on $X_{\fp}(\overline{\rho})_{w\cdot \mu}$ (so $\Gamma(X_{\fp}(\overline{\rho})_{w\cdot \mu},\cL_{w\cdot \mu}^\vee)\cong \homo_{U(\fg)}\big(L(w\cdot \mu),(\Pi_\infty^{R_\infty-\ana})^{U_0}\big)_{\mathrm{fs}}$).\;The schematic support of $\cL_{w\cdot \mu}$ defines a Zariski-closed rigid subspace $Y_{\fp}(\overline{\rho})_{w\cdot \mu}$ in $X_{\fp}(\overline{\rho})_{w\cdot \mu}$.\;Let $Y_{\fp}(\overline{\rho})_{w\cdot \mu}^{\red}$ be the underlying reduced analytic subvariety of $Y_{\fp}(\overline{\rho})_{w\cdot \mu}$.\;Then we have 
$\cM_{\infty,y}(L(w\cdot \mu))\neq 0$ if and only if $y\in Y_{\fp}(\overline{\rho})_{w\cdot \mu}$,\;by \cite[(4.4)]{wu2021local} or \cite[(5.16),\;(5.18)]{breuil2019local}.\;If non-zero,\;the schematic support of $\cM_{\infty,y}(L(w\cdot \mu))$ is supported in $\widehat{Y_{\fp}(\overline{\rho})}_{w\cdot \mu,y}$ and 
\begin{equation}
	\begin{aligned}
			&\cM_{\infty,y}(M(w\cdot \mu))=\cM_{\infty}\otimes_{\cO_{X_{\fp}(\overline{\rho})}}\cO_{X_{\fp}(\overline{\rho})_{w\cdot \mu},y},\\
			&\cM_{\infty,y}(L(w\cdot \mu))=\cL_{w\cdot \mu} \otimes_{\cO_{X_{\fp}(\overline{\rho})_{w\cdot \mu}}}\cO_{X_{\fp}(\overline{\rho})_{w\cdot \mu},y}\cong \cL_{w\cdot\mu} \otimes_{\cO_{Y_{\fp}(\overline{\rho})_{w\cdot \mu}}}\cO_{Y_{\fp}(\overline{\rho})_{w\cdot \mu},y}.
		\end{aligned}
\end{equation}

\subsection{Global companion points and companion constituents}\label{companionpointandconsti}



We now state the main results on the appearance of companion constituents in the completed cohomology and the existences of global companion points.\;See Proposition \ref{prolocalcomapn1},\;Proposition \ref{prolocalcomapn},\;Proposition \ref{equicompanpointconsti} and Theorem \ref{globalcompanion}.\;We prove our main theorems by using the same strategy as in the proof of \cite[Proposition 4.7, Proposition 4.9, Theorem 4.10,\;Theorem 4.12]{wu2021local} and \cite[Proposition 7.2]{HHS}.\;


\begin{hypothesis}\label{hyongaloisrep} Assume $\widehat{S}_{\xi,\tau}(U^{\fp},E)^{\lalg}_{\overline{\rho}}[\fm_{{\rho}}]\neq 0$ and $(b),(c)$ in Hypothesis \ref{hyongaloisrep1}.\;
\end{hypothesis}


The first goal of this section is to show that $\{y_{w\underline{w}_0}\}_{w\leq w_{\cF}\underline{w}_0}$ are  global companion points of $y$ ,\;i.e.,\;$y_{w\underline{w}_0}\in Y(U^{\fp},\overline{\rho})$  for $w\leq w_{\cF}\underline{w}_0$.\;Put ${\bm\lambda}_\bh:=(\hpi_{\tau,i}+i-1)_{\tau\in \Sigma_L,1\leq i\leq n}$, which is a dominant weight of $\GLN_{n,L}$ with  respect to $\bB_{n,L}$.\;Recall that  $r_{y_1}:=(r_{y_1}^{\fp},\rho_L,z)\in {{\FX}}_{\infty}=\FX_{\overline{\rho}^{\fp}}^\Box\times \BU^g\times \mathfrak{X}_{\overline{r}}^\Box$ is the image of $y_1$ in ${{\FX}}_{\infty}$.\;We have
\[\widehat{{\FX}}_{\infty,r_{y_1}}=\widehat{(\FX_{\overline{\rho}^{\fp}}^\Box)}_{r_{y_1}^{\fp}}\times 
\widehat{(\BU^g)}_{z}\times \widehat{(\mathfrak{X}_{\overline{r}}^\Box)}_{\rho_L}\cong \widehat{(\FX_{\overline{\rho}^{\fp}}^\Box)}_{r_{y_1}^{\fp}}\times 
\widehat{(\BU^g)}_{z}\times X_{\rho_L}.\]

In the sequel,\;let $y$ be a possible companion point of $y_1$ (so that $r_{y}=r_{y_1}$).\;We rewrite  ${\FX}_{\infty,y}^{\fp}:=\widehat{(\FX_{\overline{\rho}^{\fp}}^\Box)}_{r_{y_1}^{\fp}}\times 
\widehat{(\BU^g)}_{z}$ for simplicity (so that $\widehat{{\FX}}_{\infty,r_{y}}={\FX}_{\infty,y}^{\fp}\times X_{\rho_L}$).\;We define 
\[{{\FX}}_{\infty,y}={\FX}_{\infty,y}^{\fp}\times X_{\rho_L,\cM_{\bullet}}\subset \widehat{{\FX}}_{\infty,r_{y}}.\]
We define some subspaces of ${\FX}_{\infty,y}$ (using partially de-Rham Galois cycles),\;which are related to the supports of the patching functor $\cM_{\infty,y}(-)$.\;For $J\subseteq \Sigma_L$,\;${I}\subset \Delta_n$ and $w\in \sW_{n,L}$,\;we put
\begin{equation}
	\begin{aligned}
		&{\FX}^{\underline{I}^J}_{\infty,y}={\FX}_{\infty,y}^{\fp}\times X^{J,\underline{I}^J}_{\rho_L,\cM_{\bullet}},\;\text{and\;}{{\FX}}^{\underline{I}^J,w}_{\infty,y}={\FX}_{\infty,y}^{\fp}\times X^{J,\underline{I}^J,w}_{\rho_L,\cM_{\bullet}},\\	&{\overline{\FX}}^{\underline{I}^J}_{\infty,y}={\FX}_{\infty,y}^{\fp}\times \overline{X}^{J,\underline{I}^J}_{\rho_L,\cM_{\bullet}},\;\text{and\;}{\overline{\FX}}^{\underline{I}^J,w}_{\infty,y}={\FX}_{\infty,y}^{\fp}\times \overline{X}^{J,\underline{I}^J,w}_{\rho_L,\cM_{\bullet}},
	\end{aligned}
\end{equation}
where $\underline{I}^J=\prod_{\tau\in J}I$,\;and we drop the superscript $J$ if $J=\Sigma_L$.\;Let $Y$ be a subspace of  $\widehat{\cX}^{\flat}_{L,\widehat{y}}$.\;Let $X_{\rho_L,\cM_{\bullet}}(Y)$ be the essential image of $X^{\Box}_{\rho_L,\cM_{\bullet}}\times_{\widehat{\cX}^{\flat}_{L,\widehat{y}}}Y$  via the formally smooth morphism $X^{\Box}_{\rho_L,\cM_{\bullet}}\rightarrow X_{\rho_L,\cM_{\bullet}}$.\;We put (recall the diagram (\ref{factorwwwdelta}))
\begin{equation}
	\begin{aligned}
		&\overline{\overline{\FX}}_{\infty,y}={\FX}_{\infty,y}^{\fp}\times\Spec \overline{\overline{R}}_{\rho_L,\cM_{\bullet}}\cong {\FX}_{\infty,y}^{\fp}\times X_{\rho_L,\cM_{\bullet}}(\Spf \widehat{\cO}^{\flat}_{\overline{{X}}_{L,{y}}}),\\
		&\overline{\overline{\FX}}^{w}_{\infty,y}={\FX}_{\infty,y}^{\fp}\times\Spec \overline{\overline{R}}^{w}_{\rho_L,\cM_{\bullet}}\cong {\FX}_{\infty,y}^{\fp}\times X_{\rho_L,\cM_{\bullet}}(\Spf \widehat{\cO}^{\flat}_{\overline{{X}}_{w,{y}}}).
	\end{aligned}
\end{equation}
We further put
\begin{equation}
	\begin{aligned}
		{{\FZ}}^{w}_{\infty,y}={\FX}_{\infty,y}^{\fp}\times X_{\rho_L,\cM_{\bullet}}(\Spf \widehat{\cO}^{\flat}_{\cZ_{w},\widehat{y}}),\;\text{resp.,\;}
		\overline{{\FZ}}^{w}_{\infty,y}={\FX}_{\infty,y}^{\fp}\times  X_{\rho_L,\cM_{\bullet}}(\Spf \widehat{\cO}^{\flat}_{Z_{w},y}),
	\end{aligned}
\end{equation} 
which is a closed subspace of ${\overline{\FX}}_{\infty,y}$ (resp.,\;$\overline{\overline{\FX}}_{\infty,y}$).\;

\subsubsection{General discussion on coherent sheaves and cycles}\label{Analysissheafcycles}


Assume $L\neq \bQ_p$.\;We begin with a computation of  the schematic supports $\cM_{\infty,y}(\widetilde{M}^{\Sigma_L\backslash J}_{{I}}(w\cdot {\bm\lambda}_\bh))$ and $\cM_{\infty,y}(\widetilde{M}^{\Sigma_L\backslash J}_{{I}}(w\cdot {\bm\lambda}_\bh)^{\vee})$ when $J\subsetneq \Sigma_L$.\;

\begin{pro}\label{schemesupport}Fix ${I}\subset \Delta_n$,\;$w\in \sW_{n,L}$ and $J\subsetneq \Sigma_L$ or $w=1$ and $J=\Sigma_L$,\;we have
\begin{itemize}
	\item[(a)] For $w^{\min}\underline{w}_0\geq w_{\cF}$,\;the formal scheme ${{\FX}}_{\infty,y}^{\underline{I},w^{\min}\underline{w}_0}$ is reduced and is an irreducible component of ${{\FX}}_{\infty,y}^{\underline{I}}$.\;
	\item[(b)] The schematic supports of $\cM_{\infty,y}(\widetilde{M}_{{I}}(w\cdot {\bm\lambda}_\bh))$ and $\cM_{\infty,y}(\widetilde{M}_{{I}}(w\cdot {\bm\lambda}_\bh)^{\vee})$ are generically free of rank $m_{y,I,w}$ over their support (it equal to ${{\FX}}_{\infty,y}^{\underline{I},w^{\min}\underline{w}_0}$ or empty) for some integer $m_{y,I,w}$.\;If no empty,\;then $w^{\min}\underline{w}_0\geq w_{\cF}$.\;We obmit the subscript $I$ in $m_{y,I,w}$ if $I=\emptyset$.\;
	\item[(c)] For $w\underline{w}_0\geq w_{\cF}$,\;the schematic supports of the sheaves $\cM_{\infty,y}(\widetilde{M}^{\Sigma_L\backslash J}(w\cdot {\bm\lambda}_\bh))$ and $\cM_{\infty,y}(\widetilde{M}^{\Sigma_L\backslash J}(w\cdot {\bm\lambda}_\bh)^{\vee})$ are either ${\overline{\FX}}_{\infty,y}^{w\underline{w}_0}$ or empty.\;If non empty,\;their schematic support is  ${\overline{\FX}}_{\infty,y}^{w\underline{w}_0}$.\;These sheaves are generically free over its support ${\overline{\FX}}_{\infty,y}^{w\underline{w}_0}$.\;Thus we have $[\cM_{\infty,y}(\widetilde{M}^{\Sigma_L\backslash J}(w\cdot {\bm\lambda}_\bh))]=m_{y,w\underline{w}_0}[{\overline{\FX}}_{\infty,y}^{w\underline{w}_0}]\in Z^0({\overline{\FX}}_{\infty,y})$.\;
\end{itemize}
\end{pro}
\begin{proof} Part $(a)$ is obvious.\;The proof of Part $(b)$ is the same as \cite[Theorem 6.16]{HHS} and \cite[Proposition 7.2,(1)]{HHS}.\;As the sheaves,\;$\cM_{\infty,y}(\widetilde{M}_{{I}}(w\cdot {\bm\lambda}_\bh))$ and $\cM_{\infty,y}(\widetilde{M}_{{I}}(w\cdot {\bm\lambda}_\bh)^{\vee})$ are Cohen-Macaulay of the same dimension as ${{\FX}}_{\infty,r_y}^{\underline{I},w^{\min}\underline{w}_0}$ (which is generically smooth and irreducible),\;the last assertion in Part $(b)$ follows.\;For general $J\subsetneq \Sigma_L$,\;Part $(c)$ follows by quoting the regular sequence generating the maximal ideal of $U(t_{J})_{\fm_{J}}$.\;Note that
this sequence is regular for $\cM_{\infty,y}(\widetilde{M}(w\cdot {\bm\lambda}_\bh))$ and $\cM_{\infty,y}(\widetilde{M}(w\cdot {\bm\lambda}_\bh)^{\vee})$ by \cite[Proposition 6.5]{HHS},\;and is also regular for ${{\FX}}_{\infty,y}^{w^{\min}\underline{w}_0}$ (since $X^{\Box}_{\rho_L,\cM_{\bullet}}$ is formally smooth over $X^{\Box}_{\bW_\Dpik,\bF_{\bullet},J}$).\;For $J=\Sigma_L$ and $w=1$,\;this sequence is regular for ${{\FX}}_{\infty,y}^{w^{\min}\underline{w}_0}$ by Proposition \ref{flatnessX}.\;As coherent sheaves,\;$\cM_{\infty,y}(\widetilde{M}^{\Sigma_L\backslash J}(w\cdot {\bm\lambda}_\bh))$ and $\cM_{\infty,y}(\widetilde{M}^{\Sigma_L\backslash J}(w\cdot {\bm\lambda}_\bh)^{\vee})$ are Cohen-Macaulay of the same dimension as ${\overline{\FX}}_{\infty,r_y}^{w\underline{w}_0}$ (which is generically smooth),\;the last assertion in Part $(c)$ also follows.\;
\end{proof}
Recall in the proof of \cite[Theorem 2.4.7 (iii)]{breuil2019local},\;$(a_{w_{\tau},w'_{\tau}})_{(w,w')\in \sW_n\times\sW_n }$ is the product of two upper triangular matrices $B_{\tau}P_{\tau}$ with $1$ on the diagonal.\;Let $M_{\tau}$ be the diagonal matrix $(m_{y,w})_{(w,w)}$.\;Write  $(a'_{w_{\tau},w'_{\tau}})_{(w,w')\in \sW_n\times\sW_n }=B_{\tau}M_{\tau}P_{\tau}$ and put $a'_{w,w'}:=\prod_{\tau\in \Sigma_L}a_{w_\tau,w'_\tau}$.\;

\begin{cor}\label{corforcoherent}Assume Conjecture \ref{flatnessXconj} holds.\;Then the Part $(c)$ in Proposition \ref{schemesupport} also holds for every $w\in \sW_{n,L}$ and $J=\Sigma_L$.\;In particular,\;we have equality $[\cM_{\infty,y}({M}(w\cdot {\bm\lambda}_\bh))]=[\cM_{\infty,y}({M}(w\cdot {\bm\lambda}_\bh)^{\vee})]=m_{y,w}[{\overline{\FX}}_{\infty,y}^{w^{\min}\underline{w}_0}]\in Z^0({\overline{\FX}}_{\infty,y})$.\;Moreover,\;by the exactness of the patching functor $\cM_{\infty,y}(-)$,\;we have \[[\cM_{\infty,y}({L}(w\cdot {\bm\lambda}_\bh))]=\sum_{w'\geq w}a'_{w,w'}[{{\FZ}}_{\infty,y}^{w'\underline{w}_0}]=m_{y,w}[{{\FZ}}_{\infty,y}^{w\underline{w}_0}]+\sum_{w'> w}a'_{w,w'}[{{\FZ}}_{\infty,y}^{w'\underline{w}_0}]\in Z^0({\overline{\FX}}_{\infty,y}).\]  
\end{cor}
\begin{proof}
The first assertion follows by quoting the regular sequence generating the maximal ideal of $U(t_{L})_{\fm_{L}}$. Note that
this sequence is regular for $\cM_{\infty,y}(\widetilde{M}(w\cdot {\bm\lambda}_\bh))$ and $\cM_{\infty,y}(\widetilde{M}(w\cdot {\bm\lambda}_\bh)^{\vee})$ by \cite[Proposition 6.5]{HHS},\;and is also regular for ${{\FX}}_{\infty,y}^{w^{\min}\underline{w}_0}$ if  Conjecture \ref{flatnessXconj} holds.\;The remainder of this corollary is now obvious.\;
\end{proof}

The following proposition compare $m_{y,I,w}$ with $m_{y,w}$ and study the sheaves $\cM_{\infty,y}(\widetilde{M}_{{I}}(w\cdot {\bm\lambda}_\bh))$ and $\cM_{\infty,y}(\widetilde{M}_{{I}}(w\cdot {\bm\lambda}_\bh)^{\vee})$,\;if Conjecture \ref{flatnessXconj} holds.\;
\begin{pro}\label{multiconj}Suppose that Conjecture \ref{flatnessXconj} holds.\;For ${I}\subset \Delta_n$ and $w\in \sW_{n,L}$,\;the sheaves $\cM_{\infty,y}(\widetilde{M}_{{I}}(w\cdot {\bm\lambda}_\bh))$ and $\cM_{\infty,y}(\widetilde{M}_{{I}}(w\cdot {\bm\lambda}_\bh)^{\vee})$ are generally free of rank $m_{y,w}$ over their support.\;In particular,\;the integer $m_{y,I,w}$ appeared in Proposition \ref{schemesupport} is equal to $m_{y,w}$.\;
\end{pro}
\begin{proof}
	As ${{\FX}}^{\underline{I},w}_{\infty,y}$ is generically smooth for any $w$,\;we see that $\cM_{\infty,y}(M)$ is generically free of rank $r$ over its support,\;where $M$ equals to $\widetilde{M}_{{I}}(w\cdot {\bm\lambda}_\bh)$ or $\widetilde{M}_{{I}}(w\cdot {\bm\lambda}_\bh)^{\vee}$.\;As in the proof of \cite[Proposition 7.2,(7)]{HHS},\;there exists an  open subset $U$ in the regular locus of ${{\FX}}^{\underline{I},w}_{\infty,y}$ (as a scheme) such that
	the intersection of $U$ and the support of $\cM_{\infty,y}({L}(w\cdot {\bm\lambda}_\bh))$ is non-empty.\;Then $\cM_{\infty,y}(\widetilde{M}_{{I}}(w\cdot {\bm\lambda}_\bh))$ and thus $\cM_{\infty,y}({M}_{{I}}(w\cdot {\bm\lambda}_\bh))$ is locally free of rank $r$ over its support intersected with $U$.\;By Corollary \ref{corforcoherent},\;$\cM_{\infty,y}({L}(w'\cdot {\bm\lambda}_\bh))$ is not supported at every generic points of ${{\FZ}}_{\infty,y}^{w\underline{w}_0}$ if $w'>w$,\;and $\cM_{\infty,y}({L}(w\cdot {\bm\lambda}_\bh))$ has length $m_{y,w}$ at each generic point of ${{\FZ}}_{\infty,y}^{w\underline{w}_0}$.\;Since ${L}(w\cdot {\bm\lambda}_\bh)$ has multiplicity one in ${M}_{{I}}(w\cdot {\bm\lambda}_\bh)$,\;we have $m_{y,I,w}=m_{y,w}$.\;
\end{proof}

We let $m_y=\dim_{k(y)} \cM_{\infty,y}(L( {\bm\lambda}_\bh))\otimes k(y)=m_{y,1}$.\;Since we have an injection $M(w\cdot{\bm\lambda}_\bh)\hookrightarrow M({\bm\lambda}_\bh)$,\;we have $m_{y,w}\leq m_y$ for $w\in \sW_{n,L}$.\;We will show that $m_{y,w}=m_y$ in Proposition \ref{determinemulti},\;if Conjecture \ref{flatnessXconj} holds.\;



\begin{pro}\label{caseforw0coherentsheaf}If $w_{\cF}=\underline{w}_0$ (i.e.,\;$y$ is non-critical).\;Then $\cM_{\infty,y}({M}(w\underline{w}_0\cdot {\bm\lambda}_\bh))\neq 0$ if and only if $w=\underline{w}_0$,\;and $\cM_{\infty,y}({M}({\bm\lambda}_\bh))=\cM_{\infty,y}(L({\bm\lambda}_\bh))=m_yr_{w_0}[{\overline{\FX}}^{\bh-\mathrm{st}}_{\infty,y}]\in Z^0({\overline{\FX}}_{\infty,r_y})$,\;where ${\overline{\FX}}^{\bh-\mathrm{st}}_{\infty,y}:=\FX_{\overline{\rho}^{\fp}}^\Box\times \BU^g\times \widehat{(\FX_{\overline{r}}^{\Box,\bh-\mathrm{st}})}_{\rho_L}$.\; 
\end{pro}
\begin{proof}By \cite[Corollary 4.4,\;Lemma 4.6]{2019DINGSimple},\;$y=y_{w_{\cF}\underline{w}_0}=y_1$ does not have any other companion point (so that $\cM_{\infty,y}({M}(w\underline{w}_0\cdot {\bm\lambda}_\bh))\neq 0$ if and only if $w=\underline{w}_0$) and is a smooth point on  $X_{\fp}(\overline{\rho})$.\;Therefore,\;the sheaf $\cM_{\infty,y}({M}({\bm\lambda}_\bh))=\cM_{\infty,y}(L({\bm\lambda}_\bh))$ is free over its support of rank $m_y$.\;By  \cite[Section 5]{BSCONJ},\;the schematic support of $\cM_{\infty,y}(L({\bm\lambda}_\bh))$ is contained in the ${\overline{\FX}}^{\bh-\mathrm{st}}_{\infty,y}\subseteq {\overline{\FX}}_{\infty,y}$,\;which is smooth and irreducible of the same dimension as the support of $\cM_{\infty,y}(L({\bm\lambda}_\bh))$.\;Thus we get the last equality by Proposition \ref{descriptionzw0} $(3)$.\;This completes the proof.\;
\end{proof}

Recall the Bezrukavnikov's functor recollected in \cite[Section 7.2]{HHS},\;which
associates,\;to an object $M$ in BGG category $\cO_{\chi_{\lambda_{\bh}}}$,\;a Cohen-Macaulay coherent sheaf $\cB(M)$ on $X_L^{\wedge}$,\;where ${X}_L^{\wedge}$ is the completion of $X_L$ along the preimage of $(0,0)\in \cT_{L}:=\ft_{L}\times_{\ft_{L}/\sW_{n,L}}\ft_{L}$ in $X_L$.\;It satisfies the following properties:
\begin{itemize}
	\item[(a)] For all $w\in \sW_{n,L}$,\;there is an isomorphism $\cB({M}(w\cdot{\bm\lambda}_\bh)^{\vee})\cong \cO_{\overline{X}_w}$;
	\item[(b)] For all $w\in \sW_{n,L}$,\;there is an isomorphism $\cB({M}(w\cdot{\bm\lambda}_\bh))\cong \omega_{\overline{X}_w}$,\;where $\omega_{\overline{X}_w}$ is the dualizing sheaf of $\overline{X}_w$;
	\item[(c)] Recall $P(\underline{w}_0\cdot{\bm\lambda}_\bh)$ is the anti-dominant projective envelope.\;Then $\cB(P(\underline{w}_0\cdot{\bm\lambda}_\bh))$ is equal to the structure sheaf $\cO_{\overline{X}}$.\;
\end{itemize}
As in the proof of \cite[Corollary 7.5]{HHS},\;we have the diagram
\begin{equation}\label{Bezrukavnikov1}
	\xymatrix{
		& {{\FX}}^{\Box}_{\infty,y}
		\ar[dl]_{\pi} \ar[dr]_{W^{\flat}} \ar[drr]^{W} &   &  \\
		{{\FX}}_{\infty,y}	&  & \widehat{\cX}^{\flat}_{L,\widehat{y}} \ar[r]_{\iota^{\flat}} & \widehat{X}_{L,\widehat{y}}.}
\end{equation} 
By the proof of \cite[Corollary 7.5]{HHS},\;the Bezrukavnikov's functor $\cB$ induces a functor
\[\cB_y:\cO_{\chi_{\lambda_{\bh}}}\rightarrow \mathrm{Coh}({{\FX}}_{\infty,y})\]
such that for all $M\in \cO_{\chi_{\lambda_{\bh}}}$,\;the sheaf $\cB_y(M)$ is a Cohen-Macaulay sheaf.\;In precise,\;for any $M\in \cO_{\chi_{\lambda_{\bh}}}$,\;the sheaf $\cB(M)$ gives rise to a $G$-equivariant sheaf $\cB(M)^{\widehat{}}_{\widehat{y}}$ on $ \widehat{X}_{L,\widehat{y}}$,\;the pullback of $\cB(M)^{\widehat{}}_{\widehat{y}}$ along $W$ is a $G$-equivariant sheaf on ${{\FX}}^{\Box}_{\infty,y}$,\;and hence descends to a coherent sheaf $\cB_y(M)\in \mathrm{Coh}({{\FX}}_{\infty,y})$.\;In particular,\;we have 
\begin{lem}\label{lemBezrukavnikovvalue}For all $w\in \sW_{n,L}$,\;we have isomorphisms $\cB({M}(w\cdot{\bm\lambda}_\bh)^{\vee})\cong \widehat{\cO}^{\flat}_{\cX_{L,w},\widehat{y}}$ and $\cB(P(\underline{w}_0\cdot{\bm\lambda}_\bh))\cong \widehat{\cO}^{\flat}_{\cX_{L},\widehat{y}}$.\;
\end{lem}

\begin{rmk}
This functor is not exact in our case since $W$ is no longer flat.\;Note that
$W^{\ast}\cB(M)^{\widehat{}}_{x_{\pdr}}=W^{\flat,\ast}(\iota^{\flat})^{\ast}\cB(M)^{\widehat{}}_{\widehat{y}}$.\;Since the local model map $W^{\flat}$ is formally smooth,\;we just need to treat the $G$-equivariant sheaf $(\iota^{\flat})^{\ast}\cB(M)^{\widehat{}}_{\widehat{y}}$ on $\widehat{\cX}^{\flat}_{L,\widehat{y}}$.\;
\end{rmk}

\begin{rmk}\label{stackynotforfixsmooth}
	In general,\;the right functor $R\cB_y:\cO_{\chi_{\lambda_{\bh}}}\rightarrow D^b_{\mathrm{Coh}}({{\FX}}_{\infty,y})$ should
	given by \[R\FB_{y}({M})=RW^{!}\cB(M)^{\widehat{}}_{\widehat{y}}=W^{\flat,\ast}R(\iota^{\flat})^{!}\cB(M)^{\widehat{}}_{\widehat{y}},\;\]where $D^b_{\mathrm{Coh}}({{\FX}}_{\infty,y})$ is the (bounded) derived category of coherent sheaves on ${{\FX}}_{\infty,y}$,\;and $(-)^{!}$ is the upper shriek functor (see \cite[Tag 0A9Y]{Stack}).\;
\end{rmk}

For $M\in \cO^{\infty}_{\mathrm{alg}}$,\;we also consider the following functor (i.e.,\;we fix the smooth part ${\underline{\epsilon}_{\mathrm{sm}}}$ of $\underline{\epsilon}$)
\begin{equation}
	\cM_{\infty,y,\underline{\epsilon}_{\mathrm{sm}}}(M):=\homo_{U(\fg)}\Big(M,(\Pi_\infty^{R_\infty-\ana})^{U_0}\Big)[\fm_{r_y}^\infty][\fm_{\underline{\epsilon}_{\mathrm{sm}}}]^\vee,\;
\end{equation}
where $U_0$ is a compact open subgroup of $\bN(L)$.\;For $M\in \widetilde{\cO}^{\infty}_{\mathrm{alg}}$,\;we define
\begin{equation}
	\cM_{\infty,y,\underline{\epsilon}_{\mathrm{sm}}}(M):=\varprojlim_n\cM_{\infty,y}(M/\fm_I^n).\;
\end{equation}
In this case,\;the functor $\cM_{\infty,y,\underline{\epsilon}_{\mathrm{sm}}}(-)$ is only left exact on $\widetilde{\cO}^{\infty}_{\mathrm{alg}}$.\;The support of the coherent sheaf $\cM_{\infty,y,\underline{\epsilon}_{\mathrm{sm}}}(-)$ is contained in $\overline{\overline{\FX}}_{\infty,y}$.\; 
\begin{rmk}\label{stackyforfixsmooth}
Let $g:\Spec \widehat{\cO}^{\flat}_{\overline{X}_L,y}\cong \Spec{\widehat{\cO}}_{\overline{X}^0_{L},{y}}\rightarrow \Spec \widehat{\cO}_{\overline{X}_L,y}$ be the natural closed embedding given in Proposition \ref{localmodelforover2}.\;Define $R\overline{\cB}_y:\cO_{\chi_{\lambda_{\bh}}}\rightarrow D^b_{\mathrm{Coh}}({{\FX}}_{\infty,y})$  by $R\overline{\cB}_{y}({M})=R(g\circ W)^{!}\cB(M)^{\widehat{}}_{y}$.\;In spirit of the categorical approach $p$-adic Langlands correspondence,\;the functor $R\cM_{\infty,y,\underline{\epsilon}_{\mathrm{sm}}}(-)$ given by  $R\cM_{\infty,y,\underline{\epsilon}_{\mathrm{sm}}}(M):=R\homo_{U(\fg)}\Big(M,(\Pi_\infty^{R_\infty-\ana})^{U_0}\Big)[\fm_{r_y}^\infty][\fm_{\underline{\epsilon}_{\mathrm{sm}}}]^\vee$ should closely related to the ``local" functor $R\overline{\cB}_y(-)$.\;
\end{rmk}

\subsubsection{Proofs of the existences of companion points (constituents) for $L\neq \bQ_p$}\label{proofsL}

In this section,\;we will show that $\cM_{\infty,y}(\widetilde{M}(w\cdot {\bm\lambda}_\bh))\neq 0$,\;equivalently,\;$\cM_{\infty,y}(\widetilde{M}^{\Sigma_L\backslash J}_{{I}}(w\cdot {\bm\lambda}_\bh))\neq 0$  for any $J\subseteq \Sigma_L$ (in particular,\;$\cM_{\infty,y}({M}(w\cdot {\bm\lambda}_\bh))\neq 0$) if and only if $w\underline{w}_0\geq w_{\cF}$.\;

The key step is the following proposition,\;which is an analogue of \cite[Proposition 4.7]{wu2021local} (but in our setting).\;
\begin{pro}\label{prolocalcomapn}Assume that $x_{w\underline{w}_0}\in X_{\mathrm{tri}}(\overline{r})$ for any  $w_{\cF}<w$ and $w_{\cF}\neq \underline{w}_0$.\;Suppose that there exists a point $z\in \FX_{\overline{\rho}^{\fp}}^\Box\times \BU^g$ such that $(\iota_{\fp}(x_{w\underline{w}_0}),z)\in \iota_{\fp}\big(X_{\mathrm{tri}}(\overline{r})\big)\times
	\FX_{\overline{\rho}^{\fp}}^\Box\times \BU^g$ are in $X_{\fp}(\overline{\rho})(E)$.\;Then $(\iota_{\fp}(x_{w_{\cF}\underline{w}_0}),z)\in X_{\fp}(\overline{\rho})(E)$.\;
\end{pro}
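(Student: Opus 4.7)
The plan is to adapt the strategy of \cite[Proposition 4.7]{wu2021local} to our Steinberg (special) situation, using the local model results of Section \ref{sectionlocalmodelmainresult} in place of the generic-crystalline local models of \cite{breuil2019local}. Concretely, I would first choose, as in the last paragraph of the introduction, a simple root $\alpha$ of $\bG_{/E}$ with corresponding simple reflection $s_\alpha\in\sW_{n,\Sigma_L}$ and a standard parabolic $\bB_{/E}\subset\bP$ such that $s_\alpha w_{\cF}\underline{w}_0>w_{\cF}\underline{w}_0$ in the Bruhat order, $s_\alpha w_{\cF}\underline{w}_0(\bh)$ is strictly $\bP$-dominant, while $w_{\cF}\underline{w}_0(\bh)$ itself fails to be strictly $\bP$-dominant. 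Existence is ensured because $w_{\cF}\neq\underline{w}_0$: there is always a simple reflection moving $w_{\cF}\underline{w}_0$ up in Bruhat order, and after relabeling we can match it with an appropriate choice of $\bP$. By the hypothesis of the proposition applied to $w=s_\alpha w_{\cF}$, the point $y_{s_\alpha w_{\cF}\underline{w}_0}$ lies in $X_{\fp}(\overline{\rho})$; equivalently, the cycle $[\cL(s_\alpha w_{\cF}\underline{w}_0\cdot\lambda_{\bh})]$ on $X_{\fp}(\overline{\rho})_{w_{\cF}\underline{w}_0\cdot\lambda_{\bh}}$ is non-zero in a neighbourhood of $y_{w_{\cF}\underline{w}_0}$.

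Next I would translate the problem into a cycle-theoretic comparison on the completed local ring $\widehat{\cO}_{X_{\fp}(\overline{\rho})_{w_{\cF}\underline{w}_0\cdot\lambda_{\bh}},y_{w_{\cF}\underline{w}_0}}$, using the closed immersion (\ref{injpatchtotri1}) together with the comparison (\ref{comparepatchedandtrivar}) to import the local model $\widehat{\cX}^{\flat}_{L,x_{\pdr}}$ of Section \ref{sectionlocalmodelmainresult}. By Proposition \ref{localgeomertyonspecial} and the construction of the cycles $\FZ^{\flat}_{w'}$ in Section \ref{galoiscycles}, the underlying reduced subspace of the fibre of $\cM_\infty$ at $y_{w_{\cF}\underline{w}_0}$ coincides, up to the formally smooth pullback from the patched Galois side and the identification with the appropriate piece of $\widehat{\cX}^{\flat}_{L,x_{\pdr}}$, with $\bigcup_{w\geq w_{\cF}}\FZ^{\flat}_{w\underline{w}_0}$. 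This gives the decisive cycle identity
\[
 [\cL(s_\alpha w_{\cF}\underline{w}_0\cdot\lambda_{\bh})]\cup[\cL(w_{\cF}\underline{w}_0\cdot\lambda_{\bh})]
 =\FZ^{\flat}_{s_\alpha w_{\cF}\underline{w}_0}\cup\FZ^{\flat}_{w_{\cF}\underline{w}_0}
\]
as underlying closed subspaces in a neighbourhood of $y_{w_{\cF}\underline{w}_0}$, which is the exact analogue of (\ref{comparecycles}).

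The main obstacle is the final step: separating the two sides of this equality using the partial de Rham locus. Following \cite[Section 3.6]{wu2021local}, I would show that $[\cL(s_\alpha w_{\cF}\underline{w}_0\cdot\lambda_{\bh})]$ is $\bP$-partially de Rham, because $s_\alpha w_{\cF}\underline{w}_0(\bh)$ is strictly $\bP$-dominant (this is where the classicality along the $\bP$-direction at non-dominant infinitesimal characters translates into vanishing of the nilpotent operator on the relevant graded pieces of the triangulation — Section \ref{PARTdeRHAMCYCLES} gives exactly this input via $Z_{\bP_{\underline{I}/E},L}$ and Lemma \ref{strictlyiff}). On the other hand, since $w_{\cF}\underline{w}_0(\bh)$ is \emph{not} strictly $\bP$-dominant, Lemma \ref{strictlyiff} (applied to $w_{\cF}\underline{w}_0$) tells us that the Galois cycle $\FZ^{\flat}_{w_{\cF}\underline{w}_0}$ is not contained in $\Spec R^{\flat,\bP_{\underline{I}/E}}_{\rho_L,\cM_\bullet}$, hence is not contained in the $\bP$-partially de Rham locus in $X_{\fp}(\overline{\rho})$.

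Combining these two properties forces $\FZ^{\flat}_{w_{\cF}\underline{w}_0}\not\subseteq[\cL(s_\alpha w_{\cF}\underline{w}_0\cdot\lambda_{\bh})]$, so the displayed cycle identity is only possible if $[\cL(w_{\cF}\underline{w}_0\cdot\lambda_{\bh})]\neq 0$ in a neighbourhood of $y_{w_{\cF}\underline{w}_0}$. Using the description of the support of $\cL_{w_{\cF}\underline{w}_0\cdot\lambda_{\bh}}$ recalled in Section \ref{intropatcheigenvariety}, this is equivalent to $\homo_G\bigl(C(w_{\cF}\underline{w}_0),\Pi_\infty^{R_\infty-\ana}[\fm_{r_y}]\bigr)\neq 0$ at the given $z$, which in turn gives $(\iota_{\fp}(x_{w_{\cF}\underline{w}_0}),z)\in X_{\fp}(\overline{\rho})(E)$, as required. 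I expect the hardest technical point to be checking that the partial de Rham condition really does propagate from $[\cL(s_\alpha w_{\cF}\underline{w}_0\cdot\lambda_{\bh})]$ to every component containing $y_{w_{\cF}\underline{w}_0}$; this relies on the irreducibility statement in Proposition \ref{localgeomertyonspecial}(d) and the unibranch description of $\widehat{\cX}^{\flat}_{L,x_{\pdr}}$ established in Section \ref{sectionlocalmodelmainresult}, which replaces the formally smooth local models used in the generic crystalline case.
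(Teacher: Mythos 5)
Your high-level plan mirrors the sketch given in the introduction (choose $s_\alpha$, choose $\bP$ so that $s_\alpha w_{\cF}\underline{w}_0(\bh)$ is strictly $\bP$-dominant but $w_{\cF}\underline{w}_0(\bh)$ is not, and separate the cycles via the $\bP$-partially de Rham locus). However, the step you describe as ``this gives the decisive cycle identity'' is precisely the content of the proposition, and it is not a consequence of Proposition \ref{localgeomertyonspecial} and the construction of the $\FZ^{\flat}_{w'}$ alone. The paper's actual argument establishes the needed comparison via a short exact sequence of $\Hom_{U(\fg)}$-spaces at the point $z_{w\underline{w}_0}$ with $w=s_\alpha w_{\cF}$, namely
\[
0\to\Hom_{U(\fg)}\bigl(L(w\underline{w}_0\cdot\lambda_{\bh}),V\bigr)\to\Hom_{U(\fg)}\bigl(M(w\underline{w}_0\cdot\lambda_{\bh}),V\bigr)\to\Hom_{U(\fg)}\bigl(L(w_{\cF}\underline{w}_0\cdot\lambda_{\bh}),V\bigr)\to 0,
\]
for $V=(\Pi_\infty^{R_\infty-\ana})^{U_0}[\fm_{r_{z_{w\underline{w}_0}}}^\infty][\fm^\infty_{\underline{\epsilon},\mathrm{sm}}]$, and then argues by contradiction: assuming the middle term equals the first forces $\Spec\widehat{\cO}_{X_{\fp}(\overline{\rho})_{w\underline{w}_0\cdot\lambda_{\bh}}^{\red},y_{w\underline{w}_0}}=\Spec\widehat{\cO}_{Y_{\fp}(\overline{\rho})_{w\underline{w}_0\cdot\lambda_{\bh}}^{\red},y_{w\underline{w}_0}}$, and Proposition \ref{factorRqrho} (which you never invoke) then shows this local ring factors through $R^{\flat,\bP_{\underline{I}/E}}_{\rho_L,\cM_\bullet}$. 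The contradiction comes because $\FZ^{\flat}_{w_{\cF}\underline{w}_0}$ lies in this local ring (it is one of the two irreducible pieces of $\Spec\overline{R}^{\flat,w}_{\rho_L,\cM_\bullet}$) but is not inside the $\bP$-de Rham locus by Lemma \ref{strictlyiff}. Without the BGG sequence and Proposition \ref{factorRqrho}, your cycle comparison is unsupported.

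A secondary issue is the choice of base point: you set up the comparison ``on the completed local ring $\widehat{\cO}_{X_{\fp}(\overline{\rho})_{w_{\cF}\underline{w}_0\cdot\lambda_{\bh}},y_{w_{\cF}\underline{w}_0}}$'', but $y_{w_{\cF}\underline{w}_0}\in X_{\fp}(\overline{\rho})$ is exactly what is being proved, so this local ring is not available at the start. The entire argument must run at $y_{w\underline{w}_0}$ with $w=s_\alpha w_{\cF}$, which is on the eigenvariety by hypothesis, and then conclude about $\Hom_{U(\fg)}(L(w_{\cF}\underline{w}_0\cdot\lambda_{\bh}),V)$ via the exact sequence above. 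Finally, a small misattribution: the $\bP$-partially de Rham property of $[\cL(s_\alpha w_{\cF}\underline{w}_0\cdot\lambda_{\bh})]$ is supplied by Proposition \ref{factorRqrho}, not by Lemma \ref{strictlyiff} (which concerns the Galois cycles $\FZ^{\flat}$, and which you do use correctly for the non-containment of $\FZ^{\flat}_{w_{\cF}\underline{w}_0}$).
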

\begin{proof}
We adapt the proof of \cite[Proposition 4.7]{wu2021local} to our case.\;For $w$ such that $w_{\cF}\leq w$,\;we write $z_{w\underline{w}_0}:=(\iota_{\fp}(x_{w\underline{w}_0}),z)$.\;By \cite[Lemma 2.26]{wu2021local},\;there exists a simple root $\alpha_\tau$ (for one $\tau\in\Sigma_L$) of $\GLN_{n,L}$ and a standard parabolic subgroup $\bP_{L}$ of $\GLN_{n,L}$  containing $\bB_{L}$ such that $w\underline{w}_0(\bh)$ is strictly $\bP_{L}$-dominant (more precisely,\;strictly $\bP_{\tau}$-dominant) and $w_{\cF}\underline{w}_0(\bh)$ is not strictly $\bP_{L}$-dominant (not strictly $\bP_{\tau}$-dominant),\;where $w:=s_{\alpha_\tau}w_{\cF}$ and $\lg(w)=\lg(w_{\cF})+1$.\;By assumption,\;we have 
$\cM_{\infty,y}(M(w\underline{w}_0\cdot \lambda_{\bh}))\neq 0$ (and equivalently,\;$\cM_{\infty,y}(\widetilde{M}^{\Sigma_L\backslash\tau}(w\underline{w}_0\cdot \lambda_{\bh}))\neq 0$).\;Similar to the proof of \cite[Proposition 4.7]{wu2021local},\;we get an exact sequence:
\begin{equation}
	\begin{aligned}
		0\rightarrow\cM_{\infty,y}(\widetilde{M}^{\Sigma_L\backslash\tau}(w_{\Sigma_L\backslash\tau}\underline{w}_{0,\Sigma_L\backslash\tau}&\cdot \lambda_{\bh})\otimes_EL(w_{\cF,\tau}{w}_0\cdot \lambda_{\bh}))
		\rightarrow\cM_{\infty,y}(\widetilde{M}^{\Sigma_L\backslash\tau}(w\underline{w}_0\cdot \lambda_{\bh}))\\ \rightarrow&\cM_{\infty,y}(\widetilde{M}^{\Sigma_L\backslash\tau}(w_{\Sigma_L\backslash\tau}\underline{w}_{0,\Sigma_L\backslash\tau}\cdot \lambda_{\bh})\otimes_EL(w_\tau{w}_0\cdot \lambda_{\bh}))\rightarrow 0.
	\end{aligned}
\end{equation}	
Therefore,\;to show that $\cM_{\infty,y}(L(w_{\cF}\underline{w}_0\cdot \lambda_{\bh}))\neq 0$,\;it suffices to show that the natural surjection \[\cM_{\infty,y}(\widetilde{M}^{\Sigma_L\backslash\tau}(w\underline{w}_0\cdot \lambda_{\bh})) \twoheadrightarrow\cM_{\infty,y}(\widetilde{M}^{\Sigma_L\backslash\tau}(w_{\Sigma_L\backslash\tau}\underline{w}_{\Sigma_L\backslash\tau}\cdot \lambda_{\bh})\otimes_EL(w_\tau{w}_0\cdot \lambda_{\bh}))\]
(equivalently,\;$\cM_{\infty,y}(L(w\underline{w}_0\cdot \lambda_{\bh}))\twoheadrightarrow
\cM_{\infty,y}(M(w\underline{w}_0\cdot \lambda_{\bh}))$) is not an isomorphism.\;We prove it by contradiction.\;Assume that the above surjection is an isomorphism.\;Then this isomorphism shows that (the same as in the proof of \cite[Proposition 4.7]{wu2021local}) 
\begin{equation}\label{twospeccontain}
{\overline{\FX}}_{\infty,r_y}^{\{\tau\},w\underline{w}_0}=\Supp\cM_{\infty,y}(\widetilde{M}^{\Sigma_L\backslash\tau}(w_{\Sigma_L\backslash\tau}\underline{w}_{0,\Sigma_L\backslash\tau}\cdot \lambda_{\bh})\otimes_EL(w_\tau{w}_0\cdot \lambda_{\bh}))\subseteq {\overline{\FX}}_{\infty,r_y}^{I_{\tau}}\;(\text{viewed as schemes}),
\end{equation}
where $\bP_\tau=\bP_{I_\tau}$ for some $I_\tau\subseteq \Delta_n$.\;We show that this is impossible.\;The underlying topological space of ${\overline{\FX}}_{\infty,y}^{\{\tau\},w\underline{w}_0}$ is equal to  the union of non-empty cycles denoted by $\FZ^{\{\tau\},w\underline{w}_0}_{\infty,y}$ and $\FZ^{\{\tau\},w_{\cF}\underline{w}_0}_{\infty,y}$.\;But by Lemma \ref{strictlyiff},\;$\FZ^{\{\tau\},w_{\cF}\underline{w}_0}_{\infty,y}$ is not contained in ${\overline{\FX}}_{\infty,r_y}^{I_{\tau}}$ since  $w_{\cF}\underline{w}_0(\bh)$ is not strictly $\bP_{L}$-dominant (not strictly $\bP_{\tau}$-dominant),\;which lead a contradiction to (\ref{twospeccontain}).\;
\end{proof}


We are ready to prove the main theorem on global companion points.\;From now on,\;we set $\beta:=\alpha q_L^{\frac{n-1}{2}}$.\;

\begin{pro}\label{equicompanpointconsti}Assume $x_{w_{\cF}\underline{w}_0}\in X_{\mathrm{tri}}(\overline{r})$.\;If $y_{w\underline{w}_0}\in X_{\fp}(\overline{\rho})(E)$ for some $w\leq w_{\cF}\underline{w}_0$,\;then  $$\homo_{G}\Big(\Pi_{(w\underline{w}_0,1)}(\beta),\Pi_\infty^{R_\infty-\ana}[\fm_{r_y}^\infty]\Big)\neq 0.$$
\end{pro}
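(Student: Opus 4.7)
The plan is to reduce the target $\Hom$-non-vanishing to a cycle-theoretic statement on the patched eigenvariety, and then to deduce that statement from the hypothesis via a cycle matching and the partially de Rham property. First, by the Orlik-Strauch parabolic compatibility one identifies $\Pi_{(w\underline{w}_0,1)}(\beta)\cong\cF^G_{\ob(L)}(\overline{L}(-w\underline{w}_0\cdot\lambda_\bh),\delta_\bB^{1/2}\unr(\beta))=\bI_{w\underline{w}_0}$, and the Breuil-Emerton adjunction recalled at the end of Section \ref{intropatcheigenvariety} (or \cite[(4.1)]{wu2021local}) identifies the target non-vanishing with $y_{w\underline{w}_0}\in Y_\fp(\overline{\rho})_{w\underline{w}_0\cdot\lambda_\bh}$, i.e.\ with the non-vanishing of the coherent sheaf $\cL_{w\underline{w}_0\cdot\lambda_\bh}$ at $y_{w\underline{w}_0}$.

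Second, the hypothesis $y_{w\underline{w}_0}\in X_\fp(\overline{\rho})(E)$ provides, via Proposition \ref{basicpropertyeigen}(1), only the non-vanishing of $\cM_\infty$ at $y_{w\underline{w}_0}$, equivalent to
\[\Hom_{U(\fg)}\bigl(M(w\underline{w}_0\cdot\lambda_\bh),(\Pi_\infty^{R_\infty-\ana})^{U_0}\bigr)[\fm_{r_y}^\infty][\fm^\infty_{\underline{\epsilon}_\mathrm{sm}}]\neq 0\]
for the Verma module $M$; the crux is to upgrade this Verma-level contribution to the simple quotient $L(w\underline{w}_0\cdot\lambda_\bh)$. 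To do so, I compare cycles on $\Spf\widehat{\cO}_{X_\fp(\overline{\rho}),y_{w\underline{w}_0}}$: the cycle of $\cM_\infty$ pulls back along (\ref{injpatchtotri1}) and (\ref{comparepatchedandtrivar}) to a linear combination of the local-model cycles $\FZ^\flat_{w'\underline{w}_0}$ from Section \ref{galoiscycles} via the identity (\ref{r1Breuilsecond}), while $[\cL_{w\underline{w}_0\cdot\lambda_\bh}]$ picks out a sub-combination determined by the Kazhdan-Lusztig multiplicities.

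Third, following the argument sketched after (\ref{comparecycles}) in the introduction, I pick a simple reflection $s_\alpha$ and a standard parabolic $\bP\supseteq\bB$ of $\GLN_{n/E}$ (via \cite[Lemma 2.26]{wu2021local}) such that $s_\alpha w\underline{w}_0(\bh)$ is strictly $\bP$-dominant while $w\underline{w}_0(\bh)$ is not. By Proposition \ref{factorRqrho} applied at $y_{s_\alpha w\underline{w}_0}$, the cycle $[\cL_{s_\alpha w\underline{w}_0\cdot\lambda_\bh}]$ lies inside the $\bP$-partially de Rham locus $\Spec R^{\flat,\bP_{\underline{I}/E}}_{\rho_L,\cM_\bullet}$, while Lemma \ref{strictlyiff} rules $\FZ^\flat_{w\underline{w}_0}$ out of this locus. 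Combined with the BGG-induced cycle equality
\[[\cL_{s_\alpha w\underline{w}_0\cdot\lambda_\bh}]\cup[\cL_{w\underline{w}_0\cdot\lambda_\bh}]=\FZ^\flat_{s_\alpha w\underline{w}_0}\cup\FZ^\flat_{w\underline{w}_0},\]
this forces $\FZ^\flat_{w\underline{w}_0}\subseteq[\cL_{w\underline{w}_0\cdot\lambda_\bh}]$, and in particular $[\cL_{w\underline{w}_0\cdot\lambda_\bh}]\neq 0$ at $y_{w\underline{w}_0}$, which is exactly what we want.

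The main obstacle will be the delicate cycle bookkeeping in our setting: the local model $\widehat{\cX}^\flat_{L,x_\pdr}$ is no longer formally smooth over the Galois deformation space (Proposition \ref{propertyofxrhombullet}) because of the mixed $\Sigma_L$-component equations cutting out $\widehat{\cX}^\flat$ from $\widehat{\cX}$ (Section \ref{geooflocalmodel}), so the cycle matching of \cite{wu2021local} must be adapted using the partial formal smoothness $\iota_J$ of Proposition \ref{Jversionwdfj}; in addition, Proposition \ref{prolocalcomapn} is invoked at the inductive boundary to guarantee $y_{s_\alpha w\underline{w}_0}\in X_\fp(\overline{\rho})$ so that Proposition \ref{factorRqrho} can be applied there.
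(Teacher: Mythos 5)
Your proposal takes a genuinely different route from the paper, and as stated it contains a gap. The paper's proof of Proposition~\ref{equicompanpointconsti} (i.e.\ Proposition~\ref{equicompanpointconsti123}) is a direct Zariski-closure argument: it observes that $y_{w\underline{w}_0}$ lies in the image, under the closed immersion $\iota_{\bh,w\underline{w}_0}$, of the closure $\overline{\widetilde{\FX}_{\mathrm{tri},\cP_{\min},w}^{\bh-\mathrm{st}}(\overline{r})}$ of a semistable-deformation stratum (Proposition~\ref{semistableZarisikeargsecond}); on the Zariski-open dense part of that stratum the corresponding points on $X_{\fp}(\overline{\rho})$ are non-critical (they satisfy $w_z=w_{\cF}$), so for them the Verma and simple Hom-spaces coincide and they lie in $Y_{\fp}(\overline{\rho})_{w\underline{w}_0\cdot\lambda_\bh}$; since $Y_{\fp}(\overline{\rho})_{w\underline{w}_0\cdot\lambda_\bh}$ is closed, the entire closure and in particular $y_{w\underline{w}_0}$ lies in it. Your plan instead tries to run the cycle-matching/$\bP$-de-Rham argument, which in the paper is the machine for Proposition~\ref{prolocalcomapn} (producing the \emph{boundary} companion point $y_{w_{\cF}\underline{w}_0}$ from all higher ones), not for Proposition~\ref{equicompanpointconsti}.

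The gap is in the third and final steps. To invoke Proposition~\ref{factorRqrho} at $y_{s_\alpha w\underline{w}_0}$ you need to know that $y_{s_\alpha w\underline{w}_0}\in X_{\fp}(\overline{\rho})$. But $\lg(s_\alpha w)=\lg(w)+1$, so $s_\alpha w>w$ and $y_{s_\alpha w\underline{w}_0}$ is a \emph{higher} companion point (closer to the strictly dominant one). The hypothesis only gives $y_{w\underline{w}_0}\in X_{\fp}(\overline{\rho})$, and Proposition~\ref{prolocalcomapn} runs in the opposite direction --- it deduces the lowest companion point $y_{w_{\cF}\underline{w}_0}$ from the existence of all $y_{w'\underline{w}_0}$ with $w'>w_{\cF}$; it cannot produce $y_{s_\alpha w\underline{w}_0}$ from $y_{w\underline{w}_0}$. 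To supply that you would need the full Theorem~\ref{globalcompanion}, which in the paper is proved \emph{after} and \emph{using} Proposition~\ref{prolocalcomapn} together with a Zariski-closure step; your proof would then be circular, or at best would quietly replicate the paper's Zariski-closure argument inside itself. In addition, the two-term cycle identity
\[
[\cL_{s_\alpha w\underline{w}_0\cdot\lambda_{\bh}}]\cup[\cL_{w\underline{w}_0\cdot\lambda_{\bh}}]=\FZ^{\flat}_{s_\alpha w\underline{w}_0}\cup\FZ^{\flat}_{w\underline{w}_0}
\]
quoted from the introduction is established only in the boundary case $w=w_{\cF}$ (where $M(s_\alpha w_{\cF}\underline{w}_0\cdot\lambda_{\bh})$ has exactly two Jordan--H\"older factors and the right-hand side of (\ref{r1Breuilsecond}) collapses to two terms); for a general $w\geq w_{\cF}$ the cycle identity (\ref{r1Breuilsecond}) is a full sum over all $w''$ with $w_{\cF}\leq w''\leq w$ weighted by $b_{w,w''}$, and extracting the single term $[\cL_{w\underline{w}_0\cdot\lambda_{\bh}}]$ from it requires the matching of \emph{all} the cycles, not just two of them. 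I recommend abandoning the cycle argument here and following the paper's route: embed $y_{w\underline{w}_0}$ via $\iota_{\bh,w\underline{w}_0}$, verify the appearance of the constituent on the open dense non-critical stratum, and take Zariski closure using that $Y_{\fp}(\overline{\rho})_{w\underline{w}_0\cdot\lambda_\bh}$ is closed.
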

By \cite[(4.1)]{wu2021local},\;we see that
\begin{equation}
	\begin{aligned}
		\homo_{G}(\Pi_{(w\underline{w}_0,1)}(\beta),\Pi_\infty^{R_\infty-\ana}[\fm_{r_y}^\infty])\neq 0\Leftrightarrow	\cM_{\infty,y}(L(w\cdot \lambda_{\bh}))\neq 0\Leftrightarrow y_{w\underline{w}_0}\in Y_{\fp}(\overline{\rho})_{w\underline{w}_0\cdot \lambda_{\bh}}.\;
	\end{aligned}
\end{equation}
Therefore Proposition \ref{equicompanpointconsti} is equivalent to
\begin{pro}\label{equicompanpointconsti123}Assume  $x_{w_{\cF}\underline{w}_0}\in X_{\mathrm{tri}}(\overline{r})$.\;If $y_{w\underline{w}_0}\in X_{\fp}(\overline{\rho})(E)$,\;then $\cM_{\infty,y}(L(w\cdot \lambda_{\bh}))\neq 0$.\;
\end{pro}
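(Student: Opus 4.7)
The plan is to establish the equivalent statement Proposition \ref{equicompanpointconsti} by descending induction on $w$ with respect to the Bruhat order on $\sW_{n,\Sigma_L}$, starting at $w = \underline{w}_0$ and descending toward $w_{\cF}$. The base case $w = \underline{w}_0$ corresponds to the strictly dominant point $y_{\underline{w}_0\underline{w}_0} = y$ fixed in Hypothesis \ref{hyongaloisrep}: since $\rho$ is classical and $\rho_L$ is Steinberg of full monodromy rank, classical local-global compatibility combined with the classical local Langlands correspondence for the Steinberg representation produces a nonzero map $\Pi_{(\underline{w}_0,1)}(\beta) \to \Pi_\infty^{R_\infty-\ana}[\fm_{r_y}]$, so $\cL(\lambda_\bh) \neq 0$ at $y$.

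For the inductive step, fix $w < \underline{w}_0$ with $y_{w\underline{w}_0} \in X_\fp(\overline{\rho})(E)$, and assume the conclusion for every $w''$ strictly greater than $w$ in the Bruhat order (for which the corresponding global companion point exists). I argue by contradiction: if $\cL(w\underline{w}_0 \cdot \lambda_\bh) = 0$, then the natural surjection $\cM(w\underline{w}_0 \cdot \lambda_\bh) \twoheadrightarrow \cL(w\underline{w}_0 \cdot \lambda_\bh)$ at $y_{w\underline{w}_0}$ is the zero map, forcing the schematic equality
\[\Spec \widehat{\cO}_{X_\fp(\overline{\rho})_{w\underline{w}_0 \cdot \lambda_\bh}^{\red}, y_{w\underline{w}_0}} = \Spec \widehat{\cO}_{Y_\fp(\overline{\rho})_{w\underline{w}_0 \cdot \lambda_\bh}^{\red}, y_{w\underline{w}_0}}.\]
Using \cite[Lemma 2.26]{wu2021local} I pick a simple reflection $s_\alpha$ of $\bG_{/E}$ (adapted to some $\tau \in \Sigma_L$) and a parabolic $\bB_{/E} \subsetneq \bP_{\underline{I}/E}$ such that $s_\alpha w \underline{w}_0 \cdot \lambda_\bh$ is strictly $\bP_{\underline{I}/E}$-dominant while $w\underline{w}_0 \cdot \lambda_\bh$ is not, with $\lg(s_\alpha w) > \lg(w)$. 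By the inductive hypothesis $\cL(s_\alpha w \underline{w}_0 \cdot \lambda_\bh) \neq 0$ at $y_{s_\alpha w \underline{w}_0}$, and Proposition \ref{PartiallydeRham} then yields the $\bP_{\underline{I}/E}$-partial de Rhamness of the triangulation at that point.

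Combining this partial de Rham input with Proposition \ref{factorRqrho} applied to the equality of spectra above, I deduce that the canonical map $R_{\rho_L} \to \widehat{\cO}_{Y_\fp(\overline{\rho})_{w\underline{w}_0 \cdot \lambda_\bh}^{\red}, y_{w\underline{w}_0}}$ factors through the partially de Rham quotient $R^{\flat,\bP_{\underline{I}/E}}_{\rho_L,\cM_\bullet}$, producing a closed inclusion $\Spec \widehat{\cO}_{X_\fp(\overline{\rho})_{w\underline{w}_0 \cdot \lambda_\bh}^{\red}, y_{w\underline{w}_0}} \hookrightarrow \Spec R^{\flat,\bP_{\underline{I}/E}}_{\rho_L,\cM_\bullet}$. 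But the underlying topological space of $\Spec \overline{R}^{\flat,w\underline{w}_0}_{\rho_L,\cM_\bullet}$ contains the Galois cycle $\FZ^{\flat}_{w\underline{w}_0}$ as a component, and by Lemma \ref{strictlyiff} this cycle is \emph{not} contained in $\Spec R^{\flat,\bP_{\underline{I}/E}}_{\rho_L,\cM_\bullet}$ precisely because $w\underline{w}_0 \cdot \lambda_\bh$ fails to be strictly $\bP_{\underline{I}/E}$-dominant. This is the desired contradiction, so $\cL(w\underline{w}_0 \cdot \lambda_\bh) \neq 0$.

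The hard part will be keeping the induction coherent: the inductive step requires $\cL(s_\alpha w\underline{w}_0 \cdot \lambda_\bh) \neq 0$ and hence the existence of the global companion point $y_{s_\alpha w\underline{w}_0} \in X_\fp(\overline{\rho})$, yet this companion-point existence is itself part of the output of Theorem \ref{globalcomppoitnproof}. The way around this is to interleave the companion-point and companion-constituent inductions, bootstrapping from Proposition \ref{prolocalcomapn1} (local companion points on the trianguline variety) through Proposition \ref{prolocalcomapn} (the key patched-eigenvariety step) and matching the cycle decomposition $\FC^{\flat}_{w'} = \sum a_{w',w''} \FZ^{\flat}_{w''}$ of Section \ref{galoiscycles} against the Jordan--H\"older cycles on the locally analytic side. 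Making this matching precise, so that the contribution corresponding to $\cL(w\underline{w}_0 \cdot \lambda_\bh)$ is genuinely supported on the component $\FZ^{\flat}_{w\underline{w}_0}$ rather than any $\FZ^{\flat}_{w''}$ for $w'' > w$, is expected to be the main technical hurdle.
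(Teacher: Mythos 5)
Your proposal takes a genuinely different route from the paper, and the difference matters. You set up a descending Bruhat-order induction and, at each step, invoke the partial-de-Rham/cycle-comparison contradiction (the mechanism of \cite[Proposition 4.7]{wu2021local}, which in this paper powers Proposition \ref{prolocalcomapn}). The paper instead proves Proposition \ref{equicompanpointconsti123} by a Zariski-closure argument modelled on \cite[Proposition 4.9]{wu2021local}: since $y_{w\underline{w}_0}\in X_{\fp}(\overline{\rho})$ forces $w\geq w_{\cF}$, the point $y_{w\underline{w}_0}$ lies (via $\iota_{\bh,w\underline{w}_0}$, using Hypothesis \ref{appenhypothesis} and Proposition \ref{semistableZarisikeargsecond}) in the image of the closure $\overline{\widetilde{\FX}_{\mathrm{tri},\cP_{\min},w}^{\bh-\mathrm{st}}(\overline{r})}$; on a Zariski-dense open locus of that closure the points are in the most non-critical stratum and automatically satisfy $\cL\neq 0$; and $Y_{\fp}(\overline{\rho})_{w\underline{w}_0\cdot\lambda_{\bh}}$ is Zariski-closed, so the point $y_{w\underline{w}_0}$ inherits $\cL(w\underline{w}_0\cdot\lambda_{\bh})\neq 0$. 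No cycle comparison, no simple-reflection bookkeeping, and crucially no need for any companion-point existence beyond the hypothesis.

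The gap in your proposal is precisely the one you half-acknowledge at the end: the inductive step at $w$ requires $\cL(s_\alpha w\underline{w}_0\cdot\lambda_{\bh})\neq 0$ at $y_{s_\alpha w\underline{w}_0}$, and hence requires $y_{s_\alpha w\underline{w}_0}\in X_{\fp}(\overline{\rho})$. But the hypothesis of Proposition \ref{equicompanpointconsti123} only supplies $y_{w\underline{w}_0}\in X_{\fp}(\overline{\rho})$; the implication $w\geq w_{\cF}\Longrightarrow y_{w'\underline{w}_0}\in X_{\fp}(\overline{\rho})$ for all $w'\geq w$ is the ``if'' direction of Theorem \ref{globalcompanion}, which has not yet been proved at this point in the paper. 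Interleaving the two inductions is indeed the way Theorem \ref{globalcompanion} is proved, but then your argument is not a proof of Proposition \ref{equicompanpointconsti123} on its own hypotheses; it would be a joint proof of \ref{equicompanpointconsti123} and \ref{globalcompanion}, and to make it run cleanly you would still have to smuggle in a version of the semistable Zariski-closure step to seed and propagate the induction (that step is how the paper's Theorem \ref{globalcompanion} re-establishes companion points after descending by one length). In other words, your plan reuses the engine of Proposition \ref{prolocalcomapn} a second time but cannot avoid the Zariski-closure ingredient; the paper simply isolates that ingredient as the entire proof of Proposition \ref{equicompanpointconsti123} and keeps the cycle-comparison engine for Proposition \ref{prolocalcomapn} only. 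A secondary point: your base case $w=\underline{w}_0$ is fine, but the paper's base case is the opposite endpoint $w=w_{\cF}$, which is immediate because at that point the Hodge flag and the triangulation flag are aligned and $\cL$ agrees with $\cM$; this makes the paper's downward-free argument possible.
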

\begin{rmk}
	Note that $\cM_{\infty,y}(L(w\cdot \lambda_{\bh}))\neq 0$ implies $y_{w\underline{w}_0}\in  X_{\fp}(\overline{\rho})$ and $x_{w\underline{w}_0}\in X_{\mathrm{tri}}(\overline{r})$.\;Therefore this proposition is stronger than predicting the set of companion points.\;But in our Steinberg case,\;$\cM_{\infty,y}(L(w\cdot \lambda_{\bh}))\neq 0$ is still slightly weaker than the locally analytic scole conejecture (see Theorem \ref{detlamedaSOCLE},\;which is not far from Proposition \ref{equicompanpointconsti}).\;
\end{rmk}
\begin{proof}[Proof of Proposition \ref{equicompanpointconsti123}]
We follow the route of the proof in \cite[Proposition 4.9]{wu2021local}.\;This proposition holds clearly in the case when $w=w_{\cF}$.\;Suppose that $y_{w\underline{w}_0}\in \FX^{\fp}_{w\underline{w}_0}\times \FX_{\mathrm{tri},\cP_{\min}}^{\bh-\mathrm{st}}(\overline{r})\times \BU^g$ for some irreducible component $\FX^{\fp}_{w\underline{w}_0}\subset \FX_{\overline{\rho}^{\fp}}^\Box$.\;Recall that we have a closed immersion:
\[\iota_{\bh,w\underline{w}_0}:\overline{\widetilde{\FX}_{\mathrm{tri},\cP_{\min},w}^{\bh-\mathrm{st}}(\overline{r})}\hookrightarrow X_{\mathrm{tri}}(\overline{r}).\]
Then the point $y_{w\underline{w}_0}$ is in the image of $\iota_{\bh,w\underline{w}_0}$ since $w\geq w_{\cF}$.\;We can take an affinoid neighbourhood $U$ of $y_{w\underline{w}_0}$ in $X$.\;Let $V$ be a sufficiently mall open affinoid $V\subset \iota_{\bh,w\underline{w}_0}^{-1}(U)$ such that $y_{w\underline{w}_0}\in \iota_{\bh,w\underline{w}_0}(V)$.\;Note that $V\cap \widetilde{\FX}_{\mathrm{tri},\cP_{\min},w\underline{w}_0}^{\bh-\mathrm{st}}(\overline{r})$ is Zariski open dense in $V$.\;Since the any point $z$ in $\big(\mathrm{id}\times(\iota_{\fp}\circ\iota_{\bh,w\underline{w}_0})\times\mathrm{id}\big)(\FU^{\fp}\times\widetilde{\FX}_{\mathrm{tri},\cP_{\min},w}^{\bh-\mathrm{st}}(\overline{r})\times \BU^g)$ satisfies that $w_z=w_{\cF}$,\;we deduce that 
\begin{equation}
	\begin{aligned}
		\FU^{\fp}\times\widetilde{\FX}_{\mathrm{tri},\cP_{\min},w}^{\bh-\mathrm{st}}(\overline{r})\times \BU^g\subset \big(\mathrm{id}\times(\iota_{\fp}\circ\iota_{\bh,w\underline{w}_0})\times\mathrm{id}\big)^{-1}\big(Y_{\fp}(\overline{\rho})_{w\underline{w}_0\cdot \lambda_{\bh}}\big) .\;
	\end{aligned}
\end{equation}
for any $l\leq \lg(w)$.\;Therefore,\;we deduce:
\[\FU^{\fp}\times \overline{\widetilde{\FX}_{\mathrm{tri},\cP_{\min},w}^{\bh-\mathrm{st}}(\overline{r})}\times \BU^g \subset \big(\mathrm{id}\times(\iota_{\fp}\circ\iota_{\bh,w\underline{w}_0})\times\mathrm{id}\big)^{-1}\big( Y_{\fp}(\overline{\rho})_{w\underline{w}_0\cdot \lambda_{\bh}}\big).\;\]
This shows that the companion point $y_{w\underline{w}_0}$ is in $ Y_{\fp}(\overline{\rho})_{w\underline{w}_0\cdot \lambda_{\bh}}$.\;
\end{proof}

\begin{thm}\label{globalcompanion}
Assume  Hypothesis \ref{TWhypo},\;Hypothesis \ref{hyongaloisrep} and {Hypothesis} \ref{appenhypothesis} and $x_{w_{\cF}\underline{w}_0}\in X_{\mathrm{tri}}(\overline{r})$.\;Then $y_{w\underline{w}_0}\in X_{\fp}(\overline{\rho})(E)$ if and only if $w\geq w_{\cF}$.\;
\end{thm}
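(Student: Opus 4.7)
The plan is to treat the two implications separately. For the ``only if'' direction, suppose $y_{w\underline{w}_0}\in X_\fp(\overline{\rho})(E)$. Its image under the morphism (\ref{injpatchtotri1}) is $\iota_\fp^{-1}\iota_\fp(x_{w\underline{w}_0})=x_{w\underline{w}_0}\in X_{\mathrm{tri}}(\overline{r})$, and Proposition \ref{prolocalcomapn1} (whose hypothesis $x_{w_\cF \underline{w}_0}\in X_{\mathrm{tri}}(\overline{r})$ is assumed) then forces $w\geq w_\cF$.

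The ``if'' direction is the real content, and the plan is descending induction along the Bruhat order starting at $w=\underline{w}_0$, where Hypothesis \ref{hyongaloisrep} together with Proposition \ref{basicpropertyeigen}(5) gives $y=y_{\underline{w}_0\underline{w}_0}\in Y(U^\fp,\overline{\rho})\subseteq X_\fp(\overline{\rho})$. For the inductive step, fix $w$ with $w_\cF\leq w<\underline{w}_0$ and assume $y_{w'\underline{w}_0}\in X_\fp(\overline{\rho})$ for every $w'>w$. Using \cite[Lemma 2.26]{wu2021local} choose a simple reflection $s_\alpha$ (in some $\tau$-component) and a standard parabolic $\bB_{/E}\subset\bP$ of $\GLN_{n/E}$ such that $s_\alpha w\underline{w}_0(\bh)$ is strictly $\bP$-dominant while $w\underline{w}_0(\bh)$ is not, with $\ell(s_\alpha w)=\ell(w)+1$. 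The inductive hypothesis supplies $y_{s_\alpha w\underline{w}_0}\in X_\fp(\overline{\rho})$, and Proposition \ref{equicompanpointconsti123} upgrades this to $\cL(s_\alpha w\underline{w}_0\cdot\lambda_\bh)\neq 0$ at $y_{s_\alpha w\underline{w}_0}$. To conclude $y_{w\underline{w}_0}\in X_\fp(\overline{\rho})$, it suffices to verify that the surjection
\begin{equation*}
\cM(w\underline{w}_0\cdot\lambda_\bh)\twoheadrightarrow \cL(w\underline{w}_0\cdot\lambda_\bh)
\end{equation*}
at the prospective point $y_{w\underline{w}_0}$ is not an isomorphism; given the short exact sequence $0\to L(s_\alpha w\underline{w}_0\cdot\lambda_\bh)\to M(w\underline{w}_0\cdot\lambda_\bh)\to L(w\underline{w}_0\cdot\lambda_\bh)\to 0$, failure of this isomorphism is equivalent to non-vanishing of the quotient $\cL(w\underline{w}_0\cdot\lambda_\bh)\neq 0$, i.e.~to the desired companion point.

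The main obstacle is verifying precisely this non-isomorphism, and I plan to reproduce the cycle-theoretic contradiction from the proof of Proposition \ref{prolocalcomapn}: if it were an isomorphism, Proposition \ref{factorRqrho} combined with the partially de Rham result of Proposition \ref{PartiallydeRham} would give a factorization
\begin{equation*}
R_{\rho_L}\longrightarrow R^{\flat,w\underline{w}_0}_{\rho_L,\cM_\bullet}\cong \widehat{\cO}_{X_{\mathrm{tri}}(\overline{r})_{\underline{w}_0\cdot\lambda_\bh},x_{w\underline{w}_0}}\longrightarrow R^{\flat,\bP}_{\rho_L,\cM_\bullet},
\end{equation*}
forcing $\Spec(\widehat{\cO}_{X_{\fp}(\overline{\rho})^{\mathrm{red}}_{w\underline{w}_0\cdot\lambda_\bh},y_{w\underline{w}_0}})\subseteq \Spec R^{\flat,\bP}_{\rho_L,\cM_\bullet}$. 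By Proposition \ref{propertyofxrhombullet}, the cycle $\FZ^\flat_{w\underline{w}_0}$ is a nonzero component of the underlying space on the left (this uses the local geometry and that $w\geq w_\cF$, so Proposition \ref{propertyofxrhombullet}(b) produces this component); however, Lemma \ref{strictlyiff} together with the fact that $w\underline{w}_0(\bh)$ is not strictly $\bP$-dominant rules out its containment in $\Spec R^{\flat,\bP}_{\rho_L,\cM_\bullet}$, yielding the required contradiction.

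A subtlety to handle before running the cycle argument is ensuring the existence of an actual patched companion point $y_{w\underline{w}_0}$ sitting on the expected component of $X_\fp(\overline{\rho})$: this is where Hypothesis \ref{appenhypothesis} and Hypothesis \ref{TWhypo} enter, via the global decomposition (\ref{comparepatchedandtrivar}) and the Zariski-density/accumulation of classical points from Proposition \ref{basicpropertyeigen}(4). Concretely, the inductive hypothesis $y_{s_\alpha w\underline{w}_0}\in X_\fp(\overline{\rho})$ pins down an irreducible component $\FX^\fp\times\iota_\fp(Z)\times\BU^g$ of $X_\fp(\overline{\rho})$ containing both $y_{s_\alpha w\underline{w}_0}$ and (after the cycle argument) the candidate $y_{w\underline{w}_0}$, because Proposition \ref{prolocalcomapn1} guarantees $x_{w\underline{w}_0}$ lies on the component $Z$ of $X_{\mathrm{tri}}(\overline{r})$ through $x_{w_\cF\underline{w}_0}$. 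Once $y_{w\underline{w}_0}\in X_\fp(\overline{\rho})$ is produced for each $w\geq w_\cF$ via this descending induction, the theorem is complete.
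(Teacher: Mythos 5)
Your plan for the ``if'' direction attempts to iterate the cycle-theoretic argument of Proposition~\ref{prolocalcomapn} at \emph{every} rung of the descending induction, whereas the paper applies that argument only once, for the critical step at $w_{\cF}$, and handles the intermediate $w$'s by a Zariski-closure argument on the semistable deformation locus (Proposition~\ref{semistableZarisikeargsecond}, together with the decomposition~(\ref{comparepatchedandtrivar})). That closure argument is what propagates the companion-point property from the non-critical open dense stratum $\widetilde{\FX}^{\bh-\mathrm{st}}_{\mathrm{tri},\cP_{\min},w}(\overline{r})$ to the critical point $\rho_L$ for those $w$ strictly between $w_{\cF}$ and $\underline{w}_0$; your proof omits it entirely and relies on the cycle step to carry the whole weight.

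There are two concrete problems with making the cycle step do all the work. First, the short exact sequence you write, $0\to L(s_\alpha w\underline{w}_0\cdot\lambda_\bh)\to M(w\underline{w}_0\cdot\lambda_\bh)\to L(w\underline{w}_0\cdot\lambda_\bh)\to 0$, cannot hold: since $\ell(s_\alpha w)=\ell(w)+1$ forces $s_\alpha w\underline{w}_0<w\underline{w}_0$ in Bruhat order, the weight $s_\alpha w\underline{w}_0\cdot\lambda_\bh$ is strictly \emph{larger} than $w\underline{w}_0\cdot\lambda_\bh$, so $L(s_\alpha w\underline{w}_0\cdot\lambda_\bh)$ cannot be a subquotient of $M(w\underline{w}_0\cdot\lambda_\bh)$; the Verma module in the middle must be $M(s_\alpha w\underline{w}_0\cdot\lambda_\bh)$ and the sub/quotient roles swapped. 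Second, and more fundamentally, the exactness of the resulting $\Hom$-sequence (the step that lets one pass from ``$\cM\ne\cL$'' to non-vanishing of the next companion constituent) is not automatic for an arbitrary $w$ with $w>w_{\cF}$. In the paper's case $w=w_{\cF}$, the deeper radical layers of $M(s_\alpha w_{\cF}\underline{w}_0\cdot\lambda_\bh)$ consist of $L(w'''\underline{w}_0\cdot\lambda_\bh)$ with $w'''<w_{\cF}$, whose $\Hom$'s into $\Pi_\infty^{R_\infty-\ana}$ vanish by Proposition~\ref{propertyofxrhombullet}(c) (these are not companion constituents), and this vanishing is exactly what collapses the $\Hom$-sequence to a length-two exact sequence. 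For $w>w_{\cF}$ the analogous radical layers contain $L(w'''\underline{w}_0\cdot\lambda_\bh)$ with $w_{\cF}\le w'''<w$, which \emph{are} expected companion constituents with non-zero $\Hom$'s (indeed the very objects the induction is trying to produce), so the sequence does not truncate and the cycle-comparison cannot isolate $\cL(w\underline{w}_0\cdot\lambda_\bh)$. Your final paragraph gestures at ``Zariski-density/accumulation'' but does not actually run the closure argument of Propositions~\ref{pminloucsofsemidefringscP} and~\ref{semistableZarisikeargsecond}; without it the inductive step for $w\ne w_{\cF}$ is not established.
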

\begin{proof}
The "only if" part follows from Proposition \ref{prolocalcomapn1}.\;We prove "if" part by descending induction on the integer integer $l\leq \lg(\underline{w}_0)$ for the following hypothesis $\cH_l$:\;if $y_{w_0}\in X_{\fp}(\overline{\rho})(E)$ is a Steinberg point which is $w_{\cF}$-critical,\;then for any $w_{\cF}\leq w$ and $l\leq \lg(w)$,\;$y_{w\underline{w}_0}\in X_{\fp}(\overline{\rho})(E)$.\;For $l=\lg(\underline{w}_0)$,\;there is nothing to prove.\;It suffices to prove $\cH_{l-1}$ when $\cH_l$ holds.\;

If  $\lg(w_{\cF}\underline{w}_0)\geq l-1$,\;then the Hypothesis $\cH_l$ and  Proposition \ref{prolocalcomapn} imply $y_{w_{\cF}\underline{w}_0}\in  X_{\fp}(\overline{\rho})(E)$,\;and hence $y_{w_{\cF}\underline{w}_0}\in   Y_{\fp}(\overline{\rho})_{w_{\cF}\underline{w}_0\cdot \lambda_{\bh}}$. Now we assume that $\lg(w_{\cF}\underline{w}_0)< l-1$.\;We need to prove that for any $w$ such that $w\leq w_{\cF}\underline{w}_0$ and $\lg(w\underline{w}_0)=l-1$,\;we have $y_{w\underline{w}_0}\in X_{\fp}(\overline{\rho})(E)$.\;The point $y=y_1=y_{\underline{w}_0\underline{w}_0}$ is in the image of $\iota_{\bh,1}(\overline{\widetilde{\FX}_{\mathrm{tri},\cP_{\min},w}^{\bh-\mathrm{st}}(\overline{r})})$ since $w\geq w_{\cF}$.\;We can take an affinoid neighbourhood $U$ of $y$ in $X$.\;Then $V:=\iota_{\bh,1}^{-1}(U)\cap \widetilde{\FX}_{\mathrm{tri},\cP_{\min},w}^{\bh-\mathrm{st}}(\overline{r})$,\;which is Zariski open dense in the affinoid $\overline{V}:=\iota_{\bh,1}^{-1}(U)\cap \overline{\widetilde{\FX}_{\mathrm{tri},\cP_{\min},w}^{\bh-\mathrm{st}}(\overline{r})}$.\;Since any point $z$ in $\big(\mathrm{id}\times(\iota_{\fp}\circ\iota_{\bh,1})\times\mathrm{id}\big)(\FU^{\fp}\times\widetilde{\FX}_{\mathrm{tri},\cP_{\min},w}^{\bh-\mathrm{st}}(\overline{r})\times \BU^g)$ satisfies the condition in $\cH_{l-1}$,\;and $w_z=w_{\cF}$,\;$\lg(w_z)=l-1$.\;Hence their companion points are contained in $X_{\fp}(\overline{\rho})$.\;We hence deduce that 
\begin{equation}
	\begin{aligned}
		\FU^{\fp}\times\widetilde{\FX}_{\mathrm{tri},\cP_{\min},w}^{\bh-\mathrm{st}}(\overline{r})\times \BU^g\subset \big(\mathrm{id}\times(\iota_{\fp}\circ\iota_{\bh,w})\times\mathrm{id}\big)^{-1}\big(X_{\fp}(\overline{\rho})\big) .\;
	\end{aligned}
\end{equation}
for any $l\leq \lg(w)$.\;Therefore,\;we deduce:
\[\FU^{\fp}\times \overline{\widetilde{\FX}_{\mathrm{tri},\cP_{\min},w}^{\bh-\mathrm{st}}(\overline{r})}\times \BU^g \subset \big(\mathrm{id}\times(\iota_{\fp}\circ\iota_{\bh,w\underline{w}_0})\times\mathrm{id}\big)^{-1}\big( X_{\fp}(\overline{\rho})\big).\;\]
This show that the companion point $y_{w\underline{w}_0}$ is in $ X_{\fp}(\overline{\rho})$.\;This completes the proof of $\cH_{l-1}$.\;
\end{proof}

\begin{pro}\label{determinemulti}
Assume Conjecture \ref{flatnessXconj},\;then $m_{y,w}=m_y$ for any $w\in\sW_{n,L}$.\;Furhermore,\;we have $[\cM_{\infty,y}({M}(w\cdot {\bm\lambda}_\bh)^{\vee})]=m_y[\FB_{y}({M}(w\cdot {\bm\lambda}_\bh))^{\vee}]=m_y[\overline{{\FX}}_{\infty,y}^{w{w}_0}]$ for all $w\in\sW_n$.\;
\end{pro}
\begin{proof}
The proof is just a copy of the Step $10$ in the proof of \cite[Theorem 5.3.3]{breuil2019local}.\;We prove this Proposition by descending induction on the integer integer $l\leq \lg(\underline{w}_0)$ for the following hypothesis $\cH_l'$:\;for any $l\leq \lg(w)$,\;we have $m_{y,w}=m_y$.\;For $l=\lg(\underline{w}_0)$,\;there is nothing to prove.\;It suffices to prove $\cH'_{l-1}$ when $\cH'_l$ holds.\;Choose $w_1,w_2,w_3$ as in \cite[Lemma 5.2.7]{breuil2019local} applied to $w$.\;Then we have the equality:
\[[\cM_{\infty,y}(M(w_i\underline{w}_0\cdot \lambda_{\bh}))]=m_y[{\overline{\FX}}_{\infty,y}^{w_i\underline{w}_0}]\in Z^0({\overline{\FX}}_{\infty,y}),\;i\in\{1,2,3\}.\]
In particular,\;we have the equalities in $Z^0({\overline{\FX}}_{\infty,y})$:
\begin{equation}\label{multiequality1}
	\begin{aligned}
		&[{\overline{\FX}}_{\infty,y}^{w_i\underline{w}_0}]=[{{\FZ}}_{\infty,y}^{w_i}]+[{{\FZ}}_{\infty,y}^{w}],\;i\in\{1,2\}\\
		&[{\overline{\FX}}_{\infty,y}^{w_3\underline{w}_0}]=[{{\FZ}}_{\infty,y}^{w_3}]+[{{\FZ}}_{\infty,y}^{w_1}]+[{{\FZ}}_{\infty,y}^{w_1}]+[{{\FZ}}_{\infty,y}^{w}].\;
	\end{aligned}
\end{equation}
On the other hand,\;we have the following equalities in $Z^0({\overline{\FX}}_{\infty,y})$:
\begin{equation}\label{multiequality2}
	\begin{aligned}
 [\cM_{\infty,y}(M(w_i\underline{w}_0\cdot \lambda_{\bh}))]=&[\cM_{\infty,y}(L(w_i\underline{w}_0\cdot \lambda_{\bh}))]+[\cM_{\infty,y}(L(w\underline{w}_0\cdot \lambda_{\bh}))],\;i\in\{1,2\}\\
 [\cM_{\infty,y}(M(w_3\underline{w}_0\cdot \lambda_{\bh}))]=&[\cM_{\infty,y}(L(w_3\underline{w}_0\cdot \lambda_{\bh}))]+[\cM_{\infty,y}(L(w_1\underline{w}_0\cdot \lambda_{\bh}))]\\&+[\cM_{\infty,y}(L(w_2\underline{w}_0\cdot \lambda_{\bh}))]+[\cM_{\infty,y}(L(w\underline{w}_0\cdot \lambda_{\bh}))]
	\end{aligned}
\end{equation}		
By the argument below \cite[(5.37)]{breuil2019local} and Corollary \ref{corforcoherent},\;we have $[\cM_{\infty,y}(L(w_i\underline{w}_0\cdot \lambda_{\bh}))]=m_y[{{\FZ}}_{\infty,y}^{w_i}]$ for $i\in\{1,2,3\}$.\;Compare the equations in (\ref{multiequality2}) and (\ref{multiequality2}),\;we see that $[\cM_{\infty,y}(L(w\underline{w}_0\cdot \lambda_{\bh}))]=m_y[{{\FZ}}_{\infty,y}^{w}]$ and thus $m_{y,w}=m_y$ (using  Corollary \ref{corforcoherent}).\;This completes the proof of the first assertion.\;By Lemma \ref{lemBezrukavnikovvalue},\;we have $\FB_{y}({M}(w\cdot {\bm\lambda}_\bh)^{\vee})=W^{\flat,\ast}\widehat{\cO}_{{\overline{\cX}_{w}},x_{\pdr}}$.\;This implies $[\FB_{y}({M}(w\cdot {\bm\lambda}_\bh))]=[\overline{{\FX}}_{\infty,y}^{w{w}_0}]$ by definition.\;The last assertion then follows from Corollary \ref{corforcoherent}.\;
\end{proof}


\subsection{Locally analytic socle conjecture}

For any $w\in\sW_{n,L}$,\;recall that the locally $\bQ_p$-analytic irreducible admissible representation $C(w\underline{w}_0,1)\cong \cF^G_{\op_{w\underline{w}_0}(L)}\Big(\overline{L}(-w\underline{w}_0\cdot{\lambda}_{\bh}),\st^{\infty}_{\bL_{w\underline{w}_0}(L)}\Big)$ is the unique quotient of $\Pi_{(w\underline{w}_0,1)}$.\;
\begin{thm}\label{detlamedaSOCLE}Assume Hypothesis \ref{TWhypo} ,\;Hypothesis \ref{hyongaloisrep} and {Hypothesis} \ref{appenhypothesis} and $x_{w_{\cF}\underline{w}_0}\in X_{\mathrm{tri}}(\overline{r})$.\;Then $C(w\underline{w}_0,1,\beta)$ is a subrepresentation of $\widehat{S}_{\xi,\tau}(U^{\fp},E)^{\ana}_{\overline{\rho}}[\fm_{r_y}]$ if and only if $w\geq w_{\cF}$.\;
\end{thm}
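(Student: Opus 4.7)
The plan is to read off both directions from the companion-point Theorem \ref{globalcompanion} combined with the Jacquet--Emerton adjunction, treating the socle statement as an upgrade of the bare existence of a nonzero $G$-homomorphism. For the ``only if'' direction, suppose $C(w\underline{w}_0,1,\beta)\hookrightarrow \widehat{S}_{\xi,\tau}(U^{\fp},E)^{\ana}_{\overline{\rho}}[\fm_{r_y}]$. Apply $J_{\bB}$ and use the Orlik--Strauch/adjunction formulas (cf.\ the computation at the end of Section \ref{intropatcheigenvariety}) to extract a nonzero eigenvector of $\bT(L)$ of character $\delta_{\bB}\cdot\chi_{w\underline{w}_0\cdot\lambda_{\bh}}\cdot(\unr(\alpha)\circ\det)$ in $J_{\bB}(\widehat{S}_{\xi,\tau}(U^{\fp},E)^{\ana}_{\overline{\rho}})[\fm_{\rho}]$. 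This is exactly the statement that $y_{w\underline{w}_0}\in Y(U^{\fp},\overline{\rho})\subseteq X_{\fp}(\overline{\rho})(E)$, so Theorem \ref{globalcompanion} forces $w\ge w_{\cF}$.

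For the ``if'' direction, given $w\ge w_{\cF}$, Theorem \ref{globalcompanion} yields $y_{w\underline{w}_0}\in X_{\fp}(\overline{\rho})(E)$. Proposition \ref{equicompanpointconsti} then produces a nonzero element of $\homo_{G}\big(\Pi_{(w\underline{w}_0,1)}(\beta),\Pi_\infty^{R_\infty-\ana}[\fm_{r_y}^\infty]\big)$, and specializing along the ideal $\fa$ from Section \ref{preforpatching} using $\Pi_\infty[\fa]\cong \widehat{S}_{\xi,\tau}(U^{\fp},E)_{\fm_{\overline{\rho}}}$ (together with the classicality Theorem \ref{Classicality}, which guarantees the localization at $\fm_\rho$ is non-zero) gives a nonzero $\varphi\in\homo_{G}\big(\Pi_{(w\underline{w}_0,1)}(\beta),\widehat{S}_{\xi,\tau}(U^{\fp},E)^{\ana}_{\overline{\rho}}[\fm_{r_y}]\big)$. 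Since $C(w\underline{w}_0,1,\beta)$ is the unique irreducible quotient of $\Pi_{(w\underline{w}_0,1)}(\beta)$, it will suffice to show that the image of $\varphi$ is irreducible, i.e.\ that no other Jordan--H\"{o}lder constituent of $\Pi_{(w\underline{w}_0,1)}(\beta)$ embeds into $\Pi_\infty^{R_\infty-\ana}[\fm_{r_y}^\infty]$; equivalently (by dualizing, or by reading the Jordan--H\"{o}lder series), the cokernel of $C(w\underline{w}_0,1,\beta)\hookrightarrow \Pi_{(w\underline{w}_0,1)}(\beta)$ must have no nonzero map to $\Pi_\infty^{R_\infty-\ana}[\fm_{r_y}^\infty]$.

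The core step, and the expected main obstacle, is the vanishing
\[
\homo_{G}\big(\Pi_{(w\underline{w}_0,1,J)}(\beta),\Pi_\infty^{R_\infty-\ana}[\fm_{r_y}^\infty]\big)=0
\qquad \text{for every }J\neq\emptyset,\ J\subseteq I(w\underline{w}_0).
\]
The strategy here is to apply the partially de Rham criterion of Proposition \ref{PartiallydeRham}. Assume, for contradiction, that such a nonzero map exists for some $J\neq\emptyset$; then the corresponding Jacquet module computation (as in the identification at the end of Section \ref{intropatcheigenvariety}) produces a point $y'\in X_{\fp}(\overline{\rho})$ over $(\rho_L,\underline{\delta}')$ with locally algebraic $\underline{\delta}'$ whose smooth part is obtained from the generalized Steinberg $v^{\infty}_{J,I(w\underline{w}_0)}$; by the hypothesis of Proposition \ref{PartiallydeRham} one finds a $\tau\in\Sigma_L$ and a proper standard parabolic $\bP_{I_\tau}\supsetneq\bB$ such that $\wt_\tau(\underline{\delta}')$ is $\bP_{I_\tau}$-dominant. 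Proposition \ref{PartiallydeRham} then forces the triangulation of $D_{\rig}(\rho_L)[1/t]$ of parameter $\underline{\delta}'$ to be $\bP_{I_\tau}$-de Rham along $\tau$, i.e.\ some nontrivial graded piece of the triangulation at $\tau$ is de Rham.

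The hard part will be translating this $\bP_{I_\tau}$-de Rhamness into a contradiction with the Steinberg hypothesis $N^{n-1}\neq 0$ of Hypothesis \ref{hyongaloisrep}. I expect to argue as follows: the monodromy operator $N$ on $D_{\mathrm{st}}(\rho_L)$ has full Jordan block, and the (unique) triangulation with the prescribed Steinberg parameter is tied to the $(\varphi,N)$-filtration in a rigid way (as exploited in the local model of Section \ref{sectionlocalmodelmainresult}, in particular the Steinberg constraint $N\colon \gr_i\cF\to\gr_{i-1}\cF$ being the identity on $\Delta_\pi$ from Section \ref{Omegafil}); a $\bP_{I_\tau}$-de Rham condition at even a single $\tau$-component forces $N$ to vanish on some graded piece at that embedding, which is incompatible with the full monodromy rank. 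Once this contradiction is obtained for every $J\ne\emptyset$, the image of $\varphi$ in $\widehat{S}_{\xi,\tau}(U^{\fp},E)^{\ana}_{\overline{\rho}}[\fm_{r_y}]$ is forced to be isomorphic to $C(w\underline{w}_0,1,\beta)$, completing the ``if'' direction and therefore the theorem.
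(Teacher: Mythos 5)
There is a genuine gap in your core step, and the paper in fact uses a different argument there.

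Your plan to rule out the non-cosocle constituents $\Pi_{(w\underline{w}_0,1,J)}(\beta)$, $J\neq\emptyset$, is to apply Proposition \ref{PartiallydeRham} and derive a contradiction with the full monodromy rank $N^{n-1}\neq 0$, the idea being that a $\bP_{I_\tau}$-de Rham condition ``forces $N$ to vanish on some graded piece at that embedding.'' But the conclusion of Proposition \ref{PartiallydeRham} is about $\rho_L$ itself being $\bP_{I_\tau}$-de Rham, and this is \emph{automatic} here: $\rho_L$ is (potentially) semistable, hence de Rham, so the nilpotent operator $\nu$ on $D_{\pdr,\tau}(W_{\dr}(D_{\rig}(\rho_L)[1/t]))$ is identically zero, at every $\tau$ and for every parabolic. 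You are conflating the almost-de Rham nilpotent $\nu$ on $D_{\pdr}$ --- which detects failure of de Rhamness and vanishes for $\rho_L$ --- with the monodromy operator $N$ on $D_{\mathrm{st}}(\rho_L)$, which has full rank. The partially-de Rham machinery is used in the paper at the level of \emph{cycles} in Proposition \ref{prolocalcomapn} (via Lemma \ref{strictlyiff}), where a generic deformation of $\rho_L$ is no longer de Rham and the locus is genuinely proper; it is of no use as a contradiction at the single point $\rho_L$.

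The paper's actual elimination of $\Pi_{(w\underline{w}_0,1,J)}(\beta)$ for $J\neq\emptyset$ is both different and more direct. Each such constituent is the unique irreducible quotient of $\Pi_{(w\underline{w}_0,w')}(\beta)$ for some $w'\neq 1$ (a fact about the cosocle structure of the smooth parabolic induction $i^{\infty}_{w\underline{w}_0,w'}$, whose Jordan--H\"older constituents are the generalized Steinbergs $v^{\infty}_{J,I(w\underline{w}_0)}$). An embedding of $\Pi_{(w\underline{w}_0,1,J)}(\beta)$ would therefore give a nonzero element of $\homo_G(\Pi_{(w\underline{w}_0,w')}(\beta), \widehat{S}_{\xi,\tau}(U^{\fp},E)^{\ana}_{\overline{\rho}}[\fm_{r_y}])$, and the Jacquet--Emerton adjunction then produces a point $(\rho,z^{w\underline{w}_0(\bh)}\eta)$ on $X_{\fp}(\overline{\rho})$ with $\eta$ a nontrivial permutation ($w'$-twist) of $\unr(\alpha)_n$. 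Such a point cannot equal any $y_{w\underline{w}_0}$: the smooth part of a triangulation parameter for $\rho_L$ (semistable non-crystalline, full monodromy) must be $\unr(\alpha)_n$ in the prescribed order, and Theorem \ref{globalcompanion} has already enumerated all points above $\rho_L$. This smooth-side rigidity, not a Hodge-theoretic contradiction with $N$, is the engine of the ``only-if'' step. Your first paragraph (the other direction, via $J_{\bB}$ and the identification with $Y_\fp(\overline{\rho})_{w\cdot\mu}$) and your reduction to ruling out the $J\neq\emptyset$ constituents are both fine; it is only the mechanism you chose for that final exclusion that does not work.
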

\begin{proof}It suffices to show that $\Pi_{(w\underline{w}_0,1,J)}(\beta)$ for any $\emptyset\neq J \subseteq I(w\underline{w}_0)$ cannot be embedded into the space $\widehat{S}(U^{\fp},W^{\fp})_{\overline{\rho}}^{\ana}[\fm_{r_y}]$. Indeed,\;there exists an $w'\neq 1$ such that $\Pi_{(w\underline{w}_0,1,J)}(\beta)$ becomes the unique quotient of ${\Pi}_{(w\underline{w}_0,w')}(\beta)$.\;If $\Pi_{(w\underline{w}_0,1)}(W)\hookrightarrow \widehat{S}(U^{\fp},W^{\fp})_{\overline{\rho}}^{\ana}[\fm_{r_y}]$.\;Then we see that
	\[\homo_G\Big({\Pi}_{(w\underline{w}_0,w')}(\beta),\widehat{S}_{\xi,\tau}(U^{\fp},E)^{\ana}_{\overline{\rho}}[\fm_{r_y}]\Big)\neq 0.\]
This gives a companion point $(\rho, z^{w\underline{w}_0(\bh)}\eta)\in X_{\fp}(\overline{\rho})$ of $y$ with $\eta$ a $w'$-twist of $\unr(\alpha)_n$,\;thus this point is not equal to $y_{w\underline{w}_0}$ for any $w\geq w_{\cF}$,\;a contradiction.\;
\end{proof}

\section{Appendix:\;another approach to socle conjecture for $\GLN_2(L)$-case}\label{appGL2(L)}

We combine the methods in \cite{CompanionpointforGLN2L} and \cite{2015Ding}.\;We recall and keep the notation in \cite{CompanionpointforGLN2L}.\;

For $\underline{h}=(h_{\tau,1},h_{\tau,2})_{\tau\in \Sigma_L}$,\;let $J\subseteq \Sigma_L$ and $\underline{h}_J=(h_{\tau,1},h_{\tau,2})_{\tau\in J}$.\;Let $\widehat{T}_L(\underline{h}_J)$ be the reduced closed subspace of $\widehat{T}_L$  such that $\widehat{T}_L(\underline{h}_J)(\overline{E})=\{\delta=\delta_1\otimes\delta_2|\wt_\tau(\delta_i)=h_{i,\tau},\tau\in J\}$.\;Put $X_{\mathrm{tri}}(\overline{r},\underline{h}_J)=X_{\mathrm{tri}}(\overline{r})\times_{\widehat{T}_L}\widehat{T}_L(\underline{h}_J)$.\;Let $X_{\mathrm{tri},J-\dR}(\overline{r},\underline{h}_J)$ be the reduced closed subspace of $X_{\mathrm{tri}}(\overline{r},\underline{h}_J)$ with the $A$-point are $(r_A,\delta_A)$ such the $r_A$ is $J$-de Rham of Hodge-Tate weights $\underline{h}_J$.\;For $J'\subset J$,\;we have natural morphism $	\widehat{T}_L(\underline{h}_J)\rightarrow \widehat{T}_L(\underline{h}_{J'})$.\;Put $X_{\mathrm{tri},J'-\dR}(\overline{r},\underline{h}_J):=X_{\mathrm{tri},J'-\dR}(\overline{r},\underline{h}_{J'})\times_{\widehat{T}_L(\underline{h}_{J'})}\widehat{T}_L(\underline{h}_J)$.\;

by \cite[(7)]{CompanionpointforGLN2L},\;such spaces fall into the following commutative diagram:
\begin{equation}
	\xymatrix@C=2ex{ X_{\mathrm{tri},J-\dR}(\overline{r},\underline{h}_J) \ar[d] \ar[r]&  X_{\mathrm{tri},J'-\dR}(\overline{r},\underline{h}_J) \ar[r] \ar[d]&
		X_{\mathrm{tri},J'-\dR}(\overline{r},\underline{h}_{J'})\ar[r]\ar[d] & X_{\mathrm{tri}}(\overline{r},\underline{h}_{J'}) \ar[d] \ar[r]  & X_{\mathrm{tri}}(\overline{r}) \ar[d]\\
		\widehat{T}_L(\underline{h}_J)\ar[r]&	\widehat{T}_L(\underline{h}_J) \ar[r] &\widehat{T}_L(\underline{h}_{J'}) \ar[r] & \widehat{T}_L(\underline{h}_{J'}) \ar[r] & \widehat{T}_L,}
\end{equation}
where the horizontal maps are closed embedding,\;and the second and fourth square are cartesian.\;For a closed subspace $X\subset X_{\mathrm{tri}}(\overline{r})$,\;put $X(\underline{h}_J):=X\times_{X_{\mathrm{tri}}(\overline{r})}X_{\mathrm{tri}}(\overline{r},\underline{h}_{J})$,\; $X_{J-\dR}(\underline{h}_J):=X\times_{X_{\mathrm{tri}}(\overline{r})}X_{\mathrm{tri},J-\dR}(\overline{r},\underline{h}_{J})$ and $X_{J'-\dR}(\underline{h}_J):=X\times_{X_{\mathrm{tri}}(\overline{r})}X_{\mathrm{tri},J'-\dR}(\overline{r},\underline{h}_{J})$.\;

Keep the assumption and notation in Section \ref{companionpointandconsti}.\;Let $\rho:\gal_F\rightarrow \GLN_2(E)$ be a continuous representation such that $\rho\otimes\epsilon\cong \rho^{\vee}\circ c$ and $\rho$ is unramified outside $S$.\;Firstly,\;we assume  that:

\begin{hypothesis}\label{hyongaloisrepgl2l} Assume \begin{itemize}
		\item[(1)] $\rho$ comes from a classical point $y\in Y(U^{\fp},\overline{\rho})$ of the form $y=(\rho,\chi)$ (thus $\widehat{S}_{\xi,\tau}(U^{\fp},E)_{\overline{\rho}}^{\lalg}[\fm_{\rho}]\neq 0$); 
		\item[(2)] $\rho_{L}:=\rho|_{\gal_{F_{\widetilde{v}}}}$ is semistable non-crystalline of Hodge-Tate weights ${\bh}$,\;and $\{\alpha,q_L\alpha\}$ the eigenvalues of $\varphi^{f_L}$ on $D_{\mathrm{st}}(\rho_{L})$;
	\end{itemize}
\end{hypothesis}

By assumptions, the classical point $y=(\rho,\chi)\in X_{\fp}(\overline{{\rho}})$ satisfies $\chi=\chi_1\otimes\chi_2$ with $\chi_1=\unr(\alpha q_L^{-1})z^{\bh_{1}}$ and $\chi_2=\unr(\alpha q_L)z^{\bh_{2}+1}$ for some $\alpha\in E^{\times}$.\;

Suppose that $x=(\rho_{L},\delta=\delta_1\otimes\delta_2)$ is a closed point in
$X_{\mathrm{tri}}(\overline{r})$.\;Let $\Sigma^+(\delta)=\{\tau\in\Sigma_L:\wt_\tau(\delta_1)>\wt_\tau(\delta_2)\}$ and $\Sigma^-(\delta):=\Sigma_L\backslash\Sigma^+(\delta)$.\;Then we have
\begin{equation}
	\left\{
	\begin{array}{ll}
		\delta_1:=\unr(\alpha q_L^{-1})z^{\bh_{1,\Sigma^+(\delta)}}z^{\bh_{2,\Sigma^-(\delta)}}\\
		\delta_2:=\unr(\alpha q_L)z^{\bh_{1,\Sigma^-(\delta)}}z^{\bh_{2,\Sigma^+(\delta)}}.
	\end{array}
	\right.
\end{equation}
Then by \cite[Theorem 4.15]{2015Ding},\;there exists $\Sigma(x)\subseteq \Sigma^+(\delta)$ such that $\rho_{L}$ admits a trianguline of parameter 
\begin{equation}
	\left\{
	\begin{array}{ll}
		\delta'_1:=\unr(\alpha)z^{\bh_{1,\Sigma^+(\delta)\backslash\Sigma(x)}}z^{\bh_{2,\Sigma^-(\delta)\cup \Sigma(x)}} \\
		\delta'_2:=\unr(\alpha q_L)z^{\bh_{1,\Sigma^-(\delta)\cup\Sigma(x)}}z^{\bh_{2,\Sigma^+(\delta)\backslash\Sigma(x)}}.
	\end{array}
	\right.
\end{equation}
From now on,\;we assume that $\Sigma^-(\delta)\cup \Sigma(x)\neq \emptyset$ (i.e.,\;$\rho_L$ admits a critical special triangulation).\;For $J\subset \Sigma^+(\delta)$,\;we see that $x$ is also a closed point of the spaces $X_{\mathrm{tri},J-\dR}(\overline{r},\underline{\bh}_J)\hookrightarrow X_{\mathrm{tri}}(\overline{r},\underline{\bh}_J)\hookrightarrow X_{\mathrm{tri}}(\overline{r})$ and $X_{\mathrm{tri},J'-\dR}(\overline{r},\underline{\bh}_J)\hookrightarrow X_{\mathrm{tri}}(\overline{r},\underline{\bh}_{J'})$.\;Let $X$ be a union of irreducible components of an open subset of $X_{\mathrm{tri}}(\overline{r})$ such that $X$ satisfies the accumulation property at $x$ (see \cite[Definition 2.11]{breuil2017smoothness}).\;Then we have the following results on tangent space of $X$ at $x$.\;
\begin{thm}Keep the above situation.\;Let $J'\subset J$ and $J'\cap \Sigma(x)\neq \Sigma(x)$.\;
\begin{itemize}
	\item[(1)] $\dim_ET_{X,x}=4+3d_L$;
	\item[(2)] $\dim_ET_{X(\underline{\bh}_J),x}=4+3d_L-2|J\cap(\Sigma_L\backslash\Sigma(x))|-|J\cap\Sigma(x)|$;
	\item[(3)] $\dim_ET_{X_{J-\dR}(\underline{\bh}_J),x}=4+3d_L-2|J|$;
	\item[(4)] $\dim_ET_{X_{J'-\dR}(\underline{\bh}_J),x}=4+3d_L-2|J'|-2|(J\backslash J')
	\cap (\Sigma_L\backslash\Sigma(x))|-|(J\backslash J')
	\cap \Sigma(x)|$;
\end{itemize}
\end{thm}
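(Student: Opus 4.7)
\medskip

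\noindent\textbf{Proof proposal.} The plan is to reduce all four computations to a computation of first-order deformations in the groupoid $X^{w_x\underline{w}_0}_{\rho_L,\cM_\bullet}$ attached to the special triangulation at $x$, where $\cM_\bullet$ is the $\Omega_{[1,2]}$-filtration coming from the triangulation of parameters $(\delta_1',\delta_2')$. By Proposition~\ref{localgeomertyonspecial}, there is an isomorphism of groupoids over $\Art_E$ between $\widehat{X_{\mathrm{tri}}(\overline{r})}_x$ and $X^{w_x\underline{w}_0}_{\rho_L,\cM_\bullet}$, and by Proposition~\ref{propertyofxrhombullet}(c) the trianguline variety is irreducible at $x$, so any $X$ as in the statement agrees with $X_{\mathrm{tri}}(\overline{r})$ in a neighbourhood of $x$. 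In particular $T_{X,x}\cong X^{w_x\underline{w}_0}_{\rho_L,\cM_\bullet}(E[\epsilon]/\epsilon^2)$, which identifies with the space $\ext^{1,\circ}_{\mathrm{par}}(D,D)^\Box$ of framed paraboline deformations studied in Appendix~A, modulo an $n^2=4$-dimensional framing contribution. The dimension count in Appendix~A gives $4+3d_L$, which proves (1).

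For (2)--(4), the idea is to cut out the subspaces by imposing the weight and de Rham conditions $\Sigma_L$-factor by $\Sigma_L$-factor, using the decomposition $D_{\pdr}(W_{\dr}(D))=\bigoplus_{\tau\in\Sigma_L}D_{\pdr,\tau}(W_{\dr}(D))$ and the parallel decomposition of the parameter space $\widehat{T}$. For each $\tau\in J$ the condition of fixing the weight $\wt_\tau(\delta_i)=h_{i,\tau}$ cuts one linear equation out of the weight-parameter part of the tangent space; by inspection of the universal paraboline deformation $(\widetilde\delta_1,\widetilde\delta_2)$ this amounts to killing $\psi_{1,\tau}=\psi_{2,\tau}=0$ in the notation of Appendix~A, losing $2$ dimensions per $\tau\in J$ a priori. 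However, when $\tau\in\Sigma(x)$, the triangulation admits a non-trivial swap and one of the two weight directions is forced to vanish already on the paraboline tangent space (this is the content of the factor $\Sigma(x)$ in Theorem~4.15 of \cite{2015Ding} and of Lemma~\ref{dimkerCMWdr}(2) applied place-by-place); hence the cost at such $\tau$ is only $1$, which yields (2). Statement (3) is then immediate: imposing $J$-de Rhamness on top of fixing the weights at $J$ kills, at each $\tau\in J$, the one-dimensional unramified direction in $\hH^1_{(\varphi,\Gamma)}(\EndO(\Delta_\pi)\widehat\otimes_E \cR_{E,L}(\chi)[1/t])$ coming from $\widetilde{H}^2_{\{\tau\}}=0$ (Lemma~\ref{dimkerCMWdr} and the dimension formula from \cite[Proposition A.3]{CompanionpointforGLN2L}), giving a clean loss of $2$ per $\tau\in J$. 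Statement (4) is the interpolation between (2) and (3): for $\tau\in J'$ we pay the full $2$-dimensional cost (weight + de Rham), while for $\tau\in J\setminus J'$ we pay only the weight cost, which is $2$ or $1$ according as $\tau\notin\Sigma(x)$ or $\tau\in\Sigma(x)$.

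To actually execute the cut-out, I would work with the local-model description. Pull back along the formally smooth morphism $X^{w_x\underline{w}_0}_{\rho_L,\cM_\bullet}\to \widehat{X}^{(\varphi,\Gamma),w_x\underline{w}_0}_{\bW^+_D,\bF_\bullet}$ and reduce to the linear algebra of the groupoid of almost de Rham filtered $\bW_{\dr}^+$-representations; here partially de Rham conditions translate, via the functor $D_{\mathrm{pdR},\tau}$, into the vanishing of certain components of the universal nilpotent endomorphism on the graded pieces. Combining this with Lemma~\ref{dimkerCMWdr}(2) (which provides $\dim_E\hH^1_{g,J}(\cM_{i,i-1})=(d_L-|J|)r^2$) and with the partially de Rham refinement of Appendix~A Lemma~\ref{extgext1cire}(b), one gets precisely the claimed codimensions.

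The main obstacle is the asymmetry at places $\tau\in\Sigma(x)$: one must justify rigorously that at such $\tau$ the condition $\wt_\tau(\delta_i)=h_{i,\tau}$ is only a single linear condition on the paraboline tangent space rather than two. My plan is to handle this by exhibiting an explicit non-trivial element of the paraboline tangent space at $x$ coming from the alternative triangulation of parameter obtained by swapping the weights $h_{1,\tau}\leftrightarrow h_{2,\tau}$ for $\tau\in\Sigma(x)$, as in \cite[Section 4]{2015Ding}, and verifying that this deformation lies in $\ext^{1,\circ}_{\mathrm{par}}(D,D)$ but projects non-trivially to only one of the two weight coordinates at $\tau$. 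This requires an explicit $(\varphi,\Gamma)$-module computation analogous to \cite[Proposition 4.11]{2015Ding} adapted to the Steinberg case, and it is where the main content of the proof will lie.
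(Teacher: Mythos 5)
Your proposal takes a genuinely different route from the paper. The paper's Appendix~B is explicitly designed to give a proof \emph{without} the local model machinery (this is stated in the introduction), instead carrying out a direct computation with $(\varphi,\Gamma)$-module and $E$-$B$-pair cohomology in the style of Ding's earlier papers. Concretely, the paper works with the exact sequence $0\rightarrow K(\rho_{L})\rightarrow T_{X,x}\xrightarrow{f}\ext^1_{\gal_L}(\rho_{L},\rho_{L})$, identifies $\mathrm{Im}(f)=V_1\cap\nabla^{-1}(W)$ where $V_1$ is the kernel of an explicit composition of connecting maps on $\ext^1_{(\varphi,\Gamma)}$ and $W=\{(d_{i,\tau}):d_{1,\tau}=d_{2,\tau}\text{ for }\tau\in\Sigma(x)\}$, and then for (3)--(4) computes $\dim_E\ext^1_{\gal_L,g,J}\cap V_1$ by a case analysis ($J\cap\Sigma(x)\ne\Sigma(x)$ versus $\Sigma(x)\subseteq J$) on the commutative squares of $\widetilde{H}^2_J$-groups of $E$-$B$-pairs. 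You instead want to identify $T_{X,x}$ with the local-model groupoid $X^{w_x\underline{w}_0}_{\rho_L,\cM_\bullet}(E[\epsilon]/\epsilon^2)$ via Proposition~\ref{localgeomertyonspecial} and then cut out the weight and de Rham conditions place-by-place. For (1) this works and matches Proposition~\ref{propertyofxrhombullet}(a); the two routes coincide there.

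However, for (2)--(4) your plan has a real gap, and it sits precisely where the paper's technical work lies. You claim that at $\tau\in\Sigma(x)$ fixing the weight costs only one dimension because ``one of the two weight directions is forced to vanish,'' and you propose to establish this by exhibiting an explicit element of the paraboline tangent space coming from the swapped triangulation. But exhibiting an element is a \emph{lower} bound on $\mathrm{Im}(\nabla\circ f)$; what actually forces the cost down to one is the \emph{upper}-bound constraint $\mathrm{Im}(f)\subseteq\nabla^{-1}(W)$, i.e.\ that the two Sen-weight derivatives at $\tau\in\Sigma(x)$ are locked together ($d_{1,\tau}=d_{2,\tau}$) on the trianguline tangent space. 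In the local-model language this is the statement that $\Theta$ factors through a single component $\cT_{r,w_x\underline{w}_0}$; you assert it by analogy but never prove it, and it is not a consequence of Lemma~\ref{dimkerCMWdr}(2), which concerns $\dim_E\hH^1_{g,J}(\cM_{i,i-1})$ for $J\subsetneqq\Sigma_L$ and has no content about the weight map at $x$. Similarly, deriving (3) and (4) ``immediately'' from Lemma~\ref{dimkerCMWdr} and partially de Rham cycles would require translating the $J$-de Rham stratification $X_{J-\dR}(\underline{h}_J)$ into the $\flat$-local model, a dictionary the paper never develops (Section~\ref{PARTdeRHAMCYCLES} treats cycles under stronger generic hypotheses, not tangent spaces at critical semistable points). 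Carrying this out would in effect reproduce the $E$-$B$-pair computations from \cite[Proposition A.3]{CompanionpointforGLN2L} and \cite[(1.7)]{2015Ding} that the paper uses directly, which is why the paper avoids the detour. If you wish to pursue your route, the missing steps are: (i) prove the upper bound $\mathrm{Im}(\nabla\circ f)\subseteq W$ from the local model's $\Theta$-factorisation, and (ii) build the $\tau$-by-$\tau$ de Rham stratification of $X^{w_x\underline{w}_0}_{\rho_L,\cM_\bullet}$ and identify it with $X_{J-\dR}(\underline{h}_J)$; neither is immediate.
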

\begin{rmk}The fist two results are analogue of \cite[Theorem 2.2]{CompanionpointforGLN2L},\;and $(3),(4)$ are analogue of \cite[Theorem 2.4]{CompanionpointforGLN2L}.\;
\end{rmk}
\begin{proof}Let $W:=\{(d_{1,\tau},d_{2,\tau})|\;d_{1,\tau}=d_{2,\tau},\tau\in\Sigma(x)\}$ and $W_J:=\{(d_{1,\tau},d_{2,\tau})|\;d_{1,\tau}=d_{2,\tau}=0,\tau\in J\}$.\;As in \cite[(9)]{CompanionpointforGLN2L},\;one has an exact sequence 
\begin{equation}\label{exactTangentspace}
0\rightarrow K(\rho_{L})\cap T_{X,x}\rightarrow T_{X,x}\xrightarrow{f}\ext^1_{\gal_L}(\rho_{L},\rho_{L}).\;
\end{equation}
We need to control the $\mathrm{Im}(f)$.\;For $t\in T_{X,x}:\Spec E[
\epsilon]/\epsilon^2\rightarrow X_{\mathrm{tri}}(\overline{r})$,\;we get the following map.\;The composition $\Spec E[
\epsilon]/\epsilon^2\rightarrow X_{\mathrm{tri}}(\overline{r})\rightarrow \mathfrak{X}_{\overline{r}}^\Box$ gives a continuous representation $\widetilde{\rho_{L}}$ (we view it as an element in $\ext^1_{\gal_L}(\rho_{L},\rho_{L})$).\;We define a $E$-linear map $\nabla:\ext^1_{\gal_L}(\rho_{L},\rho_{L})\rightarrow E^{2d_L}$ by sending $\widetilde{\rho_{L}}$ to $(d_{1,\tau},d_{2,\tau})_{\tau\in\Sigma_L}$ such that $(\wt_\tau(\delta_1)+\epsilon d_{1,\tau},\wt_\tau(\delta_1)+\epsilon d_{2,\tau})_{\tau\in\Sigma_L}$ equals to the Sen's weights of $\widetilde{\rho_{L}}$.\;Secondly,\;the composition $\Spec E[
\epsilon]/\epsilon^2\rightarrow X_{\mathrm{tri}}(\overline{r})\rightarrow \widehat{T}_L$ also gives a deformation $\widetilde{\delta}=\widetilde{\delta}_1\otimes\widetilde{\delta}_2$ of ${\delta}={\delta}_1\otimes{\delta}_2$.\;The two properties in \cite[(12)]{CompanionpointforGLN2L} also hold in semistable non-crystalline case.\;Let $V_1$ be the kernel of the composition:
\begin{equation}
	\begin{aligned}
		\ext^1_{(\varphi,\Gamma)}(D,D)\rightarrow\;&\ext^1_{(\varphi,\Gamma)}(\cR_{E,L}(\delta'_1),D)\\&\rightarrow\ext^1_{(\varphi,\Gamma)}(\cR_{E,L}(\delta'_1),\cR_{E,L}(\delta'_2))\rightarrow\ext^1_{(\varphi,\Gamma)}(\cR_{E,L}(\delta_1),\cR_{E,L}(\delta'_2)).\;
	\end{aligned}
\end{equation}
Since $\Sigma^-(\delta)\cup \Sigma(x)\neq \emptyset$,\;the first two maps are still surjective,\;so the conclusions in \cite[Lemma 2.6,\;Lemma 2.7]{CompanionpointforGLN2L} also hold.\;We thus  have $\mathrm{Im}(f)=V_1\cap \nabla^{-1}(W)$ and $K(\rho_{L})\cap T_{X,x}=K(\rho_{L})$.\;These prove $(1)$ and $(2)$,\;by applying the same arguments as in \cite[Page 62]{CompanionpointforGLN2L}.\;We need more argument in the proof of $(3)$ and $(4)$.\;We need the following exact sequences:
\begin{equation}
	\begin{aligned}
	&0\rightarrow K(\rho_{L})\rightarrow T_{X_{J-\dR}(\underline{\bh}_J),x}\xrightarrow{f}\ext^1_{\gal_L,g,J}(\rho_{L},\rho_{L})\cap V_1\cap \nabla^{-1}(W)\rightarrow 0,\\
	&0\rightarrow K(\rho_{L})\rightarrow T_{X_{J'-\dR}(\underline{\bh}_J),x}\xrightarrow{f}\ext^1_{\gal_L,g,J'}(\rho_{L},\rho_{L})\cap V_1\cap \nabla^{-1}(W\cap W_J)\rightarrow 0.\\
	\end{aligned}
\end{equation}
We claim that:
\begin{itemize}
	\item[(a)] $\dim_E\ext^1_{\gal_L,g,J}(\rho_{L},\rho_{L})\cap V_1=\dim_E\ext^1_{\gal_L}(\rho_{L},\rho_{L})-3|J|-(d_L-|\Sigma(x)|-|J\cap(\Sigma^+(\delta)\backslash\Sigma(x))|)$.\;
\end{itemize}
We instead of considering the cohomology of the corresponding $E$-$B$-pairs:
 \begin{equation}\label{Jcomposition}
	\begin{aligned}
		\hH^1_{g,J}(\gal_L,W(\rho_{L})\otimes W(\rho_{L})^{\vee})\xrightarrow{j_1}\;&\hH^1_{g,J}(\gal_L,B_E(\delta'_2)\otimes W(\rho_{L})^{\vee})\\&\xrightarrow{j_2} \hH^1_{g,J}(\gal_L,B_E(\delta'_2(\delta'_1)^{-1}))\xrightarrow{j_3}\hH^1_{g,J}(\gal_L,B_E(\delta'_2(\delta_1)^{-1})).\;
	\end{aligned}
\end{equation}
Denote $\delta_0=\delta'_2(\delta_1)^{-1}$ and $\delta'_0=\delta'_2(\delta'_1)^{-1}$.\;Put ${\mathbf{n}}=\bh_{1}-\bh_2$.\;Then $\delta_0=\unr(q_L)z^{-\mathbf{n}_{\Sigma^+(\delta)\backslash\Sigma(x)}}z^{\mathbf{n}_{\Sigma^-(\delta)}}$,\;$\delta'_0=\unr(q_L)z^{-\mathbf{n}_{\Sigma^+(\delta)\backslash\Sigma(x)}}z^{\mathbf{n}_{\Sigma^-(\delta)\cup \Sigma(x)}}$ and $\delta'_0=\delta_0z^{\mathbf{n}_{\Sigma(x)}}$.\;

Since $\widetilde{\hH}^2_J(\gal_L,B_E(\delta'_2(\delta'_2)^{-1}))=0$,\;we deduce from \cite[Proposition A.5]{CompanionpointforGLN2L} that $j_2$ is surjective.\;For the map $j_3$,\;the same strategy as in the proof of \cite[Page 65]{CompanionpointforGLN2L} shows that $j_3$ restrict to a surjective map $\hH^1_{g,J}(\gal_L,B_E(\delta'_0))\twoheadrightarrow \hH^1_{g,J\cup \Sigma(x)}(\gal_L,B_E(\delta_0))$.\;

By \cite[Proposition A.5]{CompanionpointforGLN2L},\;the surjectivity of $j_1$ are related to the cohomology group $\widetilde{\hH}^2_J(\gal_L,B_E(\delta'_1)\otimes W(\rho_{L})^{\vee})$.\;We distinguish two cases $J\cap \Sigma(x)\neq \Sigma(x)$ and $\Sigma(x)\subseteq J$.\;

Suppose $J\cap \Sigma(x)\neq \Sigma(x)$.\;Then $\widetilde{\hH}^2_J(\gal_L,B_E(\delta'_1)\otimes W(\rho_{L})^{\vee})=0$ and the composition (\ref{Jcomposition}) induces a surjection $\hH^1_{g,J}(\gal_L,W(\rho_{L})\otimes W(\rho_{L})^{\vee})\twoheadrightarrow \hH^1_{g,J\cup \Sigma(x)}(\gal_L,B_E(\delta_0))$.\;Note that $B_E(\delta'_1)\otimes W(\rho_{L})^{\vee}$ is an extension of $B_E(1)$ by $B_E((\delta'_0)^{-1})$.\;Then the surjectivity follows from $\widetilde{\hH}^2_J(B_E(1))=0$ and \[\widetilde{\hH}^2_J(B_E((\delta'_0)^{-1}))={\hH}^2(B_E(\unr(q_L^{-1})z^{\mathbf{n}_{\Sigma^+(\delta)\backslash\Sigma(x)}}z^{-\mathbf{n}_{\Sigma^-(\delta)\cup \Sigma(x)}}z^{1+\mathbf{n}_{(\Sigma^-(\delta)\cup \Sigma(x))\cap J}}))=0.\]
We then compute $\hH^1_{g,J}(\gal_L,W(\rho_{L})\otimes W(\rho_{L})^{\vee})$
and $\hH^1_{g,J}(\gal_L,B_E(\delta_0))$.\;We use \cite[Proposition A.3]{CompanionpointforGLN2L}.\;It remains to compute  $\dim_E\widetilde{\hH}^2_{J}(\gal_L,W(\rho_{L})\otimes W(\rho_{L})^{\vee})$
and $\dim_E\widetilde{\hH}^2_{J\cup\Sigma(x)}(\gal_L,B_E(\delta_0))$.\;First,\;it is clear that $\dim_E\widetilde{\hH}^2_{g,J\cup\Sigma(x)}(\gal_L,B_E(\delta_0))=0$.\;Then $\dim_E\hH^1_{g,J}(\gal_L,B_E(\delta_0))=d_L-|\Sigma(x)|-|J\cap(\Sigma^+(\delta)\backslash\Sigma(x))|$. On the other hand,\;since the gradded pieces of $W(\rho_{L})\otimes W(\rho_{L})^{\vee}$ are $B_E((\delta'_0)^{-1})$,\;$B_E(1)$ and $B_E(\delta'_0)$.\;Then the $\widetilde{\hH}^2_J(\gal_L,-)$ of these objects are all zero,\;thus $\dim_E\widetilde{\hH}^2_{J}(\gal_L,W(\rho_{L})\otimes W(\rho_{L})^{\vee})=0$.\;We deduce from \cite[Proposition A.3]{CompanionpointforGLN2L} that $\dim_E\hH^1_{g,J}(\gal_L,W(\rho_{L})\otimes W(\rho_{L})^{\vee})=\dim_E\ext^1_{\gal_L}(\rho_{L},\rho_{L})-3|J|$.\;In this case,\;we get that $\dim_E\ext^1_{\gal_L,g,J}(\rho_{L},\rho_{L})\cap V_1=\dim_E\ext^1_{\gal_L}(\rho_{L},\rho_{L})-3|J|-(d_L-|\Sigma(x)|-|J\cap(\Sigma^+(\delta)\backslash\Sigma(x))|)$ (the same as in the proof of \cite[Page 66]{CompanionpointforGLN2L}).\;Note that the argument in \cite[Page 66,\;Proof of Lemma 2.10]{CompanionpointforGLN2L} also holds in our case,\;i.e.,\;the induced map $\nabla:\ext^1_{\gal_L,g,J }(\rho_{L},\rho_{L})\cap V_1 \rightarrow W_{J}$ is surjective.\;Therefore,\;the calculations of $(3)$ and $(4)$ for the case  $J\cap \Sigma(x)\neq \Sigma(x)$ are the same as \cite[Theorem 2.4]{CompanionpointforGLN2L}.\;

Now suppose $\Sigma(x)\subseteq J$.\;Then $\widetilde{\hH}^2_J(\gal_L,B_E(\delta'_1)\otimes W(\rho_{L})^{\vee})\neq 0$ but $j_3$ is surjective.\;In this case,\;we can prove that $\dim_E\hH^1_{g,J}(\gal_L,B_E(\delta_0))=1+d_L-|J|$ and  $\dim_E\hH^1_{g,J}(\gal_L,W(\rho_{L})\otimes W(\rho_{L})^{\vee})=1+4d_L-3|J|$.\;The map $j_1$ lies in the following exact sequence:
\begin{equation}
	\begin{aligned}
		0\rightarrow\hH^0(\gal_L,W(\rho_{L})\otimes W(\rho_{L})^{\vee})\rightarrow&\;\hH^0(\gal_L,B_E(\delta'_2)\otimes W(\rho_{L})^{\vee})\rightarrow \hH^1_{g,J}(\gal_L,B_E(\delta'_1)\otimes W(\rho_{L})^{\vee})\\ &\rightarrow \hH^1_{g,J}(\gal_L,W(\rho_{L})\otimes W(\rho_{L})^{\vee})\xrightarrow{j_1}\hH^1_{g,J}(\gal_L,B_E(\delta'_2)\otimes W(\rho_{L})^{\vee})
	\end{aligned}
\end{equation}
The first map induces an isomorphism.\;Thus we get that $\ker(j_1)=\hH^1_{g,J}(\gal_L,B_E(\delta'_1)\otimes W(\rho_{L})^{\vee})$ and $\dim_E\ker(j_1)=\dim_E\hH^1_{g,J}(\gal_L,B_E(\delta'_1)\otimes W(\rho_{L})^{\vee})=1+2d_L-\dim_E\hH^0(\gal_L,(B_E(\delta'_1)\otimes W(\rho_{L})^{\vee})^+_{\dr})=1+2d_L-(|J|+|J\cap(\Sigma^-(\delta)\cup \Sigma(x)|)$.\;Put $\eta''_J= z^{\mathbf{n}_{(\Sigma^+(\delta)\backslash\Sigma(x))\cap J}+1}\prod_{\tau\in \Sigma(x)}\tau$,\;$\eta^{\ast}=\prod_{\tau\in \Sigma(x)}\tau$ and $\eta^{\#}_J=z^{\mathbf{n}_{(\Sigma^+(\delta)\backslash\Sigma(x))\cap J}+1}$. Then $\eta^{\#}_J=\eta''_J\eta^{\ast}_J$.\;We have an injection $B_E(\eta^{\#}_J)\hookrightarrow B_E(\eta''_J)$ and a commutative diagram:
	\[\xymatrix@C=3.5ex{
	\hH^1(\gal_L,W(\rho_{L})\otimes W(\rho_{L})^{\vee}\otimes B_E(\eta^{\#}_Jz^{\mathbf{n}_{\Sigma(x)\cup \Sigma^{-}(\delta)}+1})) \ar[r]^{\hspace{25pt}f'}\ar[d]_{\fj_1}  & \hH^1(\gal_L,B_E(\delta'_2)\otimes W(\rho_{L})^{\vee}\otimes B_E(\eta^{\#}_J)) \ar[d]_{\fj_2}  \\
	\hH^1(\gal_L,W(\rho_{L})\otimes W(\rho_{L})^{\vee}) \ar[r] & \hH^1(\gal_L,B_E(\delta'_2)\otimes W(\rho_{L})^{\vee})}.\]
\[\xymatrix@C=3.5ex{
\ar[r]	& \hH^1(\gal_L,B_E(\delta'_2(\delta'_1)^{-1}\eta^{\#}_J)) \ar[r]\ar[d]_{\fj_3} & \hH^1(\gal_L,B_E(\delta'_2(\delta_1)^{-1}\eta''_J)) \ar[d]_{\fj_4}  \\
\ar[r]	&\hH^1(\gal_L,B_E(\delta'_2(\delta'_1)^{-1})) \ar[r] & \hH^1(\gal_L,B_E(\delta'_2(\delta_1)^{-1}))}.\]
We then have $\mathrm{Im}\fj_1=\hH^1_{g,J}(\gal_L,W(\rho_{L})\otimes W(\rho_{L})^{\vee})$,\;$\mathrm{Im}\fj_2=\hH^1_{g,J}(\gal_L,B_E(\delta'_2)\otimes W(\rho_{L})^{\vee})$,\;and $\mathrm{Im}\fj_3=\hH^1_{g,J}(\gal_L,B_E(\delta'_2(\delta'_1)^{-1}))$,\;$\mathrm{Im}\fj_4=\hH^1_{g,J}(\gal_L,B_E(\delta'_2(\delta_1)^{-1}))$.\;By \cite[(1.7)]{2015Ding},\;we have $$\mathrm{Im}(f')=\hH^1_{g,\Sigma^{-}(\delta)\cup \Sigma(x)}(\gal_L,B_E(\delta'_2)\otimes W(\rho_{L})^{\vee}\otimes B_E(\eta^{\#}_J)).\;$$
Therefore,\;we deduce that  the composition (\ref{Jcomposition}) restricts to a  surjective map:
\[\hH^1_{g,J}(\gal_L,W(\rho_{L})\otimes W(\rho_{L})^{\vee})\rightarrow\hH^1_{g,J\cup\Sigma^{-}(\delta)}(\gal_L,B_E(\delta'_2(\delta_1)^{-1})).\]
Note that $\dim_E\hH^1_{g,J\cup\Sigma^{-}(\delta)}(\gal_L,B_E(\delta'_2(\delta_1)^{-1}))=1+d_L-|J|$ (the $\Sigma^{-}(\delta)$-component has no effect).\;In this case,\;we get that $\dim_E\ext^1_{\gal_L,g,J}(\rho_{L},\rho_{L})\cap V_1=\dim_E\ext^1_{\gal_L}(\rho_{L},\rho_{L})-(2|J|+d_L)$.\;We complete the proof of the claim.\;Note that $W_{J}\subset W$ since $\Sigma(x)\subseteq J$.\;The same argument as in the proof of \cite[Proposition 2.8]{CompanionpointforGLN2L} show $(3)$ and $(4)$.\;Indeed,\;we have 
\begin{equation}
	\begin{aligned}
			\ext^1_{\gal_L,g,J}(\rho_{L},\rho_{L})\cap V_1\cap \nabla^{-1}(W)=\;&\dim_E\ext^1_{\gal_L,g,J}(\rho_{L},\rho_{L})\cap V_1\\
			=\;&\dim_E\ext^1_{\gal_L}(\rho_{L},\rho_{L})-(2|J|+d_L).\;
		\end{aligned}
\end{equation}
Applying the above formula to $J'$.\;Since $J'\cap \Sigma(x)\neq \Sigma(x)$,\;then $\ext^1_{\gal_L,g,J'}(\rho_{L},\rho_{L})\cap V_1\cap \nabla^{-1}(W\cap W_J)$ is the preimage of $W\cap W_{J}$ via the surjective map $\ext^1_{\gal_L,g,J'}(\rho_{L},\rho_{L})\cap V_1\cap \nabla^{-1}(W)\rightarrow W\cap W_{J'}$.\;This implies $(4)$.\;
\end{proof}
\begin{rmk}If $\Sigma(x)\subseteq J'$,\;then this computation is not clear since we do not know the image of $\nabla:\ext^1_{\gal_L,g,J' }(\rho_{L},\rho_{L})\cap V_1\rightarrow W$ explicitly.\;Note that $W_{J}\subset W$ since $\Sigma(x)\subseteq J$.\;We claim that $W_{J\cup\Sigma^{-}(\delta)}\subset \mathrm{Im}(\nabla)$.\;The Colmez-Greenberg-Stevens formula \cite[Theorem 2.1]{2015Ding} describes the obstructions of liftings to $E[\epsilon]/\epsilon^2$.\;We show that the induced map $\nabla:\ext^1_{\gal_L,g,J}(\rho_{L},\rho_{L})\cap V_1 \rightarrow W_{J\cup \Sigma^{-}(\delta)}$ is surjective.\;Let $\widetilde{\delta_1'}:L^\times\rightarrow E[\epsilon]/\epsilon^2$ be a continuous character with $\widetilde{\delta_1'}\equiv {\delta_1'}\Modo\;\epsilon$ and $\wt_\tau(\widetilde{\delta_1'})=\wt_\tau({\delta_1'})$ for $\tau\in J\cup \Sigma^{-}(\delta)$.\;Then  \cite[Theorem 2.1]{2015Ding} and its proof show that there exists a deformation $\widetilde{W}$ of $W(\rho_{L})$ over $E[\epsilon]/\epsilon^2$ and a deformation $\widetilde{\delta_2'}$ of ${\delta_2'}$ over $E[\epsilon]/\epsilon^2$ such that $\widetilde{W}$ comes from $\ext^1_{\gal_L,g,J\cup \Sigma^{-}(\delta)}(\rho_{L},\rho_{L})\cap V_1$ with parameter $(\widetilde{\delta_1'},\widetilde{\delta_2'})$ if and only if $(\widetilde{\delta_2'}\widetilde{\delta_2'}^{-1}(p)-1)/\epsilon+\sum_{\tau\in  \Sigma^{+}(\delta)\backslash\Sigma(x)}\sL_{\tau}(d_{1,\tau}-d_{2,\tau})=0$ (so the value $\widetilde{\delta_2'}(p)$ is uniquely determined),\;where $\sL_{\tau}$ are the $\sL$-invariants defined in \cite[Definition 1.20]{2015Ding},\;and $(\wt_\tau(\delta_1)+\epsilon d_{1,\tau},\wt_\tau(\delta_1)+\epsilon d_{2,\tau})_{\tau\in\Sigma_L}$ equals to the Sen's weights of $\widetilde{W}$.\;Thus,\;by choosing the value of $\widetilde{\delta_2'}\widetilde{\delta_2'}^{-1}(p)$ carefully,\;the above assertion follows.\;
\end{rmk}
As a corollary,\;we immediately get:
\begin{cor}\label{corofdimTangent}If Let $J'\subset J$ such that $J'\cap \Sigma(x)\neq \Sigma(x)$ and $(J\backslash J')\cap \Sigma(x)\neq \emptyset$,\;then $X_{J-\dR}({\bh}_J)$ is a proper closed subspace of $X_{J'-\dR}({\bh}_J)$.\;
\end{cor}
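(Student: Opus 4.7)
The plan is to deduce the statement directly from parts (3) and (4) of the preceding theorem, which compute the tangent space dimensions of $X_{J-\dR}(\underline{\bh}_J)$ and $X_{J'-\dR}(\underline{\bh}_J)$ at $x$.

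First I would establish the closed immersion $X_{J-\dR}(\underline{\bh}_J) \hookrightarrow X_{J'-\dR}(\underline{\bh}_J)$. Since $J' \subset J$, any $A$-point $(r_A,\delta_A)$ for which $r_A$ is $J$-de Rham of Hodge--Tate weights $\underline{\bh}_J$ is in particular $J'$-de Rham of the same weights (restricted to $J'$). By the moduli-theoretic description of these loci used earlier in the paper, this containment is a closed immersion of reduced rigid subspaces of $X_{\mathrm{tri}}(\overline{r},\underline{\bh}_J)$; note also $x \in X_{J-\dR}(\underline{\bh}_J)$ because $\rho_L$ itself is potentially semistable (hence de Rham at every embedding of $L$).

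Next I would verify that both tangent space formulas from the theorem are applicable at $x$. Part (3) gives
\[
\dim_E T_{X_{J-\dR}(\underline{\bh}_J),x} = 4 + 3d_L - 2|J|,
\]
with no further restriction beyond what the theorem assumes. Part (4) applies because the standing hypothesis of the theorem, $J' \cap \Sigma(x) \neq \Sigma(x)$, is part of our hypothesis; it yields
\[
\dim_E T_{X_{J'-\dR}(\underline{\bh}_J),x} = 4 + 3d_L - 2|J'| - 2\bigl|(J \setminus J') \cap (\Sigma_L \setminus \Sigma(x))\bigr| - \bigl|(J \setminus J') \cap \Sigma(x)\bigr|.
\]
Computing the difference and using $|J \setminus J'| = |(J\setminus J') \cap \Sigma(x)| + |(J\setminus J') \cap (\Sigma_L \setminus \Sigma(x))|$ gives
\[
\dim_E T_{X_{J'-\dR}(\underline{\bh}_J),x} - \dim_E T_{X_{J-\dR}(\underline{\bh}_J),x} = \bigl|(J \setminus J') \cap \Sigma(x)\bigr| > 0,
\]
where positivity is precisely the remaining hypothesis.

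Finally, the strict inequality of tangent space dimensions at the common point $x$ forces the closed immersion of step one to be proper: if the two closed subspaces coincided (even just locally at $x$), their Zariski tangent spaces at $x$ would agree. This is the whole content of the corollary, so there is no serious obstacle beyond bookkeeping. The only point that deserves a moment of care is ensuring that the hypothesis $J' \cap \Sigma(x) \neq \Sigma(x)$ genuinely allows one to invoke formula (4) and that the closed immersion in the first step is between reduced subspaces in the sense used by the theorem, so that tangent space comparison is meaningful; both are immediate from how $X_{J-\dR}(\underline{\bh}_J)$ and $X_{J'-\dR}(\underline{\bh}_J)$ were defined.
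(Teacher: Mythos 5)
Your proof is correct and is exactly the short argument the paper has in mind when it writes ``As a corollary, we immediately get''; the closed-immersion observation plus the tangent-space count $\dim_E T_{X_{J'-\dR}(\underline{\bh}_J),x} - \dim_E T_{X_{J-\dR}(\underline{\bh}_J),x} = |(J\setminus J')\cap\Sigma(x)| > 0$ is the intended content.
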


Put $\lambda_{\bh}=(\bh_1,\bh_2+1)$.\;In \cite[Section 3.3.2]{CompanionpointforGLN2L},\;the author introduces some stratifications on patched eigenvariety.\;For any $J\subset \Sigma_L$,\;we get a reduced closed subspace  $X_{\fp}(\overline{{\rho}},\lambda_{\bh,J})$ of $X_{\fp}(\overline{{\rho}})$ such that $(\rho,\delta')\in  X_{\fp}(\overline{{\rho}},\lambda_{\bh,J})$ if and only if the eigenspace $J_{\bB}(\Pi_\infty^{R_\infty-\ana}(\lambda_{\bh,J}))[\fm_\rho,\bT(L)=\delta']\neq0$,\;where $\Pi_\infty^{R_\infty-\ana}(\lambda_{\bh,J}):=(\Pi_\infty^{R_\infty-\ana}\otimes_EL(\lambda_{\bh,J}^{\vee}))^{\Sigma_L\backslash J}\otimes L(\lambda_{\bh,J})$.\;We have a natural morphism $	X_{\fp}(\overline{{\rho}},\lambda_{\bh,J})\rightarrow \widehat{T}_L(\lambda_{\bh,J})$.\;For $J'\subseteq J$,\;we put $X_{\fp}(\overline{{\rho}},\lambda_{\bh,J},J'):=X_{\fp}(\overline{{\rho}},\lambda_{\bh,J'}){\times}_{\widehat{T}_L(\lambda_{\bh,J'})}\widehat{T}_L(\lambda_{\bh,J})$.\;Such subspaces fit into the following commutative diagram (by \cite[(22)]{CompanionpointforGLN2L}):
\begin{equation}
	\xymatrix@C=2ex{ X_{\fp}(\overline{{\rho}},\lambda_{\bh,J}) \ar[d] \ar[r]&  X_{\fp}(\overline{{\rho}},\lambda_{\bh,J},J') \ar[r] \ar[d]&
		X_{\fp}(\overline{{\rho}},\lambda_{\bh,J'})\ar[r]\ar[d] & X_{\fp}(\overline{{\rho}},\lambda_{\bh,J'})' \ar[d] \ar[r]  & X_{\fp}(\overline{{\rho}}) \ar[d]\\
		\widehat{T}_L(\lambda_{\bh,J})\ar[r]&	\widehat{T}_L(\lambda_{\bh,J}) \ar[r] &\widehat{T}_L(\lambda_{\bh,J'}) \ar[r] & \widehat{T}_L(\lambda_{\bh,J'}) \ar[r] & \widehat{T}_L,}
\end{equation}
where the horizontal maps are closed embedding,\;and the second and fourth square are cartesian.\;The injection (\ref{injpatchtotri}) induces a closed embedding (by \cite[(34)]{CompanionpointforGLN2L}):
\begin{equation}\label{injpatchtotriJver}
	X_{\fp}(\overline{\rho},\lambda_{\bh,J})^{\mathrm{red}}\hookrightarrow\FX_{\overline{\rho}^{\fp}}^\Box\times \BU^g\times \iota_{\fp}\big(X_{J-\dR}({\bh}_J)).\;
\end{equation}

By the same argument as in the proof of \cite[Theorem 3.21]{CompanionpointforGLN2L},\;we show that
\begin{thm}\label{appeGL2LR=T}(Infinitesimal ``$R=T$" results)
Let $y=(r_y,\delta)\in X_{\fp}(\overline{{\rho}},\lambda_{\bh,J})(E)$ such that $r_{\fp}$ is isomorphic to $\rho_{L}$.\;Suppose $x$ is spherical (i.e.,\;$\delta$ is locally algebraic and $\delta z^{-\wt(\delta)}$ is unramified).\;Then $X_{\fp}(\overline{{\rho}},\lambda_{\bh,J})$ is smooth at $x$,\;and we have a natural isomorphism of complete regular noetherian local $E$-algebras:
\[\widehat{\cO}_{X_{\fp}(\overline{\rho},\lambda_{\bh,J}),{x}}\cong \widehat{\cO}_{\FX_{\overline{\rho}^{\fp}}^\Box\times \iota_{\fp}\big(X_{J-\dr}(\underline{\bh}_J)\big)\times \BU^g,x}.\]
\end{thm}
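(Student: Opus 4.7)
The plan is to follow closely the strategy of \cite[Theorem 3.21]{CompanionpointforGLN2L}, adapted to the semistable non-crystalline Steinberg setting considered here. The input is the closed embedding (\ref{injpatchtotriJver}), which gives a closed immersion
\[\widehat{\cO}_{\FX_{\overline{\rho}^{\fp}}^\Box\times \iota_{\fp}(X_{J-\dR}(\underline{\bh}_J))\times \BU^g,x}\twoheadrightarrow \widehat{\cO}_{X_{\fp}(\overline{\rho},\lambda_{\bh,J})_{\mathrm{red}},x}.\]
The core of the argument is then a numerical tangent-space comparison: I will bound $\dim_E T_{X_{\fp}(\overline{\rho},\lambda_{\bh,J}),x}$ from above by the dimension of the target of the embedding (which is regular), and simultaneously exhibit enough independent tangent vectors on $X_{\fp}(\overline{\rho},\lambda_{\bh,J})$ to force equality. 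Equality of a regular ambient local ring with a reduced quotient of the same dimension (via a closed immersion) gives simultaneously the smoothness of $X_{\fp}(\overline{\rho},\lambda_{\bh,J})$ at $x$ and the isomorphism of completed local rings.

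First I would collect the geometric input on the trianguline side. By the preceding tangent-space theorem, in the case $J'=J$ we have $\dim_E T_{X_{J-\dR}(\underline{\bh}_J),x}=4+3d_L-2|J|$. Since $X_{J-\dR}(\underline{\bh}_J)$ has expected (global) dimension $4+3d_L-2|J|$ by the same dimension count as in the generic case of $X_{\mathrm{tri}}(\overline{r})$ intersected with the partial de Rham locus, this tangent dimension forces $X_{J-\dR}(\underline{\bh}_J)$ to be smooth at $x$ of that dimension. Combined with the smoothness of $\FX_{\overline{\rho}^{\fp}}^\Box$ at the image point of $x$ (which uses the genericity hypothesis that $r_{y,\widetilde{v}}$ is generic for $v\in \Sigma(U^p)\backslash S_p$, so that the relevant local deformation rings at places away from $p$ are formally smooth), the completed local ring of $\FX_{\overline{\rho}^{\fp}}^\Box\times \iota_{\fp}(X_{J-\dR}(\underline{\bh}_J))\times \BU^g$ at $x$ is regular of the correct dimension.

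Next I would produce the matching upper bound for $\dim_E T_{X_{\fp}(\overline{\rho},\lambda_{\bh,J}),x}$ from the patching side, reproducing the argument of \cite[Proposition 3.19, Theorem 3.21]{CompanionpointforGLN2L}. The key tool is the patching module $M_\infty$ together with the Jacquet--Emerton functor: writing $J_{\bB}(\Pi_\infty^{R_\infty-\ana})^\vee$ as a coadmissible module over $\cO(\FX_\infty\times \widehat{T})$, and using the sphericity of $x$ together with the explicit description of the fibre of $\cM_\infty$ at $x$ (via $\homo_{U(\fg)}(M(w\cdot\lambda_\bh),-)$ with $w=1$), one sees that $\cM_\infty$ is locally free of rank one around $x$ on $X_{\fp}(\overline{\rho},\lambda_{\bh,J})$; this is the genuine local-global compatibility input. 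Combined with the faithfulness of $M_\infty$ over a suitable quotient of $R_\infty$ (here the genericity of $r_{y,\widetilde{v}}$ and the Taylor--Wiles patching from Section \ref{preforpatching} are essential), every tangent vector at $x$ on $X_{\fp}(\overline{\rho},\lambda_{\bh,J})$ comes from a deformation of $\rho_L$ satisfying the $J$-de Rham condition, hence lies in the image of the tangent space of $\FX_{\overline{\rho}^{\fp}}^\Box\times \iota_{\fp}(X_{J-\dR}(\underline{\bh}_J))\times \BU^g$.

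The hard step will be the last one: verifying that the patched tangent space at $x$ is no larger than the geometric tangent space of the ambient smooth space, i.e.\ controlling spurious tangent directions. Concretely, the delicate point is to rule out deformations in $\cM_\infty$ at $x$ that do not come from $J$-de Rham deformations of the triangulation, which amounts to checking that the partial de Rham stratification on the trianguline variety pulls back, via the map (\ref{injpatchtotri1}), to the corresponding stratification $X_{\fp}(\overline{\rho},\lambda_{\bh,J})$ on the patched eigenvariety at the infinitesimal level. In the $\GL_2$ generic crystalline case this is \cite[Proposition 3.19]{CompanionpointforGLN2L}; the adaptation to the Steinberg case requires replacing the crystalline weight argument by the semistable analogue, which uses the Colmez--Greenberg--Stevens formula (as already invoked in the tangent-space computation above) to match the Sen weight variations on both sides. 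Once this compatibility is in place, the inequalities force equality of tangent dimensions, and the conclusion follows.
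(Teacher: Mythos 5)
Your overall strategy is the right one and matches the paper's (which simply cites the argument of \cite[Theorem 3.21]{CompanionpointforGLN2L}), but the execution has a genuine gap and a misidentification of which step carries the real weight. Since $X_{\fp}(\overline{\rho},\lambda_{\bh,J})$ is by \emph{definition} a reduced closed subspace of $X_\fp(\overline{\rho})$, the closed embedding (\ref{injpatchtotriJver}) already hands you a surjection $\widehat{\cO}_{\FX_{\overline{\rho}^{\fp}}^\Box\times \iota_{\fp}(X_{J-\dR}(\underline{\bh}_J))\times \BU^g,\,x}\twoheadrightarrow \widehat{\cO}_{X_{\fp}(\overline{\rho},\lambda_{\bh,J}),\,x}$ and hence the injection of Zariski tangent spaces; so the step you flag as ``the hard step'' — controlling spurious tangent directions on the patched side — is in fact automatic. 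What is genuinely hard, and what your sketch never carries out, is the converse bound: you must show that $X_{\fp}(\overline{\rho},\lambda_{\bh,J})$ has the \emph{same Krull dimension} at $x$ as the smooth ambient space, so that the surjection (from a regular local ring, hence a domain, onto a reduced quotient of equal dimension) must have trivial kernel. Your opening paragraph gestures at ``exhibiting enough independent tangent vectors to force equality,'' but that is never done, and tangent-space equality alone is insufficient: $E[[x,y]]\twoheadrightarrow E[[x,y]]/(xy)$ is a surjection of a regular ring onto a reduced quotient with equal Zariski tangent spaces but strictly smaller Krull dimension.

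The missing ingredient is the lower bound on the dimension (or equivalently the codimension count), which is where the actual patching and local-global compatibility content enters. One needs that $X_\fp(\overline{\rho})$ is equidimensional of the dimension given in Proposition~\ref{basicpropertyeigen}, that $X_\fp(\overline{\rho},\lambda_{\bh,J})$ is cut out by fixing $2|J|$ Sen weights, and (via the accumulation of non-critical classical points at $x$, which requires $\delta$ locally algebraic and is ensured by the sphericity hypothesis) that $y$ lies on a component where these conditions cut down the dimension by exactly $2|J|$. It is this numerical coincidence with $\dim X_{J-\dR}(\underline{\bh}_J)$ — not a tangent-space comparison pure and simple — that closes the argument. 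The smoothness of $X_{J-\dR}(\underline{\bh}_J)$ at $x$ similarly requires a dimension lower bound for the trianguline variety stratum, not merely the tangent-space computation from the preceding theorem: your assertion of the ``expected global dimension'' is precisely what needs to be established. Finally, your claim that $\cM_\infty$ is locally free of rank one around $x$ on $X_{\fp}(\overline{\rho},\lambda_{\bh,J})$ is not needed for and does not follow from this tangent-space argument; the relevant input from the patched module is its Cohen--Macaulayness (Proposition~\ref{basicpropertyeigen}(3)), which enforces equidimensionality of the support.
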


By Theorem \ref{appeGL2LR=T} and Corollary \ref{corofdimTangent},\;we get

\begin{cor}\label{R=Tcor}Keep the situation of Theorem \ref{appeGL2LR=T}.\;Let $J'\subset J$ and $J'\cap \Sigma(x)\neq \Sigma(x)$.\;The following statements are equivalent:
\begin{itemize}
	\item[(i)] The natural projection $\widehat{\cO}_{X_{\fp}(\overline{\rho},\lambda_{\bh,J},J'),{x}}\twoheadrightarrow\widehat{\cO}_{X_{\fp}(\overline{\rho},\lambda_{\bh,J}),{x}}$ is an isomorphism;
	\item[(ii)] $X_{\fp}(\overline{\rho},\lambda_{\bh,J},J')$ is smooth at $x$;
	\item[(iii)] $(J\backslash J')\cap \Sigma(x)=\emptyset$.\;
\end{itemize} 
\end{cor}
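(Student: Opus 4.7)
The strategy is to reduce everything to the tangent space computations in parts (3) and (4) of the preceding theorem by first establishing a three-parameter analogue of Theorem \ref{appeGL2LR=T}. Writing for brevity $a := |(J\setminus J')\cap(\Sigma_L\setminus\Sigma(x))|$ and $b := |(J\setminus J')\cap\Sigma(x)|$, so that $|J\setminus J'|=a+b$, parts (3) and (4) give
\[
\dim_E T_{X_{J'-\dR}(\underline{\bh}_J),x} - \dim_E T_{X_{J-\dR}(\underline{\bh}_J),x} = 2(a+b) - 2a - b = b.
\]
Thus (iii) is exactly the assertion that the two tangent spaces have the same dimension.

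I would first prove the analogue of Theorem \ref{appeGL2LR=T} for $X_{\fp}(\overline{\rho},\lambda_{\bh,J},J')$: following the route of \cite[Theorem 3.21]{CompanionpointforGLN2L} (and of Theorem \ref{appeGL2LR=T} itself), and using the closed embedding obtained by combining (\ref{injpatchtotriJver}) for $J'$ with the weight-fixing condition at $J\setminus J'$, one gets
\[
\widehat{\cO}_{X_{\fp}(\overline{\rho},\lambda_{\bh,J},J'),x} \;\cong\; \widehat{\cO}_{\FX_{\overline{\rho}^{\fp}}^\Box\times\iota_{\fp}(X_{J'-\dR}(\underline{\bh}_J))\times\BU^g,\,x}.
\]
In particular the closed immersion $X_{\fp}(\overline{\rho},\lambda_{\bh,J})\hookrightarrow X_{\fp}(\overline{\rho},\lambda_{\bh,J},J')$ induces a surjection on completed local rings at $x$, whose difference in cotangent-space dimensions equals $b$ by the computation above.

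For (iii)$\Rightarrow$(i): under (iii) the surjection $\widehat{\cO}_{X_{\fp}(\overline{\rho},\lambda_{\bh,J},J'),x}\twoheadrightarrow\widehat{\cO}_{X_{\fp}(\overline{\rho},\lambda_{\bh,J}),x}$ has source and target with the same cotangent-space dimension, and the target is regular of that dimension by Theorem \ref{appeGL2LR=T}. A standard fact on Noetherian complete local $E$-algebras then forces the source to be regular of the same dimension and the surjection to be an isomorphism. The implication (i)$\Rightarrow$(ii) is immediate from Theorem \ref{appeGL2LR=T}, since the source is then regular.

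The main obstacle is (ii)$\Rightarrow$(iii), for which I would use an accumulation argument. By Proposition \ref{basicpropertyeigen}(4), very classical non-critical generic points accumulate at the locally algebraic point $x$ in $X_{\fp}(\overline{\rho})$; those whose weight character lies in $\widehat{T}(\lambda_{\bh,J})$ accumulate at $x$ within $X_{\fp}(\overline{\rho},\lambda_{\bh,J},J')$. At such a point $y'$, the underlying Galois representation is non-critical, hence its trianguline structure is automatically de Rham in a Zariski neighbourhood of $y'$, so $X_{\fp}(\overline{\rho},\lambda_{\bh,J},J') = X_{\fp}(\overline{\rho},\lambda_{\bh,J})$ locally near $y'$. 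Taking Zariski closures, $X_{\fp}(\overline{\rho},\lambda_{\bh,J})$ contains every irreducible component of $X_{\fp}(\overline{\rho},\lambda_{\bh,J},J')$ meeting the non-critical locus, and in particular every component through $x$. If (ii) holds, there is a unique such component, whence $X_{\fp}(\overline{\rho},\lambda_{\bh,J}) = X_{\fp}(\overline{\rho},\lambda_{\bh,J},J')$ locally at $x$; this is (i), and (iii) follows by the tangent-space identity above. The delicate step in this last paragraph is verifying that the accumulation of non-critical classical points persists under the additional weight and de Rham constraints; this should follow from the density results already invoked in the proof of Proposition \ref{PartiallydeRham}, but will need to be checked carefully.
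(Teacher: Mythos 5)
Your tangent-space bookkeeping is correct, and the chain (iii)\;$\Rightarrow$\;(i)\;$\Rightarrow$\;(ii) is fine as you set it up: the analogue of Theorem~\ref{appeGL2LR=T} for the pair $(J,J')$ can indeed be obtained (most cleanly by applying Theorem~\ref{appeGL2LR=T} with $J'$ in place of $J$ and then pulling back along $\widehat{T}(\lambda_{\bh,J})\hookrightarrow\widehat{T}(\lambda_{\bh,J'})$, using the cartesian squares in the two stratification diagrams), and once $b:=|(J\setminus J')\cap\Sigma(x)|=0$ the surjection between completed local rings has equal cotangent dimensions and a regular target, so it is an isomorphism.

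The problem is the direction (ii)\;$\Rightarrow$\;(iii). The accumulation claim you use -- that fully non-critical classical points with weight in $\widehat{T}(\lambda_{\bh,J})$ accumulate at $x$ inside $X_{\fp}(\overline{\rho},\lambda_{\bh,J},J')$ -- cannot hold in general: if it did, then taking Zariski closures would show \emph{unconditionally} that every irreducible component of $X_{\fp}(\overline{\rho},\lambda_{\bh,J},J')$ through $x$ lies in the closed subspace $X_{\fp}(\overline{\rho},\lambda_{\bh,J})$, so (i) would hold for all $(J,J')$ with no hypothesis, and your tangent-space identity would then force $b=0$ always. But the corollary preceding Theorem~\ref{appeGL2LR=T} says that when $(J\setminus J')\cap\Sigma(x)\neq\emptyset$ the inclusion $X_{J-\dR}(\underline{\bh}_J)\subsetneq X_{J'-\dR}(\underline{\bh}_J)$ is strict, so (iii), and hence (i), genuinely fails for some $(J,J')$. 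What is actually true is that fully non-critical points are Zariski-dense in the \emph{smaller} space $X_{\fp}(\overline{\rho},\lambda_{\bh,J})$; inferring density in a component of the \emph{larger} space $X_{\fp}(\overline{\rho},\lambda_{\bh,J},J')$ presupposes that the two coincide, which is exactly (i) -- the argument is circular, and the smoothness hypothesis (ii) is not actually doing any work in your closure step. To give an honest (ii)\;$\Rightarrow$\;(iii), you need an independent bound on the Krull dimension of $\widehat{\cO}_{X_{\fp}(\overline{\rho},\lambda_{\bh,J},J'),x}$ (equivalently, via your analogue of Theorem~\ref{appeGL2LR=T}, of $\widehat{\cO}_{X_{J'-\dR}(\underline{\bh}_J),x}$): if one shows this Krull dimension equals $\dim_x X_{\fp}(\overline{\rho},\lambda_{\bh,J})$, then smoothness forces the tangent-space dimension to equal the Krull dimension, and your computation of the difference then reads off $b=0$.
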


We now state the locally analytic socle conjecture  for $\GLN_2(L)$ case.\;For $J\subset \Sigma_L$,\;we denote
\[I^c_S(\alpha,\bh):=C(1,s_J)=\cF^{\GLN_2(L)}_{\ob(L)}(\underline{L}(-s_J\cdot\lambda_{\bh)},1).\;\]
\begin{conjecture}\label{appenconj-1}
Assume Hypothesis \ref{hyongaloisrepgl2l}.\;Then $I^c_S(\alpha,\bh)\hookrightarrow \widehat{S}(U^{\fp},E)_{\overline{\rho}}^{\ana}[\fm_{\rho}]$ if and only if $S\subseteq\Sigma(y)$.\;
\end{conjecture}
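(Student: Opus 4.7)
The plan is to translate the embedding statement into a question about companion points on the patched eigenvariety, and then use the infinitesimal $R=T$ isomorphism (Theorem \ref{appeGL2LR=T}) together with the partially de Rham structure on the trianguline variety to separate the different strata. First, by the adjunction between the Orlik-Strauch parabolic induction and the Jacquet-Emerton functor (as recalled in Section \ref{intropatcheigenvariety}), an embedding
\[
I^c_S(\alpha,\bh)=C(1,s_S)\hookrightarrow \widehat{S}(U^{\fp},E)_{\overline{\rho}}^{\ana}[\fm_{\rho}]
\]
is equivalent to the non-vanishing of a companion contribution to $J_{\bB}(\Pi_\infty^{R_\infty-\ana})[\fm_\rho^\infty]$, i.e. to the existence of the companion point $y_S:=(\rho,s_S\cdot\delta)$ in the schematic support of $\cL_{s_S\cdot\lambda_{\bh}}$, viewed inside $X_{\fp}(\overline{\rho},\lambda_{\bh,S})$.

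For the ``only if'' direction, assume $y_S\in X_{\fp}(\overline{\rho},\lambda_{\bh,S})(E)$. I will apply the $\GLN_2(L)$-analogue of Proposition \ref{PartiallydeRham}: the existence of the companion constituent forces the triangulation on $D_{\rig}(\rho_L)[1/t]$ of parameter $s_S\cdot\delta$ to be $\bP_{\{1\}}$-de Rham at each $\tau\in S$. By the characterization of $\Sigma(y)$ in \cite[Theorem 4.15]{2015Ding} (the set where the special triangulation is non-critically refined), this is precisely the condition $S\subseteq\Sigma(y)$. Here I use that the triangulation determined by $s_S\cdot\delta$ agrees with the original one away from $S$ and swaps at $\tau\in S$, and partial de Rhamness at $\tau$ characterizes membership in $\Sigma(y)$.

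For the ``if'' direction, assume $S\subseteq\Sigma(y)$. The strategy is induction on $|S|$, anchored at $S=\emptyset$ (the classicality assumption $\widehat{S}_{\xi,\tau}(U^{\fp},E)_{\overline{\rho}}^{\lalg}[\fm_\rho]\neq 0$). For the inductive step, pick $S'\subsetneq S$ with $|S\setminus S'|=1$ and note $(S\setminus S')\cap\Sigma(y)\neq\emptyset$. By induction the point $y_{S'}$ lies in $X_{\fp}(\overline{\rho},\lambda_{\bh,S'})$, so via the closed embedding (\ref{injpatchtotriJver}) the corresponding point $x_{S'}$ lies in $X_{S'-\dR}(\bh_{S'})$. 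Because $S\subseteq\Sigma(y)$, the tangent space computations (in particular the analogue of the calculation preceding Theorem \ref{appeGL2LR=T} in the case $S\cap\Sigma(x)\neq\Sigma(x)$) show that $X_{S-\dR}(\bh_S)$ is a \emph{proper} closed analytic subspace of $X_{S'-\dR}(\bh_S)\supseteq X_{S'-\dR}(\bh_{S'})$ containing $x$, and by accumulation of very classical non-critical generic points (Proposition \ref{basicpropertyeigen}(4)) along the partial de Rham locus, one produces a classical companion point on $X_{\fp}(\overline{\rho},\lambda_{\bh,S})$. Invoking Theorem \ref{appeGL2LR=T} for $S$ then identifies $\widehat{\cO}_{X_{\fp}(\overline{\rho},\lambda_{\bh,S}),x}$ with $\widehat{\cO}_{\FX^\Box_{\overline{\rho}^\fp}\times\iota_\fp(X_{S-\dR}(\bh_S))\times\BU^g,x}$, in particular this completed local ring is non-zero at $x$, which delivers the desired embedding.

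The main obstacle is the inductive step of the ``if'' direction: it is not a priori clear that the candidate point $y_S$ on the local trianguline variety lifts to a point on the global patched eigenvariety, since the transfer map $X_{\fp}(\overline{\rho})\to X_{\mathrm{tri}}(\overline{r})$ is only a closed immersion after fixing the unramified part. One must combine the infinitesimal $R=T$ isomorphism (to compare completed local rings), Corollary \ref{R=Tcor} (to distinguish strata when $(S\setminus S')\cap\Sigma(y)\neq\emptyset$), and an accumulation argument along the partially de Rham locus to propagate the embedding from the smaller stratum $X_{\fp}(\overline{\rho},\lambda_{\bh,S'})$ to $X_{\fp}(\overline{\rho},\lambda_{\bh,S})$. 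In particular, controlling that no spurious irreducible components of $X_{\fp}(\overline{\rho},\lambda_{\bh,S'})$ absorb $y_S$ relies on genericity of $\rho_{y,\widetilde v}$ for $v\in\Sigma(U^p)\setminus S_p$, which is the genericity hypothesis already appearing in Theorem \ref{socleGLN2}.
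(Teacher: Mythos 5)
Your translation to companion points via the Jacquet--Emerton adjunction matches the paper's route (this is exactly the equivalence with Conjecture \ref{appenconj-2}), and your ``if'' direction correctly identifies the key ingredients the paper uses: the infinitesimal $R=T$ isomorphism of Theorem \ref{appeGL2LR=T}, Corollary \ref{R=Tcor}, and Breuil's adjunction formula applied stratum by stratum. However there are two concrete issues.

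First, the ``only if'' direction cannot be obtained the way you propose. Proposition \ref{PartiallydeRham} requires $\mu_\tau$ to be $\bP_{I_\tau}$-dominant in order to conclude $\bP_{I_\tau}$-de Rhamness, but for $\tau\in S$ the weight $\wt_\tau(s_S\cdot\delta)$ is precisely the \emph{anti}-dominant permutation of $\bh_\tau$, so the only available $I_\tau$ is $\emptyset$ and the proposition returns no information at the very places you need it. (If you instead apply it at $\tau\notin S$, you get the opposite conclusion to the one you want.) The ``only if'' direction is actually local and easier than what you attempt: if $I^c_S(\alpha,\bh)\hookrightarrow\widehat{S}(U^{\fp},E)^{\ana}_{\overline{\rho}}[\fm_\rho]$ then $y_S\in X_\fp(\overline\rho)$, hence $x_S\in X_{\mathrm{tri}}(\overline r)$; by Proposition \ref{propertyparavar}(3) the $(\varphi,\Gamma)$-module $D_{\rig}(\rho_L)[1/t]$ admits a triangulation of parameter $\chi_S^c$, and for $\GLN_2(L)$ with $\rho_L$ semistable non-crystalline the set of admissible parameters is classified by \cite[Theorem 4.15]{2015Ding}: exactly those $\chi_J^c$ with $J\subseteq\Sigma(x)$. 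No global partial de Rham argument is required.

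Second, your ``if'' direction has the stratification indexing reversed, which obscures the structure of the induction. The companion point $y_{S'}$ has dominant (i.e.~equal to $\lambda_\bh$) Hodge--Tate weights exactly at places in $\Sigma_L\setminus S'$, so via the closed embedding (\ref{injpatchtotriJver}) it lies in $X_\fp(\overline\rho,\lambda_{\bh,\Sigma_L\setminus S'})$, not $X_\fp(\overline\rho,\lambda_{\bh,S'})$; likewise the relevant local stratum is $X_{(\Sigma_L\setminus S')\text{-}\dR}(\bh_{\Sigma_L\setminus S'})$. This matters because the paper's theorem in Appendix B takes a spherical point $y\in X_\fp(\overline\rho,\lambda_{\bh,J})$ with $J=\Sigma^+(\delta)$ and produces $y_\tau^c$ in the \emph{smaller} stratum $X_\fp(\overline\rho,\lambda_{\bh,\Sigma^+(\delta)\setminus\tau})$, and the induction is on the \emph{complement} of $S$ shrinking, not on $S$ growing as a strictly nested subset. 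Moreover the sentence ``by accumulation $\ldots$ one produces a classical companion point on $X_\fp(\overline\rho,\lambda_{\bh,S})$'' cannot stand alone, and ``invoking Theorem \ref{appeGL2LR=T} $\ldots$ delivers the desired embedding'' is circular unless you first establish that $y_S$ (or some auxiliary point) is already on the relevant eigenvariety stratum; the paper's mechanism for passing from one stratum to the next is Breuil's adjunction formula together with the isomorphism/non-isomorphism dichotomy of Corollary \ref{R=Tcor}, carried out as in \cite[Theorem 4.4]{CompanionpointforGLN2L}, and the full iteration is deferred to the argument of \cite[Corollary 46]{CompanionpointforGLN2L}. Your proposal would need to spell out why each application of Theorem \ref{appeGL2LR=T} at the newly constructed point $y_{S'}$ is legitimate (in particular that $y_{S'}$ is again spherical and that $\Sigma(x_{S'})=\Sigma(x)\setminus S'$), which is the content that the reference supplies.
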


In $\GLN_2(L)$-case,\;this conjecture is in fact equivalent to the following conjecture on companion points on $X_{\fp}(\overline{{\rho}})$.\;For $J\subset \Sigma_L$,\;we put $\chi_J^c=(\unr(\alpha q_L^{-1})z^{\bh_{1,\Sigma_L\backslash J}}z^{\bh_{2,J}})\otimes(\unr(\alpha q_L^{-1})z^{\bh_{1,\Sigma_L\backslash J}+1}z^{\bh_{2,J}+1})$.\;
\begin{conjecture}\label{appenconj-2}
Assume Hypothesis \ref{hyongaloisrepgl2l}.\;Then $(r_y,\chi)\in X_{\fp}(\overline{{\rho}})$ if and only if $\chi=\chi_J^c$ for some $J\subseteq \Sigma(x)$.\;In particular,\;the point $y_J:=(r_y,\chi_J^c)$ lie in $X_{\fp}(\overline{{\rho}},\lambda_{\bh,J})$.\;
\end{conjecture}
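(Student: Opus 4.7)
The plan is to prove Conjecture \ref{appenconj-2} in two stages: first the easy ``only if'' direction from the classification of triangulations, then the inductive ``if'' direction via the infinitesimal ``$R=T$'' result of Theorem \ref{appeGL2LR=T} together with a partially de Rham cycle/accumulation argument. The equivalence with Conjecture \ref{appenconj-1} is the standard dictionary between companion points and companion constituents (via the Jacquet--Emerton functor and the formula \cite[(4.1)]{wu2021local}): the embedding $I^c_J(\alpha,\bh)\hookrightarrow \widehat{S}(U^{\fp},E)_{\overline{\rho}}^{\ana}[\fm_{\rho}]$ holds iff $\homo_{G}\bigl(I^c_J(\alpha,\bh),\widehat{S}(U^{\fp},E)_{\overline{\rho}}^{\ana}[\fm_{\rho}]\bigr)\neq 0$ iff the point $(r_y,\chi_J^c)$ lies on $X_{\fp}(\overline{\rho})$.

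For the ``only if'' direction, suppose $(r_y,\chi)\in X_{\fp}(\overline{\rho})$ with $\chi$ locally algebraic. The closed immersion (\ref{injpatchtotri}) sends $(r_y,\chi)$ into $\FX_{\overline{\rho}^{\fp}}^\Box\times\BU^g\times\iota_{\fp}(X_{\mathrm{tri}}(\overline{r}))$, so $\iota_{\fp}^{-1}(\chi)$ is a trianguline parameter of $\rho_L$. By \cite[Theorem 4.15]{2015Ding} applied to the semistable non-crystalline $\rho_L$, every trianguline parameter is obtained by ``switching weights on a subset $J\subseteq \Sigma(x)$''; combined with the required locally algebraic shape, this forces $\chi=\chi_J^c$ for some $J\subseteq\Sigma(x)$.

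For the ``if'' direction, we argue by descending induction on $|J|\le |\Sigma(x)|$. The base case $J=\Sigma(x)$ is the classical point $y_{\Sigma(x)}=y$ furnished by the hypothesis that $\widehat{S}(U^{\fp},E)_{\overline{\rho}}^{\lalg}[\fm_{\rho}]\neq 0$. Suppose $y_{J'}\in X_{\fp}(\overline{\rho},\lambda_{\bh,J'})$ is known for all $J\subsetneq J'\subseteq\Sigma(x)$. Pick $\tau\in\Sigma(x)\setminus J$, put $J':=J\sqcup\{\tau\}$, and work at $y_{J'}$. Theorem \ref{appeGL2LR=T} identifies the complete local rings of $X_{\fp}(\overline{\rho},\lambda_{\bh,J'})$ and of $X_{\fp}(\overline{\rho},\lambda_{\bh,J'},J)$ at $y_{J'}$ with those of $\FX_{\overline{\rho}^{\fp}}^\Box\times\BU^g\times \iota_{\fp}(X_{J'-\dr}(\underline{\bh}_{J'}))$ and $\FX_{\overline{\rho}^{\fp}}^\Box\times\BU^g\times \iota_{\fp}(X_{J-\dr}(\underline{\bh}_{J'}))$ respectively. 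Since $(J'\setminus J)\cap\Sigma(x)=\{\tau\}\neq\emptyset$, the tangent space computation (parts (3)--(4) of the first theorem of this appendix) shows $X_{J-\dr}(\underline{\bh}_{J'})\supsetneq X_{J'-\dr}(\underline{\bh}_{J'})$ at $x$, so by Corollary \ref{R=Tcor} the projection $\widehat{\cO}_{X_{\fp}(\overline{\rho},\lambda_{\bh,J'},J),y_{J'}}\twoheadrightarrow \widehat{\cO}_{X_{\fp}(\overline{\rho},\lambda_{\bh,J'}),y_{J'}}$ is not an isomorphism. Thus there is an irreducible component of $X_{\fp}(\overline{\rho},\lambda_{\bh,J'},J)$ through $y_{J'}$ that is not fully $J'$-de Rham. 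Using accumulation of non-critical classical points in an affinoid neighbourhood of $y_{J'}$ (as in the proof of Proposition \ref{prolocalcomapn}) and specializing at weight $\lambda_{\bh,J}$, we obtain a classical companion point $y_J\in X_{\fp}(\overline{\rho})$, as desired.

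The main obstacle is precisely the accumulation step, i.e.\ globalizing the strictly larger tangent direction at $y_{J'}$ to an honest companion point at a different weight. Infinitesimally this direction is controlled by the $\sL$-invariant/Colmez--Greenberg--Stevens obstruction \cite[Theorem 2.1]{2015Ding} and the rank-two partial-de Rham cohomology of $(\varphi,\Gamma)$-modules from Lemma \ref{extgext1cire}, but propagating it requires a Zariski-closure argument on the semistable locus $\FX_{\overline{r},\cP_{\min}}^{\Box,\bh-\mathrm{st}}$, analogous to Proposition \ref{semistableZarisikeargsecond}, combined with the partially de Rham cycle analysis of Proposition \ref{PartiallydeRham}. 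The advantage over the general $n$ case is that Theorem \ref{appeGL2LR=T} bypasses Hypothesis \ref{introhypo-1} and Hypothesis \ref{hy114} by giving an explicit isomorphism of complete local rings, so this accumulation step plays the role that the local model of Section \ref{sectionlocalmodelmainresult} plays for $n\ge 3$.
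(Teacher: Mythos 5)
Your overall architecture is close to the paper's: a descending induction on $|J|$ anchored on the classical point $y_{\Sigma(x)}$, with the heart of the matter being the failure of the projection $\widehat{\cO}_{X_{\fp}(\overline{\rho},\lambda_{\bh,J'},J),y_{J'}}\twoheadrightarrow\widehat{\cO}_{X_{\fp}(\overline{\rho},\lambda_{\bh,J'}),y_{J'}}$ to be an isomorphism, which you correctly detect from Theorem \ref{appeGL2LR=T} and Corollary \ref{R=Tcor}. The ``only if'' direction via the classification of triangulations in \cite[Theorem 4.15]{2015Ding} is also essentially what the paper does.

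The genuine gap is in how you pass from the infinitesimal statement at $y_{J'}$ to the existence of the new point $y_J$. You propose ``accumulation of non-critical classical points\ldots and specializing at weight $\lambda_{\bh,J}$,'' but both $X_{\fp}(\overline{\rho},\lambda_{\bh,J'},J)$ and $X_{\fp}(\overline{\rho},\lambda_{\bh,J'})$ sit over $\widehat{T}(\lambda_{\bh,J'})$, i.e.\ the weight is already frozen in all $J'$-components (including $\tau$), so there is nothing to specialize: every point in an infinitesimal neighbourhood of $y_{J'}$ inside $X_{\fp}(\overline{\rho},\lambda_{\bh,J'},J)$ has the same $\tau$-weight as $y_{J'}$, not the swapped weight that $y_J$ carries. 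A strictly larger completed local ring at a \emph{fixed} point of \emph{fixed} weight does not, by itself, produce a second point at a \emph{different} weight; the passage is not geometric closure but representation-theoretic. This is exactly why the paper (following \cite[Theorem 4.4]{CompanionpointforGLN2L}) routes through Breuil's adjunction formula for the Jacquet--Emerton functor: the extra tangent direction is read off as the non-surjectivity of the map on $\Hom_{U(\fg)}(\overline{M}(-\lambda),-)_{\mathrm{fs}}$-modules coming from $M(\lambda)\twoheadrightarrow L(\lambda)$, which is then translated into the appearance of the companion constituent $I^c_J(\alpha,\bh)$, hence of $y_J$. Your sketch omits this mechanism entirely.

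A secondary inconsistency: you invoke Propositions \ref{prolocalcomapn}, \ref{semistableZarisikeargsecond} and \ref{PartiallydeRham}, but those belong to the general-$n$ machinery and require Hypothesis \ref{hy114} (and \ref{introhypo-1}) as inputs. The whole purpose of Appendix B is to circumvent those hypotheses in the $\GLN_2(L)$ case by computing the relevant tangent spaces directly (via the partial de Rham cohomology of rank-two $(\varphi,\Gamma)$-modules). Using the Section 4 propositions would reintroduce the hypotheses you claim to be avoiding, so they cannot legitimately appear in this proof.
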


\begin{thm}
Let $y=(r_y,\delta)\in X_{\fp}(\overline{{\rho}},\lambda_{\bh,J})(E)$ such that $r_{\fp}$ is isomorphic to $\rho_{L}$.\;Suppose $y$ is spherical  and $r_{y,\widetilde{v}}$ is generic for $v\in \Sigma(U^p)\backslash S_p$.\;Let $x$ be the associated point in $X_{\mathrm{tri}}(\overline{r},{\bh}_J)$.\;Suppose $\Sigma(x)\neq \emptyset$.\;Then for all $\tau\in \Sigma(x)$,\;$y_{\tau}^c=(r_y,\delta_{\tau}^c)\in X_{\fp}(\overline{{\rho}},\lambda_{\bh,\Sigma^{+}(\delta)\backslash \tau})(E)$.\;
\end{thm}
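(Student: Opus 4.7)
The plan is to transfer the question to the trianguline variety via the infinitesimal ``$R=T$'' isomorphism of Theorem \ref{appeGL2LR=T}, use the local companion theory of \cite[Theorem 4.15]{2015Ding} to produce the companion point $x_\tau^c$ on the trianguline side, and then promote this to a point on the patched eigenvariety by a Zariski-closure and accumulation argument.

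First, the embedding (\ref{injpatchtotriJver}) sends $y$ to a point $x=(\rho_L,\delta)\in X_{J-\dR}(\bh_J)\subset X_{\mathrm{tri}}(\overline{r})$. Since $\tau\in \Sigma(x)$, by \cite[Theorem 4.15]{2015Ding} the representation $\rho_L$ admits a critical special triangulation with parameter $\delta_\tau^c$ (obtained from $\delta$ by swapping the weights at $\tau$), so the local companion point $x_\tau^c:=(\rho_L,\delta_\tau^c)$ already lies in $X_{\mathrm{tri},J'-\dR}(\overline{r},\bh_{J'})$ with $J':=\Sigma^+(\delta)\setminus\{\tau\}$.

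Next, I would apply Corollary \ref{R=Tcor} with $J=\Sigma^+(\delta)$ and the above $J'$: since $(J\setminus J')\cap \Sigma(x)=\{\tau\}\neq \emptyset$, the natural projection $\widehat{\cO}_{X_{\fp}(\overline{\rho},\lambda_{\bh,J},J'),x}\twoheadrightarrow \widehat{\cO}_{X_{\fp}(\overline{\rho},\lambda_{\bh,J}),x}$ fails to be an isomorphism. Under Theorem \ref{appeGL2LR=T} applied both to $X_{\fp}(\overline{\rho},\lambda_{\bh,J})$ and to $X_{\fp}(\overline{\rho},\lambda_{\bh,J'})$, the kernel of this projection corresponds precisely to the extra tangent directions recorded in the tangent space computations (items (2) and (4) of the preceding theorem), i.e.\ to directions ``unfixing the de Rham condition at $\tau$''. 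This yields an irreducible local branch of $X_{\fp}(\overline{\rho},\lambda_{\bh,J'})$ through $x$ whose image in the trianguline variety identifies, at the level of completed local rings, with the germ of $X_{J'-\dR}(\bh_{J'})$ on the component containing $x_\tau^c$.

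Finally, to pass from this infinitesimal information to the actual existence of the point $y_\tau^c$, I would invoke the accumulation of classical non-critical points in $X_{\fp}(\overline{\rho})$ (Proposition \ref{basicpropertyeigen}(4)): the new irreducible component constructed above accumulates classical points, and each such classical point is forced by local--global compatibility to have the unique triangulation parameter prescribed by $D_{\mathrm{st}}(\rho_L)$ together with a choice of swap set, hence to have eigensystem matching $y_\tau^c$. A Zariski-closure argument on the semistable locus (the $\GL_2(L)$ analogue of Proposition \ref{pminloucsofsemidefringscP}) then yields $y_\tau^c\in X_{\fp}(\overline{\rho},\lambda_{\bh,J'})$. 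The main obstacle is this last step: matching the accumulation limit with the specific point $y_\tau^c$ rather than some other point with the same Galois representation but differing Hecke eigenvalues. Controlling the $\bT(L)$-eigenvalues along the new branch requires combining continuity of the Jacquet--Emerton module $J_{\bB}(\Pi_\infty^{R_\infty-\ana})$ with the fact that $\delta_\tau^c$ is completely determined by $(\rho_L,\tau,\delta)$; this is the $\GL_2(L)$ analogue of the descending induction behind Theorem \ref{globalcompanion}, proceeding in the spirit of the arguments in \cite{2015Ding}.
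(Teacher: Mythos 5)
You correctly identify the key inputs — the infinitesimal $R=T$ isomorphism (Theorem \ref{appeGL2LR=T}), Corollary \ref{R=Tcor} applied with $J=\Sigma^+(\delta)$ and $J'=\Sigma^+(\delta)\setminus\{\tau\}$ (so that $(J\setminus J')\cap\Sigma(x)=\{\tau\}\neq\emptyset$), and the production of the local companion datum from \cite[Theorem 4.15]{2015Ding} — and your first two steps agree with the paper's strategy. Where you diverge is the concluding step: the paper does not appeal to an accumulation or Zariski-closure argument at this point. Instead it imports the argument of \cite[Theorem 4.4, claim (37)]{CompanionpointforGLN2L}: after using Theorem \ref{appeGL2LR=T} and Corollary \ref{R=Tcor} to show that the projection $\widehat{\cO}_{X_{\fp}(\overline{\rho},\lambda_{\bh,J},J'),x}\twoheadrightarrow\widehat{\cO}_{X_{\fp}(\overline{\rho},\lambda_{\bh,J}),x}$ is \emph{not} an isomorphism, one applies Breuil's adjunction formula to translate this failure into the non-vanishing of $\Hom_G$ of the corresponding Orlik--Strauch companion constituent into $\Pi_\infty^{R_\infty-\ana}[\fm_{r_y}]$; that non-vanishing is precisely the statement $y_\tau^c\in X_{\fp}(\overline{\rho},\lambda_{\bh,J'})$.

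Your proposed final step via accumulation plus a Zariski-closure on the semistable locus is exactly where the genuine gap lies, and you flag it yourself. The closure argument controls the Galois component and the weight direction, but it does not, on its own, pin down the smooth part of the $\bT(L)$-eigensystem on the new branch; so it would leave open the possibility that the extra tangent directions produced by Corollary \ref{R=Tcor} carry some other Hecke eigensystem rather than $\delta_\tau^c$. What resolves this is precisely the adjunction formula: the non-isomorphism of completed local rings says that $\cM_\infty$ near $x$, restricted to the partial-weight stratum, strictly exceeds the top-constituent sheaf $\cL$, and the excess is identified (via the structure of Verma modules and the adjunction isomorphism from Section \ref{intropatcheigenvariety}) as a $\Hom_{U(\fg)}\big(M(s_\tau\cdot\lambda_{\bh}),(\Pi_\infty^{R_\infty-\ana})^{U_0}\big)$-class; the $\bT(L)$-action on that class is then forced to be $\delta_\tau^c$. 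Without incorporating this adjunction step (or an equivalent), your route does not close the gap you identify.
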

\begin{proof}We put $J=\Sigma^{+}(\delta)$ and $J'=\Sigma^{+}(\delta)\backslash \tau$.\;Then we have $J'\cap \Sigma(x)
\neq \Sigma(x)$.\;Then the claim,\;i.e.,\;$(37)$ in the proof of 
\cite[Theorem 4.4]{CompanionpointforGLN2L} also holds in our case,\;instead by using our Theorem \ref{appeGL2LR=T} and Corollary \ref{R=Tcor}.\;Thus this theorem follows from the Breuil's adjunction formula,\;as in the proof of \cite[Theorem 4.4,\;Page 94]{CompanionpointforGLN2L}.\;
\end{proof}

Finally,\;similar to the proof of \cite[Corollary 46]{CompanionpointforGLN2L},\;we actually get
\begin{thm}The Conjecture \ref{appenconj-2} (and hence Conjecture \ref{appenconj-1}) is true.\;
\end{thm}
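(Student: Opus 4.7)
The plan is to deduce the two conjectures from the preceding theorem by an induction on $|J|$, in the spirit of the proof of \cite[Corollary 46]{CompanionpointforGLN2L}. The existence direction of Conjecture \ref{appenconj-2} will be obtained by iterating the single-$\tau$ companion point theorem just proved; the uniqueness direction will follow from the local geometric analysis of $X_{\mathrm{tri}}(\overline{r})$ at the relevant semistable non-crystalline points (Proposition \ref{prolocalcomapn1} or rather its $n=2$ analogue obtained without Hypothesis \ref{appenhypothesis} via the tangent space computations above); and Conjecture \ref{appenconj-1} will then drop out via Breuil's adjunction formula as in the proof of the preceding theorem.

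For the existence part of Conjecture \ref{appenconj-2}, I would argue by induction on $|J|$ that $y_J := (r_y,\chi_J^c)$ lies in $X_{\fp}(\overline{\rho},\lambda_{\bh,J})(E)$ for every $J\subseteq \Sigma(x)$. The base case $J=\emptyset$ is the starting classical spherical point $y$. For the inductive step, suppose $y_J\in X_{\fp}(\overline{\rho},\lambda_{\bh,J})$ for some $J\subsetneq \Sigma(x)$ and let $\tau\in \Sigma(x)\setminus J$; the key observation is that the trianguline point $x_{y_J}\in X_{\mathrm{tri}}(\overline{r})$ attached to $y_J$ has $\Sigma^+(\delta_{y_J})\supseteq \Sigma_L\setminus J$ and $\Sigma(x_{y_J})\ni \tau$, because the associated filtration on $D_{\mathrm{rig}}(\rho_L)$ is obtained from that of $x$ by flipping the Hodge-Tate weights exactly on $J$, and the condition $J\subseteq \Sigma(x)$ ensures that the critical triangulation of $\rho_L$ with the new parameter still exists. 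With this input, the preceding theorem applied to $y_J$ (taking the subset $J$ of loc.\ cit.\ to be the new $\Sigma^+(\delta_{y_J})$ and removing $\tau$) produces $y_{J\cup\{\tau\}}\in X_{\fp}(\overline{\rho},\lambda_{\bh,J\cup\{\tau\}})$, completing the induction.

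For the uniqueness part, suppose $(r_y,\chi)\in X_{\fp}(\overline{\rho})$ with $\chi\delta^{-1}$ algebraic. Inspecting the weights forces $\chi=\chi_J^c$ for some $J\subseteq \Sigma_L$, so the point of $X_{\mathrm{tri}}(\overline{r})$ lying below is $x_{\chi_J^c}=(\rho_L,\chi_J^c\cdot \text{twist})$. To conclude $J\subseteq \Sigma(x)$, I would use the $\GL_2(L)$ analogue of Proposition \ref{prolocalcomapn1}: namely, the Zariski-closure argument on semistable deformation rings combined with the explicit description of $\Sigma(x)$ via \cite[Theorem 4.15]{2015Ding} shows that $x_{\chi_J^c}\in X_{\mathrm{tri}}(\overline{r})$ forces the triangulation parameter at $\tau\in J$ to be compatible with flipping the Hodge-Tate weights, which is only possible when $\tau\in \Sigma(x)$. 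The hard part of this step is making the Zariski-closure argument rigorous without invoking Hypothesis \ref{appenhypothesis}; here one exploits the fact that in the $\GL_2$ case the semistable non-crystalline locus $\FX_{\overline{r},\cP_{\min}}^{\Box,\bh-\mathrm{st}}$ is explicit enough (a divisor in $\FX_{\overline{r}}^{\Box,\bh-\mathrm{st}}$) that the companion-parameter map $\iota_{\bh,w}$ can be shown to land in $X_{\mathrm{tri}}(\overline{r})$ directly by approximation by non-critical crystabelline points, bypassing the need for the general hypothesis.

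Finally, to deduce Conjecture \ref{appenconj-1} from Conjecture \ref{appenconj-2}, I would apply Breuil's adjunction formula as in the end of the proof of \cite[Corollary 46]{CompanionpointforGLN2L}: an embedding $I^c_S(\alpha,\bh)\hookrightarrow \widehat{S}(U^{\fp},E)_{\overline{\rho}}^{\ana}[\fm_{\rho}]$ is equivalent, through Jacquet-Emerton functor and the adjunction for $\cF^G_{\ob(L)}$, to the existence of the companion point $y_S\in X_{\fp}(\overline{\rho})$, together with the non-vanishing of a partial de Rham condition at the simple constituents indexed by $\Sigma_L\setminus S$; the already established Conjecture \ref{appenconj-2} gives exactly the equivalence $y_S\in X_{\fp}(\overline{\rho})\iff S\subseteq \Sigma(x)$. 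The main obstacle throughout will be the uniqueness step, since the semistable non-crystalline $\GL_2(L)$ case is not covered by the generic hypotheses underlying the previous $n>2$ arguments; but the tangent space dimension formulas proved earlier in this appendix provide enough rigidity of $X_{\mathrm{tri}}(\overline{r})$ at the Steinberg critical points to push it through.
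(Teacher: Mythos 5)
You have correctly identified the skeleton that the paper itself delegates to \cite[Corollary 46]{CompanionpointforGLN2L}: induction on $|J|$ via the single-$\tau$ companion point theorem just proved (itself driven by the infinitesimal ``$R=T$'' isomorphism of Theorem \ref{appeGL2LR=T} and Breuil's adjunction), a local argument for the converse containment $J\subseteq\Sigma(x)$, and adjunction again to convert the classification of companion points into the classification of companion constituents. The existence induction, including the bookkeeping $\Sigma^+(\delta_{y_J})=\Sigma_L\setminus J$ and $\Sigma(x_{y_J})=\Sigma(x)\setminus J$, is the right way to iterate the preceding theorem.

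Where your proposal is off is the uniqueness paragraph, and the confusion points to a misunderstanding of which direction needs which tool. You identify the ``only if'' direction of Conjecture \ref{appenconj-2} as the principal obstacle and plan to force it through with the Zariski-closure argument on semistable deformation rings together with a crystabelline approximation designed to bypass Hypothesis \ref{appenhypothesis}. But those are \emph{existence} devices: they are used to show that additional points of the form $\iota_{\bh,w}(\rho_L')$ lie inside $X_{\mathrm{tri}}(\overline{r})$. They prove a \emph{lower} bound on the companion set, not an upper bound, and in this appendix they are not even needed for existence because the $R=T$ route replaces them. The ``only if'' direction is in fact the easy one and never touches Hypothesis \ref{appenhypothesis}: a point $(\rho_L,\chi')\in X_{\mathrm{tri}}(\overline{r})$ gives, by Proposition \ref{propertyparavar}(3) in the $r=1$ case, a triangulation of $D_{\rig}(\rho_L)$ whose parameter agrees with $\chi'$ after inverting $t$; once the weight of $\chi'$ is constrained to be a $\sW_{2,\Sigma_L}$-permutation of $\bh$ (and the smooth part is pinned by the $\varphi^{f_L}$-eigenvalues of $D_{\mathrm{st}}(\rho_L)$), the explicit classification of triangulations of the fixed semistable non-crystalline $\rho_L$ from \cite[Theorem 4.15]{2015Ding} forces $J\subseteq\Sigma(x)$. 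A smaller imprecision: the deduction of Conjecture \ref{appenconj-1} from Conjecture \ref{appenconj-2} is not via a ``partial de Rham condition at the simple constituents'' but by contradiction exactly as in Theorem \ref{detlamedaSOCLE} --- an embedding of a wrong irreducible constituent would, via adjunction, produce a companion point with the wrong smooth part at some embedding, which Conjecture \ref{appenconj-2} rules out.
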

\begin{rmk}The above theorem gives a complete version of \cite[Theorem 4.22]{2015Ding},\;which only proves Conjecture \ref{appenconj-2} (and hence Conjecture \ref{appenconj-1}) if $|S|=1$.\;
\end{rmk}




\bibliographystyle{plain}

\begin{thebibliography}{10}
	\bibitem{ALLENPARK}
	P. B. Allen, 
	\newblock Deformations of polarized automorphic Galois representations and adjoint Selmer groups. (English summary)
	\newblock {\em Duke Math. J.} 165 (2016), no. 13, 2407–2460.
	11F80 (11F70 11R34)
	
\bibitem{AST2009324R10}
	J. Bellaïche,  G. Chenevier,
	\newblock Families of Galois representations and Selmer groups. 
	\newblock {\em Astérisque} No. 324 (2009), xii+314 pp. ISBN: 978-2-85629-264-8

	\bibitem{parabolinePHD}
	J. Bergdall, 
	\newblock {On the variation of $(\varphi,\Gamma)$-modules over $p$-adic families of automorphic forms}.
	\newblock Ph.\;D.\;thesis.
	
	
	\bibitem{bergdall_2017}
	J. Bergdall.
	\newblock {Paraboline variation over $p$-adic families of
		$(\phi,\Gamma)$-modules}.
	\newblock {\em Compos. Math.} 153 (2017), no. 1, 132–174.

	
	\bibitem{bergdall2018adjunction}
	J. Bergdall and P. Chojecki.
	\newblock {An adjunction formula for the Emerton-Jacquet functor}.
	\newblock {\em Israel J. Math.} 223 (2018), no. 1, 1–52.

	
	\bibitem{berger2008equations}
	L. Berger,
	\newblock {Équations différentielles $p$-adiques et $(\varphi,N)$-modules filtrés}.
	\newblock {\em Astérisque} No. 319 (2008), 13–38. ISBN: 978-2-85629-256-3
	
	\bibitem{bergertri}
	L. Berger,
	\newblock {Trianguline representations}.
	\newblock {\em Bulletin of the London Mathematical Society} 43 (2011): 619-635.
	
	


	\bibitem{bernstein1984centre}
	J. N. Bernstein,  and P. Deligne.
	\newblock {Le ``centre'' de Bernstein. (French) [The Bernstein ``center'']}.
	\newblock Edited by P. Deligne. {\em Travaux en Cours, Representations of reductive groups over a local field,} 1–32, Hermann, Paris, 1984.

	
	
	
	
	
	\bibitem{BCLINV2}
	C. Breuil, 
	\newblock Série spéciale p-adique et cohomologie étale complétée. 
	\newblock {\em Astérisque } No. 331 (2010), 65–115. ISBN: 978-2-85629-282-2

	
	\bibitem{breuil2016socle}
	C. Breuil, 
	\newblock {Socle localement analytique I}.
	\newblock  {\em Ann. Inst. Fourier (Grenoble)} 66 (2016), no. 2, 633–685.

	
	
	\bibitem{breuil2015II}
	C. Breuil, 
	\newblock {Vers le socle localement analytique pour GLn II}.
	\newblock {\em Math. Ann.} 361 (2015), no. 3-4, 741–785.

	
	\bibitem{Ding2021}
	C. Breuil,  Y. Ding, 
	\newblock Bernstein eigenvarieties.
	\newblock {\em  Peking Mathematical Journal } (2024) 7:471–642.
	
	
	\bibitem{HigherLinvariantsGL3(Qp)}
	C. Breuil,  Y. Ding, 
	\newblock {Higher $L$-invariants for $\GLN_3(\bQ_p)$ and local-global
		compatibility}.
	\newblock {\em Camb. J. Math.} 8 (2020), no. 4, 775–951.

	
	\bibitem{breuil2020probleme}
	C. Breuil, Y. Ding,
	\newblock {SUR UN PROBL{\`E}ME DE COMPATIBILIT{\'E} LOCAL-GLOBAL LOCALEMENT ANALYTIQUE}.
	\newblock {\em Memoirs of the American Mathematical Society}, 2020.
	
	\bibitem{breuil2017smoothness}
	C. Breuil, E. Hellmann, and B. Schraen,
	\newblock Smoothness and classicality on eigenvarieties. 
	\newblock {\em Invent. Math.} 209 (2017), no. 1, 197–274.

	
	\bibitem{breuil2017interpretation}
	C. Breuil, E. Hellmann, and B. Schraen,
	\newblock Une interpr{\'e}tation modulaire de la vari{\'e}t{\'e} trianguline.
	\newblock {\em Math. Ann.} 367 (2017), no. 3-4, 1587–1645.

	
	\bibitem{breuil2019local}
	C. Breuil, E. Hellmann, and B. Schraen,
	\newblock A local model for the trianguline variety and applications. 
	\newblock {\em Publ. Math. Inst. Hautes Études Sci.} 130 (2019), 299–412.

 \bibitem{breuil2019ext1}
C. Breuil
\newblock {${\ext}^1$ localement analytique et compatibilit{\'e}
	local-global}.
\newblock {\em Amer. J. Math.} 141 (2019), no. 3, 611–703.
11S37 (14D24)
	
	\bibitem{breuil2020towards}
	C. Breuil, F. Herzig,
	\newblock {Towards the Finite Slope Part for $\GLN_n$}.
	\newblock {\em Int. Math. Res. Not.} IMRN 2020, no. 24, 10495–10552.

	
	\bibitem{breuil2007first}
	C. Breuil, P. Schneider,
	\newblock First steps towards $p$-adic langlands functoriality. 
	\newblock {\em J. Reine Angew. Math.} 610 (2007), 149–180.

	
	\bibitem{bushnell2016admissible}
	C. J. Bushnell,  P. C. Kutzko,
	\newblock {\em The Admissible Dual of $\GLN_n$ via Compact Open
		Subgroups}, 
	\newblock {\em Annals of Mathematics Studies, 129.} Princeton University Press, Princeton, NJ, 1993. xii+313 pp. ISBN: 0-691-03256-4; 0-691-02114-7

	\bibitem{Gvalued}
	R. Bellovin, T. Gee, 
	\newblock {{$G$}-valued local deformation rings and global lifts}.
	\newblock {\em	Algebra Number Theory} 13(2019), no.2, 333–378.
	
	\bibitem{PATCHING2016}
	A. Caraiani, M. Emerton, T. Gee, D. Geraghty, V.
	Pa\v{s}k\={u}nas, and S. W. Shin.
	\newblock Patching and the p-adic local Langlands correspondence. 
	\newblock {\em Camb. J. Math.} 4 (2016), no. 2, 197–287.

	
	\bibitem{chenevier2011infinite}
	G. Chenevier.
	\newblock On the infinite fern of Galois representations of unitary type. 
	\newblock {\em Ann. Sci. Éc. Norm. Supér.} (4) 44 (2011), no. 6, 963–1019.

	
	\bibitem{clozel2008automorphy}
	L. Clozel, M. Harris,  R. Taylor,
	\newblock Automorphy for some l-adic lifts of automorphic mod l Galois representations. 
	With Appendix A, summarizing unpublished work of Russ Mann, and Appendix B by Marie-France Vign{\'e}ras.
	\newblock {\em Publ. Math. Inst. Hautes Études Sci.} No. 108 (2008), 1–181.
	
	\bibitem{CompanionpointforGLN2L}
	Y. Ding, 
	\newblock {Companion points and locally analytic socle for {${\rm GL}_2(L)$}}.
	\newblock {\em Israel J. Math.} 231(2019), no.1, 47–122.
	
	\bibitem{2022DINGext1}
	Y. Ding, 
	\newblock {Locally analytic {$\ext^1$} for {$\GLN_2(\bB_p)$} in de Rham non-trianguline case.}
   \newblock {\em Represent. Theory} 26 (2022), 122–133.
    
	
	\bibitem{2015Ding}
	Y. Ding,
	\newblock {$\mathcal{L}$-invariants, partially de Rham families, and local-global compatibility}
	\newblock {\em Ann. Inst. Fourier (Grenoble)} 67 (2017), no. 4, 1457–1519.

	
	\bibitem{2019DINGSimple}
	Y. Ding, 
	\newblock {Simple $\cL$-invariants for $\GL_{n}$}.
	\newblock {\em Trans. Amer. Math. Soc.} 372 (2019), no. 11, 7993–8042.

     \bibitem{Dingsocle}
    Y. Ding, 
    \newblock {Some results on the locally analytic socle for {${\GLN}_n(\bQ_p)$}}.
    \newblock {\em Int. Math. Res. Not. IMRN} 2019, no. 19, 5989–6035.
    
    
	
	\bibitem{emerton2006jacquet}
	M. Emerton.
	\newblock {Jacquet modules of locally analytic representations of p-adic reductive groups. I. Construction and first properties.}
	\newblock {\em Ann. Sci. École Norm. Sup.} (4) 39 (2006), no. 5, 775–839.

	
	\bibitem{Em1}
	M. Emerton, 
	\newblock On the interpolation of systems of eigenvalues attached to automorphic Hecke eigenforms.
	\newblock {\em Invent. Math.} 164 (2006), no. 1, 1–84.

	
	\bibitem{emerton2007jacquet}
	M. Emerton.
	\newblock {Jacquet modules of locally analytic representations of $p$-adic
		reductive groups II. The relation to parabolic induction}.
	\newblock {\em J. Institut Math. Jussieu}, 2007.
	
	\bibitem{Emerton2007summary}
	M. Emerton, 
	\newblock Locally analytic representation theory of p-adic reductive groups: a summary of some recent developments.
	\newblock {\em L-functions and Galois representations}, 407–437,
	London Math. Soc. Lecture Note Ser., 320, Cambridge Univ. Press, Cambridge, 2007.
	
		\bibitem{emerton2017locally}
	M. Emerton, 
	\newblock {\em Locally analytic vectors in representations of locally $p$-adic analytic groups}.
	\newblock {\em Mem. Amer. Math. Soc.} 248 (2017), no. 1175, iv+158 pp. ISBN: 978-0-8218-7562-9; 978-1-4704-4052-7

	\bibitem{emerton2014geometric}
	M. Emerton and Toby Gee.
	\newblock A geometric perspective on the breuil--m{\'e}zard conjecture.  
	\newblock {\em J. Inst. Math. Jussieu} 13 (2014), no. 1, 183–223.

    \bibitem{emerton2023introduction}
    M. Emerton,\;T. Gee and E. Hellmann, 
    \newblock {An introduction to the categorical $p$-adic Langlands program}.
    \newblock {\em arXiv:} 2210.01404 (2023).
    
	
	\bibitem{1987Catehighetweight}
	T. J. Enright,  B. Shelton, 
	\newblock {Categories of highest weight modules: applications to classical Hermitian symmetric pairs}.
	\newblock {\em Mem. Amer. Math. Soc.} 67 (1987), no. 367, iv+94 pp.
	
	
	\bibitem{2022ext1hyq}
	Y. He.
	\newblock {Extensions of locally analytic generalized parabolic Steinberg
		representations}.
	\newblock {\em arXiv:\;2211.00476, preprint}.
	
	\bibitem{He20222}
	Y. He,
	\newblock Parabolic Simple $\mathcal{L}$-Invariants.
	\newblock {\em arXiv:\;2211.10847, preprint}.
	
	
	\bibitem{humphreysBGG}
	J. E. Humphreys, 
	\newblock {\em Representations of Semisimple Lie Algebras in the BGG Category $\cO$}, volume~94.
	\newblock Graduate Studies in Mathematics, 94. American Mathematical Society, Providence, RI, 2008. xvi+289 pp. ISBN: 978-0-8218-4678-0
	
	\bibitem{HN17companion}
	D. Hansen and J. Newton.,
	\newblock {Universal eigenvarieties, trianguline Galois representations, and $p$-adic Langlands functoriality}.
	\newblock {\em Journal für die reine und angewandte Mathematik} 2017(730):1–64, 2017.
	
	\bibitem{universalsemistable}
	U. Hartl and E. Hellmann,  
	\newblock {The universal family of semistable $p$-adic Galois representations}.
	\newblock {\em Algebra \& Number Theory} 14 (2020), no.5, 1055–1121.
	
	
	

	\bibitem{HHS}
	E, Hellmann, V. Hernandez, B. Schraen,
	\newblock {Patching and multiplicities of p-adic eigenforms}.
	\newblock {\em arXiv: 2406.01129, preprint}.
	
	
	
	\bibitem{localmodelforgalois}
	D. Le, B. V. L. Hung, B. Levin,  A. S. Morra,
	\newblock {Local models for Galois deformation rings and applications}.
	\newblock {\em Invent. math.} 231, 1277–1488 (2023).

	
	\bibitem{KisinSEMISTABLE}
	M. Kisin, 
	\newblock {Potentially semistable deformation rings}.
	\newblock {\em	J. Amer. Math. Soc.} 21(2008), no.2, 513–546.

	\bibitem{liu2007cohomology}
	R. Liu.
	\newblock Cohomology and duality for ($\varphi$, $\Gamma$)-modules over the
	robba ring.
	\newblock {\em Int. Math. Res. Not. IMRN 2008}, no. 3, Art. ID rnm150, 32 pp. 
	
	 \bibitem{matrixsch}
	E. Miller, B. Sturmfels,
	\newblock {Combinatorial commutative algebra}.
	\newblock {\em Graduate Texts in Mathe
		matics}  vol. 227, Springer-Verlag, New York, 2005.
	
	
	\bibitem{nakamura2009classification}
	K. Nakamura,
	\newblock {Classification of two-dimensional split trianguline representations
	of p-adic fields.}
	\newblock {\em Compos. Math.} 145 (2009), no. 4, 865–914.

	\bibitem{XLROBBA}
	\textit{K.~S.~Kedlaya}, \textit{J.~Pottharst} and  \textit{L.~Xiao},
	\newblock {Cohomology of arithmetic families of {$(\varphi,\Gamma)$}-modules}.
		\newblock {\em J. Amer. Math. Soc.} \textbf{27} (2014), no. 4, 1043-1115.
	
	
	
	
	\bibitem{orlik2015jordan}
	S. Orlik,
	\newblock On Jordan-Hölder series of some locally analytic representations.
	\newblock {\em J. Amer. Math. Soc.} 28 (2015), no. 1, 99–157.

	
	\bibitem{orlik2014jordan}
	S. Orlik, B. Schraen.
	\newblock The {Jordan-H{\"o}lder} series of the locally analytic steinberg representation. 
	\newblock {\em Doc. Math.} 19 (2014), 647–671.
	
	\bibitem{BSCONJ}
	A. Pyvovarov,
	\newblock On the Breuil-Schneider conjecture generic case. 
	\newblock {\em Algebra Number Theory} 15 (2021), no. 2, 309–339.

	
	\bibitem{Dilogarithm}
	Z. Qian, 
	\newblock {Dilogarithm and higher {$\mathcal{L}$}-invariants for {${\rm GL}_3({\BQ}_p)$}}.
	\newblock {\em	Represent. Theory} 25 (2021), 344–411.
	
	\bibitem{wholeLINV}
	Z. Qian,
	\newblock {On generalization of Breuil-Schraen's $\sL$-invariants to $\GLN_{n}$.}
	\newblock {\em arXiv:\;2210.01381, preprint}
	
	\bibitem{schraen2011GL3}
	B.  Schraen, 
	\newblock {Repr{\'e}sentations localement analytiques de
		$\mathrm{GL}_3(\mathbb{Q}_p)$}.
	\newblock {\em Ann. Sci. {\'E}c. Norm. Sup{\'e}r.} (4) 44 (2011), no. 1, 43–145. 
	
	\bibitem{schneider2002banach}
	P. Schneider, J. Teitelbaum,  
	\newblock Banach space representations and Iwasawa theory. 
	\newblock {\em Israel J. Math.} 127 (2002), 359–380.

   \bibitem{Stack}
   The Stacks project,
   \newblock {https://stacks.math.columbia.edu/.}
	
   \bibitem{scholze2013local}
   P. Scholze,
  \newblock The local Langlands correspondence for $\GLN_{n}$ over $p$-adic fields, and the cohomology of compact unitary Shimura varieties.  Shimura varieties, 251–265,
  \newblock London Math. Soc. Lecture Note Ser., 457, Cambridge Univ. Press, Cambridge, [2020], ©2020. 
  
  \bibitem{MSW}
  W. S. Weirich, 
  \newblock {The trianguline variety at semi-stable points}.
  \newblock {\em PhD.,\;thesis}.
  
  
  \bibitem{wu2021local}
  Z. Wu, 
  \newblock {Local models for the trianguline variety and partially classical families}.
  \newblock {\em arXiv:\;2103.03823}  (2021).
  
  \bibitem{wu2023localcompan2}
  Z. Wu,
  \newblock {Companion points on the eigenvariety with non-regular weights.}
  \newblock {\em Int. Math. Res. Not.} (2023), no.9, 8008–8032.
  
  \bibitem{wu2024translation}
  Z. Wu,
  \newblock {Geometric translations of $(\varphi,\Gamma)$-modules for $\GLN_{2}(\bQ_p)$}
  \newblock {\em arXiv:\;2405.16637} (2024).
  
  \bibitem{av1980induced2}
  A. V. Zelevinsky,
  \newblock {Induced representations of reductive $p$-adic groups. II. On irreducible representations of $\GLN(n)$}.
  \newblock {\em Ann. Sci. École Norm. Sup.} (4) 13 (1980), no. 2, 165–210.
  
  
 
  
 
  
 
\end{thebibliography}

\printindex
\end{document}